\documentclass[a4paper,11pt]{article}

\usepackage{amsmath,amssymb,amsthm}
\usepackage[utf8]{inputenc}
\usepackage{amssymb,amsmath,mathrsfs,amsthm,bbm,xcolor}
\usepackage{verbatim}%
\usepackage{indentfirst}
\usepackage{geometry}
\usepackage{enumerate} 
\usepackage{graphicx} 
\usepackage{tikz}
\usepackage{tikz-cd}
\usetikzlibrary{calc,decorations.pathreplacing,angles,arrows.meta,bending}
\usepackage{float}
\usepackage{booktabs,caption}
\usepackage[flushleft]{threeparttable}
\usepackage{hyperref} %
\hypersetup{colorlinks=true,allcolors=blue}
\usepackage{hypcap}
\usepackage[shortlabels]{enumitem}

\usepackage{todonotes}

\usepackage{soul} %

\usepackage[capitalize]{cleveref} %

\usepackage{autonum}

\usepackage{enumitem}
\newlist{properties}{enumerate}{1}
\setlist[properties]{label=\textup{(P\arabic*)},ref=\textup{(P\arabic*)},leftmargin=*}
\newlist{hypotheses}{enumerate}{1}
\setlist[hypotheses]{label=\textup{(H\arabic*)},ref=\textup{(H\arabic*)},leftmargin=*}

\usepackage{longtable}
\usepackage{array} %

\makeatletter
\def\cref@thmoptarg[#1]#2#3#4{%
    \ifhmode\unskip\unskip\par\fi%
    \normalfont%
    \trivlist%
    \let\thmheadnl\relax%
    \let\thm@swap\@gobble%
    \thm@notefont{\fontseries\mddefault\upshape}%
    \thm@headpunct{.}%
    \thm@headsep 5\p@ plus\p@ minus\p@\relax%
    \thm@space@setup%
    #2%
    \@topsep \thm@preskip               %
    \@topsepadd \thm@postskip           %
    \def\@tempa{#3}\ifx\@empty\@tempa%
      \def\@tempa{\@oparg{\@begintheorem{#4}{}}[]}%
    \else%
      \refstepcounter[#1]{#3}%
      \@namedef{cref@#3@alias}{#1}%
      \def\@tempa{\@oparg{\@begintheorem{#4}{\csname the#3\endcsname}}[]}%
    \fi%
    \@tempa}%
\makeatother

\crefname{ptheorem}{THEOREM}{THEOREMS}
\crefname{plemma}{lemma}{lemmas}

\newtheorem{thm}{Theorem}[section]
\newtheorem{prop}[thm]{Proposition}

\newtheorem{thmx}{Theorem}

\crefname{thmx}{Theorem}{Theorems}  \Crefname{thmx}{Theorem}{Theorems}

\newtheorem{cor}[thm]{Corollary}

\newtheorem{lem}[thm]{Lemma}

\newtheorem*{lem*}{Lemma}

\newtheorem*{claim*}{Claim}
\newtheorem{claim}{Claim}

\theoremstyle{remark}
\newtheorem{rem}[thm]{Remark}
\newtheorem*{rem*}{Remark}

\theoremstyle{definition}
\newtheorem{defi}[thm]{Definition}

\makeatletter
\newtheorem*{rep@theorem}{\rep@title}
\newcommand{\newreptheorem}[2]{%
\newenvironment{rep#1}[1]{%
 \def\rep@title{#2 \ref{##1}}%
 \begin{rep@theorem}}%
 {\end{rep@theorem}}}
\makeatother
\newreptheorem{thm}{Theorem}
\newreptheorem{lem}{Lemma}

\makeatletter
\def\symmetricbbox{%
  \pgfpointanchor{current bounding box}{south west}\edef\bbxmin{\the\pgf@x}\edef\bbymin{\the\pgf@y}%
  \pgfpointanchor{current bounding box}{north east}\edef\bbxmax{\the\pgf@x}\edef\bbymax{\the\pgf@y}%
  \pgfmathsetlength\dimen@{max(0pt-\bbxmin,\bbxmax-\panelshift)}%
  \useasboundingbox (0pt-\dimen@,\bbymin) rectangle (\panelshift+\dimen@,\bbymax);}
\makeatother

\usepackage{color}

\RequirePackage{extarrows}
\RequirePackage{mathtools}
\RequirePackage{textcomp}
\RequirePackage{wasysym}
\RequirePackage{tikz}
\RequirePackage{graphicx}
\usetikzlibrary{arrows}
\usetikzlibrary{calc}
\usepackage{esint}

\numberwithin{equation}{section}

\DeclareMathOperator{\supp}{\mathrm{supp}}

\DeclareMathOperator{\diam}{\mathrm{diam}}

\newcommand{\cA}{\mathcal A}
\newcommand{\cB}{\mathcal B}

\newcommand{\cD}{\mathcal D}

\newcommand{\cF}{\mathcal F}

\newcommand{\cN}{\mathcal N}

\newcommand{\cR}{\mathcal R}

\newcommand{\xx}{\underline{x}}
\newcommand{\yy}{\underline{y}}

\renewcommand{\leq}{\leqslant}
\renewcommand{\geq}{\geqslant}

\renewcommand{\bf}[1]{\mathbf{#1}}
\renewcommand{\rm}[1]{\mathrm{#1}}
\renewcommand{\cal}[1]{\mathcal{#1}}
\newcommand{\bb}[1]{\mathbb{#1}}

\newcommand{\jialun}[1]{{\color{olive}#1}}

\def\R{\bb R}

\def\Z{\bb Z}

\def\N{\bb N}

\def\calA{\cal A}

\def\Id{\rm{Id}}

\def\bi{\bf i}

\newcommand{\bfi}{\mathbf{i}}

\newcommand{\ptm}{\mathbb{P}TM}
\newcommand{\wloc}{W^{su}_{\mathrm{loc}}}

 \newcommand{\bfj}{{\mathbf{j}}}
\newcommand{\bfk}{{\mathbf{k}}}

\renewcommand{\epsilon}{\varepsilon}

\begin{document}
	\bibliographystyle{alpha}
	\title{Dimensions of surface repellers and attractors of non-linear planar IFSs}
\author{Jialun Li\thanks{School of Mathematical Sciences, Fudan University, No 220 Handan Road, Shanghai 200433, China. Email: \texttt{jialunli@fudan.edu.cn}.},\;
Wenyu Pan\thanks{Department of Mathematics, University of Toronto, 40 St George St, Toronto ON,	M5S 2E4, Canada. Email: \texttt{wenyup.pan@utoronto.ca}.},\;
Yao Tong\thanks{ School of Mathematical Sciences, Peking University, No 5 Yiheyuan Road, Beijing 100871, China. Email: \texttt{2401110024@stu.pku.edu.cn}.}\; and
Disheng Xu\thanks{School of Science, Great Bay University and Great bay institute for advanced study, Songshan Lake International Innovation Entrepreneurship Community A5, Dongguan, Guangdong, 523000, Email: \texttt{xudisheng@gbu.edu.cn}.}}
\date{}
 \maketitle
	\begin{abstract}

    We establish that for a $C^r$-generic surface repeller ($1 < r \leq \infty$), its Hausdorff and box dimensions are exactly the unique zero of the sub-additive topological pressure function. As an application of our framework, we show that the attractor of a uniformly non-conformal and weakly irreducible planar non-linear iterated function system (IFS) satisfying the strong separation condition (SSC) attains its expected Hausdorff and box dimensions. Furthermore, as a direct consequence of our generic repeller theorem, we deduce that this dimension formula also holds for a generic $C^r$ planar IFS satisfying the SSC. Using this approach, we also extend the dichotomy result for graphs of Weierstrass-type functions of Ren and Shen \cite{ren_dichotomy_2021} by weakening their real-analytic requirement to arbitrary $C^r$ regularity for $r > 1$.

    \end{abstract}
\setcounter{tocdepth}{2}
\tableofcontents

\section{Introduction}

\subsection{Background and main results}

Dimension is a primary invariant for quantifying the geometric complexity of fractals. Within the study of dynamically generated fractals, a cornerstone result is the work of Bowen \cite{bowen_hausdorff_1979} and Ruelle \cite{ruelle_repellers_1982} on conformal expanding maps: if $\Lambda$ is a repeller for a conformal expanding map $f$, its Hausdorff dimension is the unique root $s$ of Bowen's equation
\[
P(f|_{\Lambda},-s\log\|Df\|)=0,
\]
where $P(f|_\Lambda,\cdot)$ denotes the topological pressure (see also \cite{patterson_limit_1976, sullivan_density_1979}). This forged a deep connection between dimension theory and thermodynamic formalism.

\iffalse
The dimension theory of non-conformal repellers is substantially more complex. As remarked by Chen and Pesin \cite{chen_dimension_2010}, \emph{it has long been considered a daunting task to establish a Bowen-type formula for non-conformal repellers} (see also \cite{barreira_pesin_schmeling_1999}). The expected value of the dimension is the unique root $s_0$ of the sub-additive pressure equation
\[
P_{\mathrm{sub}}(s)=0,
\]
where $P_{\mathrm{sub}}$ is the sub-additive topological pressure of the singular value potentials; we defer the precise definition to \cref{subsubsec:sub-additive pressures}. Following Falconer's introduction of the sub-additive pressure \cite{falconer_bounded_1994}, who obtained the upper bound under a bunching condition, Zhang \cite{zhang_dynamical_1997} proved $\dim_{\mathrm H}\Lambda\leq s_0$ for every $C^1$ repeller, and Ban, Cao and Hu \cite{BanCaoHu} showed that the same bound holds for the upper box dimension:
\[
\dim_{\mathrm H}\Lambda\leq\overline{\dim}_{\mathrm B}\Lambda\leq s_0 .
\]
\fi

The dimension theory of non-conformal repellers is substantially more
complex. As remarked by Chen and Pesin \cite{chen_dimension_2010}, \emph{it has long been considered a daunting task to establish a Bowen-type formula for non-conformal repellers} (see also \cite{barreira_pesin_schmeling_1999}). The natural analog of Bowen's
equation is the sub-additive pressure equation
\[
P_{\mathrm{sub}}(s)=0,
\]
where $P_{\mathrm{sub}}$ is the sub-additive topological pressure of
the singular value potentials; it has a unique root, denoted $s_0$
throughout, and we defer the precise definitions to
\cref{subsubsec:sub-additive pressures}. The pressure
$P_{\mathrm{sub}}$ was introduced by Falconer
\cite{falconer_subadditive_1988}, who obtained the upper bound
$\dim_{\mathrm H}\Lambda\leq s_0$ under a bunching condition \cite{falconer_bounded_1994}; Zhang
\cite{zhang_dynamical_1997} proved it for every $C^1$ repeller, and
Ban, Cao and Hu \cite{BanCaoHu} extended it to the upper box
dimension:
\[
\dim_{\mathrm H}\Lambda\leq\overline{\dim}_{\mathrm B}\Lambda\leq s_0 .
\]
The expected value of the dimension is therefore $s_0$.

Toward the reverse inequality $\dim_{\mathrm H}\Lambda\geq s_0$, the
known results are the deterministic lower bound of Cao, Pesin, and
Zhao \cite{cao_dimension_2019}, given by the zero of a super-additive
pressure and, in general, strictly smaller than $s_0$; exact formulas,
due to Feng and Simon \cite{Feng_C1_2022}, are valid for almost every
parameter in certain families of non-linear repellers and iterated function
systems satisfying a transversality condition; and formulas for
restricted classes of systems
\cite{luzia_hausdorff_2006,ren_dichotomy_2021}. On the other hand,
reducible or integrable examples show that the upper bound need not
be attained; see, for example, \cite{ren_dichotomy_2021}. Such
dimension-drop phenomena are nevertheless regarded as exceptional:
the formula $\dim_{\mathrm H}\Lambda=s_0$ should hold for generic
systems.

In this paper, we confirm this expectation for surface repellers. We also prove the formula for every surface repeller satisfying two explicit dynamical conditions: dominated splitting and $su$-non-integrability.

To state the results, let $M$ be a smooth Riemannian manifold without
boundary, not necessarily compact, and let $f\colon M\to M$ be a
$C^{r}$ map, $r\in(1,\infty]$. Let $\Lambda:=\Lambda_f$ be a nonempty
compact subset of $M$. We call $\Lambda$ a \emph{repeller} for $f$ if
the following conditions hold:
\begin{enumerate}[label=(\roman*), ref=(\roman*)]
\item\label{item:isolated} there exists an open neighborhood $U$ of
$\Lambda$, called an \emph{isolating neighborhood}, such that
\[
\Lambda=\{x\in U : f^{n}(x)\in U \text{ for all } n\geq 0\};
\]
\item\label{item:expanding} there exists a constant $\kappa>1$ such that
\[
\|D_xf\,v\| \geq \kappa \|v\|
\qquad\text{for all } x\in\Lambda \text{ and } v\in T_xM;
\]
\item\label{item:nonwandering} every point of $\Lambda$ is
non-wandering for the restriction $f|_{\Lambda}$, that is,
$\Omega(f|_{\Lambda})=\Lambda$.
\end{enumerate}
When $M$ is two-dimensional, we speak of a \emph{surface repeller}.
Throughout, we exclude the trivial case that $(\Lambda_f,f)$ is a transitive repeller and $f|_{\Lambda_f}$ is injective; hence, $\Lambda_f$ is a single periodic orbit. We assume that the restriction of $f$ to each transitive
component of $\Lambda_f$ is not injective. 

Repellers are structurally stable (see \cref{subsec:continuation}): for $r>1$, there is a $C^r$-open neighborhood $U_f\subseteq C^r(M,M)$ of $f$ such that every $g\in U_f$ has a repeller $\Lambda_g$, the \emph{continuation} of $\Lambda_f$, with $(\Lambda_g,g)$ topologically conjugate to $(\Lambda_f,f)$; here $C^r(M,M)$ carries the topology of uniform $C^r$ convergence on compact sets, and a subset of $U_f$ is \emph{residual} if it contains a countable intersection of open dense subsets of $U_f$. For $g\in U_f$, we write $s_0(g)$ for the unique zero of the sub-additive pressure $P_{\mathrm{sub}}$ of $(\Lambda_g,g)$. 

\begin{thmx}\label{thm:A}
Let $r\in(1,\infty]$. For every $C^r$ surface repeller
$(\Lambda_f,f)$, there exist a $C^r$-open neighborhood
$U_f\subseteq C^r(M,M)$ of $f$ and a subset $V\subseteq U_f$, residual in the $C^r$ topology of $U_f$, such that for every $g\in V$
the continuation $(\Lambda_g,g)$ satisfies
\begin{equation}
\dim_{\mathrm H}\Lambda_g
=\dim_{\mathrm B}\Lambda_g
=s_0(g).
\end{equation}
\end{thmx}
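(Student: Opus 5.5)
The proof will go through the deterministic theorem announced in the introduction: every surface repeller with a dominated splitting and $su$-non-integrability satisfies $\dim_{\mathrm H}\Lambda=\dim_{\mathrm B}\Lambda=s_0$. The argument then splits according to whether the continuation $(\Lambda_g,g)$ carries a dominated splitting. Since $\dim_{\mathrm H}\le\overline{\dim}_{\mathrm B}\le s_0$ always holds (Zhang, Ban--Cao--Hu), only the lower bound $\dim_{\mathrm H}\Lambda_g\ge s_0(g)$ is at stake.

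\emph{Case 1: no dominated splitting on a transitive component.} Here I would use a Bochi--Viana/Ma\~n\'e type dichotomy for the derivative cocycle $Dg$ over $(\Lambda_g,g)$, which is conjugate to a subshift of finite type. The aim is to show that $C^r$-generically the absence of domination forces the two Lyapunov exponents of every equilibrium state of the singular value potential $\varphi^{s_0}$ to coincide. Two routes seem available.
\begin{itemize}
\item[(a)] Perturb $g$ near a periodic orbit so that some periodic derivative $Dg^{p}$ becomes conformal, i.e.\ has a complex or equal pair of eigenvalues. I expect that, generically, having such a periodic orbit is incompatible with having all exponents of the relevant measures apart without domination.
\item[(b)] Prove directly that the sub-additive pressure can be approximated by the pressure of an ``almost conformal'' subsystem, built from a horseshoe on which the cocycle is nearly conformal.
\end{itemize}
Either route reduces this case to the conformal lower bound via Bowen's formula, applied to subsystems, with an $\epsilon$-loss. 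Residuality comes from intersecting over rational $\epsilon$ the open dense sets of $g$ for which such a subsystem exists with pressure within $\epsilon$ of the sub-additive pressure.

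\emph{Case 2: dominated splitting $E^{s}\oplus E^{u}$ (weak/strong expansion).} Domination is $C^1$-open, so it persists on a neighborhood. I would show that $su$-non-integrability is $C^r$-open and dense among such $g$. Openness should follow from a quantitative formulation, namely a nonvanishing temporal-distance or angle function between strong unstable directions computed along pairs of inverse branches. Density would come from a local perturbation supported near one point of a periodic orbit, i.e.\ composing $g$ with a small $C^r$ rotation-like map that tilts the strong direction along one inverse branch but not the other. The integrable case is exactly the rigid situation in which these directions agree for all branches, and it can be broken by such a perturbation supported away from the other branches. With these two properties the deterministic theorem applies to every $g$ in the open dense set.

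\emph{Assembling the proof and the main obstacle.} The residual set is $V=\bigcap_n (O_n\cup O'_n)$, where $O_n$ handles the dominated part via $su$-non-integrability and $O'_n$ handles the non-dominated part with $1/n$-approximation; on $V$ the lower bound equals $s_0(g)$. When $\Lambda_g$ has several transitive components, I would use that $s_0$ is the maximum over components of the corresponding roots. I expect the main obstacle to be the deterministic theorem in the dominated, non-integrable case, which this plan treats as the core earlier ingredient. That step requires projecting the equilibrium measure of $\varphi^{s_0}$ along the strong direction and proving the projections have the full dimension $\min\{1,\cdot\}$, using Ledrappier--Young and a Fourier-decay or transversality argument from non-integrability. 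Within the generic argument itself, the hardest step is Case 1: controlling $s_0$ rather than a single measure, since the sub-additive pressure is only upper semicontinuous in $g$. I would first try route (a), using the conjugacy to fix a common symbolic model and upper semicontinuity to make the conditions open.
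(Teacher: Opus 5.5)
Your Case 2 is essentially the ingredient the paper uses. It has three parts: $C^1$-openness of $su$-non-integrability from continuity of the splitting, $C^r$-density from a small rotation of the derivative at a preimage of a periodic point lying off its orbit, and then the deterministic formula. Case 1, however, has a genuine gap, and it comes from splitting on whether $\Lambda_g$ is globally dominated.

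\textbf{Why Case 1 fails.}
\begin{enumerate}
\item Ma\~n\'e--Bochi--Viana dichotomies are $C^1$ phenomena. The perturbations that rotate Oseledets directions along long orbit segments are $C^1$-small but not $C^r$-small for $r>1$, so they cannot produce $C^r$-residual sets.
\item More seriously, the conclusion you aim for is false on open sets.
\begin{itemize}
\item Every average-conformal ergodic $\mu$ has $\dim_{\mathrm{LY}}(\mu,g)=2h_\mu(g)/\int\log|\det Dg|\,d\mu\leq s_{\det}(g)$. Here $s_{\det}$ is the root of $s\mapsto P(g|_{\Lambda_g},-\tfrac s2\log|\det Dg|)$. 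More generally, measures with $\lambda_1-\lambda_2\leq\eta$ have Lyapunov dimension at most $s_{\det}(g)+O(\eta)$.
\item On the other hand, $s_0>s_{\det}$ whenever the equilibrium state of $-\tfrac{s_{\det}}2\log|\det Dg|$ has distinct exponents and $0<s_{\det}<2$. This strict inequality persists under perturbation, since both roots depend continuously on $g$.
\item Non-domination can be forced robustly by a periodic point with non-real eigenvalues.
\end{itemize}
On such an open set, no SSE measure is average conformal, so no perturbation makes the relevant exponents coincide; this kills route (a). Likewise, no nearly conformal subsystem has dimension close to $s_0$, which kills route (b). So your sets $O'_n$ are not dense.
\end{enumerate}

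\textbf{The missing idea.} Split on the SSE measure $\mu$ of $g$, not on domination of $\Lambda_g$.
\begin{itemize}
\item If some SSE measure is average conformal, or has zero entropy, the Ledrappier--Young formula for repellers gives $\dim_{\mathrm H}\mu=\dim_{\mathrm{LY}}(\mu,g)=s_0(g)$ directly, with no perturbation. This class of maps need not be open. That is harmless: the paper takes $O(\epsilon)$ to be the interior of the set where $\dim_{\mathrm H}\Lambda_h\geq s_0(h)-3\epsilon$, and an open set missing the other class already lies in that set.
\item If instead $h_\mu(g)>0$ and $\lambda_1(\mu,g)>\lambda_2(\mu,g)$, the Cao--Pesin--Zhao (Katok-type) approximation gives a horseshoe $(Q_\epsilon,g^N)\subseteq\Lambda_g$. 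It carries its own dominated splitting, whatever $\Lambda_g$ does, and its measure of maximal entropy has Lyapunov dimension at least $s_0(g)-\epsilon$.
\item This horseshoe persists near $g$. Your Case 2 perturbation, applied to the cyclic repeller $\bigcup_{j<N}g^j(Q_\epsilon)$, makes its continuation $su$-non-integrable on an open dense set of nearby maps.
\item The horseshoe formula then gives $\dim_{\mathrm H}\Lambda_h\geq s_0(g)-2\epsilon$. Combined with $s_0(h)\leq s_0(g)+\epsilon$ for $h$ near $g$, this yields the bound.
\end{itemize}
That last step needs only upper semicontinuity of $s_0$, which is the direction you said is available. So semicontinuity is not the real obstacle. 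The real missing ingredient is the passage from a non-conformal SSE measure to a dominated sub-horseshoe, which lets your Case 2 machinery run on horseshoes only.
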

\noindent
\Cref{thm:A} is the set-dimension part of \eqref{eqn:dim formula generic} in \Cref{thm:generic_full}.

In the sequel, we summarize \Cref{thm:A} by saying that the dimension formula holds for \emph{$C^r$-generic surface repellers}.

Our second result is non-perturbative: its hypotheses are two explicit
dynamical conditions on the single map $f$. Since $f|_{\Lambda_f}$ is not
invertible, directions determined by the past of a point are indexed by
its \emph{pre-histories}, sequences $\hat x=(x_j)_{j\leq0}$ in $\Lambda_f$
with $x_0=x$ and $f(x_{j-1})=x_j$ (\cref{subsec:inverse limit}). A
\emph{dominated splitting} is a continuous $Df$-invariant splitting
\begin{equation}\label{eq:intro-splitting}
T_xM=E^{wu}(x)\oplus E^{su}(\hat x),
\qquad x\in\Lambda_f,\ \hat x\text{ a pre-history of }x,
\end{equation}
into a weak and a strong unstable direction, the expansion along $E^{su}$
uniformly dominating that along $E^{wu}$; the weak direction depends only
on $x$, while the strong one depends on the pre-history. The repeller is
\emph{$su$-non-integrable} if each transitive component contains a point
that admits two pre-histories with distinct strong directions
(\Cref{def:su-nonintegrable}). If this fails, the strong unstable
directions form a $Df$-invariant line field on some transitive component,
as for Bedford--McMullen carpets \cite{bedford1984,mcmullen1984}.
Non-integrability plays, for a single map, the role that irreducibility
plays for random matrix products \cite{furstenberg_noncommuting_1963}, and
conditions of this type appear in several settings; see for instance
\cite{benoistMesuresStationnairesFermes2011,brown_rodriguez_hertz_measure_rigidity_2017,katz_measure_rigidity_2023,eskin_potrie_zhang_geometric_2023,dolgopyat_decay_1998,tsujii_zhang_smooth_mixing_2023,gan_shi_rigidity_2020}.
If $(\Lambda_f,f)$ admits a dominated splitting, $su$-non-integrability of
the continuation holds on a $C^r$-open and dense subset of $U_f$
(\Cref{appendix:B}).

\begin{thmx}\label{thm:B}
Let $(\Lambda_f,f)$ be a $C^{1+\alpha}$ surface repeller that admits a dominated splitting and is $su$-non-integrable. Then
\[
\dim_{\mathrm H}\Lambda_f=\dim_{\mathrm B}\Lambda_f
=s_0 ,
\]
where $s_0$ is the unique zero of $P_{\mathrm{sub}}$.
\end{thmx}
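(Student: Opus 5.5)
Since $\dim_{\mathrm H}\Lambda_f\leq\overline{\dim}_{\mathrm B}\Lambda_f\leq s_0$ by Zhang and Ban--Cao--Hu, it suffices to prove $\dim_{\mathrm H}\Lambda_f\geq s_0$. We may restrict to a transitive component realizing the pressure zero, and then, by passing to a mixing iterate, assume $f|_{\Lambda_f}$ is topologically mixing. Nothing is lost: the singular value potentials are sub-additive, so $P_{\mathrm{sub}}$ of the whole repeller is the maximum over components, and iterating rescales the pressure without moving its zero. The plan is variational. We take an equilibrium state $\mu$ for the sub-additive potential $\varphi^{s_0}$ and show $\dim_{\mathrm H}\mu=s_0$.

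\textbf{Step 1: reduce to a projection statement.} With a dominated splitting on a surface, $\varphi^{s}$ is almost additive, because the singular values are comparable to the expansion rates along $E^{su}$ and $E^{wu}$. Hence $\mu$ is a Gibbs-type ergodic measure, and $h_\mu=\chi_{wu}s_0$ when $s_0\le1$, and $h_\mu=\chi_{su}(s_0-1)+\chi_{wu}$ when $s_0>1$. Here $\chi_{wu}<\chi_{su}$ are the Lyapunov exponents of $\mu$, and Step 2 uses this in the form $h_\mu\le\chi_{su}$ when $s_0>1$. Next I would lift $\mu$ to the natural extension and pass to inverse branches. There the local geometry at small scales is governed by contracting maps whose strongly contracted direction is $E^{su}(\hat x)$. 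At scale $r$, a Bowen cylinder looks like a thin strip of length $e^{-n\chi_{wu}}$ and width $e^{-n\chi_{su}}$, aligned with $E^{wu}$ transported to the fine scale. A Ledrappier--Young type formula (Feng--Hu / Bárány--Käenmäki style for the non-invertible, pre-history setting) then gives
\[
\dim\mu=\frac{h_\mu}{\chi_{su}}+\Bigl(1-\frac{\chi_{wu}}{\chi_{su}}\Bigr)\dim\pi_*\mu_{\hat x},
\]
where $\pi$ denotes projection along strong stable foliation images, i.e.\ along the curves tangent to $E^{su}$ transported backward, and $\mu_{\hat x}$ are the conditional measures on the relevant transversals. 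The target $s_0$ is attained exactly when the projected measure has dimension $\min(1,h_\mu/\chi_{wu})$.

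\textbf{Step 2: the projections have full dimension. This is the main obstacle.} Following Hochman's inverse theorem strategy as adapted by Bárány--Hochman--Rapaport to planar self-affine sets, I would consider the family of measures $\mu_{\hat x}$ projected onto the lines $E^{su}(\hat x)^\perp$, where $\hat x$ varies over pre-histories. Write $\alpha$ for the (almost surely constant) dimension of these projections and suppose $\alpha<\min(1,h_\mu/\chi_{wu})$. Magnifying the dynamics, with linearization via $C^{1+\alpha}$ bounded distortion, shows that scenery measures are, up to smooth changes of coordinates that become affine in the limit, convolutions of projected pieces. Entropy-increase for convolutions (Hochman's inverse theorem) then forces these components to be nearly atomic or nearly uniform at most scales. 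The $su$-non-integrability supplies two pre-histories with distinct strong directions, so the projection directions are genuinely spread, i.e.\ the conditional measure on directions is non-atomic. I expect this to yield a contradiction with $\alpha$ being strictly below the maximum, in the same way irreducibility is used in Bárány--Hochman--Rapaport. The delicate points are uniform linearization of the nonlinear inverse branches along the limit (holonomies are $C^{1+}$ because of domination in dimension two), and an entropy-porosity argument showing that the direction measure is not supported on a single point on a positive-measure set. That last claim is where non-integrability must be upgraded, via continuity and mixing, from one point to almost every $\hat x$.

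\textbf{Step 3: conclude.} Full-dimensional projections give $\dim_{\mathrm H}\mu=s_0$, so $\dim_{\mathrm H}\Lambda_f\ge s_0$, and combining with the upper bounds yields equality of Hausdorff and box dimensions with $s_0$.
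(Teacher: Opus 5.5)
Your Step 1 matches the paper's reduction. With $\gamma_1,\gamma_2$ the fiber and transverse dimensions, $\dim\mu=h_\mu/\chi_{su}+(1-\chi_{wu}/\chi_{su})\gamma_2$, and $\dim_{\mathrm H}\mu=\dim_{\mathrm{LY}}(\mu,f)$ iff $\gamma_1=0$ or $\gamma_2=1$.

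\textbf{The gap is Step 2.} It carries the whole theorem, but it only names Hochman's inverse theorem and the Bárány--Hochman--Rapaport scheme and then ``expects'' a contradiction. That scheme relies on features of the self-affine case that are unavailable for $C^{1+\alpha}$ maps:
\begin{itemize}
\item $\pi_V\mu$ is exactly a convex combination of rescaled copies of projections $\pi_{V'}\mu$;
\item the directions are driven by a linear cocycle with a stationary Furstenberg measure;
\item one proves porosity-type regularity of the projections.
\end{itemize}
``Affine in the limit'' does not replace these. At scale $r$ the departure from affinity (curvature of leaves, variation of $E^{su}$) is only H\"older-small in $r$, while the effect you must detect comes from a small angle between two strong unstable directions. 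The two have to be played against each other quantitatively, and the paper does exactly this:
\begin{itemize}
\item it picks two pasts $\bfi,\bfj$ at angle $\tau\approx2^{-k}$ and works in a $2^{-(C+1)k}\times2^{-Ck}$ box around a piece of the $\bfi$-leaf, with $C\alpha>10$;
\item it counts at resolution $2^{-(C+2)k}$;
\item it proves that at this resolution $\pi_\bfj$ of a small piece is a translate of its $\pi_\bfi$-image, and that $\pi_\bfj$ acts on the $\bfi$-leaf as an affine map scaled by the angle, so the $\bfj$-projection contains a sumset;
\item it replaces projection porosity by Wu's sum-set theorem, whose input is dyadic spreading of the leaf conditionals on the window of scales between $2^{-Ck}$ and $2^{-(C+1)k}$, derived from $\gamma_1>0$ via invariance of these conditionals and the maximal ergodic theorem.
\end{itemize}
Your proposal contains none of these steps or a substitute for them.

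\textbf{The second missing idea is how non-integrability enters, and which measure you can work with.} What is needed is not that a direction measure be non-atomic. It is that at a single base point there are two pasts, both in a prescribed set of good pasts of measure $>1-\epsilon$ (exact-dimension, density and spreading estimates with uniform thresholds), making a positive but arbitrarily small angle. The paper obtains this in \Cref{prop: small angle} from the product structure $\hat\mu=\mu\times\nu_-$ of the natural extension of a Bernoulli measure on a horseshoe. There the law of the past is the same Bernoulli measure over every point, and one and the same measure $\mu_0$ is projected along both laminations. The Gibbs SSE measure on a general mixing repeller coded by a subshift of finite type has no such structure.

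This is why the paper never proves $\dim_{\mathrm H}\mu_{\mathrm{SSE}}=s_0$; its remark after \Cref{thm:dominated} treats the existence of an ergodic measure of full dimension as a hypothesis. Instead the paper:
\begin{itemize}
\item proves $\dim_{\mathrm H}\mu=\dim_{\mathrm{LY}}(\mu,f)$ only for Bernoulli measures on $su$-non-integrable horseshoes (\Cref{thm:base});
\item approximates the SSE measure of such a horseshoe by $f$-averages of Bernoulli measures for $f^n$ (\Cref{prop: irre-horseshoe});
\item approximates $\Lambda_f$ by Cao--Pesin--Zhao horseshoes;
\item when such a horseshoe is $su$-integrable although $\Lambda_f$ is not, enlarges it by one extra inverse branch.
\end{itemize}
Your route aims at a strictly stronger statement without a mechanism for it.

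Separately, your claim that $h_\mu\le\chi_{su}$ when $s_0>1$ is false in general. An absolutely continuous invariant measure of an expanding map of $\mathbb T^2$ has $h_\mu=\chi_{su}+\chi_{wu}$; only $h_\mu\le\chi_{su}+\chi_{wu}$ holds.
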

\noindent
\Cref{thm:B} is the set-dimension part of \eqref{eqn:dim formula surface repeller} in \Cref{thm:dominated}.

We obtain the lower bound $\dim_{\mathrm H}\Lambda\geq s_0$ from ergodic measures $\mu$ whose Lyapunov dimension $\dim_{\mathrm{LY}}(\mu,f)$ (see \eqref{eq:intro-LY} below) is close to $s_0$; the difficulty is to show that $\dim_{\mathrm H}\mu$ is also close to $s_0$. Indeed, the Lyapunov dimension is an upper bound for $\dim_{\mathrm H}\mu$ \cite[Thm.~1.4]{feng_simon_dimension_part_i_2023}, but in the non-conformal setting, it need not be attained; for example, in the $su$-integrable cases. Establishing the equality $\dim_{\mathrm H}\mu=\dim_{\mathrm{LY}}(\mu,f)$ for a special class of invariant measures is the main step of this paper (\Cref{thm:base}), and we now describe this class.

A repeller is a \emph{horseshoe repeller}
if $(\Lambda_f,f)$ is topologically conjugate, via a homeomorphism
$h\colon\Sigma_m^+\to\Lambda_f$, to the full one-sided shift on $m\geq2$
symbols (\Cref{def:horseshoe repeller}). A measure $\mu$ on $\Lambda_f$ is \emph{$f$-Bernoulli} if
$\mu=h_*\nu_{\mathbf p}$ for a Bernoulli product measure $\nu_{\mathbf p}$
with weights $p_i>0$ (\Cref{def:Bernoulli measure}); such a measure is ergodic and has full support.
For an ergodic $\mu$, let $\lambda_1(\mu,f)\geq\lambda_2(\mu,f)>0$ be its
Lyapunov exponents (\Cref{thm:oseledets-repeller}) and $h_\mu(f)$ its entropy. The \emph{Lyapunov dimension}
$\dim_{\mathrm{LY}}(\mu,f)$, defined in general in \eqref{eqn:def LY}, is, on a surface, the value obtained by charging the entropy
first to the weakly expanding direction:
\begin{equation}\label{eq:intro-LY}
\dim_{\mathrm{LY}}(\mu,f)
=\begin{cases}
h_\mu(f)/\lambda_2(\mu,f), & h_\mu(f)\leq\lambda_2(\mu,f),\\[2pt]
1+\bigl(h_\mu(f)-\lambda_2(\mu,f)\bigr)/\lambda_1(\mu,f), & h_\mu(f)>\lambda_2(\mu,f);
\end{cases}
\end{equation}
by Ruelle's inequality $\dim_{\mathrm{LY}}(\mu,f)\leq2$.

\begin{thmx}\label{thm:base}
Let $(\Lambda_f,f)$ be a $C^{1+\alpha}$ horseshoe surface repeller that admits
a dominated splitting and is $su$-non-integrable. Then every $f$-Bernoulli
measure $\mu$ on $\Lambda_f$ satisfies
\[
\dim_{\mathrm H}\mu=\dim_{\mathrm{LY}}(\mu,f).
\]
\end{thmx}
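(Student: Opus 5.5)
The upper bound $\dim_{\mathrm H}\mu\leq\dim_{\mathrm{LY}}(\mu,f)$ is already available from \cite[Thm.~1.4]{feng_simon_dimension_part_i_2023}, so only the lower bound needs proof. I will follow the Ledrappier--Young / projection strategy used for planar self-affine measures, adapted to the non-linear repeller through the inverse limit. Pass to the natural extension $\hat\mu$ on the space of pre-histories. Along backward orbits the inverse branches of $f$ are contractions, and by the dominated splitting and the $C^{1+\alpha}$ regularity they have bounded distortion. Hence a cylinder $h([i_1\dots i_n])$ is, up to uniform constants, an ellipse-like set: its long axis has length about $e^{-n\lambda_2}$ and lies in the direction $E^{wu}$, and its short axis has length about $e^{-n\lambda_1}$ and lies in the direction $E^{su}(\hat x)$ determined by the pre-history $\hat x$ (concretely, by the word read backwards).

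\textbf{Step 1: Ledrappier--Young formula.} I first establish the formula for $\mu$:
\[
\dim_{\mathrm H}\mu=\frac{h_\mu(f)-h^{\perp}}{\lambda_1}+\frac{h^{\perp}}{\lambda_2},\qquad h^{\perp}\leq\lambda_2\,\dim\pi_{\theta}\mu_{\hat x}.
\]
Here $h^{\perp}$ is the entropy transverse to the strong fibres. More precisely, $\mu$ is exactly dimensional, and its dimension decomposes as the dimension of the projection of $\mu$ along the strong unstable foliation (the fibres tangent to $E^{su}$) plus the conditional dimension on those fibres. The decomposition should follow from Feng--Hu type arguments for non-conformal repellers, or from Feng--Simon. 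With this in hand, $\dim_{\mathrm H}\mu=\dim_{\mathrm{LY}}$ reduces to a projection statement: for $\hat\mu$-a.e.\ $\hat x$, the pushforward of $\mu$ along the local strong foliation $W^{su}_{\mathrm{loc}}$ onto a transversal has dimension $\min\{1,h_\mu/\lambda_2\}$. Indeed, if this projected dimension equals $\min(1,h/\lambda_2)$, the Ledrappier--Young identity yields exactly the formula \eqref{eq:intro-LY}.

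\textbf{Step 2: dimension of the projections.} The projections form a family of non-linear "projections" indexed by the strong directions $E^{su}(\hat x)$. Write $\nu$ for the distribution of these directions on $\mathbb P^1$ (or rather of the foliations). I plan to adapt Hochman's inverse theorem for entropy of convolutions, in the form used by Bárány--Hochman--Rapaport for self-affine measures. Magnifying a small piece of $\mu$ along a typical orbit gives, at each scale, projections that look like a convolution of a linearized piece of the projected measure with a scaled copy of the projected measure. The scenery along the backward orbit carries linear maps that approximate the derivatives, and $C^{1+\alpha}$ regularity makes the non-linear error negligible at small scales. Suppose the projected dimension $\beta$ were strictly less than $\min(1,h/\lambda_2)$. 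Then the inverse theorem forces the pieces to be saturated in the projection direction at a positive proportion of scales, and this yields an entropy increase contradicting the self-similarity of the dimension, so $\beta$ attains the maximum. For this mechanism I need the projection directions to be non-degenerate: the measure $\nu$ of strong directions must not be a Dirac mass and must have positive dimension.

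\textbf{Main obstacle.} The step I expect to be hardest is converting $su$-non-integrability into this non-degeneracy. The difficulty is uniform non-concentration: for most scales, $\nu$ should give small mass to every small interval. I would first show, by a Hölder continuity argument for the map $\hat x\mapsto E^{su}(\hat x)$, that non-integrability forces two distinct directions arising from pre-histories of one point. I would then propagate this separation to all scales, either through the Bernoulli structure (which gives full support and quasi-independence of past words) or through a dimension gap for the stationary-like measure $\nu$, obtained by a Furstenberg-type argument. The second difficulty is controlling the non-linear errors in the scenery flow uniformly, and I plan to handle it with bounded distortion on cylinders.
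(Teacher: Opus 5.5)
Your Step 1 is the paper's reduction. With $\dim\mu=\gamma_1+\gamma_2$ and $h_\mu(f)=\lambda_1\gamma_1+\lambda_2\gamma_2$, the theorem is equivalent to ``$\gamma_1>0\Rightarrow\gamma_2=1$'', which is your projection statement. The gap is Step 2, which carries the whole proof and which you only assert.

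\textbf{Step 2 does not close.} Hochman's inverse theorem says that if a convolution gains no entropy, then at most scales one factor is nearly uniform or the other is nearly atomic. A projected measure of dimension $\beta<1$ can be nearly uniform on a proportion $\approx\beta$ of scales, so ``saturated at a positive proportion of scales'' contradicts nothing. To close the argument you need one of two extra inputs:
\begin{itemize}
\item porosity-type control of the projected measures, as in B\'ar\'any--Hochman--Rapaport, which relies on the linear structure; or
\item one factor that is non-atomic on a $(1-\epsilon)$-proportion of scales.
\end{itemize}
Your sketch supplies neither. In your convolution both factors are copies of the projected measure, and you never explain how the excess $\gamma_1>0$ produces entropy in either of them.

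\textbf{The non-linear errors are not automatically small.} The convolution structure you invoke comes from the affine group in the linear case. Here a cylinder has length $\approx e^{-n\lambda_2}$ and width $\approx e^{-n\lambda_1}$. A priori its deviation from an affine ellipse is only controlled by a H\"older error of order $e^{-n\lambda_2(1+\alpha')}$, which need not be small compared with the width when $\lambda_1/\lambda_2$ is large. So ``non-linear errors are negligible at small scales'' requires a localization that you do not provide.

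\textbf{The missing idea is Wu's moving-fiber mechanism, which the paper uses.} Assume $\gamma_1>0$ and $\gamma_2<1$.
\begin{itemize}
\item Pick a point $y$ and two pasts $\bfi,\bfj$ whose strong directions at $y$ make an angle $\approx 2^{-k}$.
\item Take a box of size $2^{-(C+1)k}\times 2^{-Ck}$ around a piece of the $\bfi$-leaf through $y$, with $C\alpha>10$ so that H\"older errors fall below the resolution, and count at resolution $2^{-(C+2)k}$.
\item The $\bfj$-projection of the box is then a sumset $\bigcup_t(t+B_t)$. Here the $B_t$ are pieces of the $\bfi$-projection $B$, with $\cN(B)\approx 2^{\gamma_2 k}$, and the $t$ are points of the fiber measure on the $\bfi$-leaf.
\item The fiber measure is dyadic-spreading on a $(1-\epsilon)$-proportion of scales. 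The paper derives this from $\gamma_1>0$, using invariance of the conditional measures under inverse branches and the maximal ergodic theorem.
\item This spreading is exactly what lets Wu's sumset theorem apply with only a covering bound on $B$. It gives $\cN\bigl(\pi_\bfj(\cdot)\bigr)\gtrsim\cN(B)^{1+\delta}$, contradicting $\dim\pi_\bfj\mu=\gamma_2$.
\end{itemize}

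\textbf{Your ``main obstacle'' is misplaced.} Positive dimension, or uniform non-concentration, of the law of $E^{su}(x,\bfi)$ is neither established in your plan nor needed. The proof only uses that every set of pasts of $\nu_-$-measure $>1-\epsilon$ contains two pasts whose strong directions at a given point make a positive but arbitrarily small angle. This follows by a short argument from non-integrability, H\"older dependence of $E^{su}$ on the past, and the Bernoulli product structure.
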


The idea of the proof of \Cref{thm:B} from \Cref{thm:base} is as follows. The root $s_0$ of the sub-additive pressure equation \eqref{eqn:sub-additive potential} has the variational
characterization
\begin{equation}\label{eq:intro-s0}
s_0=\sup\{\dim_{\mathrm{LY}}(\mu,f):\mu\in\mathcal M_e(\Lambda_f,f)\},
\end{equation}
see \eqref{eqn:s0 variational}; in particular, the lower bound $\dim_{\mathrm H}\Lambda_f\geq s_0$
follows from \Cref{thm:base} once one produces Bernoulli measures $\mu$ on horseshoe sub-repellers satisfying the hypotheses of \Cref{thm:base} with
$\dim_{\mathrm{LY}}(\mu,f)$ close to $s_0$.

\subsection{Applications}

\subsubsection{Dimension of non-linear non-conformal iterated function systems}

The dimension of attractors of iterated function systems (IFSs) is the counterpart of this problem in fractal geometry; see e.g.\ \cite{hutchinson_fractals_1981,bedford1984,mcmullen1984,bedfordurbanski1990,gatzouras1992,hu1998,falconer2003,manning2007,jordan_pollicott_simon_2007,falconer2013,dassimmons2017,barany_triangular_2019,ffl2021,jurga2022}. For self-affine attractors, the expected value is again the zero of a sub-additive pressure \cite{falconer_hausdorff_1988}, and in the plane, this has been proved under algebraic hypotheses on the linear parts: Hochman's inverse theorem for entropy \cite{hochman_self-similar_2014} led to the formula of B\'ar\'any, Hochman and Rapaport \cite{barany_hochman_rapaport_2019}, via the variational principle of Morris and Shmerkin \cite{morris_equality_2016}, and to the advances of Hochman and Rapaport \cite{hochman_hausdorff_2021}; see \cite{lpx,jlpx} for random walks on projective spaces. These methods use the finite-dimensional Lie group containing the maps,
but for $C^r$ non-linear IFSs, $1<r\leq\infty$, there is no such group, and exact formulas have been limited to the special systems and transversal families mentioned above.

A planar $C^r$, $r>1$, contracting IFS is a finite collection $\Phi=\{\phi_i\}_{i=1}^m$ of uniformly contracting $C^r$ diffeomorphisms defined on a bounded open domain $U\subset\mathbb R^2$ such that
$$\overline{\bigcup_{i=1}^m\phi_i(U)}\,\subset\, U.$$ By Hutchinson's theorem \cite{hutchinson_fractals_1981}, such a system admits a unique compact subset $\Lambda_\Phi\subset U$, the \emph{attractor} of the IFS, satisfying
\[
\Lambda_\Phi=\bigcup_{i=1}^m\phi_i(\Lambda_\Phi).
\]
Given a probability vector $\mathbf p$, we denote by $\mu_{\Phi,\mathbf p}$ the stationary measure associated with $(\Phi,\mathbf p)$, namely the unique Borel probability measure on $\Lambda_\Phi$ satisfying
\[
\mu_{\Phi,\mathbf p}=\sum_{i=1}^m p_i\cdot(\phi_i)_*\mu_{\Phi,\mathbf p},
\]
where $(\phi_i)_*$ denotes the push-forward by $\phi_i$.
For the simplicity of exposition, we restrict attention to IFSs satisfying the Strong Separation Condition (SSC), i.e.
\[
\phi_i(\Lambda_\Phi)\cap\phi_j(\Lambda_\Phi)=\emptyset\quad\text{for all } i\neq j .
\]
Under the SSC, the inverse branches $\phi_i^{-1}$ define an expanding map
$f_\Phi$ near $\Lambda_\Phi$, with $\Lambda_\Phi$ as a horseshoe repeller, and
the coding sequence of the IFS plays the role of the pre-history. Under this
dictionary, a dominated splitting for $f_\Phi$ is equivalent to
\emph{uniform non-conformality} of $\Phi$ (the singular values of
$D\Phi^n_{\mathbf i}$ separate exponentially, uniformly in the word
$\mathbf i$), and $su$-non-integrability is
equivalent to \emph{weak irreducibility} (the most contracted direction at
some point depends on the coding sequence); see \cref{subsec:IFS} for the precise definitions. For self-affine systems, these are the non-compactness and irreducibility conditions on the linear
parts familiar from \cite{barany_hochman_rapaport_2019,hochman_hausdorff_2021}.

\begin{thmx}\label{thm:ifs-non-perturbative}
Let $\Phi=\{\phi_i\}_{i=1}^m$ be a $C^{r}$, $r>1$, planar contracting IFS satisfying the SSC. Assume that $\Phi$ is uniformly non-conformal and weakly irreducible. Then
\[
\dim_{\mathrm H}\Lambda_\Phi=\dim_{\mathrm B}\Lambda_\Phi=s_0(\Phi),
\]
where $s_0(\Phi)$ is the unique zero of the sub-additive singular-value pressure $P_{\Phi}$ defined in \eqref{eqn:pressure IFS}. Moreover, for every probability vector $\mathbf p$,
\[
\dim_{\mathrm H}\mu_{\Phi,\mathbf p}=\dim_{\mathrm{LY}}(\mu_{\Phi,\mathbf p},\Phi),
\]
where $\dim_{\mathrm{LY}}(\mu,\Phi)$ denotes the Lyapunov dimension of $\mu$ with respect to $\Phi$ (\cref{subsec:IFS}). 
\end{thmx}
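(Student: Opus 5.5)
The plan is to reduce \Cref{thm:ifs-non-perturbative} to \Cref{thm:B} and \Cref{thm:base} through the dictionary sketched before the statement. By the SSC, the sets $\phi_i(\Lambda_\Phi)$ are pairwise disjoint compact sets. We therefore choose pairwise disjoint open neighbourhoods $U_i\supset\phi_i(\Lambda_\Phi)$ with $U_i\subset\phi_i(U)$, and define $f_\Phi:=\phi_i^{-1}$ on $U_i$. This is a $C^r$ map on the open set $\bigcup_i U_i$, which we view as a surface $M$ without boundary. Since the $\phi_i$ are uniformly contracting, $f_\Phi$ is uniformly expanding on $\Lambda_\Phi$. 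The coding map $h\colon\Sigma_m^+\to\Lambda_\Phi$, $h(\mathbf i)=\lim_n\phi_{i_1}\circ\cdots\circ\phi_{i_n}(x)$, conjugates the shift to $f_\Phi|_{\Lambda_\Phi}$; this uses the SSC for injectivity. Taking $\bigcup U_i$ small gives an isolating neighbourhood, and non-wandering holds because the system is a full shift. Hence $(\Lambda_\Phi,f_\Phi)$ is a $C^{r}$ (so $C^{1+\alpha}$ with $\alpha=\min(r-1,1)$) horseshoe surface repeller in the sense of \Cref{def:horseshoe repeller}. It is transitive and, for $m\ge2$, not injective.

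Next, we translate the hypotheses. Pre-histories $\hat x$ of $x=h(\mathbf i)$ correspond to left-infinite words $\dots j_2j_1$. The derivative of $f_\Phi^n$ along the backward branch is the inverse of $D\phi_{j_1\cdots j_n}$ at the corresponding point. Hence singular value gaps of the compositions $D\Phi^n_{\mathbf j}$ become gaps of $Df^n_\Phi$. Uniform exponential separation of singular values, uniform in the word, yields by the standard cone-field or singular-direction argument a continuous invariant dominated splitting. In this splitting, $E^{su}(\hat x)$ is the limit of the images of the least-expanded directions, that is, the most contracted direction of $D\phi_{j_1\cdots j_n}$ pushed forward. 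Conversely, $E^{wu}(x)$ depends only on the forward coding. Weak irreducibility says this most contracted direction depends on the coding sequence at some point, which is exactly two pre-histories with distinct $E^{su}$, i.e. \Cref{def:su-nonintegrable}. The precise equivalences are recorded in \cref{subsec:IFS}, so this step is essentially bookkeeping. The one check needed is that the limiting directions are Hölder and continuous, which follows from domination.

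With this in place, \Cref{thm:B} gives $\dim_{\mathrm H}\Lambda_\Phi=\dim_{\mathrm B}\Lambda_\Phi=s_0(f_\Phi)$. It remains to identify $s_0(f_\Phi)=s_0(\Phi)$. The singular values of $Df^n_\Phi$ at $x$ are reciprocals of those of $D\phi_{\mathbf i|n}$ at $f^n_\Phi x$. Bounded distortion, which holds because $r>1$ and the maps contract, makes the sub-additive potentials in \eqref{eqn:sub-additive potential} and \eqref{eqn:pressure IFS} comparable up to uniform multiplicative constants. Hence the pressures coincide.

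For the measure statement, $\mu_{\Phi,\mathbf p}=h_*\nu_{\mathbf p}$ is an $f_\Phi$-Bernoulli measure. If all $p_i>0$, \Cref{thm:base} gives $\dim_{\mathrm H}\mu=\dim_{\mathrm{LY}}(\mu,f_\Phi)$. The Lyapunov exponents of $f_\Phi$ are negatives of those of $\Phi$, and the entropy equals $-\sum p_i\log p_i$, so this Lyapunov dimension agrees with $\dim_{\mathrm{LY}}(\mu,\Phi)$. If some $p_i=0$, we pass to the sub-IFS on the indices with $p_i>0$. This still satisfies the SSC and uniform non-conformality. The main obstacle I expect is that weak irreducibility may fail for this sub-IFS. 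In that case, or when only one symbol remains (a point mass, where the statement is trivial), we need a separate argument. My first attempt would be to note that an $su$-integrable sub-IFS still has $\dim_{\mathrm H}\mu\le\dim_{\mathrm{LY}}$ and to check whether the intended definition of the probability vector requires $p_i>0$. Otherwise, I would restrict the statement to positive vectors, as the hypothesis of \Cref{thm:base} does.
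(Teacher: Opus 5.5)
Your proposal is correct and follows essentially the same route as the paper. You realize $(\Lambda_\Phi,f_\Phi)$ as a horseshoe repeller via the inverse branches, turn uniform non-conformality into a dominated splitting and weak irreducibility into $su$-non-integrability, then apply \Cref{thm:B} (in the form \Cref{thm:dominated}) to $\Lambda_\Phi$ and \Cref{thm:base} to $\mu_{\Phi,\mathbf p}$.

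The obstacle you anticipate with vanishing weights does not arise. In the paper a probability vector has all $p_i>0$ by definition (\Cref{def:Bernoulli measure}), so $\mu_{\Phi,\mathbf p}$ is always a fully supported $f_\Phi$-Bernoulli measure and no sub-IFS is needed.
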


We equip $C^r(U,\mathbb R^2)^m$ with the product of the $C^r$ topologies of uniform convergence on compact subsets of $U$. If $\Phi$ is a $C^r$ contracting IFS on $U$ satisfying the SSC, then every $\Psi\in C^r(U,\mathbb R^2)^m$ sufficiently close to $\Phi$ is a contracting IFS on a neighborhood of $\Lambda_\Phi$ satisfying the SSC, and its attractor $\Lambda_\Psi$ depends continuously on $\Psi$.

\begin{thmx}\label{thm:ifs}
Let $r\in(1,\infty]$. For every $C^r$ planar contracting IFS $\Phi$ satisfying the SSC, there exist a $C^r$-open neighborhood $U_\Phi\subseteq C^r(U,\mathbb R^2)^m$ of $\Phi$ and a subset $V\subseteq U_\Phi$, residual in the $C^r$ topology of $U_\Phi$, such that every $\Psi\in V$ satisfies
\[
\dim_{\mathrm H}\Lambda_\Psi=\dim_{\mathrm B}\Lambda_\Psi=s_0(\Psi).
\]
\end{thmx}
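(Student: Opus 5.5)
The plan is to deduce \Cref{thm:ifs} from \Cref{thm:A}, using the dictionary between contracting IFSs satisfying the SSC and horseshoe repellers described before \Cref{thm:ifs-non-perturbative}. The intermediate claim is that, under this dictionary, a residual set of repellers pulls back to a residual set of IFSs.

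\textbf{Step 1: the dynamical model.} Given $\Phi$ with the SSC, choose pairwise disjoint open neighborhoods $W_i$ of the pieces $\phi_i(\Lambda_\Phi)$ with $\overline{W_i}\subset\phi_i(U)$. Define $f_\Phi:=\phi_i^{-1}$ on $W_i$. Set $M:=\bigcup_i W_i$, an open subset of $\mathbb R^2$ and hence a smooth surface without boundary. Then $f_\Phi\colon M\to\mathbb R^2$ is $C^r$ and expanding near $\Lambda_\Phi$. Moreover $\Lambda_\Phi$ is a horseshoe repeller conjugate to the full shift on $m$ symbols: it is non-wandering, $f_\Phi$ is not injective on it since $m\geq2$, and it is isolated. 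The map formally lands in $\mathbb R^2$ rather than $M$. I would fix this either by noting that only a neighbourhood of $\Lambda$ matters, or by composing with a cutoff so that $f_\Phi$ becomes a self-map of a surface without changing it near $\Lambda_\Phi$. Next, I would check that the singular-value pressure $P_\Phi$ of the IFS coincides with $P_{\mathrm{sub}}$ of $(\Lambda_\Phi,f_\Phi)$. Since $Df^n_\Phi$ at a point of $\Lambda_\Phi$ is the inverse of $D\phi_{\mathbf i}$ at the corresponding point, the singular values invert. Because $\Lambda_\Phi$ is compact, bounded distortion for $C^r$ maps with $r>1$ should then give equal pressures, so $s_0(\Phi)=s_0(f_\Phi)$.

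\textbf{Step 2: transfer of topology.} Consider the map $T\colon\Psi\mapsto f_\Psi$ on a small neighborhood $U_\Phi$, using the same sets $W_i$. This is legitimate for $\Psi$ near $\Phi$ because $\Lambda_\Psi$ stays inside $\bigcup W_i$ and $\phi_i(U)$ varies continuously. Inversion of diffeomorphisms is continuous in the $C^r$ topology on compact sets, so $T$ is continuous. For every $\Psi$, the set $\Lambda_\Psi$ is the continuation of $\Lambda_\Phi$ for $f_\Psi$.

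The key point is that $T$ is an open map onto a neighborhood of $f_\Phi$ in $C^r(M,\mathbb R^2)$. Any $g$ that is $C^r$-close to $f_\Phi$ on a slightly smaller compact set has $g|_{W_i}$ invertible and contracting-inverse. So $\psi_i:=(g|_{W_i})^{-1}$ is defined on a neighborhood of $\Lambda$, and it must be extended to $U$ while staying close to $\phi_i$. I would do this by extending $\psi_i-\phi_i$ with a bump function. The extension changes $\Psi$, but it does not change $f_\Psi$ near $\Lambda$, which is all that matters. Hence the preimage under $T$ of an open dense set is open and dense. More precisely, density pulls back because $T$ is continuous and open up to this modification outside $\bigcup W_i$.

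\textbf{Step 3: conclusion.} Let $V'$ be the residual set given by \Cref{thm:A} for $f_\Phi$. Then $T^{-1}(V')$ contains a countable intersection of open dense sets. For every $\Psi$ in it, the dimension formula for $\Lambda_{f_\Psi}=\Lambda_\Psi$, combined with $s_0(f_\Psi)=s_0(\Psi)$, gives the result.

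\textbf{Main obstacle.} The delicate step is Step 2. One must reconcile the precise topology ($C^r$ on compact sets of all of $M$, versus $U$) and make sure that perturbations used in \Cref{thm:A} can be realized as IFS perturbations. If the perturbations in \Cref{thm:A} are local, supported near $\Lambda$, the openness argument above suffices. Otherwise I would restate genericity on the germ of $f$ near $\Lambda$. A fallback is to bypass \Cref{thm:A} entirely. One could invoke \Cref{thm:ifs-non-perturbative} directly, provided that uniform non-conformality or conformality can be handled generically. The conformal case is Bowen's formula, and for non-conformal systems weak irreducibility is open and dense by \Cref{appendix:B}. However, the dichotomy between the two cases is not itself open, so this route is less clean than the reduction above.
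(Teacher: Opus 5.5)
Your proposal is correct and follows the paper's route: you realize $\Psi$ as the inverse-branch horseshoe repeller $f_\Psi$, use $s_0(\Psi)=s_0(f_\Psi)$, and pull back the residual set of \Cref{thm:A} (i.e.\ \Cref{thm:generic_full}) along the continuous correspondence $\Psi\mapsto f_\Psi$. The paper compresses your Step~2 into the single remark that ``this identification is local in the $C^r$ topology''. That remark presupposes the germ-locality you flag as the main obstacle, and the proof of \Cref{thm:generic_full} supports it: the sets $W(\epsilon)$ involve the map only near the repeller, and the perturbation in \Cref{appendix:B} is supported near a point of $\Lambda$.
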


\subsubsection{Dichotomy for Weierstrass-type functions}
A central topic in fractal geometry is the Hausdorff dimension of the graphs of nowhere differentiable functions, among which Weierstrass-type functions are the most extensively studied. For a $1$-periodic $\phi\colon\R\to\R$, $b\in\N_{\geq2}$ and
$\lambda\in(1/b,1)$, let
\[
W^\phi_{\lambda,b}(x):=\sum_{n\geq0}\lambda^n\phi(b^nx), \qquad x\in\R.
\]
For real analytic $\phi$, Ren and Shen \cite{ren_dichotomy_2021} proved
that either $W^\phi_{\lambda,b}$ is real analytic, or the Hausdorff
dimension of its graph equals $2+\log_b\lambda$. This dichotomy contains a result of Shen \cite{Shen18}, which resolved a long-standing conjecture for trigonometric $\phi$. Using \Cref{thm:base}, we obtain the finite-smoothness analog, recovering their
theorem at $r=\omega$:

\begin{thmx}\label{thm:ren-shen-c}
Let $r\in(1,\infty]\cup\{\omega\}$ and let $\phi$ be a $1$-periodic
$C^r$ function. For every integer $b\geq2$ and every
$\lambda\in(1/b,1)$, either the graph of $W^\phi_{\lambda,b}$ has
Hausdorff and box dimensions equal to $2+\log_b\lambda$, or the
function $W^\phi_{\lambda,b}$ is $C^r$. Moreover, the second alternative only happens for finitely many $\lambda$ when $\phi$ is non-constant.
\end{thmx}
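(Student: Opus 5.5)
We plan to reduce \Cref{thm:ren-shen-c} to \Cref{thm:base}, following the Ren--Shen strategy with the analytic input replaced by $su$-non-integrability. Write $\gamma:=-\log_b\lambda\in(0,1)$, so the expected dimension is $2-\gamma$. The graph of $W=W^\phi_{\lambda,b}$ on $[0,1)$ is the repeller of the skew product
\[
F(x,y)=\Bigl(bx \bmod 1,\ \frac{y-\phi(x)}{\lambda}\Bigr)
\]
on $\mathbb R/\mathbb Z\times\mathbb R$, since $W(bx)=(W(x)-\phi(x))/\lambda$. This $F$ is $C^r$ and expanding, with rates $b$ horizontally and $\lambda^{-1}<b$ vertically. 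It is conjugate to the full shift on $b$ symbols via the base expansion, so it is a horseshoe surface repeller. The vertical direction is an invariant line field and is the weak direction $E^{wu}$, depending only on $x$. The strong direction $E^{su}(\hat x)$ along a prehistory $\hat x=(x_j)$ is spanned by $(1,S(\hat x))$, where
\[
S(\hat x)=\sum_{n\ge1}(\lambda b)^{-n}\phi'(x_{-n}).
\]
This series converges because $\lambda b>1$, so we have a dominated splitting. $S$ is the formal derivative of $W$ computed from the past. For the Lebesgue (uniform Bernoulli) measure $\mu$ on the graph, $h_\mu=\log b$ and the exponents are $\log b>\log\lambda^{-1}$. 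Since $h>\lambda_2$, the Lyapunov dimension is $1+(\log b-\log\lambda^{-1})/\log b=2-\gamma$.

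\emph{Dichotomy.} If $F$ is $su$-non-integrable, \Cref{thm:base} gives $\dim_{\mathrm H}\mu=2-\gamma$, so $\dim_{\mathrm H}\mathrm{graph}\ge 2-\gamma$. The upper bound $\overline{\dim}_{\mathrm B}\le 2-\gamma$ is classical, since $W$ is $\gamma$-Hölder. If $F$ is $su$-integrable (the repeller is transitive), then $S(\hat x)=:\Psi(x)$ depends only on $x$ and defines a continuous invariant line field. Invariance reads
\[
\Psi(bx)=\frac{\Psi(x)+\phi'(x)}{\lambda b}.
\]
Here $\Psi$ is the continuous periodic solution, so $\Psi=\sum_{n\ge0}\lambda^n b^n\phi'(b^n x)$ formally. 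More directly, $G(x):=\int_0^x\Psi$ minus a suitable linear term satisfies the same functional equation as $W$, up to a constant.

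\emph{Regularity.} Using uniqueness of bounded solutions of $u(bx)=(u(x)-\phi(x))/\lambda$, we conclude $W$ equals this $C^1$ function. Bootstrapping, $\Psi$ solves an equation with contracting inverse branches $x\mapsto(x+k)/b$: $\Psi(x)=\lambda b\,\Psi(bx)-\phi'(x)$, equivalently $\Psi(y)=\sum_k$-type fixed point along the inverse direction. This shows $\Psi\in C^{r-1}$, and hence $W\in C^r$. This regularity bootstrap is where the main obstacle lies. The operator $T\Psi(x)=(\Psi((x+k)/b)\cdots)$ must be a contraction on $C^{r-1}$ (Hölder) spaces. Derivatives of order $s$ gain a factor $b^{-s}$ against the loss $(\lambda b)^{-1}\cdot$, and one must check that $\lambda^{-1}b^{-(r-1)}\cdot b^{-1}\cdots$ stays below $1$. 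If it does not for large $r$, I would instead use the standard argument that a continuous invariant section of a $C^r$ fiber-contracting bundle map is $C^r$ (the $C^r$ section theorem), applied to the inverse branches. The one-dimensional base makes this manageable.

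\emph{Finiteness of exceptional $\lambda$.} When $W\in C^r$ (indeed $C^1$), differentiate to get $W'(x)=\sum\lambda^n b^n\phi'(b^n x)$ in a summable sense. Expand in Fourier modes: $\hat W(k)$ for $b\nmid k$ determines the relation $\sum_{n}(\lambda b)^n\ldots$. Ren--Shen's argument shows that $C^1$ forces $\sum_{n\ge0}(\lambda b)^{n}\hat\phi(kb^n)\cdot k b^n$-type identities. Equivalently, $W$ is $C^1$ iff $\phi=\psi\circ b-\lambda\cdot$… i.e.\ $\phi(x)=g(x)-\lambda g(bx)$ plus a constant for a $C^1$ periodic $g$. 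Writing this as a power series identity $\sum_n\lambda^{n}\hat\phi(kb^n)=0$ for each $k$ with $b\nmid k$, each such function of $\lambda$ is analytic on $|\lambda|<1$. It is nonzero for some $k$ when $\phi$ is non-constant, since $\hat\phi(kb^n)$ are not all zero. Its zeros in $[1/b,1-\epsilon]$ are finite. To handle accumulation at $\lambda\to1$, I would use the decay $\hat\phi(m)=O(m^{-r})$ and a growth estimate as in Ren--Shen; I expect this to be the second delicate point.
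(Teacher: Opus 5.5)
\textbf{Main gap.} Your non-integrable case invokes \Cref{thm:base}, but $(\Gamma_W,F)$ is not a horseshoe repeller. $\Gamma_W$ is the graph of a continuous function on $\mathbb T$, hence a connected closed curve, whereas $\Sigma_b^+$ is totally disconnected. The base-$b$ coding is only a finite-to-one semi-conjugacy ($b$-adic rationals have two codings), not the homeomorphism required by \Cref{def:horseshoe repeller}.

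The proof of \Cref{thm:base} genuinely uses that structure:
clopen cylinders $\Lambda_{n_0,q}$ cut out by open sets $U_q$;
the subordinate partition $\mathcal A^{su}$ built from cylinders;
the identification of $\hat\Lambda_f$ with $\Lambda_f\times\Sigma^-$ via globally defined inverse branches.
So it cannot simply be applied to Lebesgue measure on the graph.

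Nor can you just pass to a separated sub-horseshoe of some $F^N$ carrying a measure of Lyapunov dimension close to $2+\log_b\lambda$. Such a sub-horseshoe may be $su$-integrable even though $(\Gamma_W,F)$ is not.

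The fix is what the paper does: apply \Cref{thm:dominated}. It covers transitive $C^{1+\alpha}$ surface repellers with a dominated splitting that are $su$-non-integrable. Its proof handles exactly these two issues: Cao--Pesin--Zhao horseshoes, plus adding one inverse branch to restore non-integrability. It gives $\dim_{\mathrm H}\Gamma_W=\dim_{\mathrm B}\Gamma_W=s_0$. Moreover $s_0=2+\log_b\lambda$, since every ergodic measure has exponents $\log b$ and $-\log\lambda$ and the topological entropy is $\log b$. Your H\"older upper bound is then unnecessary, though correct.

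\textbf{Finiteness.} Your Fourier identity has the wrong exponent. For $b\nmid k$,
$\hat W(kb^N)=\sum_{n=0}^N\lambda^n\hat\phi(kb^{N-n})=\lambda^N\sum_{j=0}^N\lambda^{-j}\hat\phi(kb^j)$.
Put $\Theta_k(\mu):=\sum_{j\geq0}\hat\phi(kb^j)\mu^j$ and use $|\hat\phi(m)|\lesssim|m|^{-r}$. The tail then contributes $O(b^{-rN})$. So $W\in C^1$ (i.e.\ $\hat W(m)=o(1/|m|)$), together with $\lambda b>1$, forces $\Theta_k(\lambda^{-1})=0$.

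Your condition $\sum_n\lambda^n\hat\phi(kb^n)=0$ is false already for $\phi=g-\lambda g\circ b$ with $g(x)=\cos 2\pi x$, where $W=g$ is smooth.

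With the correct exponent, your ``second delicate point'' disappears. The variable $\mu=\lambda^{-1}$ ranges over $(1,b)$, while $\Theta_k$ has radius of convergence at least $b^r>b$. Hence any $\Theta_k\not\equiv0$ has only finitely many zeros on $[1,b]$, and such a $k$ exists when $\phi$ is non-constant. Corrected this way, your argument is a self-contained alternative to the paper, which only defers to the final argument of Ren--Shen for finiteness.

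\textbf{Regularity.} This is not the obstacle you fear. The contraction worry comes from working with the forward, expanding form of the equation.

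Along a fixed past, $\Psi=-\sum_{n\geq1}(\lambda b)^{-n}\phi'\circ\tau^n$, with $\tau^n$ an inverse branch of $x\mapsto b^nx$; note the minus sign, which you dropped. Since $\|D^s(\phi'\circ\tau^n)\|_{C^0}\leq b^{-ns}\|D^s\phi'\|_{C^0}$ (likewise for the H\"older seminorm), higher derivatives only gain. The series therefore converges in $C^{r-1}$, exactly as in the paper.

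Integrating the invariance relation gives $\int_{\mathbb T}\Psi=\lambda b\int_{\mathbb T}\Psi$, so $\int_{\mathbb T}\Psi=0$ and no linear correction is needed. For $r=\omega$ the paper cites Ren--Shen; your series argument also works on a complex strip, since $z\mapsto(z+j)/b$ maps a strip into a thinner one.
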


\iffalse
For an integer $b\geq2$, a parameter $\lambda\in(1/b,1)$ and a $1$-periodic function $\phi$, consider
\begin{equation}
W^\phi_{\lambda,b}(x)=\sum_{n=0}^\infty\lambda^n\phi(b^nx).
\end{equation}
Ren and Shen \cite{ren_dichotomy_2021} proved that for real-analytic $\phi$, either the graph of $W^\phi_{\lambda,b}$ has the predicted Hausdorff dimension $2+\log_b\lambda>1$, or $W^\phi_{\lambda,b}$ is itself real-analytic. This dichotomy contains the celebrated result of Shen \cite{Shen18}, which resolved a long-standing conjecture for trigonometric $\phi$. Using \Cref{thm:base}, we extend the dichotomy from the real-analytic category to $C^r$ regularity, $r>1$.

\begin{thmx}\label{thm:ren-shen-c}
Let $r\in(1,\infty]\cup\{\omega\}$ and let $\phi$ be a $1$-periodic $C^r$ function. For every integer $b\geq2$ and every $\lambda\in(1/b,1)$, either the graph of $W^\phi_{\lambda,b}$ has Hausdorff and box dimension $2+\log_b\lambda$, or the function $W^\phi_{\lambda,b}$ is $C^r$.
\end{thmx}
\fi

\subsection{Strategy of the proofs}
\iffalse

The lower bound on $\dim_{\mathrm H}\mu$ is obtained from sumset estimates---Hochman's inverse
theorem \cite{hochman_self-similar_2014} and the Balog--Szemer\'edi--Gowers
theorem in the form used by Wu \cite{wuProjectionTheoremsCountably2025} --- rather than from transversality and %
entropy growth arguments
in a finite-dimensional group. Such estimates have been used in fractal
geometry \jialun{\st{and in the dynamics of algebraic and analytic systems}}
\cite{katztao2001,bourgain_2010,bourgaindyatlov2017,shmerkin2023,orponen_shmerkin_jams2026,wang_sticky_2026,benardhezhang2026,khalil2026,lpx,jlpx};
to our knowledge, this is their first use of the dimension of invariant measures for non-linear, non-conformal $C^{1+\alpha}$ systems.
\fi
A key feature of the proof of \Cref{thm:base} is the use of techniques
from additive combinatorics \cite{katztao2001,bourgain_2010,hochman_self-similar_2014,wuProjectionTheoremsCountably2025} in the study of a smooth dynamical system. While additive combinatorics has been extensively used in fractal geometry, see, for example,  \cite{bourgaindyatlov2017,shmerkin2023,orponen_shmerkin_jams2026, wang_sticky_2026,benardhezhang2026,khalil2026}; to the best of our knowledge, this is the first application to smooth dynamics, where no finite-dimensional Lie group structure is available. 

More precisely, establishing the inequality $\dim_{\mathrm H}\mu\geq\dim_{\mathrm{LY}}(\mu,f)$ requires a synthesis of additive combinatorics, smooth dynamics, and the moving-fiber approach developed by Wu \cite{wuProjectionTheoremsCountably2025} for the planar linear setting.
Wu's approach builds upon Hochman's work but handles the interplay between the base and fiber measures differently: in its applications to self-similar and self-affine measures, Hochman's method uses the finite-dimensionality of the acting group or semigroup to run the entropy argument, whereas Wu introduces the spreading property of fiber measures, a non-concentration property obtained from the local structure of the measure, which provides the input for the entropy-increase argument without representing the measure as a convolution over a group of maps. This property offers a more flexible framework, though it
relies on a transversality condition that can be challenging to verify in general. 
The rest of this subsection explains how the approach is carried out for repellers, where the projections are along curved leaves, and every estimate is local.

\iffalse
More precisely, establishing the inequality $\dim_{\mathrm{H}}\mu\geq \dim_{\mathrm{LY}}\mu$ requires an intricate synthesis of additive combinatorics, smooth
dynamics, and the moving-fiber approach developed by Wu \cite{wuProjectionTheoremsCountably2025} for the planar linear setting.
Wu’s approach builds upon Hochman’s works but handles the interplay between the base and
fiber measures differently. While Hochman’s method essentially requires the acting group or
semigroup to be finite-dimensional in order to apply an entropy argument, Wu introduces the
spreading property of fiber measures. This property offers a more flexible framework, though it
relies on a transversality condition that can be challenging to verify in general. 
In this paper, we develop this approach in the setting of repellers by extracting and heavily exploiting local geometric and dynamical information.
\fi

\subsubsection*{A Ledrappier--Young reformulation}
Let $(\Lambda_f,f)$ and $\mu$ be as in \Cref{thm:base}; domination gives
$\lambda_1(\mu,f)>\lambda_2(\mu,f)$. A pre-history of $x\in\Lambda_f$ is
determined by $x$ and a \emph{past} $\bfi\in\Sigma^-$, a left-infinite
sequence of symbols (see \eqref{eqn:pasts}); we write $E^{su}_\bfi(x)$ for the corresponding strong
unstable direction, $W_\bfi(x)$ for the local strong unstable leaf through
$x$ tangent to it, and $\nu_-$ for the Bernoulli measure induced by $\mu$ on
$\Sigma^-$. For a fixed past, the leaves $W_\bfi(x)$
form a local foliation, along which $\mu$ has a Ledrappier-Young
decomposition: there are
$\gamma_1,\gamma_2\in[0,1]$, the \emph{fiber dimension} and the
\emph{transverse dimension} of $\mu$, such that the conditional measures of
$\mu$ on the leaves have dimension $\gamma_1$, the projection of $\mu$ along
the leaves onto a transversal has dimension $\gamma_2$, independently of the
past and of the transversal, and $\mu$ is exact
dimensional, with exact dimension $\dim\mu=\dim_{\mathrm H}\mu$ satisfying
\begin{equation}\label{eq:intro-LYformula}
\dim\mu=\gamma_1+\gamma_2,
\qquad
h_\mu(f)=\lambda_1(\mu,f)\,\gamma_1+\lambda_2(\mu,f)\,\gamma_2
\end{equation}
(\cref{lem:exact dim tran}).
Comparing \eqref{eq:intro-LY} with \eqref{eq:intro-LYformula} gives
\begin{equation}\label{eq:intro-dichotomy}
\dim_{\mathrm H}\mu=\dim_{\mathrm{LY}}(\mu,f)
\quad\Longleftrightarrow\quad
\gamma_1=0\ \text{ or }\ \gamma_2=1 ,
\end{equation}
and \Cref{thm:base} is equivalent to the following statement, which is
the technical heart of the paper.

\begin{thm}\label{thm:main-two}
Let $(\Lambda_f,f)$ be a $C^{1+\alpha}$ horseshoe surface repeller that admits
a dominated splitting and is $su$-non-integrable, and let $\mu$ be an
$f$-Bernoulli measure on $\Lambda_f$. If $\gamma_1>0$, then $\gamma_2=1$.
\end{thm}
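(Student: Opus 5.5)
We argue by contradiction in the spirit of Hochman's inverse theorem as adapted by Wu \cite{wuProjectionTheoremsCountably2025}: assume $\gamma_1>0$ and $\gamma_2<1$, and derive an entropy increase for the transverse projection that contradicts the definition of $\gamma_2$.

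\textbf{Self-similar decomposition.} We first fix a transversal $T$ to the strong unstable foliation, which we may take to be a local $E^{wu}$-curve, and write $\pi_\bfi$ for the holonomy projection along the leaves $W_\bfi(\cdot)$ onto $T$. By \cref{lem:exact dim tran}, $(\pi_\bfi)_*\mu$ is exact dimensional of dimension $\gamma_2$ for $\nu_-$-a.e.\ $\bfi$. Using the Bernoulli structure, $\mu$ on a cylinder $[\bfk]$ of length $n$ is the push-forward of $\mu$ by the inverse branch $f^{-n}_{\bfk}$. Projecting along leaves, we obtain a decomposition of $(\pi_\bfi)_*\mu$, at scales $\approx e^{-n\lambda_2}$, as an average of pieces. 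Each piece is a smooth image of $(\pi_{\bfi\bfk})_*\mu$ under a map which, after rescaling by $e^{n\lambda_2}$, is $C^{1+\alpha}$-close to affine. Bounded distortion and the H\"older continuity of the leaves give this control.

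\textbf{Entropy increase from spreading.} The key point is that at an intermediate scale, the fiber conditional measure of dimension $\gamma_1>0$ lies along a strong unstable leaf. After applying $f^{-n}$, the strong direction $E^{su}_{\bfi'}$ varies with the past $\bfi'$. The $su$-non-integrability condition (\Cref{def:su-nonintegrable}), combined with domination, should yield a quantitative non-concentration statement. Namely, for a positive proportion of pasts, the directions $E^{su}_{\bfi'}(x)$ are not concentrated near any single line, at all small scales. This is a ``spreading'' property of fiber measures in Wu's sense. It should follow from non-integrability by a compactness/continuity argument: a distinct pair of pre-histories at one point propagates, via the Bernoulli measure and bounded distortion, to a uniform positive-measure separation at every scale.

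With spreading in hand, we linearize locally. At scale $\rho$ the projection of a fiber piece onto $T$ looks like a sum set $A+B$: here $A$ is a component of $(\pi)_*\mu$, and $B$ is a rescaled copy of a $\gamma_1$-dimensional fiber measure whose direction relative to $T$ varies. Hochman's inverse theorem \cite{hochman_self-similar_2014} then applies, together with the Balog--Szemer\'edi--Gowers form used by Wu, because $\gamma_2<1$ means $A$ is not full-dimensional and $B$ has positive entropy across many scales. Their combination therefore has entropy strictly larger than $\gamma_2+\delta$ at many scales. Averaging over the self-similar decomposition gives $\dim(\pi_\bfi)_*\mu\ge\gamma_2+\delta'$, which is a contradiction.

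\textbf{Main obstacle.} The hardest step will be establishing the spreading/non-concentration property uniformly across scales, and controlling the nonlinearity error in the linearization. The curvature of the leaves and the $C^{1+\alpha}$ distortion must be negligible relative to the entropy gain. This forces us to choose scales $\rho$ with $\rho^{\alpha}\ll\delta$. I would first try to prove a multiscale version: with probability bounded below over $\nu_-$, the direction of $E^{su}$ at scale $e^{-n\lambda_1}$ differs from any fixed line by $\gtrsim e^{-\epsilon n}$, derived from non-integrability through the cocycle $Df^{-n}$ acting on projective space.
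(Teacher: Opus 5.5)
There is a genuine gap, and it sits at the step that is supposed to produce the sumset.

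\textbf{Where the sumset fails.} You never specify a projection under which a fiber measure has a nondegenerate image.
\begin{enumerate}
\item Under $\pi_\bfi$, the conditional measures on the $\bfi$-leaves collapse to points.
\item In your decomposition $(\pi_\bfi)_*\mu=\sum_{\bfk}p_\bfk\,(\pi_\bfi\circ f^{-n}_\bfk)_*\mu$, each piece is a rescaled \emph{transverse} measure $(\pi_{\bfi\bfk})_*\mu$, not a fiber measure.
\item The $n$-cylinders stacked along one $\bfi$-leaf all project to intervals of length $\approx e^{-n\lambda_2}$ through the same point, so they pile up on top of each other.
\item The same decomposition exists verbatim in the $su$-integrable case (e.g.\ Bedford--McMullen carpets), where $\gamma_1>0$ and $\gamma_2<1$ genuinely coexist.
\end{enumerate}
So ``averaging gives $\dim(\pi_\bfi)_*\mu\geq\gamma_2+\delta'$'' has nothing to act on.

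\textbf{The missing idea: compare two projections.}
\begin{enumerate}
\item Choose a base point $y$ by Fubini and apply \Cref{prop: small angle} to the set of pasts that are good at $y$. This gives two good pasts $\bfi,\bfj$ with $0<\angle_y(\bfi,\bfj)\approx 2^{-k}$.
\item Take an $\bfi$-aligned box of height $2^{-Ck}$ and width $2^{-(C+1)k}$, and count at resolution $2^{-(C+2)k}$.
\item On each square $\cR_t$ of side $\approx 2^{-(C+1)k}$ around a fiber point $z_t$, $\pi_\bfj$ is a translate of $\pi_\bfi$ up to $\approx 2^{-(C+2)k}$ (\Cref{rem:geom-input-commutator}).
\item The translate $\pi_\bfj(z_t)$ is affine in the leaf parameter up to $\approx 2^{-(C+3)k}$ (\Cref{rem:geom-input-nonlinear-displacement}).
\item Hence $\pi_\bfj$ of the box contains the moving-fiber sumset $\bigcup_t(t+B_t)$. \Cref{cor:final-growth-addcom} then gives $\cN\gtrsim 2^{k\gamma_2(1+\delta/2)}$. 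This contradicts the bound $2^{k(\gamma_2+(2C+3)\epsilon_2)}$ coming from the $\gamma_2$-dimensionality of $\pi_\bfj\mu$ at a good point.
\end{enumerate}

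\textbf{Why the angle must be small.} This is essential, not cosmetic. It is what makes the projection $B_t$ of a \emph{square} already of size $\approx 2^{k\gamma_2}$, comparable to $B$, using only ball lower bounds and Ledrappier--Young rectangle upper bounds. It also fixes the calibration of the non-linear errors: curvature and H\"older errors over height $2^{-Ck}$ must fall below $2^{-(C+2)k}$, which forces $C\alpha$ to be large. Your condition ``$\rho^\alpha\ll\delta$'' is not the right calibration.

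\textbf{Spreading is attached to the wrong object.} In Wu's sense (\Cref{def-spreading}), spreading is multiscale non-concentration of the fiber measure $\mu_{x,\calA^{su}_\bfi}$ itself. It follows from $\gamma_1>0$ alone:
\begin{enumerate}
\item Positive partial entropy and the invariance of fiber measures (\Cref{lem:invariance}) give a mass drop at one scale.
\item The maximal ergodic theorem turns this into a frequency of good scales on a set of large measure (\Cref{lem:multi-scale}).
\item Bounded distortion transfers it from atoms of $F^n\calA^{su}$ to balls, and then to dyadic cells (\Cref{prop:ball}, \Cref{lem:ball-dyadic-spreading}).
\end{enumerate}
Non-integrability plays no role in this step.

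\textbf{Your use of non-integrability points the wrong way.} You propose to show that $E^{su}_{\bfi'}(x)$ is typically far from any fixed line. That produces pairs of pasts at non-small angle, which is the opposite of what is needed.
\begin{enumerate}
\item At an angle bounded below, the two projections of a small square differ by a shear of the square's own size.
\item So pieces project to translates only if they are thin strips lying across the leaves.
\item Lower bounds on $\cN(B_t)$ would then need mass lower bounds for such strips, i.e.\ information on the projection of $\mu$ along the weak direction. Ledrappier--Young theory does not give this.
\item The only other route, a genuine planar convolution structure of components, is exactly what is unavailable without a group.
\end{enumerate}
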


The proof transports the
$\gamma_1$-dimensional structure along the leaves of one past onto the
transversal by projecting along the leaves of a second past that makes a
small angle with the first, and it shows that a transverse measure of dimension
less than one cannot absorb it. We describe this first in the linear model
and then in our setting.
\vspace{5mm}

\emph{
In what follows, the linear model, the dictionary, and the two non-linear difficulties (localization and spreading) are meant to be read first. The remaining parts are condensed summaries of \cref{sec: sec5} and
\cref{sec: Bernoulli horseshoe approx},
and are best read as a guide alongside those sections.}

\subsubsection*{The linear model}

The mechanism originates in Hochman's inverse theorem for the entropy of
convolutions \cite{hochman_self-similar_2014} and in the projection theorem of
Wu \cite{wuProjectionTheoremsCountably2025} for CP-distributions on $\mathbb R^2$;
the following is a dimension-counting version of the argument in
\cite{wuProjectionTheoremsCountably2025}. Let $\mu$ be a planar measure
and let $\pi_1,\pi_2$ be two linear projections whose kernels make an angle
$2^{-k}$. Suppose the conditional measures of $\mu$ on the fibers of $\pi_1$
have positive dimension and $\pi_1\mu$ has dimension $\beta<1$. Take a fiber
$\ell$ of $\pi_1$ and the $2^{-k}$-neighborhood $R$ of a unit segment of
$\ell$. Recall that $\cN_r(S)$ is the minimum covering number of a set $S$ in $\mathbb{R}$ by balls of radius $r$. Because $\pi_1\mu$ has dimension $\beta$, the projection
$B=\pi_1(R\cap\supp\mu)$ satisfies $\cN_{2^{-2k}}(B)\approx 2^{\beta k}$. Under $\pi_2$: $\cN_{2^{-2k}}(\pi_2(R\cap\supp\mu))\approx \cN_{2^{-2k}}(\bigcup_{a}(a+B_a))$ of translates of
pieces $B_a\subseteq B$, one for each $2^{-k}$-cell $a$ of the
fiber. The $\pi_2$-projection of $R\cap \supp \mu$ is thus a
``sumset'' of the transverse set $B$ with the fiber.

The positivity of the fiber dimension forces the fiber measure to be \emph{spreading}, {{a quantitative non-concentration property}} defined in \cite[Def.~4.5]{wuProjectionTheoremsCountably2025} (see also \Cref{def-spreading}). The sumset theorem of
\cite[Thm.~4.7]{wuProjectionTheoremsCountably2025}  (see also \Cref{thm:growth})
states that adding a spreading set to a set $B$ that is not filling the
line ($\beta<1$) must increase the covering number by a definite power:
$\cN_{2^{-2k}}\bigl(\bigcup_a(a+B_a)\bigr)\geq \cN_{2^{-2k}}(B)^{1+\delta}$.
Consequently, $\pi_2\mu$ has, at this scale and around this fiber, a larger
dimension than $\pi_1\mu$, which results in a contradiction.

\begin{figure}
  \hspace{-1cm}
  \makebox[\linewidth][c]{%
    \resizebox{1.25\linewidth}{!}{%
\providecommand{\bfi}{\mathbf{i}}
\providecommand{\bfj}{\mathbf{j}}
\providecommand{\supp}{\operatorname{supp}}
\def\panelshift{11.6cm}   %
\begin{tikzpicture}[font=\small,>=Stealth]
\begin{scope}
\node[anchor=base west] at (-1.4,5.5) {\textbf{(a) linear model}};
\draw[fill=gray!8] (-0.45,0) rectangle (0.45,4);
\draw[very thick] (0,-0.05) -- (0,4.4);
\node[anchor=south] at (0,4.42) {$\ell=\pi_1^{-1}(z)$};
\foreach \y in {0.8,1.6,2.4,3.2} \draw[gray!60] (-0.45,\y)--(0.45,\y);
\begin{scope}
\clip (-0.45,0) rectangle (0.45,4);
\fill[blue!12]  (-0.45,0)   rectangle (0.45,0.8);
\fill[red!12]   (-0.45,1.6) rectangle (0.45,2.4);
\fill[green!12] (-0.45,3.2) rectangle (0.45,4);
\end{scope}
\foreach \p in {(-0.32,0.12),(-0.10,0.55),(0.10,0.25),(0.22,0.62),(0.38,0.18)}
  \fill[blue!70!black] \p circle (1.1pt);
\foreach \p in {(-0.38,2.15),(-0.15,1.75),(0.18,1.92),(0.03,2.28),(0.35,2.10)}
  \fill[red!70!black] \p circle (1.1pt);
\foreach \p in {(-0.30,3.85),(-0.12,3.35),(0.28,3.42),(0.10,3.72),(0.40,3.90)}
  \fill[green!50!black] \p circle (1.1pt);
\draw[decorate,decoration={brace,amplitude=4pt}] (-0.78,0)--(-0.78,4)
  node[midway,left=5pt] {$1$};
\draw[decorate,decoration={brace,mirror,amplitude=3pt}] (0.62,1.6)--(0.62,2.4)
  node[midway,right=4pt] {$a$\ ($2^{-k}$-cell)};
\draw[decorate,decoration={brace,mirror,amplitude=3pt}] (-0.45,-0.18)--(0.45,-0.18)
  node[midway,below=3pt] {$2^{-k}$};
\node at (1.15,3.9) {$R$};
\draw[->] (-1.1,-1.35)--(2.7,-1.35) node[right] {$\pi_1$};
\draw[dashed,gray] (-0.32,0.06)--(-0.32,-1.3);
\draw[dashed,gray] (0.35,2.04)--(0.35,-1.3);
\draw[line width=2.2pt,blue!70!black]  (-0.40,-1.62)--(-0.12,-1.62) (0.05,-1.62)--(0.40,-1.62);
\draw[line width=2.2pt,red!70!black]   (-0.35,-1.84)--(-0.05,-1.84) (0.12,-1.84)--(0.38,-1.84);
\draw[line width=2.2pt,green!50!black] (-0.42,-2.06)--(-0.20,-2.06) (0.00,-2.06)--(0.33,-2.06);
\node[anchor=west] at (1.10,-1.80) {$B_a=\pi_1(a\cap R\cap\supp\mu)$};
\draw[decorate,decoration={brace,mirror,amplitude=4pt}] (-0.45,-2.24)--(0.45,-2.24)
  node[midway,below=4pt] {$B=\bigcup_a B_a$:\ the $B_a$ overlap};
\draw[->] (-1.1,-3.35)--(2.7,-3.35) node[right] {$\pi_2$};
\draw[dashed,orange!80!black] (0.10,0.25)--(0.66,-3.30);
\draw[dashed,orange!80!black] (0.18,1.92)--(1.01,-3.30);
\draw[dashed,orange!80!black] (0.28,3.42)--(1.35,-3.30);
\node[anchor=west] at (0.62,3.35) {\textcolor{orange!80!black}{\scriptsize parallel fibers of $\pi_2$, at angle $2^{-k}$ with $\ell$}};
\draw[line width=2.2pt,blue!70!black]  (0.26,-3.62)--(0.54,-3.62) (0.71,-3.62)--(1.06,-3.62);
\draw[line width=2.2pt,red!70!black]   (0.66,-3.84)--(0.96,-3.84) (1.13,-3.84)--(1.39,-3.84);
\draw[line width=2.2pt,green!50!black] (0.93,-4.06)--(1.15,-4.06) (1.35,-4.06)--(1.68,-4.06);
\draw[decorate,decoration={brace,mirror,amplitude=4pt}] (0.24,-4.24)--(1.70,-4.24)
  node[midway,below=4pt] {$\bigcup_a(a+B_a)$:\ each $B_a$ translated by its height $\times\,2^{-k}$};
\end{scope}
\begin{scope}[xshift=\panelshift]
\node[anchor=base west] at (-1.6,5.5) {\begin{tabular}[t]{@{}l@{}}\textbf{(b) non-linear model}\\ \textbf{(straightening coordinates $\sigma_\bfi$)}\end{tabular}};
\draw[gray!60,fill=gray!6] (-0.75,0) rectangle (0.95,4);
\draw[very thick] (0,-0.05) -- (0,4.4);
\node[anchor=south] at (0,4.42) {$L\subset W_\bfi(y)$};
\fill[blue!14]  (-0.4,0.5) rectangle (0.1,1.0);
\fill[red!14]   (0.1,1.5)  rectangle (0.6,2.0);
\fill[green!14] (-0.4,3.5) rectangle (0.1,4.0);
\draw[blue!50]  (-0.4,0.5) rectangle (0.1,1.0);
\draw[red!50]   (0.1,1.5)  rectangle (0.6,2.0);
\draw[green!50!black] (-0.4,3.5) rectangle (0.1,4.0);
\fill[blue!70!black]  (0,0.62) circle (1.6pt);  \node[anchor=east] at (-0.44,0.62) {\textcolor{blue!70!black}{\scriptsize $z_t$}};
\fill[red!70!black]   (0,1.88) circle (1.6pt);  \node[anchor=east] at (-0.44,1.78) {\textcolor{red!70!black}{\scriptsize $z_{t'}$}};
\fill[green!50!black] (0,3.70) circle (1.6pt);  \node[anchor=east] at (-0.44,3.30) {\textcolor{green!50!black}{\scriptsize $z_{t''}$}};
\draw[blue!60,dashed]  (0,0.62) circle (0.55);
\draw[red!60,dashed]   (0,1.88) circle (0.55);
\draw[green!60!black,dashed] (0,3.70) circle (0.55);
\draw[gray!70,line width=0.3pt] (-0.42,3.34)--(-0.04,3.66);
\draw[gray!70,line width=0.3pt] (-0.42,1.80)--(-0.04,1.87);
\foreach \p in {(-0.33,0.60),(-0.20,0.90),(-0.08,0.66),(0.04,0.84)} \fill[blue!70!black] \p circle (1pt);
\foreach \p in {(0.16,1.60),(0.27,1.92),(0.38,1.70),(0.50,1.86)} \fill[red!70!black] \p circle (1pt);
\foreach \p in {(-0.33,3.86),(-0.18,3.58),(-0.05,3.90),(0.05,3.62)} \fill[green!50!black] \p circle (1pt);
\node[anchor=west] at (0.63,2.06) {\textcolor{red!70!black}{\scriptsize $\mathcal B_{t'}$}};
\node[anchor=west] at (1.12,3.35) {\textcolor{orange!80!black}{\scriptsize $\bfj$-leaves, angle $\tau_y\approx2^{-k}$ at $y$}};
\draw[decorate,decoration={brace,amplitude=4pt}] (-1.06,0)--(-1.06,4)
  node[midway,left=5pt] {$2^{-Ck}$};
\draw[decorate,decoration={brace,mirror,amplitude=3pt}] (-0.4,-0.20)--(0.1,-0.20)
  node[midway,below=3pt,xshift=-14pt] {$2^{-(C+1)k}$};
\node at (0.83,2.90) {$R$};
\draw[->] (-1.25,-1.45) .. controls (0.2,-1.28) and (1.5,-1.58) .. (2.8,-1.42) node[right] {$T$};
\node[anchor=east] at (-1.30,-1.45) {\textcolor{gray}{\scriptsize $\pi_\bfi$}};
\fill (0,-1.37) circle (1.4pt); \node[below left=0pt] at (0,-1.37) {\scriptsize $y$};
\draw[dashed,gray] (-0.20,0.86)--(-0.20,-1.34);
\draw[dashed,gray] (0.50,1.82)--(0.50,-1.46);
\draw[line width=2.2pt,blue!70!black]  (-0.35,-1.72)--(-0.20,-1.72) (-0.10,-1.72)--(0.05,-1.72);
\draw[line width=2.2pt,red!70!black]   (0.15,-1.94)--(0.31,-1.94) (0.40,-1.94)--(0.54,-1.94);
\draw[line width=2.2pt,green!50!black] (-0.34,-2.16)--(-0.22,-2.16) (-0.10,-2.16)--(0.06,-2.16);
\node[anchor=west] at (1.00,-1.90) {$B_t=\pi_\bfi(\mathcal B_t)$};
\draw[decorate,decoration={brace,mirror,amplitude=4pt}] (-0.40,-2.34)--(0.60,-2.34)
  node[midway,below=4pt] {$B=\bigcup_t B_t$};
\draw[->] (-1.25,-3.55) .. controls (0.2,-3.38) and (1.5,-3.68) .. (2.8,-3.52) node[right] {$T$};
\node[anchor=east] at (-1.30,-3.55) {\textcolor{gray}{\scriptsize $\pi_\bfj$}};
\draw[dashed,orange!80!black] (0.00,0.56) .. controls (0.30,-1.4) .. (0.13,-3.44);
\draw[dashed,orange!80!black] (0.44,1.74) .. controls (0.80,-0.8) .. (0.85,-3.60);
\draw[dashed,orange!80!black] (0.00,3.56) .. controls (0.70,-0.1) .. (0.69,-3.58);
\draw[line width=2.2pt,blue!70!black]  (-0.07,-3.82)--(0.08,-3.82) (0.18,-3.82)--(0.33,-3.82);
\draw[line width=2.2pt,red!70!black]   (0.65,-4.04)--(0.81,-4.04) (0.90,-4.04)--(1.04,-4.04);
\draw[line width=2.2pt,green!50!black] (0.50,-4.26)--(0.62,-4.26) (0.74,-4.26)--(0.90,-4.26);
\node[anchor=west] at (1.35,-4.13) {\begin{tabular}[t]{@{}l@{}}$\pi_\bfj(\mathcal B_t)\approx B_t+\pi_\bfj(z_t)$\\ $\pi_\bfj(z_t)\approx t$\end{tabular}};
\draw[decorate,decoration={brace,mirror,amplitude=4pt}] (-0.09,-4.44)--(1.06,-4.44)
  node[midway,below=4pt] {$\bigcup_t(t+B_t)$ at resolution $2^{-(C+2)k}$};
\end{scope}
\symmetricbbox
\end{tikzpicture}
    }%
  }
  \caption{The sumset structure of the projection}
  \label{fig:sumset}
\end{figure}

\subsubsection*{The dictionary}
In our setting, the two directions are two pasts $\bfi,\bfj\in\Sigma^-$.
Projecting along the $\bfi$-leaves, respectively $\bfj$-leaves, onto a transversal
$T$ through a base point $y$ defines local projections $\pi_\bfi,\pi_\bfj$,
and by \cref{lem:exact dim tran}, the two projected measures have the
\emph{same} dimension $\gamma_2$, while the fiber measures on the
$\bfi$-leaves have dimension $\gamma_1>0$. The role of the uncountable
exceptional set of \cite{wuProjectionTheoremsCountably2025} is played by
$su$-non-integrability, in the following quantitative form
(\cref{prop: small angle}): there is $\epsilon>0$, depending only on the
system, such that for every point $x\in\Lambda_f$, every $\eta>0$ and every
set $\Sigma'\subseteq\Sigma^-$ of pasts with $\nu_-(\Sigma')>1-\epsilon$,
there are two pasts $\bfi,\bfj\in\Sigma'$ with
\[
0<\angle\bigl(E^{su}_\bfi(x),E^{su}_\bfj(x)\bigr)<\eta .
\]

\subsubsection*{Non-linearity}
\textbf{Localization: } Since there is no global projection, we work in the exponential chart at a base
point and in the \emph{straightening coordinates} $\sigma_\bfi$ of
\cref{prop:straightening map}.
The linear picture is accurate only for boxes that are so small that the H\"older errors are negligible compared to the resolution at which we count. Concretely, for a pair of pasts at angle $2^{-k}$, we replace the $2^{-k}\times1$ box of the linear model with a box of size $2^{-(C+1)k}\times2^{-Ck}$ around a piece of the $\bfi$-leaf of length $2^{-Ck}$, and we count at resolution $2^{-(C+2)k}$. 

\textbf{Spreading on a window of scales: }Due to the localization, the spreading property of the fiber measure requires a positive proportion of  dyadic scales between $2^{-Ck}$ and
$2^{-(C+1)k}$ 
The spreading property itself is established in \cref{sec:nledyadic}. Wu obtains it from the theory of CP-distributions; here, it comes from the dynamics: 
the conditional
measures of a Bernoulli measure along strong unstable leaves are mapped to
one another by the inverse branches of $f$, up to normalization; 
the positivity of the fiber dimension  implies spreading at a single scale; and the maximal ergodic theorem
turns spreading at a single scale into a frequency statement over scales
that holds uniformly on a set of measure close to one, in the dyadic
partitions of the straightening coordinates.

\subsubsection*{Selecting the point}
The counting argument runs at a single base point $y$ and a single pair of
pasts $\bfi,\bfj$, which have to be good in several independent senses at
once (\Cref{sec: step1}): the fiber measure on $W_\bfi(y)$ is spreading on
the relevant window; the projections $\pi_\bfi\mu$ and $\pi_\bfj\mu$ obey
their dimension $\gamma_2$; the good set has density close to one both in
the box and on the leaf through $y$; and the angle at $y$ between
$E^{su}_\bfi$ and $E^{su}_\bfj$ is of order $2^{-k}$. The first two
properties hold for all pairs $(x,\bfi)$ in a set of $\hat\mu$-measure
close to one and for all $k$ beyond a threshold depending on the
property; in order to preserve local properties after the intersection,
we need to apply a density argument. By Fubini, for $\mu$-most base
points $y$, the set of pasts $\bfi$ for which $(y,\bfi)$ is good has
$\nu_-$-measure larger than $1-\epsilon$. The fourth property, which
concerns the pair of pasts, is obtained from \cref{prop: small angle}:
applied at $y$ to this set, with $\eta$ smaller than all the thresholds,
it produces two good pasts $\bfi,\bfj$ at a positive angle $\tau_y<\eta$.
The scale $k$ is then \emph{defined} by $2^{-k-1}<\tau_y\leq2^{-k}$ and
is automatically large.

\subsubsection*{Counting and the contradiction}
Assume $\gamma_1>0$ and $\gamma_2<1$. 
Let $L$ be the piece of the $\bfi$-leaf through $y$ of
length $2^{-Ck}$, and $R$ be the $2^{-(C+1)k}\times2^{-Ck}$ box around it.
Choose one good point $z_t$ of $L$ in each $2^{-(C+1)k}$-cell of $L$ that
carries fiber mass, let $\mathcal B_t$ be the part of the good set of $R$
within $2^{-(C+1)k}$ of $z_t$, and put $B_t=\pi_\bfi(\mathcal B_t)$,
$B=\bigcup_tB_t$. The transverse dimension gives
$\cN_{2^{-(C+2)k}}(B)\approx 2^{\gamma_2k}$.
The commutator lemma (\cref{rem:geom-input-commutator}) shows that $\pi_\bfj(\mathcal B_t)$ is, at resolution
$2^{-(C+2)k}$, a translation of $B_t$ by $\pi_\bfj(z_t)$. The displacement lemma (\cref{rem:geom-input-nonlinear-displacement})
shows that, at a finer resolution, $\pi_\bfj$ acts on $L$ as an affine
contraction by the angle: in the rescaled parameter, $\pi_\bfj(z_t)\approx t$.
Hence, $\pi_\bfj(R\cap\supp\mu)$ contains, at
resolution $2^{-(C+2)k}$, a sumset $\bigcup_t(t+B_t)$ of $B$ with a spreading
set of fiber points. 
The assumption $\gamma_2<1$ guarantees that $B$ is far from filling the
interval $\pi_\bfi(R)$. The sum-set estimate of \cref{sec:additive} then
applies and gives an increase by a definite power: for some $\delta>0$,
\[
\cN_{2^{-(C+2)k}}\bigl(\pi_\bfj(R\cap\supp\mu)\bigr)\gtrsim
\cN_{2^{-(C+2)k}}(B)^{1+\delta}.
\]
Thus, at scale $2^{-(C+2)k}$, the support of $\pi_\bfj\mu$ meets more
cells near $y$ than its dimension $\gamma_2$ permits, which results a contradiction . Therefore $\gamma_2=1$.

\subsubsection*{From measures to sets}

The passage from \Cref{thm:base} to
\Cref{thm:B,thm:A} is based on three approximation steps. Since
\[
\dim_{\mathrm H}\Lambda_f
\leq
\overline{\dim}_{\mathrm B}\Lambda_f
\leq s_0
\]
holds for every $C^1$ repeller \cite{zhang_dynamical_1997,BanCaoHu}, it remains to construct ergodic measures whose Hausdorff dimensions approach $s_0$.

For an $su$-non-integrable horseshoe, an SSE measure
(\Cref{def:SSE measure}) --- whose Ledrappier--Young dimension equals
$s_0$ (\Cref{prop:sse-max-lyap} and \Cref{lem:1.1}) --- is approximated in entropy and Lyapunov exponents by
$f$-invariant averages of Bernoulli measures for suitable iterates.
Applying \Cref{thm:base} at the iterate level and then averaging back to
$f$ yields the horseshoe dimension formula (\Cref{prop: irre-horseshoe}).

For \Cref{thm:B}, the horseshoe approximation of Cao--Pesin--Zhao \cite{cao_dimension_2019} produces horseshoes carrying measures with Lyapunov dimension tending to $s_0$. If such a horseshoe is $su$-integrable, one additional inverse branch from the ambient $su$-non-integrable component enlarges it to an $su$-non-integrable horseshoe without losing the approximation. 
For \Cref{thm:A}, the average-conformal and zero-entropy cases are
treated separately. In the remaining case, given a $C^r$ map and
$\epsilon>0$, the horseshoe produced by the Cao--Pesin--Zhao argument
\cite{cao_dimension_2019}, with Lyapunov dimension larger than
$s_0-\epsilon$, persists under $C^r$ perturbation. The density result
(\Cref{appendix:B}), applied to the related horseshoe repeller, shows that its continuation is $su$-non-integrable for an
open dense set of nearby maps. Pressure continuity and a Baire-category
argument then give the required residual dimension formula.

\subsection{Structure of the paper}
In \cref{sec:2.1} we collect the preliminaries: dimensions of sets and measures, inverse limits, dominated splittings, and $su$-non-integrability, the symbolic coding of repellers, Oseledets' theorem, and the sub-additive pressure and Lyapunov dimension for repellers and for IFSs. 
\Cref{sec:regularity} establishes the Hölder regularity of the strong unstable
directions and develops the quantitative consequences of
non-integrability. \Cref{sec:nledyadic} constructs the local strong unstable
leaves and the associated straightening coordinates and verifies
the spreading properties of the fiber measures.
\Cref{sec:additive} gathers the sumset estimates from additive combinatorics. \Cref{sec: sec5} is the core of the paper: it contains the selection of the base point and of the pair of pasts, the planar-geometry lemmas, and the proofs of \Cref{thm:main-two} and \Cref{thm:base}. \Cref{sec: Bernoulli horseshoe approx} proves \cref{prop: irre-horseshoe} and deduces \Cref{thm:B} (\Cref{thm:dominated}) and \Cref{thm:A} (\Cref{thm:generic_full}). \Cref{sec: IFS and Weierstrass} contains the proofs of the results on planar non-linear IFSs (\Cref{thm:ifs-non-perturbative,thm:ifs}) and on Weierstrass-type functions (\Cref{thm:ren-shen-c}). \Cref{appendix:B} proves that $su$-non-integrability is open and dense in the $C^r$ topology, and \cref{sec:counterexample} contains examples showing that the hypothesis is necessary in \Cref{thm:A} and \Cref{thm:B}.

\subsection{Acknowledgment}
We thank Yongluo Cao, Shaobo Gan, Meng Wu, Meysam Nassiri, and Yun Zhao
for useful comments and discussions. We also thank Jing Zhou, Changguang Dong, Shucheng Yu, and the fourth named author for organizing the conference at Great Bay University in December 2025, where the key ideas were generated. W. Pan is supported by NSERC Discovery Grant RGPIN-2025-06706. D. Xu is supported by National Key R$\&$D Program of China No. 2024YFA1015100.

\subsection{AI disclosure}
The authors used AI tools to assist with literature searches, proof verification and checking, figure preparation, and manuscript editing. 
All main ideas and the mathematical contributions are solely attributed to the authors.

\section{Preliminaries}\label{sec:2.1}
In this section we set up the basic notions we need. We first fix the dimension conventions, then introduce the inverse-limit and symbolic settings, and finally recall the sub-additive pressure and the Lyapunov dimension used in the main statements. Throughout, $M$ is a Riemannian manifold and $f\colon M\to M$ is a $C^{1+\alpha}$ map, $0<\alpha\leq1$, admitting a repeller $\Lambda$; since $C^r\subseteq C^{1+\alpha}$ with $\alpha=\min\{r-1,1\}$, this covers the $C^r$ maps of the introduction.

\subsection{Hausdorff, box, and pointwise dimensions}\label{subsec:dimensions}
We fix the dimension conventions used throughout. All metric notions are taken
with respect to the Riemannian distance $d_M$. For $Z\subset M$ and
$s\geq 0$, define
\[
\mathcal H^s_\rho(Z)
:=
\inf\left\{
\sum_{k=1}^{\infty}(\operatorname{diam} U_k)^s:
Z\subseteq \bigcup_{k=1}^{\infty}U_k,\ 
\operatorname{diam}U_k\leq \rho
\right\},
\qquad
\mathcal H^s(Z):=\lim_{\rho\to 0}\mathcal H^s_\rho(Z).
\]
The \emph{Hausdorff dimension} of $Z$ is
\[
\dim_{\mathrm H}Z
:=
\inf\{s\geq 0:\mathcal H^s(Z)=0\}
=
\sup\{s\geq 0:\mathcal H^s(Z)=\infty\}.
\]
For a totally bounded set $Z\subset M$, let $N(Z,\rho)$ be the smallest
number of $d_M$-balls of radius $\rho$ needed to cover $Z$. Its lower and
upper box dimensions are
\[
\underline{\dim}_{\mathrm B}Z
:=
\liminf_{\rho\to 0}
\frac{\log N(Z,\rho)}{-\log \rho},
\qquad
\overline{\dim}_{\mathrm B}Z
:=
\limsup_{\rho\to 0}
\frac{\log N(Z,\rho)}{-\log \rho}.
\]
If these two quantities coincide, their common value is called the
\emph{box dimension} of $Z$, denoted by $\dim_{\mathrm B}Z$ (see e.g.
\cite{federer_geometric_1969,falconer_fractal_geometry_2014}).

Let $\mu$ be a Borel probability measure on $M$. The Hausdorff dimension of $\mu$ is
defined by
\[
\dim_{\mathrm H}\mu
:=
\inf\left\{
\dim_{\mathrm H}Z:
Z\subseteq M \text{ is Borel and } \mu(Z)=1
\right\}.
\]
For $\delta\in(0,1)$, set
\[
N(\mu,\rho,\delta)
:=
\inf\left\{
N(Z,\rho):
Z\subseteq M \text{ is Borel and } \mu(Z)>1-\delta
\right\}.
\]
The lower and upper box dimensions of $\mu$ are defined by
\[
\underline{\dim}_{\mathrm B}\mu
:=
\lim_{\delta\to 0}
\liminf_{\rho\to 0}
\frac{\log N(\mu,\rho,\delta)}{-\log \rho},
\qquad
\overline{\dim}_{\mathrm B}\mu
:=
\lim_{\delta\to 0}
\limsup_{\rho\to 0}
\frac{\log N(\mu,\rho,\delta)}{-\log \rho}.
\]
If these two values are equal, we denote their common value by
$\dim_{\mathrm B}\mu$. For $x\in\supp\mu$, the lower and upper pointwise dimensions of $\mu$ at
$x$ are defined by
\[
\underline d_\mu(x)
:=
\liminf_{\rho\to 0}
\frac{\log \mu(B(x,\rho))}{\log \rho},
\qquad
\overline d_\mu(x)
:=
\limsup_{\rho\to 0}
\frac{\log \mu(B(x,\rho))}{\log \rho},
\]
where $B(x,\rho)$ denotes the $d_M$-ball centered at $x$ with radius
$\rho$. If these two limits coincide, their common value is denoted by
$d_\mu(x)$ and is called the pointwise dimension of $\mu$ at $x$ (see e.g. \cite{young_dimension_1982,fan_relationships_nodate}).

We say that $\mu$ is \emph{exact dimensional} if there exists a constant
$\dim \mu\geq 0$ such that
\[
\underline d_\mu(x)=\overline d_\mu(x)=\dim \mu
\qquad\text{for $\mu$-a.e. }x.
\]
In this case, by \cite{young_dimension_1982},
\[
\dim_{\mathrm H}\mu
=
\underline{\dim}_{\mathrm B}\mu
=
\overline{\dim}_{\mathrm B}\mu
=
\dim_{\mathrm B}\mu
=
\dim \mu.
\]
For exact dimensionality and dimension formulas for hyperbolic invariant
measures, see
\cite{young_dimension_1982,LEDRAPPIER_YOUNG_B_1985,barreira_pesin_schmeling_1999,liu-xie06,Shu10,Part2}; for invariant measures of iterated function systems and self-affine measures, see e.g. \cite{feng2009dimension,barany_ledrappier-young_2015,barany_ledrappieryoung_2017}.

\subsection{Inverse limit space and dominated splittings}\label{subsec:inverse limit}
Since $f|_{\Lambda}$ is an expanding but non-invertible map, capturing its full dynamics requires tracking the pre-histories of points. The \emph{inverse limit space} $(\hat{\Lambda},\hat{f})$ associated with $(\Lambda,f)$ is defined as follows.
\begin{itemize}
\item $\hat{\Lambda}$ is the closed subset of $\Lambda^{\mathbb{Z}_{\leq 0}}$ given by
\begin{equation}
\label{eqn:hatlambda}
\hat{\Lambda} := \left\{ \hat{x} = \{x_j\}_{j\leq 0} : x_j \in \Lambda \text{ and } f(x_{j-1}) = x_j \text{ for all } j \leq 0 \right\}, 
\end{equation}
equipped with the metric
\begin{equation}
\hat{d}_a(\hat{x},\hat{y}) = \sum_{j=0}^{\infty} a^{j} d_{M}(x_{-j},y_{-j}), \label{eqn:da metric}
\end{equation}
where $a\in (0,1)$. We also write $\hat d=\hat d_{1/2}$.
\item The homeomorphism $\hat{f}\colon\hat{\Lambda}\to \hat{\Lambda}$ is the left shift
$$\hat{f}((\ldots,x_{-1},x_0)) = (\ldots, x_{-1},x_0,f(x_0)).$$
\end{itemize}
The natural projection
\begin{equation}
\label{eqn:projection inverse limit space}
\pi\colon(\hat{\Lambda},\hat{f})\to (\Lambda,f), \quad \pi(\hat{x})=x_0,
\end{equation}
is a semi-conjugacy, $\pi\circ \hat{f} = f\circ \pi$. Since $\Lambda$ is
compact, {{$\hat\Lambda$ is compact as well, and the topology induced by the metric $\hat d_a$, $a\in(0,1)$, is compatible with the product topology}}. For an $f$-invariant Borel
probability measure $\mu$ on $\Lambda$, we denote by $\hat\mu$ the unique
$\hat f$-invariant Borel probability measure on $\hat\Lambda$ such that
$\pi_*\hat\mu=\mu$; the natural extension preserves ergodicity and metric
entropy. For the theory of inverse-limit spaces and natural extensions, see for instance
\cite{Rokhlin_1967,Przytycki1976,KatokHasselblatt1995,QianXiebook}.

\begin{defi}\label{def:dominated splitting}
We say that $(\Lambda,f)$ admits a \emph{dominated splitting} if there is a continuous splitting
\begin{equation}
\label{eqn:TM splitting}
T_{\pi(\hat{x})}M = E_1(\hat x) \oplus E_2(\hat{x}),\qquad \hat x\in\hat\Lambda,
\end{equation}
with the following properties.
\begin{itemize}
\item \textbf{($\hat{f}$-invariance):} $D_{\pi(\hat x)} f\, E_i(\hat{x}) = E_i(\hat{f}\hat{x})$ for $i=1,2$ and every $\hat{x}\in \hat{\Lambda}$.
\item \textbf{(Domination):} there exists $n_0\in \mathbb{N}$ such that for every $\hat{x}\in \hat{\Lambda}$,
$$\frac{m\bigl(D_{\pi(\hat x)} f^{n_0}|_{E_2(\hat{x})}\bigr)}{\bigl\|D_{\pi(\hat x)} f^{n_0}|_{E_1(\hat{x})}\bigr\|} \geq 2,$$
where $m(A):=\|A^{-1}\|^{-1}$ is the conorm of the linear operator $A$.
\end{itemize}
\end{defi}
 
For the general theory of dominated splittings, see, for instance, \cite{bonatti_diaz_viana_2005,KatokHasselblatt1995}.
A dominated splitting is unique, the angle between $E_1(\hat x)$ and $E_2(\hat x)$ is uniformly bounded
away from $0$, and dominated splittings persist under sufficiently small
$C^1$ perturbations. Moreover, the weak bundle is characterized by forward growth. Consequently, $E_1(\hat x)$ depends only on the base point $x_0=\pi(\hat x)$, whereas $E_2(\hat x)$ may depend on the entire pre-history. This is the crucial difference from the invertible setting, and we write accordingly
\[
E^{wu}(\pi(\hat x)):=E_1(\hat x),\qquad E^{su}(\hat x):=E_2(\hat x),
\]
for the \emph{weak unstable} and \emph{strong unstable} bundles.

\subsection{The non-integrability assumption}\label{subsec:non-integrability}
By the spectral decomposition theorem, see for instance \cite{smale_differentiable_1967,Przytycki1976,KatokHasselblatt1995}, any $C^{1+\alpha}$ repeller $(\Lambda_f,f)$ is a finite disjoint union of transitive repellers $(\Lambda_{i,f},f)$ with $f^{-1}(\Lambda_{i,f})\cap\Lambda_f = \Lambda_{i,f}$. 

\begin{defi}\label{def:su-nonintegrable}
A transitive repeller $(\Lambda_f,f)$ is \emph{$su$-non-integrable} if it admits a dominated splitting
$$T_{\pi(\hat x)}M = E^{wu}(\pi(\hat x)) \oplus E^{su}(\hat x)$$
over its inverse limit space, and there exists a base point $x\in\Lambda_f$ such that
$$\#\bigl\{E^{su}(\hat x)\in \mathbb P(T_xM) : \hat x\in\pi^{-1}(x) \bigr\} \geq 2 .$$
A non-transitive repeller is \emph{$su$-non-integrable} if all of its transitive components are $su$-non-integrable, and a repeller is \emph{$su$-integrable} if it admits a dominated splitting and is not $su$-non-integrable.
\end{defi}

The following lemma shows that $su$-non-integrability at one point implies it at every point.

\begin{lem}
\label{lem:pointwise-su-nonintegrability}
Let $(\Lambda_f,f)$ be a transitive $C^{1+\alpha}$ surface repeller admitting a dominated splitting.
Assume that there exist $p\in\Lambda_f$ and two histories
$\hat p^1,\hat p^2\in\hat\Lambda_f$ with
\[
\pi(\hat p^1)=\pi(\hat p^2)=p,
\qquad
\mathbb P(E^{su}(\hat p^1))\neq \mathbb P(E^{su}(\hat p^2)).
\]
Then for every $x\in\Lambda_f$, there exist two histories
$\hat x^1,\hat x^2\in\hat\Lambda_f$ with
\[
\pi(\hat x^1)=\pi(\hat x^2)=x,
\qquad
\mathbb P(E^{su}(\hat x^1))\neq \mathbb P(E^{su}(\hat x^2)).
\]
\end{lem}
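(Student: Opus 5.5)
We propagate the non-integrability from $p$ to an arbitrary $x$ in two moves: first push forward along the dynamics, then use density of preimages together with continuity of the splitting on $\hat\Lambda_f$.

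\emph{Step 1 (forward invariance).} If $\hat y^1,\hat y^2$ are two histories of $y$ with distinct strong directions, then $\hat f\hat y^1,\hat f\hat y^2$ are histories of $f(y)$. Since $D_yf$ is invertible (expansion) and $D_yf\,E^{su}(\hat y^i)=E^{su}(\hat f\hat y^i)$, their strong directions are still distinct. Hence the set
\[
G:=\{y\in\Lambda_f:\ \exists\,\hat y^1,\hat y^2\in\pi^{-1}(y)\text{ with }\mathbb P(E^{su}(\hat y^1))\neq\mathbb P(E^{su}(\hat y^2))\}
\]
satisfies $f(G)\subseteq G$, and $p\in G$. In particular $G$ contains the forward orbit of $p$.

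\emph{Step 2 (reaching every $x$).} Fix $x\in\Lambda_f$. For a transitive repeller, with $f|_{\Lambda_f}$ expanding and open on $\Lambda_f$, the backward orbit of any point is dense in $\Lambda_f$. Equivalently, by the local homeomorphism property and transitivity, for every neighborhood $V$ of $p$ there exist $n$ and $q\in V\cap\Lambda_f$ with $f^n(q)=x$.

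We choose $V$ as follows. Let $\theta:=\angle(E^{su}(\hat p^1),E^{su}(\hat p^2))>0$. The leading coordinates of $\hat p^1,\hat p^2$ are both $p$, but their pasts differ. Near $p$, each past is shadowed by inverse branches: the inverse branches $g^i_j$ of $f$ along $\hat p^i$ are defined on a uniform neighborhood (by expansion), giving for each $q$ near $p$ a history $\hat q^i:=(\dots,g^i_2g^i_1 q, g^i_1q, q)$ that is $\hat d$-close to $\hat p^i$. By continuity of $E^{su}$ on $\hat\Lambda_f$, for $q$ close enough to $p$ we get $\angle(E^{su}(\hat q^1),E^{su}(\hat q^2))>\theta/2$. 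Thus $G$ contains a neighborhood of $p$ in $\Lambda_f$. Taking $q$ in this neighborhood with $f^n(q)=x$ and applying Step 1 yields $x\in G$.

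\emph{Main obstacle.} The delicate point is Step 2's construction of $\hat q^i$. We must check that $g^i_j q\in\Lambda_f$, i.e.\ that the shadowing history stays in the repeller. This follows from $f^{-1}(\Lambda_{f})\cap U=\Lambda_f$ (isolation) together with the uniform size of the inverse-branch domains. It remains to show that $\hat q^i\to\hat p^i$ in $\hat d$ as $q\to p$; this holds because $d(g^i_j q,p^i_{-j})\leq\kappa^{-j}d(q,p)$.

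Density of preimages comes from the standard fact that a transitive expanding repeller is topologically exact on some iterate (spectral decomposition). Alternatively, we can avoid this fact by using non-wandering plus transitivity: a dense forward orbit of a point near $p$ returns near $x$ only gives approximation. So we rely instead on the local-product/openness of $f|_{\Lambda_f}$ to pull $x$ back exactly. If exactness fails (cyclic components), we apply the argument to $f^{N}$ on each cyclic piece, using Step 1 to move between pieces.
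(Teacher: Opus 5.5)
Your proposal is correct and follows the same route as the paper's proof. The paper likewise extends the two histories of $p$, via local inverse branches and continuity of $E^{su}$, to two continuous history sections over a neighborhood $U$ of $p$ with angle at least $\theta_0/2$; it then finds $z\in U$ with $f^n(z)=x$ and pushes forward by $\hat f^n$, using invertibility of $D_zf^n$. The only difference is how the exact preimage $z$ is produced: the paper takes it from an irreducible Markov coding (a cylinder $\Lambda[w]\subset U$ followed by a word connecting it to a coding of $x$), which needs only irreducibility and so avoids your detour through topological exactness of an iterate and the cyclic decomposition.
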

\Cref{lem:pointwise-su-nonintegrability} is proved in \cref{sec:3.2}.

\subsection{Markov coding and horseshoe repellers}\label{subsec:coding}
We briefly recall the symbolic coding of repellers; see, for example,
\cite{Przytycki1976,KatokHasselblatt1995,QianXiebook}.
For any $C^{1+\alpha}$ repeller $(\Lambda,f)$,
there exist a finite zero-one matrix
\[
A=(A_{ij})_{1\leq i,j\leq m},
\]
the associated one-sided subshift of finite type
\[
\Sigma_A^+
:=
\left\{
\omega=(\omega_n)_{n\geq0}\in\{1,\ldots,m\}^{\mathbb Z_{\geq0}}:
A_{\omega_n\omega_{n+1}}=1
\text{ for every }n\geq0
\right\},
\]
and a finite to one continuous surjective coding map
\[
\Pi\colon\Sigma_A^+\longrightarrow\Lambda
\]
such that
\[
\Pi\circ\sigma=f\circ\Pi.
\]
A finite word $w=i_0\cdots i_{n-1}$ is called \emph{admissible} if
$A_{i_j i_{j+1}}=1$ for $0\leq j<n-1$. We define the cylinders of repellers as
\[
[w]
:=
\left\{
\omega\in\Sigma_A^+:
\omega_j=i_j,\ 0\leq j<n
\right\},
\qquad
\Lambda[w]:=\Pi([w]).
\]
The coding can be chosen so that there exist constants $D>0$ and
$\vartheta\in(0,1)$ with
\[
\operatorname{diam}\Lambda[w]\leq D\vartheta^n
\]
for every admissible word $w$ of length $n$. In particular, every
nonempty relatively open subset of $\Lambda$ contains $\Lambda[w]$ for
some admissible word $w$.
If $(\Lambda,f)$ is transitive, then $A$ can be chosen to be irreducible, so
any two admissible words can be joined by a finite admissible word. 

\begin{defi}\label{def:horseshoe repeller}
A repeller $(\Lambda_f,f)$ is called a \emph{horseshoe repeller} if, for
some $m\geq2$, there exists a homeomorphism
\begin{equation}
\label{eqn:horseshoe def}
h\colon\Sigma_m^+\longrightarrow\Lambda_f,
\qquad
\Sigma_m^+:=\{1,\ldots,m\}^{\mathbb Z_{\geq0}},
\end{equation}
such that $h\circ\sigma=f\circ h$.
\end{defi}
Then the sets $\Lambda_i:=h([i])$, $1\leq i\leq m$, form a pairwise
disjoint clopen partition of $\Lambda_f$, and each
$f|_{\Lambda_i}\colon\Lambda_i\to\Lambda_f$ is a homeomorphism.
Since $f$ is a local diffeomorphism near $\Lambda_f$, there exists an
open neighborhood $U$ of $\Lambda_f$ such that the inverse branches
$(f|_{\Lambda_i})^{-1}$ extend to $C^{1+\alpha}$ maps
$f_i^{-1}\colon U\to M$ with pairwise disjoint images $U_i:= f_i^{-1}(U)$.

\begin{defi}\label{def:Bernoulli measure}
Let $(\Lambda_f,f)$ be a horseshoe repeller.
A vector $\mathbf p=(p_1,\ldots,p_m)$ is called a \emph{probability vector} if
$p_i>0$ and $\sum_{i=1}^m p_i=1$. The Bernoulli product measure
$\nu_{\mathbf p}$ on $\Sigma_m^+$ is determined by
\[
\nu_{\mathbf p}([i_0\cdots i_{n-1}])
=p_{i_0}\cdots p_{i_{n-1}}
\]
for every cylinder $[i_0\cdots i_{n-1}]$. An $f$-invariant measure $\mu$ on $\Lambda_f$ is called \emph{$f$-Bernoulli}
if $\mu=h_*\nu_{\mathbf p}$ for some Bernoulli product measure
$\nu_{\mathbf p}$ on $\Sigma_m^+$.
\end{defi}
Since $p_i>0$ for all $i$, every $f$-Bernoulli measure is ergodic and fully supported on $\Lambda_f$.

{{For a horseshoe repeller, the inverse limit is coded by the two-sided shift. Let
\begin{equation}\label{eqn:pasts}
\Sigma^-:=\{1,\ldots,m\}^{\mathbb Z_{<0}}
\end{equation}
be the space of \emph{pasts}. Since every $x\in\Lambda_f$ has exactly one preimage in each $\Lambda_i$, the pre-histories of $x$ are in bijection with $\Sigma^-$: for $\bfi=(\ldots,i_{-2},i_{-1})\in\Sigma^-$, we write $\hat x_\bfi$ for the pre-history $(\ldots,x_{-2},x_{-1},x)$ with $x_{-j}\in\Lambda_{i_{-j}}$, and $(\bfi,x)\mapsto\hat x_\bfi$ is a homeomorphism $\Sigma^-\times\Lambda_f\to\hat\Lambda_f$. Under this identification, the natural extension of an $f$-Bernoulli measure $\mu=h_*\nu_{\mathbf p}$ is the product $\hat\mu=\nu_-\times\mu$, where $\nu_-$ is the Bernoulli measure with weights $\mathbf p$ on $\Sigma^-$.}}

\subsection{The $C^r$ topology and continuations of repellers}\label{subsec:continuation}
For $r\in(1,\infty]$ we equip $C^r(M,M)$ with the $C^r$ topology of uniform
convergence of derivatives up to order $r$ on compact subsets of $M$ (of all
derivatives when $r=\infty$). This is a Baire space, so its residual subsets,
those containing a countable intersection of open dense subsets, are dense.
Repellers are structurally stable \cite{shub_endomorphisms_1969,przytycki_omega_1977}: if $(\Lambda_f,f)$ is a $C^r$ repeller with
isolating neighborhood $U$, there is a $C^r$-open neighborhood $U_f$ of $f$
such that for every $g\in U_f$ the set
\[
\Lambda_g:=\{x\in U: g^n(x)\in U\text{ for all }n\geq0\}
\]
is a repeller of $g$ with isolating neighborhood $U$, $(\Lambda_g,g)$ is
topologically conjugate to $(\Lambda_f,f)$ by a homeomorphism close to the
identity, and $\Lambda_g$ depends continuously on $g$ in the Hausdorff
metric. We call $\Lambda_g$ the \emph{continuation} of $\Lambda_f$; the
continuation of a surface repeller with a dominated splitting again has a
dominated splitting.

\subsection{Oseledets' multiplicative ergodic theorem for repellers}\label{subsec:oseledets}
For a linear map $A$ between $d$-dimensional inner product spaces let
$\alpha_1(A)\geq\cdots\geq\alpha_d(A)\geq0$ denote its singular values, and for $x\in\Lambda_f$ and $n\geq1$ write
\[
\alpha_j(x,f^n):=\alpha_j(D_xf^n),\qquad 1\leq j\leq d .
\]
We recall the following form of Oseledets' multiplicative ergodic theorem \cite{oseledets_multiplicative_1968}
adapted to repellers; see \cite{QianXiebook} for instance.

\begin{thm}[Oseledets' multiplicative ergodic theorem]
\label{thm:oseledets-repeller}
Let $M$ be a $d$-dimensional Riemannian manifold. Let
$(\Lambda_f,f)$ be a $C^{1+\alpha}$ repeller and $\mu$ an $f$-ergodic measure on $\Lambda_f$.
Denote by $\hat\mu$ the natural-extension lift of $\mu$ to
$(\hat\Lambda_f,\hat f)$.

Then there exist Lyapunov exponents, counted with multiplicity,
\begin{equation}\label{eq:def of LYexponent}
\lambda_1(\mu,f)\geq\cdots\geq\lambda_d(\mu,f)>0,
\end{equation}
and an $\hat f$-invariant Borel set
$\hat\Lambda_\mu\subseteq\hat\Lambda_f$ with
$\hat\mu(\hat\Lambda_\mu)=1$, such that for every
$\hat x\in\hat\Lambda_\mu$ there is a measurable splitting
\[
T_{\pi(\hat x)}M
=
\bigoplus_{\lambda\in
\{\lambda_1(\mu,f),\ldots,\lambda_d(\mu,f)\}}
E_\lambda(\hat x),
\]
where the direct sum ranges over the distinct values of the Lyapunov
exponents and
\[
\dim E_\lambda(\hat x)
=
\#\{1\leq j\leq d:\lambda_j(\mu,f)=\lambda\}.
\]
Moreover,
\[
D_{\pi(\hat x)}f\bigl(E_\lambda(\hat x)\bigr)
=
E_\lambda(\hat f\hat x),
\]
and, for every $v\in E_\lambda(\hat x)\setminus\{0\}$,
\[
\lim_{n\to\infty}
\frac1n\log\left\|D_{\pi(\hat x)}f^n v\right\|
=
\lambda.
\]
Finally, for $\mu$-almost every $x\in\Lambda_f$,
\begin{equation}\label{eq:oseledets-singular-values}
\lim_{n\to\infty}
\frac1n\log\alpha_j(x,f^n)
=
\lambda_j(\mu,f),
\qquad 1\leq j\leq d.
\end{equation}
\end{thm}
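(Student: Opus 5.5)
This is the repeller form of Oseledets' theorem, so I would not reprove it. I would deduce it from the classical invertible cocycle version applied to the natural extension.

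First I set up the cocycle on the natural extension. Over the inverse limit $(\hat\Lambda_f,\hat f,\hat\mu)$, which is an invertible ergodic system since the natural extension preserves ergodicity, consider the cocycle
\[
A(\hat x):=D_{\pi(\hat x)}f\colon T_{\pi(\hat x)}M\to T_{\pi(\hat f\hat x)}M.
\]
The tangent spaces can be trivialized measurably, for instance by a measurable orthonormal frame on $M$ restricted to the compact set $\Lambda_f$; this yields a $GL(d,\mathbb R)$-valued cocycle. Because $f$ is $C^1$ and $\Lambda_f$ is compact, both $\log\|A\|$ and $\log\|A^{-1}\|$ are bounded. Invertibility of $A$ follows from the expansion condition \ref{item:expanding}. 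So the integrability hypotheses hold trivially.

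Next I apply the two-sided (invertible) Oseledets theorem \cite{oseledets_multiplicative_1968} to this cocycle. It gives an $\hat f$-invariant full-measure set $\hat\Lambda_\mu$, exponents $\lambda_1\geq\cdots\geq\lambda_d$ counted with multiplicity, and a measurable invariant splitting into subspaces $E_\lambda(\hat x)$ of the stated dimensions. On these subspaces the forward growth rate of $\|D f^n v\|$ is exactly $\lambda$. The splitting depends on the whole pre-history, which is why it lives on $\hat\Lambda_f$ rather than on $\Lambda_f$. The invariance $D_{\pi(\hat x)}f\,E_\lambda(\hat x)=E_\lambda(\hat f\hat x)$ is part of that theorem.

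Positivity of the exponents is immediate from \ref{item:expanding}: for every unit vector, $\|D_xf^n v\|\geq\kappa^n$, so every exponent is at least $\log\kappa>0$.

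Finally I prove the singular value statement \eqref{eq:oseledets-singular-values}. It only involves forward products $D_xf^n$, which depend on $x=\pi(\hat x)$ alone. I would apply Kingman's subadditive ergodic theorem to the functions $\log\|\wedge^k D_xf^n\|$ over $(\Lambda_f,f,\mu)$. This gives almost-sure limits of $\frac1n\log(\alpha_1\cdots\alpha_k)(x,f^n)$. These limits coincide with $\lambda_1+\cdots+\lambda_k$, since the same sums arise as the growth rates of $\wedge^k$ in the invertible theorem, and the forward products are the same whether read on $\hat\Lambda_f$ or on $\Lambda_f$. Taking differences in $k$ yields the limit for each $\alpha_j$. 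A $\mu$-full-measure set is obtained by projecting the $\hat\mu$-full set via $\pi_*\hat\mu=\mu$.

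The main point requiring care is the measurable trivialization and the identification of the Kingman limits with the Oseledets exponents; both are standard. Otherwise the result is a direct citation, for example \cite{QianXiebook}.
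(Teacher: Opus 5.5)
Your proposal is correct and takes the same approach as the paper. The paper gives no proof of this theorem: it recalls it as the standard repeller form of Oseledets' theorem and refers to Oseledets' original paper and the Qian--Xie book. Your sketch is the usual derivation behind that citation: the invertible Oseledets theorem for the derivative cocycle over the natural extension, positivity of the exponents from uniform expansion, and the singular-value limits via Kingman's theorem applied to exterior powers.
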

\begin{rem}
Let $\mu$ and $\hat\mu$ be as in \Cref{thm:oseledets-repeller}. If a surface
repeller $(\Lambda_f,f)$ admits a dominated splitting
$E^{wu}\oplus E^{su}$, then the domination condition gives $\lambda_1(\mu,f)-\lambda_2(\mu,f)\geq\log 2/n_0>0$, and the Oseledets splitting coincides $\hat\mu$-a.e.\ with the dominated one, $E_{\lambda_1}=E^{su}$ and $E_{\lambda_2}=E^{wu}$. Consequently
    \begin{align}\label{eq:Lyexpo1}
\lambda_1(\mu,f)
&=
\int_{\hat\Lambda_f}\log\|Df|_{E^{su}(\hat x)}\|\,\mathrm d\hat\mu(\hat x),\\
\label{eq:Lyexpo2}
\lambda_2(\mu,f)
&=
\int_{\Lambda_f}\log\|Df|_{E^{wu}(x)}\|\,\mathrm d\mu(x).
\end{align}
In particular, since $E^{su}$ and $E^{wu}$ are continuous, $\lambda_1(\mu,f)$ and $\lambda_2(\mu,f)$ depend continuously on $\hat\mu$ in the weak-$*$ topology.
\end{rem}

\subsection{Sub-additive topological pressures and Lyapunov dimensions}
\label{subsubsec:sub-additive pressures}
Let $(\Lambda_f,f)$ be a $C^{1+\alpha}$ repeller. For any compact subset $X\subset M$,
denote
\begin{align}
\mathcal M(X,f)&:=\{\mu:\mu \text{ is an $f$-invariant probability measure and } \supp \mu\subseteq X\},\\
\mathcal M_e(X,f)&:=\{\mu:\mu \text{ is an $f$-ergodic probability measure and } \supp \mu\subseteq X\}.
\label{eqn:MeXf}
\end{align}
The sub-additive topological pressure used below belongs to the non-additive thermodynamic formalism of Falconer \cite{falconer_subadditive_1988}, Barreira \cite{barreira_nonadditive_1996} and Cao, Feng and Huang \cite{cao_thermodynamic_2008}; for variational principles, approximation results and dimension-theoretic applications see \cite{feng2009dimension,cao_dimension_2019,morris_equality_2016,morris_variational_2023,bochi_morris_equilibrium_2018}.

For any $s\in[0,\dim M]$, the \emph{sub-additive singular potentials}, following Cao, Pesin and Zhao \cite{cao_dimension_2019}, are defined by
\begin{equation}
\label{eqn:singular potential}
\varphi^s_n(x) = \sum_{i=\dim M-[s]+1}^{\dim M}\log \alpha_i(x,f^n) + (s-[s])\log\alpha_{\dim M-[s]}(x,f^n),
\end{equation}
where $\alpha_1(x,f^n)\geq\cdots\geq\alpha_{\dim M}(x,f^n)>0$ are the singular values of $D_xf^n$ defined above, $[s]$ denotes the integer part of $s$, and
the last term is understood to vanish when $s=\dim M$.
The sequence $(-\varphi^s_n)_{n\geq1}$ is sub-additive. We define the \emph{sub-additive topological pressure} of the singular potentials by
\begin{equation}
\label{eqn:sub-additive potential}
P_{\mathrm{sub}}(s) := \sup_{\mu\in\mathcal{M}_e(\Lambda_f,f)} \left\{ h_\mu(f) - \lim_{n\to\infty}\frac{1}{n}\int\varphi^s_n\,\mathrm d\mu \right\},
\end{equation}
where the limit exists by sub-additivity. By the variational principle of Cao, Feng and Huang \cite{cao_thermodynamic_2008}, see also \cite[Equation 2.4]{cao_dimension_2019}, \eqref{eqn:sub-additive potential} coincides with the topologically defined sub-additive pressure of $(-\varphi^s_n)_n$, and the supremum may equivalently be taken over $\mathcal M(\Lambda_f,f)$; see \cite[Proposition A.1]{feng_lyapunov_2010} for the reduction from invariant to ergodic measures.

Since $f$ is uniformly expanding, the Lyapunov exponents of every $\mu\in\mathcal M_e(\Lambda_f,f)$ satisfy $\log\kappa\leq\lambda_{\dim M}(\mu,f)\leq\lambda_1(\mu,f)\leq\log\sup_{x\in\Lambda_f}\|D_xf\|$. Hence
\eqref{eq:oseledets-singular-values} and the dominated convergence theorem yield
\begin{equation}
P_{\mathrm{sub}}(s) = \sup_{\mu\in\mathcal{M}_e(\Lambda_f,f)} \left\{ h_\mu(f) - \sum_{i=\dim M-[s]+1}^{\dim M}\lambda_i(\mu,f) - (s-[s])\lambda_{\dim M-[s]}(\mu,f) \right\} \label{eq:sub-pressure-lyapunov}.
\end{equation}
The bracket in
\eqref{eq:sub-pressure-lyapunov} is a continuous, piecewise affine function
of $s$ with slopes at most $-\log\kappa$; taking the supremum,
$P_{\mathrm{sub}}$ is Lipschitz continuous and strictly decreasing. Moreover, $P_{\mathrm{sub}}(0)=h_{\mathrm{top}}(f|_{\Lambda_f})\geq0$ by the classical variational principle, and $P_{\mathrm{sub}}(\dim M)\leq0$ by Ruelle's inequality \cite{ruelle_entropy_1978}. Hence $P_{\mathrm{sub}}$ has a unique zero $s_0$.

Given an $f$-ergodic measure $\mu$ on $\Lambda_f$, its \emph{Lyapunov dimension} $\dim_{\mathrm{LY}}(\mu,f)$ is defined as the unique zero $t_\mu\in[0,\dim M]$ of the function
\begin{equation}
\label{eqn:def LY}
F_\mu(t) := h_\mu(f) - \sum_{i=\dim M-[t]+1}^{\dim M}\lambda_i(\mu,f) - (t-[t])\lambda_{\dim M-[t]}(\mu,f).
\end{equation}
It is well defined since $F_\mu$ is
continuous and strictly decreasing in $t$, $F_\mu(0)=h_\mu(f)\geq0$, and $F_\mu(\dim M)\leq0$ by Ruelle's inequality. For a surface, $\dim_{\mathrm{LY}}(\mu,f)$ is given by the explicit formula \eqref{eq:intro-LY}. Since $P_{\mathrm{sub}}(s)=\sup_\mu F_\mu(s)$ and the slopes of the $F_\mu$ are bounded away from $0$, one has
\begin{equation}
\label{eqn:s0 variational}
s_0=\sup_{\mu\in\mathcal M_e(\Lambda_f,f)}\dim_{\mathrm{LY}}(\mu,f),
\end{equation}
which is \eqref{eq:intro-s0}: indeed $F_\mu(s)>0$ for some $\mu$ whenever $s<\sup_\mu t_\mu$, while $F_\mu(s)\leq-\log\kappa\,(s-\sup_\mu t_\mu)<0$ for all $\mu$ whenever $s>\sup_\mu t_\mu$.

\subsection{Uniform non-conformality, weak irreducibility and pressures for IFSs}\label{subsec:IFS}
Let
$\Phi=\{\phi_i\}_{i=1}^m$
be a planar $C^{1+\alpha}$ contracting IFS consisting of diffeomorphisms
onto their images, with attractor $\Lambda_\Phi$, and suppose that $\Phi$
satisfies the Strong Separation Condition. Let $\Sigma^+:=\{1,\ldots,m\}^{\mathbb Z_{\geq0}}$ be the coding space. For a word
$\mathbf i=(i_0,\ldots,i_{n-1})$, write
\[
\Phi_{\mathbf i}^n
:=
\phi_{i_0}\circ\cdots\circ\phi_{i_{n-1}} ;
\]
for $\mathbf i\in\Sigma^+$ we use the same notation for its initial word of length $n$.
Under the SSC, define the associated expanding inverse-branch repeller by
\[
f_\Phi|_{\phi_i(\Lambda_\Phi)}:=\phi_i^{-1},\qquad 1\leq i\leq m ;
\]
$f_\Phi$ extends to a $C^{1+\alpha}$ map of a neighborhood of $\Lambda_\Phi$, and $(\Lambda_\Phi,f_\Phi)$ is a horseshoe repeller. If $y=\Phi^n_{\mathbf i}(x)$, then
\[
(D_yf_\Phi^n)^{-1}=D_x\Phi_{\mathbf i}^n,
\]
so the singular values of $D_x\Phi^n_{\mathbf i}$ are the reciprocals of those of $D_yf^n_\Phi$ in reverse order. A pre-history of $x\in\Lambda_\Phi$ under $f_\Phi$ is the same thing as a sequence $\mathbf i\in\Sigma^+$, namely $(\ldots,\phi_{i_0}(x),x)$; we write $\hat x_{\mathbf i}$ for it. The dynamical conditions on $\Phi$ are the conditions of the introduction for $f_\Phi$, expressed through the maps $\phi_i$.
 
\emph{Uniform non-conformality.} We say that $\Phi$ is \emph{uniformly non-conformal} if there exist constants $D>0$ and $\epsilon>0$ such that for every $\mathbf i\in\Sigma^+$, every $n\geq1$ and every $x\in\Lambda_\Phi$,
\[
\|D_x\Phi^n_{\mathbf i}\|\geq D e^{\epsilon n}\sqrt{|\det D_x\Phi^n_{\mathbf i}|},
\]
i.e.\ $\alpha_1(D_x\Phi^n_{\mathbf i})/\alpha_2(D_x\Phi^n_{\mathbf i})\geq D^2e^{2\epsilon n}$. Geometrically, the images of infinitesimal circles under the IFS are ellipses whose eccentricities grow exponentially (see also \cite{hueter_falconers_1995}). Uniform non-conformality is equivalent to the existence of a dominated splitting for $f_\Phi$ over the inverse limit of $(\Lambda_\Phi,f_\Phi)$, see \cite{bochi_gourmelon_domination_2009}. Under the forward iteration of the contractions the splitting decomposes the tangent space into a weakest and a strongest contracting direction, $E^{ws}(x)\oplus E^{ss}(x,\mathbf i)$, where $E^{ss}(x,\mathbf i):=E^{su}(\hat x_{\mathbf i})$ depends on the coding sequence $\mathbf i$.

\emph{Weak irreducibility.} In the study of planar self-affine IFSs an irreducibility condition prevents the linear parts from preserving a common line, which would otherwise degenerate the fractal into a carpet-like object whose dimension may be strictly smaller than the zero of the sub-additive pressure. The non-linear analogue is $su$-non-integrability of $f_\Phi$: we say that $\Phi$ is \emph{weakly irreducible} if there exist $x\in\Lambda_\Phi$ and $\mathbf i,\mathbf j\in\Sigma^+$ such that
\[
E^{ss}(x,\mathbf i)\neq E^{ss}(x,\mathbf j),
\]
that is, the strongest contracting direction depends on the coding sequence. For self-affine systems this covers both the strongly irreducible case and the non-diagonal upper-triangular case of \cite{barany_hochman_rapaport_2019}. 

\emph{Pressure and Lyapunov dimension.} For $A\in GL_2(\mathbb R)$, let
$\alpha_1(A)\geq\alpha_2(A)>0$
be its singular values. For $s\in[0,2]$, define the singular value function
\[
\varphi^s(A):=\alpha_1(A)^{\min\{s,1\}}\,\alpha_2(A)^{\max\{s-1,0\}},
\]
so that $\log\varphi^s(D_x\Phi^n_{\mathbf i})=-\varphi^s_n(y)$ for $y=\Phi^n_{\mathbf i}(x)$, with $\varphi^s_n$ the singular potential \eqref{eqn:singular potential} of $f_\Phi$.
The sub-additive singular-value pressure of $\Phi$ is
\begin{equation}
\label{eqn:pressure IFS}
P_\Phi(s)
:=
\lim_{n\to\infty}\frac1n
\log
\sum_{\mathbf i\in\{1,\ldots,m\}^n}
\sup_{x\in\Lambda_\Phi}
\varphi^s\bigl(D_x\Phi_{\mathbf i}^n\bigr).
\end{equation}
The limit exists, $P_\Phi$ is continuous and strictly decreasing, and we
denote its unique zero by $s_0(\Phi)$.
For $\nu\in\mathcal M_e(\Lambda_\Phi,f_\Phi)$, write
\[
\lambda_1(\nu,\Phi)
:=
\lambda_1(\nu,f_\Phi)
\geq
\lambda_2(\nu,\Phi)
:=
\lambda_2(\nu,f_\Phi)>0 ,
\]
and define the Lyapunov dimension of $\nu$ with respect to $\Phi$, $\dim_{\mathrm{LY}}(\nu,\Phi):=\dim_{\mathrm{LY}}(\nu,f_\Phi)$. By the relation between $D\Phi^n_{\mathbf i}$ and $Df^n_\Phi$ above, the potentials, pressures, pressure zeros, Lyapunov exponents and Lyapunov dimensions of $\Phi$ agree with the corresponding notions for the repeller $(\Lambda_\Phi,f_\Phi)$; in particular $P_\Phi=P_{\mathrm{sub}}$ and $s_0(\Phi)=s_0$. For these definitions and the variational principle, see
\cite{falconer_hausdorff_1988,falconer_subadditive_1988,
cao_thermodynamic_2008,feng_simon_dimension_part_i_2023}. 
\section{H\"older regularity and non-integrability}\label{sec:regularity}
\subsection{H\"older continuity of dominated splitting}

On a surface $M$, let $\mathbb{P}TM$ denote the projectivized tangent bundle. We fix a Riemannian metric on $\mathbb{P}TM$ and denote the associated distance by $d_{\mathbb{P}TM}$.

For later estimates, it will be convenient to compare this distance with a more explicit local expression. Fix $c>0$ smaller than the injectivity radius of $M$. Whenever $d_M(x,y)<c$, let
\[
\mathcal P_{x\to y}:T_xM\longrightarrow T_yM
\]
denote parallel transport along the unique minimizing geodesic from $x$ to $y$. Then, after possibly decreasing $c$, there exists a constant $C_{\rm{PT}}\geq1$ such that
\[
C_{\rm{PT}}^{-1}
\Big(
d_M(x,y)
+
\angle\big(\mathcal P_{x\to y}E_x,E_y\big)
\Big)
\leq
d_{\mathbb{P}TM}\big((x,E_x),(y,E_y)\big)
\]
and
\[
d_{\mathbb{P}TM}\big((x,E_x),(y,E_y)\big)
\leq
C_{\rm{PT}}
\Big(
d_M(x,y)
+
\angle\big(\mathcal P_{x\to y}E_x,E_y\big)
\Big)
\]
for all $(x,E_x),(y,E_y)\in\mathbb{P}TM$ with $d_M(x,y)<c$, where
\[
\angle(E,F)\in\left[0,\frac{\pi}{2}\right]
\]
denotes the angle between two lines in the same tangent space.

Thus, for the local H\"older estimates below, the distance on $\mathbb{P}TM$ may be measured equivalently by the sum of the distance between the base points and the angle between the corresponding directions after parallel transport.

\begin{prop}\label{prop Holder continuous}
    For any  $C^{1+\alpha}$ surface horseshoe repeller $(\Lambda_f,f)$ with dominated splitting, there exists a constant $C_0>0$ such that   
   for any $\mathbf{i}\in \Sigma^-$, the map
\begin{equation}
x\in \Lambda_f \mapsto (x, E^{su}(\hat{x}_{\mathbf{i}}))\in \ptm 
\end{equation}
    is $\alpha$-H\"older with the H\"older constant $C_0>1$, i.e. 
    $$d_{\mathbb PTM}\left(\left(x,E^{su}(\hat{x}_\bfi)\right),\left(y,E^{su}(\hat{y}_\bfi)\right)\right)\leq C_0\cdot d_{M}(x,y)^{\alpha},\quad\forall x,y\in\Lambda. $$
\end{prop}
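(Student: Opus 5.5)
We prove this with the standard graph-transform argument, organized along the common past $\bfi$. Fix $\bfi=(\ldots,i_{-2},i_{-1})\in\Sigma^-$ and $x,y\in\Lambda_f$. Write $x_{-n},y_{-n}$ for the $n$-th points of the pre-histories $\hat x_\bfi,\hat y_\bfi$. Then
\[
x_{-n}=f^{-1}_{i_{-n}}\circ\cdots\circ f^{-1}_{i_{-1}}(x),
\]
and similarly for $y$. Since all these inverse branches are uniform contractions on $U$ with rate $\kappa^{-1}$, we get $d_M(x_{-n},y_{-n})\leq C\kappa^{-n}d_M(x,y)$. The key identity is
\[
E^{su}(\hat x_\bfi)=D_{x_{-n}}f^n\,E^{su}(\hat f^{-n}\hat x_\bfi),
\]
and likewise for $y$. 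Hence both strong directions are images of strong directions at the nearby points $x_{-n},y_{-n}$, pushed forward along the same branch sequence.

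First, we reduce to the case $d_M(x,y)<c$ small; otherwise the bound is trivial after enlarging $C_0$. By continuity of the splitting and compactness, we fix a cone field $\mathcal C$ around $E^{su}$ and $n_0$ from \Cref{def:dominated splitting}. Then $Df^{n_0}$ maps $\mathcal C$ into itself and contracts the projective metric on lines in $\mathcal C$ by a factor $\theta<1$. This contraction holds uniformly at all points of a small neighborhood of $\Lambda_f$, by $C^1$-continuity of $Df$. Passing to $f^{n_0}$ if necessary, we assume $n_0=1$.

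Second, we compare the two sides step by step, transporting by $\mathcal P$. Set
\[
a_k:=\angle\bigl(\mathcal P_{x_{-k}\to y_{-k}}E^{su}(\hat f^{-k}\hat x_\bfi),\,E^{su}(\hat f^{-k}\hat y_\bfi)\bigr).
\]
Applying $D_{x_{-k}}f$ and $D_{y_{-k}}f$, we have
\[
\|\mathcal P\circ D_{x_{-k}}f-D_{y_{-k}}f\circ\mathcal P\|\leq C d_M(x_{-k},y_{-k})^\alpha,
\]
since $Df$ is $\alpha$-Hölder. Combining this with the projective contraction inside the cone gives the recursion
\[
a_{k-1}\leq \theta\, a_k + C\,d_M(x_{-k},y_{-k})^\alpha\leq \theta a_k+C\kappa^{-k\alpha}d_M(x,y)^\alpha .
\]
Iterating from $k=n$ down to $k=1$, and bounding $a_n\leq\pi/2$, yields
\[
a_0\leq \theta^n\tfrac{\pi}{2}+C\,d_M(x,y)^\alpha\sum_{k}\theta^{k}.
\]
Letting $n\to\infty$ gives $a_0\leq C' d_M(x,y)^\alpha$, with $C'$ independent of $\bfi$. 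Note that we only need the transported line to stay in the cone, which holds for small $d_M$. Together with the comparison between $d_{\mathbb PTM}$ and base distance plus transported angle stated above, this gives the claim with $C_0=C_{\rm PT}(1+C')$ (plus a term absorbing $d_M\leq d_M^\alpha$ for small distances).

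The main obstacle is the second step: showing that the Hölder error of $Df$ and the parallel transport defect combine additively with a uniform contraction $\theta$. The contraction $\theta$ must be uniform even though the derivatives are evaluated at two different points, $x_{-k}$ and $y_{-k}$. I would handle this by working with the projective action of $2\times2$ matrices in a cone of fixed width, where the map $L\mapsto AL$ is $\theta$-Lipschitz and is Lipschitz in $A$. Note that the rate $\theta$ is not related to $\kappa^{-\alpha}$; because the errors are summed against $\theta^k$ with $d_M(x_{-k},y_{-k})^\alpha\leq d_M(x,y)^\alpha$, the Hölder exponent $\alpha$ survives without any bunching condition.
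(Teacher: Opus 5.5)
Your argument is correct. It uses the same mechanism as the paper: projective contraction of $Df$ in a cone around $E^{su}$, plus $\alpha$-H\"older dependence of $Df$ on the base point. The difference is that you run the contraction by hand along one fixed past, instead of invoking an abstract theorem on the whole inverse limit.

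The paper applies the H\"older section theorem, Lemma~\ref{lem: Holder section thm}, to $F(\hat x,L)=(\hat f\hat x,D_{\pi(\hat x)}f\,L)$ on $(\hat\Lambda_f,\hat d_a)$. This needs Lemma~\ref{lem: Lip section approximate} to build fiber coordinates, and the choice $\kappa_{\mathrm{fib}}<a<1$ of the weight in $\hat d_a$ to meet the bunching-type hypothesis $\kappa_{\mathrm{fib}}a^{-\alpha}<1$. It yields Proposition~\ref{lem: Holder in inverse limit space}, and the fixed-past statement then follows from $\hat d_a(\hat x_\bfi,\hat y_\bfi)\leq(1-a)^{-1}d_M(x,y)$.

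Your telescoping recursion is self-contained and shows directly why no bunching is needed. Along a common past the backward orbits converge, so the H\"older errors are summable against $\theta^k$. Your preliminary reduction to $d_M(x,y)<c$ also handles the fact that inverse branches contract distances only locally. The paper asserts $d((\hat x_\omega)_i,(\hat y_\omega)_i)\leq d(x,y)$ for all $x,y$, which in general holds only up to a constant.

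What the paper's route buys is H\"older dependence on the past as well. It reuses this for the cylinder bound $\operatorname{diam}_{\angle}\mathscr D^{su}_n(x,w)\leq C_{\mathrm H}2^{-\gamma n}$ behind Proposition~\ref{prop: small angle}, and for the H\"older potential in Section~\ref{sec: Bernoulli horseshoe approx}. Your recursion adapts to that too: run it over the $n$ shared symbols, where the base error vanishes, in the wide cone $\{L:\angle(L,E^{wu})\geq\beta\}$, which depends only on the base point.

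One small precision: the $n_0$ of Definition~\ref{def:dominated splitting} only gives norm ratio $2$. Since $E^{wu}$ and $E^{su}$ need not be orthogonal, you must pass to a higher iterate (or an adapted metric) to get an angle contraction $\theta<1$; the paper makes the same reduction.
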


To prove this proposition, we first {{recall}} the following version of the classical H\"older section theorem (\Cref{lem: Holder section thm}) in \cite{Pugh_foliation_1997}.

Let $E=X\times Y$, where $X$ is a compact metric space and $Y$ is a closed, bounded subset of a Banach space, and let $\pi_X:E\to X$ and $\pi_Y:E\to Y$ be the projection maps. Let $h:X\to X$ be a homeomorphism on the base, and $F:E\to E$ be a bundle map over $(X,h)$, i.e.,$$\pi_X\circ F=h\circ \pi_X.$$ 
\begin{equation}
\begin{tikzcd}
    E \arrow[r, "F"] \arrow[d, "\pi_X"'] & E \arrow[d, "\pi_X"] \\
    X \arrow[r, "h"'] & X
\end{tikzcd}
\end{equation}
Assume $F$ is a fiber contraction: for all $x\in X$ and all $y,y'\in Y$,
\begin{equation}
d_Y\left(\pi_Y(F(x,y)),\pi_Y(F(x,y'))\right)\leq \kappa_x d_Y(y,y'),
\end{equation}
where 
\begin{equation}
\kappa_{\mathrm{fib}}:=\sup\{\kappa_x:x\in X\}<1.
\end{equation}
We also assume the following conditions.
\begin{enumerate}[(1)]
\item $h$ satisfies that
\begin{equation}
a:=\inf\left\{\frac{d_X(h(x_1),h(x_2))}{d_X(x_1,x_2)}:x_1\neq x_2\in X\right\}>0.
\end{equation}
\item $F$ is $\alpha$-H\"{o}lder continuous: there exists $C>0$ such that for any $x,x'\in X$ and any $y\in Y$,
\begin{equation}
d_Y\left(\pi_Y(F(x,y)),\pi_Y(F(x',y))\right)\leq C d_X(x,x')^{\alpha}.
\end{equation}
\end{enumerate}

\begin{lem}[{\cite[Theorem 3.2]{Pugh_foliation_1997}}]
\label{lem: Holder section thm}
Under the above assumptions, there exists a unique $F$-invariant section $\sigma_F:X\to E$ in the sense that
\begin{equation}
\sigma_F\circ h=F\circ \sigma_F.
\end{equation}
Moreover, suppose $$\kappa_{\mathrm{fib}}\cdot a^{-\alpha}<1.$$ Then $\sigma_F$ is $\alpha$-H\"{o}lder continuous, i.e., if we write $\sigma_F(x)=(x,s(x))\in X\times Y$, then there exists $C>0$ such that
\begin{equation}
d_Y(s(x),s(x'))\leq Cd_X(x,x')^{\alpha} \quad \text{for all}\,\,x,x'\in X.
\end{equation}
\end{lem}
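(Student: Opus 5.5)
The plan is a graph-transform argument: realize $\sigma_F$ as the fixed point of a contraction on a complete space of sections, then show that a closed class of Hölder sections is invariant under that contraction.

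\textbf{Graph transform.} Identify a section $\sigma(x)=(x,s(x))$ with the function $s\colon X\to Y$. Let $\mathcal S$ be the set of all functions $X\to Y$ with the sup metric $d_\infty(s,s')=\sup_{x}d_Y(s(x),s'(x))$. Since $Y$ is a closed bounded subset of a Banach space, $(\mathcal S,d_\infty)$ is a complete metric space with finite distances. Since $h$ is a homeomorphism, we can define
\[
(\Gamma s)(x):=\pi_Y\bigl(F(h^{-1}x,\,s(h^{-1}x))\bigr),
\]
which takes values in $Y$ because $F$ maps $E$ into $E$. The relation $\sigma\circ h=F\circ\sigma$ says exactly that $\Gamma s=s$.

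\textbf{Existence and uniqueness.} The fiber contraction gives $d_\infty(\Gamma s,\Gamma s')\leq\kappa_{\mathrm{fib}}\,d_\infty(s,s')$, so $\Gamma$ is a contraction of $\mathcal S$. By the Banach fixed point theorem it has a unique fixed point $s_F$, and uniqueness holds even among arbitrary, not necessarily continuous, invariant sections.

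\textbf{Hölder regularity.} For $K>0$, let $\mathcal S_K\subseteq\mathcal S$ be the set of $s$ with $d_Y(s(x),s(x'))\leq K\,d_X(x,x')^\alpha$ for all $x,x'$. This set is nonempty (it contains the constant sections) and closed under uniform convergence. I will show that $\Gamma(\mathcal S_K)\subseteq\mathcal S_K$ once $K$ is large. Take $s\in\mathcal S_K$ and $x,x'\in X$, and put $u=h^{-1}x$, $u'=h^{-1}x'$. By the definition of $a$, $d_X(u,u')\leq a^{-1}d_X(x,x')$. Inserting the intermediate point $F(u',s(u))$ and using the triangle inequality, assumption (2) bounds the first term and the fiber contraction bounds the second:
\[
d_Y\bigl((\Gamma s)(x),(\Gamma s)(x')\bigr)\leq C\,d_X(u,u')^\alpha+\kappa_{\mathrm{fib}}K\,d_X(u,u')^\alpha\leq (C+\kappa_{\mathrm{fib}}K)\,a^{-\alpha}\,d_X(x,x')^\alpha .
\]
Because $\kappa_{\mathrm{fib}}a^{-\alpha}<1$, choosing
\[
K\geq \frac{C a^{-\alpha}}{1-\kappa_{\mathrm{fib}}a^{-\alpha}}
\]
gives $(C+\kappa_{\mathrm{fib}}K)a^{-\alpha}\leq K$, so $\Gamma$ preserves $\mathcal S_K$. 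Starting the iteration $\Gamma^n s$ from a constant section, every iterate lies in $\mathcal S_K$, and the iterates converge uniformly to $s_F$. Since $\mathcal S_K$ is closed, $s_F\in\mathcal S_K$, which is the claimed $\alpha$-Hölder bound.

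The only delicate point is the one-step Hölder estimate. One must use backward iteration through $h^{-1}$ and the expansion lower bound $a$, and it is precisely here that the bunching condition $\kappa_{\mathrm{fib}}a^{-\alpha}<1$ is needed. Everything else is a routine contraction-mapping argument.
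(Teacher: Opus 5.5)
Your proof is correct and follows the standard argument behind this lemma; the paper does not reprove it but cites \cite[Theorem 3.2]{Pugh_foliation_1997}, which uses exactly this graph transform: a $\kappa_{\mathrm{fib}}$-contraction on the sup-metric space of all sections gives existence and uniqueness, and the closed set of $\alpha$-H\"older sections with constant $K\geq Ca^{-\alpha}/(1-\kappa_{\mathrm{fib}}a^{-\alpha})$ is invariant under the bunching condition. Your one-step estimate, which passes through $h^{-1}$, inserts the intermediate point $F(u',s(u))$, and uses the bound $d_X(h^{-1}x,h^{-1}x')\leq a^{-1}d_X(x,x')$, is carried out correctly.
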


A map $\sigma:\hat{\Lambda}_f\to \mathbb{P}TM$ is called a {\textit{section}} if $\sigma(\hat{x})\in \mathbb{P}T_{x_0}M$ for any $\hat{x}=\{x_j\}_{j\leq 0}\in \hat{\Lambda}_f$. Let $\hat{d}_a$ be the metric on $\hat{\Lambda}_f$ given in \eqref{eqn:da metric}.
 We prove the following lemma on approximation by Lipschitz sections.
\begin{lem}\label{lem: Lip section approximate}
     For any continuous section $\sigma_0:\hat \Lambda_f\to \mathbb PTM$ 
    and any $\epsilon>0$, there exists a Lipschitz section $\sigma_{1}:(\hat \Lambda_f,\hat d_a)\to \mathbb PTM$ such that
    \[
    \ d_{C^0}(\sigma_0,\sigma_1)<\epsilon.
    \]
\begin{proof}
Fix a smooth embedding of the manifold $M$ into Euclidean space
$\mathbb{R}^N$. For any $x\in M$, we have the inclusions
$T_xM\subseteq T_x\mathbb{R}^N$ and
$\mathbb{P}T_xM\subseteq \mathbb{P}T_x\mathbb{R}^N$. Let
$T_M\mathbb{R}^N$ and $\mathbb{P}T_M\mathbb{R}^N$ denote the restrictions
of the respective bundles to $M$. Then
\[
TM\subseteq T_M\mathbb{R}^N,
\qquad
\mathbb{P}TM\subseteq \mathbb{P}T_M\mathbb{R}^N.
\]
Furthermore, the trivialization of the ambient tangent bundle induces the
smooth trivialization
\begin{equation}
\mathbb{P}T_M\mathbb{R}^N
\cong
M\times\mathbb{R}P^{N-1}.
\end{equation}

We first justify the Lipschitz approximation used below. Let $X$ be a
compact metric space. The real-valued Lipschitz functions on $X$ form a
unital subalgebra of $C(X)$ that separates points; hence, they are
uniformly dense in $C(X)$ by the Stone--Weierstrass theorem. Fix a smooth
embedding
\[
\imath:\mathbb{R}P^{N-1}\longrightarrow\mathbb{R}^q
\]
for some $q\in\mathbb N$ and a smooth retraction
\[
r_{\mathbb P}:\mathcal V\longrightarrow
\imath(\mathbb{R}P^{N-1})
\]
from a tubular neighborhood $\mathcal V$ of
$\imath(\mathbb{R}P^{N-1})$. Given a continuous map
$g:X\to\mathbb{R}P^{N-1}$, we may approximate each coordinate of
$\imath\circ g$ uniformly by a Lipschitz function. If the approximation
is sufficiently close, its image is contained in $\mathcal V$, and
composition with $\imath^{-1}\circ r_{\mathbb P}$ gives a Lipschitz map
$X\to\mathbb{R}P^{N-1}$ arbitrarily close to $g$. Thus
\[
\operatorname{Lip}(X,\mathbb{R}P^{N-1})
\quad\text{is dense in}\quad
C(X,\mathbb{R}P^{N-1})
\]
in the uniform topology.

Let
\begin{equation}
\widetilde{\sigma}_0:
\hat{\Lambda}_f
\longrightarrow
\mathbb{P}T_M\mathbb{R}^N
\cong
M\times\mathbb{R}P^{N-1}
\end{equation}
be the continuous map obtained by composing $\sigma_0$ with the inclusion
$\mathbb{P}TM\subseteq\mathbb{P}T_M\mathbb{R}^N$.

Under the above trivialization, write
\[
\widetilde{\sigma}_0(\hat{x})
=
\bigl(\pi(\hat{x}),s_0(\hat{x})\bigr),
\]
where
\[
s_0:\hat{\Lambda}_f\longrightarrow\mathbb{R}P^{N-1}
\]
is continuous. Note that the natural projection
$\pi:(\hat{\Lambda}_f,\hat d_a)\to M$ is $1$-Lipschitz, since
\[
d_M\bigl(\pi(\hat{x}),\pi(\hat{y})\bigr)
=
d_M(x_0,y_0)
\leq
\hat d_a(\hat{x},\hat{y}).
\]

For $x\in M$, let
\[
P_x:\mathbb{R}^N\longrightarrow T_xM
\]
be the Euclidean orthogonal projection. For a line $[v]\in
\mathbb{R}P^{N-1}$ sufficiently close to $\mathbb{P}T_xM$, one has
$P_xv\neq0$, and we define
\[
\pi^*(x,[v])
:=
\bigl(x,[P_xv]\bigr).
\]
Since $x\mapsto P_x$ depends smoothly on $x$, this defines a smooth,
fiber-preserving bundle map
\[
\pi^*:\mathcal U\longrightarrow\mathbb{P}TM
\]
on an open neighborhood
$\mathcal U\subseteq\mathbb{P}T_M\mathbb{R}^N$ of $\mathbb{P}TM$.
Moreover,
\(
\left.\pi^*\right|_{\mathbb{P}TM}
=
\operatorname{Id}_{\mathbb{P}TM}.
\)

The set $\widetilde{\sigma}_0(\hat{\Lambda}_f)$ is compact and is contained
in $\mathbb{P}TM$. Therefore, there exist $\rho>0$ and $L\geq1$ such that
the $\rho$-neighborhood of
$\widetilde{\sigma}_0(\hat{\Lambda}_f)$ is contained in $\mathcal U$ and
$\pi^*$ is $L$-Lipschitz on this neighborhood.
Choose
\[
0<\delta<\min\left\{\rho,\frac{\epsilon}{L}\right\}.
\]
By the Lipschitz approximation established above, there exists a Lipschitz
map
\[
s_1:(\hat{\Lambda}_f,\hat d_a)
\longrightarrow
\mathbb{R}P^{N-1}
\]
such that
\[
\sup_{\hat{x}\in\hat{\Lambda}_f}
d_{\mathbb{R}P^{N-1}}
\bigl(s_0(\hat{x}),s_1(\hat{x})\bigr)
<\delta.
\]
Define
\[
\widetilde{\sigma}_1(\hat{x})
:=
\bigl(\pi(\hat{x}),s_1(\hat{x})\bigr).
\]
Since both $\pi$ and $s_1$ are Lipschitz,
$\widetilde{\sigma}_1$ is Lipschitz. Furthermore,
\[
d_{C^0}\bigl(\widetilde{\sigma}_0,\widetilde{\sigma}_1\bigr)
<\delta
\text{, so that }
\widetilde{\sigma}_1(\hat{\Lambda}_f)\subseteq\mathcal U.
\]

Consequently, define
\[
\sigma_1:
(\hat{\Lambda}_f,\hat d_a)
\longrightarrow
\mathbb{P}TM\qquad
\text{by}\qquad
\sigma_1(\hat{x})
=
\pi^*\bigl(\widetilde{\sigma}_1(\hat{x})\bigr).
\]
Because both $\widetilde{\sigma}_1$ and $\pi^*$ are Lipschitz on the
relevant neighborhood, $\sigma_1$ is Lipschitz. Since $\pi^*$ preserves
the base point,
\[
\pi_M\circ\sigma_1(\hat{x})
=
\pi(\hat{x}),
\]
and hence $\sigma_1$ is a section. Finally, using
$\pi^*\circ\widetilde{\sigma}_0=\sigma_0$ and the choice of $\delta$, we
obtain
\[
\begin{aligned}
d_{C^0}(\sigma_0,\sigma_1)
&=
\sup_{\hat{x}\in\hat{\Lambda}_f}
d_{\mathbb{P}TM}
\left(
\pi^*(\widetilde{\sigma}_0(\hat{x})),
\pi^*(\widetilde{\sigma}_1(\hat{x}))
\right)\\
&\leq
L\,d_{C^0}
\bigl(\widetilde{\sigma}_0,\widetilde{\sigma}_1\bigr)
<
L\delta
<
\epsilon.
\end{aligned}
\]
This completes the proof.
\end{proof}
\end{lem}

Let $\sigma_{su}:\hat{\Lambda}_f\to \mathbb{P}TM$ be the continuous map given by $\sigma_{su}(\hat{x})=E^{su}(\hat{x})$. By the continuity of the splitting, there exists a uniform cone size $\alpha_0>0$ such that the $\alpha_0$-cone field around $E^{su}$ forms an open neighborhood $K_{su}\subset \mathcal{E}$ of the graph $\sigma_{su}$. The domination property implies the following two properties:
consider the fiber map
\[
F(\hat{x},L) = (\hat{f}(\hat{x}), D_{\pi(\hat{x})}f(L)),
\]
then, by passing to an equivalent adapted metric or considering a high iteration without loss of generality, we have 
\begin{enumerate}[label=(\roman*), ref=(\roman*)]
\item (Strict invariance) $K_{su}$ is a trapping area, that is,
\begin{equation}
F(\overline{K_{su}})\subset K_{su}.
\end{equation}

\item (Fiber contraction) The map $F$ strictly contracts fibers inside $K_{su}$. Specifically, there exists a constant $\kappa_{\mathrm{fib}}\in (0,1)$ such that for any $\hat{x}\in \hat{\Lambda}_f$ and any two $L_1,L_2\in K_{su}\cap \mathbb{P}T_{\pi(\hat{x})}M$,
\begin{equation}
\angle \big( F(\hat{x}, L_1), F(\hat{x}, L_2) \big) \leq \kappa_{\mathrm{fib}} \cdot \angle(L_1, L_2).
\end{equation}
\end{enumerate}

Next, we establish the Hölder regularity of the invariant $su$-direction on the inverse limit space, which will later be transferred to fixed-past sections over the base repeller.

\begin{prop}\label{lem: Holder in inverse limit space}
    For any $\kappa_{\mathrm{fib}}<a<1$, the map 
    \begin{equation}
    \hat x\in\hat\Lambda_f\mapsto (\pi (\hat x),E^{su}(\hat x))\in \ptm
    \end{equation}
    is $\alpha$-H\"older continuous with respect to the metric $\hat d_a$ on $\hat\Lambda_f$ and $d_{\ptm}$ on $\ptm$.
   \begin{proof}
Let $\sigma_{su} : \hat{\Lambda}_f \to \mathbb{P}TM$ denote the invariant strong unstable
section, i.e., $\sigma_{su}(\hat{x}) = E^{su}(\hat{x})$ for all
$\hat{x} \in \hat{\Lambda}_f$.
To define suitable fiber coordinates, fix $\epsilon>0$ small and apply
Lemma~\ref{lem: Lip section approximate} to obtain a Lipschitz section
\[
\sigma_1 : \hat{\Lambda}_f \to \mathbb{P}TM
\]
such that
\[
\mathrm{graph}(\sigma_1) \subset K_{su}, \qquad
d_{C^0}(\sigma_{su}, \sigma_1) < \epsilon.
\]
This Lipschitz section $\sigma_1$ defines local fiber coordinates in
$K_{su}\subset \hat{\Lambda}_f\times\mathbb{P}TM$ such that the fiber is an interval in $\mathbb R$ with a metric
that coincides with the angle metric on each $\mathbb{P}T_{\pi(\hat{x})}M$. The coordinate map is Lipschitz by its definition. Thus, this coordinate structure preserves the H\"older regularity of the sections.

Recall the fiber map
\[
F: K_{su} \to K_{su}, \quad F(\hat{x},L) = (\hat{f}(\hat{x}), D_{\pi(\hat{x})}f(L)).
\]
By the fiber contraction property, there exists $0<\kappa_{\mathrm{fib}}<1$ such that for all
$\hat{x}\in \hat{\Lambda}_f$ and $u,v \in K_{su}$ in the same fiber,
\[
d_{\mathbb{P}T_{\pi(\hat f (\hat{x}))}M}(D_{\pi(\hat{x})}f(u), D_{\pi(\hat{x})}f(v))
\leq \kappa_{\mathrm{fib}} \, d_{\mathbb{P}T_{\pi(\hat{x})}M}(u,v).
\]
One can directly check that the conditions of Lemma~\ref{lem: Holder section thm} hold in this setting since $\kappa_{\mathrm{fib}}<a<1$. Apply Lemma~\ref{lem: Holder section thm} to the fiber contraction $F$ in the
coordinates given by $\sigma_1$.  This guarantees the existence of a unique
$F$-invariant section
\[
\tilde{\sigma} : \hat{\Lambda}_f \to K_{su}
\qquad \text{satisfying}
\qquad \tilde{\sigma} \circ \hat{f} = F \circ \tilde{\sigma},
\]
and which is $\alpha$-Hölder continuous with constant $C>0$ with respect to the angle metric
on $\mathbb{P}T_{\pi(\hat{x})}M$.
Since the section $\sigma_{su}$ is invariant, it follows that
$
\tilde{\sigma} = \sigma_{su}.
$
Therefore, for all $\hat{x},\hat{y}\in \hat{\Lambda}_f$,
\[
d_{\mathbb{P}TM}(\sigma_{su}(\hat{x}), \sigma_{su}(\hat{y}))
\leq C \, \hat{d}_a(\hat{x},\hat{y})^\alpha.
\]
Thus $\sigma_{su}$ is $\alpha$-Hölder continuous, with the Hölder
constant  depending on the system $f$. 
\end{proof}
\end{prop}

\begin{proof}[Proof of \Cref{prop Holder continuous}]
Now fix $\kappa_{\mathrm{fib}}<a<1$ as above, \Cref{lem: Holder in inverse limit space} implies that there exists a constant $C_0'>0$ such that
\[
d_{\ptm}\left((\pi\hat x,E^{su}(\hat x)),(\pi\hat y,E^{su}(\hat y))\right)\leq C_0'\cdot\hat d_{a}(\hat x,\hat y)^\alpha.
\]
Since $(\Lambda_f,f)$ is a horseshoe repeller, for any historical branch $\omega\in \Sigma^-$ and $x,y\in\Lambda_f$, one has
\[
d((\hat x_\omega)_i,(\hat y_\omega)_i)\leq d(x,y),\quad \forall\ i\in\mathbb Z^-.
\]
Thus,
\[
\hat d_a(\hat x_\omega,\hat y_\omega)\leq \sum_{i=0}^{\infty}a^{i}\cdot d(x,y)=\frac{1}{1-a}\cdot d(x,y).
\]
Together we obtain 
\[
d_{\ptm}\left((x,E^{su}(\hat x_\omega)),(y,E^{su}(\hat y_\omega))\right)\leq C_0'\cdot(\frac{1}{1-a})^\alpha\cdot d(x,y)^\alpha,
\]
which completes the proof of \Cref{prop Holder continuous} with $C_0= C_0'\cdot(\frac{1}{1-a})^\alpha$.

\end{proof}

\subsection{Small angle and non-integrability}\label{sec:3.2}
We first prove \cref{lem:pointwise-su-nonintegrability}, which yields that for a transitive surface repeller, if one point is $su$-non-integrable, then every point on this repeller is $su$-non-integrable.

\begin{proof}[Proof of \cref{lem:pointwise-su-nonintegrability}]
Let
\[
\theta_0=\angle(E^{su}(\hat p^1),E^{su}(\hat p^2))>0 .
\]
We first observe that the existence of two histories whose ($E^{su}$) directions make an angle uniformly bounded away from zero is an open property with respect to the base point.
Indeed, by the uniform expansion, the two backward orbits 
defining $\hat p^1$ and $\hat p^2$ determine two local inverse branches near $p$ with a uniform size. Thus, after shrinking a relative neighborhood $U\subset\Lambda_f$ of $p$, these inverse branches give two continuous history sections
\[
s_i:U\to\hat\Lambda_f,\qquad \pi\circ s_i=\Id_U,\qquad s_i(p)=\hat p^i,
\quad i=1,2 .
\]
By continuity of the dominated splitting on $\hat\Lambda_f$, after shrinking $U$ if necessary, we have
\[
\angle(E^{su}(s_1(z)),E^{su}(s_2(z)))\geq \frac{\theta_0}{2}
\qquad\text{for all }z\in U .
\]

Now fix $x\in\Lambda_f$. Since $(\Lambda_f,f)$ is a transitive repeller, it admits an irreducible Markov coding
\[
\Pi:\Sigma_A^+\to\Lambda_f,\qquad \Pi\circ\sigma=f\circ\Pi .
\]
Choose a cylinder $\Lambda[w]=\Pi([w])\subset U$. If $\xi\in\Sigma_A^+$ is a coding of $x$, irreducibility gives an admissible connecting word $v$ such that $wv\xi$ is admissible. Hence, for
\[
z:=\Pi(wv\xi)\in U
\]
there exists $n\geq0$ with
\[
f^n(z)=x .
\]

For $i=1,2$, define
\[
\hat x^i:=\hat f^n(s_i(z)).
\]
Then $\pi(\hat x^i)=x$, and by $\hat f$-invariance of the dominated splitting,
\[
E^{su}(\hat x^i)=D_zf^n(E^{su}(s_i(z))).
\]
Since $D_zf^n$ is invertible, its induced map on projective lines is injective. Therefore
\[
\mathbb P(E^{su}(\hat x^1))\neq
\mathbb P(E^{su}(\hat x^2)).
\]
This proves the lemma.
\end{proof}

The next proposition is the key measure-theoretic consequence of \(su\)-non-integrability.  Roughly speaking, it says that the dependence of the \(su\)-direction on the past cannot be removed after discarding a small set of historical branches: every sufficiently large set of pasts still contains two distinct histories whose induced \(su\)-directions at a prescribed base point are arbitrarily close, while not coinciding. 

In this way, \(su\)-non-integrability is strengthened into a measure-quantitative statement on historical branches, which will later allow us to extract small-angle pairs inside large-measure subsets of \(\Sigma^{-}\), where \(\Sigma^{-}\) is the space of left-infinite sequences representing the historical branches.

\begin{prop}\label{prop: small angle}
Let $(\Lambda_f,f)$ be a $C^{1+\alpha}$ $su$-non-integrable surface horseshoe repeller, and let $\nu$ be a fully supported Bernoulli measure on $\Sigma^-$. Then there exists $\epsilon>0$ such that the following holds.

For every $\eta>0$, every $x\in\Lambda_f$, and every measurable set
$\Sigma'\subseteq\Sigma^-$ with
\[
\nu(\Sigma')>1-\epsilon,
\]
there exist two distinct histories $\mathbf i,\mathbf j\in\Sigma'$ such that
\[
0<
\angle\bigl(E^{su}(\hat x_{\mathbf i}),E^{su}(\hat x_{\mathbf j})\bigr)
<\eta.
\]
\end{prop}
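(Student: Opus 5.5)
The plan is to reduce the statement to a non-atomicity property of the distribution of strong unstable directions at $x$. For $x\in\Lambda_f$, let $\Theta_x\colon\Sigma^-\to\mathbb P(T_xM)$, $\bfi\mapsto E^{su}(\hat x_\bfi)$, and let $\theta_x:=(\Theta_x)_*\nu$.

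\emph{Reduction.} Suppose $\theta_x$ has no atoms for every $x\in\Lambda_f$. I claim that then any $\epsilon\in(0,1)$, for instance $\epsilon=1/2$, works. Let $\nu(\Sigma')>1-\epsilon>0$.
\begin{itemize}
\item If $\Theta_x(\Sigma')$ were finite, then $\theta_x(\Theta_x(\Sigma'))\geq\nu(\Sigma')>0$. So $\theta_x$ would charge a finite set and hence have an atom, which is excluded.
\item Hence $\Theta_x(\Sigma')$ is an infinite subset of the compact space $\mathbb P(T_xM)\cong\mathbb{RP}^1$, so it has an accumulation point.
\item Therefore, for every $\eta>0$ it contains two distinct lines at angle $<\eta$. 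Their preimages $\bfi,\bfj\in\Sigma'$ are automatically distinct.
\end{itemize}
Everything therefore reduces to showing $M:=\sup_{x\in\Lambda_f}\max_{L}\theta_x(\{L\})=0$.

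\emph{Self-similarity.} Split $\bfi=(\bfi',a)$ with $a=i_{-1}$ and put $y_a:=f_a^{-1}(x)$. Then $E^{su}(\hat x_\bfi)=D_{y_a}f\bigl(E^{su}(\hat{(y_a)}_{\bfi'})\bigr)$. Since $\nu$ is Bernoulli, this gives
\[
\theta_x=\sum_{a=1}^m p_a\,(D_{y_a}f)_*\theta_{y_a},\qquad\text{hence}\qquad \theta_x(\{L\})=\sum_a p_a\,\theta_{y_a}(\{L_a\}),
\]
where $L_a:=(D_{y_a}f)^{-1}L$. Iterating, for every word $w$ of length $n$ with base point $y_w:=f_w^{-n}(x)$, we get $\theta_x(\{L\})=\sum_{|w|=n}p_w\,\theta_{y_w}(\{L_w\})$, where $L_w:=(D_{y_w}f^n)^{-1}L$.

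\emph{$M$ is attained.} Take $x_k$ and lines $L_k$ with $\theta_{x_k}(\{L_k\})\to M$, and pass to a subsequence with $x_k\to x$ and $L_k\to L$ (after parallel transport).
\begin{itemize}
\item By \Cref{prop Holder continuous}, $\Theta_{x_k}\to\Theta_x$ uniformly on $\Sigma^-$, so $\theta_{x_k}\to\theta_x$ weakly.
\item For any closed $\rho$-neighbourhood $\bar B_\rho(L)$, we get $\limsup_k\theta_{x_k}(\{L_k\})\leq\theta_x(\bar B_\rho(L))$.
\item Letting $\rho\to0$ gives $\theta_x(\{L\})\geq M$, so the supremum is attained at $(x,L)$.
\end{itemize}
This compactness and upper-semicontinuity step is the point I would check most carefully. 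The key input is the uniformity in $\bfi$ of the Hölder estimate.

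\emph{Contradiction if $M>0$.}
\begin{itemize}
\item \emph{All atoms are maximal.} From $M=\theta_x(\{L\})=\sum_{|w|=n}p_w\,\theta_{y_w}(\{L_w\})$, with each term at most $M$ and $\sum p_w=1$, every $\theta_{y_w}(\{L_w\})$ equals $M>0$. In particular each $L_w$ is itself a strong unstable direction $E^{su}$ at $y_w$ for some past, so it lies in the trapping cone $K_{su}$.
\item \emph{All directions collapse.} Any past $\bfi$ with last $n$ symbols $w$ gives $E^{su}(\hat x_\bfi)=D_{y_w}f^n\bigl(E^{su}(\cdot)\bigr)$, where $E^{su}(\cdot)$ is also in $K_{su}$ at $y_w$. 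By the fiber contraction inside $K_{su}$ (passing to an iterate/adapted metric as in the proof of \Cref{lem: Holder in inverse limit space}), $\angle\bigl(E^{su}(\hat x_\bfi),L\bigr)\leq C\kappa_{\mathrm{fib}}^n$. Letting $n\to\infty$, every strong direction at $x$ equals $L$.
\item \emph{Contradiction.} This contradicts \Cref{lem:pointwise-su-nonintegrability}, applied at $x$: a horseshoe repeller is transitive, and $su$-non-integrability gives a point with two distinct strong directions.
\end{itemize}
Hence $M=0$, every $\theta_x$ is non-atomic, and the reduction above completes the proof.
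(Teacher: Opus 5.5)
Your proof is correct, and it follows a genuinely different route from the paper's.

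\textbf{The paper's route.} The paper never examines atoms of the direction distribution. It first upgrades $su$-non-integrability, by compactness, to a uniform finite-scale separation (\Cref{lem:uniform-su-separation}): at every point there are two words $u_x,v_x$ of a fixed length $N_0$ whose direction sets are $\delta_0$-apart. It then sets $\epsilon:=p_*^{N_0}$ and runs a pigeonhole argument over the cylinders $[w]$ of a length $N_1$ chosen from $\eta$ via the H\"older diameter bound. If $\Sigma'$ missed one of $[u_w\star w]$, $[v_w\star w]$ for every $w$, its complement would have mass at least $\epsilon$. The two pasts obtained this way share the terminal block $w$, which gives angle $<\eta$. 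They separate within the next $N_0$ symbols, which gives angle $>0$ by injectivity of $Df^{N_1}$ on lines.

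\textbf{Your route.} You show that every $\theta_x=(\Theta_x)_*\nu$ is non-atomic. This is the fibered, non-linear analogue of the fact that a stationary measure of a contracting IFS is either a single Dirac mass or continuous. The ingredients are the self-similarity $\theta_x=\sum_a p_a(D_{y_a}f)_*\theta_{y_a}$, attainment of the maximal atom by compactness and the portmanteau theorem, and projective contraction under domination. The conclusion then holds for every $\Sigma'$ of positive measure, by compactness of $\mathbb P(T_xM)$.

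\textbf{What each buys.} Your statement is strictly stronger: any $\epsilon<1$ works. In \Cref{sec: step1} one would then only need $\nu_-(G_4(x,\cdot,h))>0$, so the constraints $\epsilon_1,\epsilon_2<\epsilon/6$ would become unnecessary. The paper's argument is explicit and combinatorial. It identifies $\epsilon$ and locates the two pasts precisely: a common terminal block of prescribed length, splitting within $N_0$ further symbols. It also uses only the H\"older diameter estimate, with no global contraction step and no attained supremum.

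\textbf{One point to state precisely.} In your collapse step, $L_w$ and $E^{su}(\widehat{(y_w)}_{\bfi''})$ generally come from different histories of $y_w$. So they need not lie in a common $\alpha_0$-cone around a single $E^{su}(\hat y)$. Use instead the history-independent cone $\{L:\angle(L,E^{wu}(y_w))\geq\beta_0\}$, where $\beta_0$ is the uniform angle between $E^{wu}$ and $E^{su}$. Writing lines as $\mathbb R(e_s+te_w)$ in a basis $e_s\in E^{su}(\hat y)$, $e_w\in E^{wu}(y)$, domination multiplies slopes $t$ by at most $c\lambda^n$. Hence $\angle(Df^nL_1,Df^nL_2)\leq C(\beta_0)\lambda^n$ uniformly, which is exactly the estimate your argument needs.
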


We next use the H\"older continuity of the strong-unstable direction on the inverse limit to derive an exponential diameter estimate for the sets of directions associated with history cylinders.

For $x\in\Lambda_f$ and $\mathbf i=(\ldots,i_{-2},i_{-1})\in\Sigma^-$, write
\[
\hat x_{\mathbf i}:=
(\ldots,x_{-2}^{\mathbf i},x_{-1}^{\mathbf i},x),
\]
where
\[
x_{-n}^{\mathbf i}
:=f^{-1}_{i_{-n}}\circ\cdots\circ f^{-1}_{i_{-1}}(x),
~~ n\geq 1.
\]
Thus $f^n(x_{-n}^{\mathbf i})=x$.

For a word
\[
w=(i_{-n},\ldots,i_{-1})\in\{1,\ldots,m\}^n,
\]
let $[w]\subset\Sigma^-$ denote the corresponding $n$-cylinder, and define the associated set of strong-unstable directions by
\[
\mathscr D^{su}_n(x,w)
:=\{E^{su}(\hat x_{\mathbf i}):
\mathbf i\in[w]\} \subset\mathbb P(T_xM).
\]

By \Cref{lem: Holder in inverse limit space} and a direct calculation that is omitted, there exist constants
$C_{\mathrm{inv}}>0$ and $\gamma=-\alpha\log_{2}a>0$ such that
\[
d_{\mathbb PTM}
\left(
(\pi(\hat z),E^{su}(\hat z)),
(\pi(\hat z'),E^{su}(\hat z'))
\right)
\leq
C_{\mathrm{inv}}\hat d(\hat z,\hat z')^\gamma
\]
for all $\hat z,\hat z'\in\hat\Lambda_f$.

Fix $x\in\Lambda_f$ and a word
\[
w=(i_{-n},\ldots,i_{-1})
\]
of length $n$. If $\mathbf i,\mathbf j\in[w]$, then the corresponding histories
$\hat x_{\mathbf i}$ and $\hat x_{\mathbf j}$ have identical coordinates from
time $-n$ to time $0$. By the definition of the inverse-limit metric with $a=\frac{1}{2}$, there
exists a uniform constant $C_{\rm b}>0$ such that
\[
\hat d(\hat x_{\mathbf i},\hat x_{\mathbf j})
\leq C_{\rm{b}} 2^{-n}.
\]
Consequently,
\[
d_{\mathbb PTM}
\left(
(x,E^{su}(\hat x_{\mathbf i})),
(x,E^{su}(\hat x_{\mathbf j}))
\right)
\leq
C_{\mathrm{inv}}C_{\rm b}^\gamma 2^{-\gamma n}.
\]
Since the two directions belong to the same fiber $\mathbb P(T_xM)$, the
restriction of the metric on $\mathbb PTM$ is uniformly equivalent to the
angular metric. Hence, after changing the constant, there exists
$C_{\mathrm H}>0$ such that
\[
\angle\left(
E^{su}(\hat x_{\mathbf i}),
E^{su}(\hat x_{\mathbf j})
\right)
\leq
C_{\mathrm H}2^{-\gamma n}.
\]
Taking the supremum over $\mathbf i,\mathbf j\in[w]$ gives
\[\label{eq:su-direction-cylinder-diameter}
\operatorname{diam}_{\angle}
\mathscr D_n^{su}(x,w)
\leq
C_{\mathrm H}2^{-\gamma n}.
\]
 where $$\operatorname{diam}_{\angle}
\mathscr D^{su}_n(x,w):=\sup_{\mathbf i,\mathbf j\in[w]}
\angle\left(
E^{su}(\hat x_{\mathbf i}),
E^{su}(\hat x_{\mathbf j})
\right).$$

We next obtain a uniform finite-scale separation of strong-unstable
directions.

\begin{lem}\label{lem:uniform-su-separation}
With the notation above, there exist $N_0\in\mathbb N$ and $\delta_0>0$ such that, for every
$x\in\Lambda_f$, there are two distinct words
$u_x,v_x\in\{1,\ldots,m
\}^{N_0}$ satisfying
\begin{equation}\label{eq:uniform-su-cylinder-separation}
\operatorname{dist}_{\angle}
\left(
\mathscr D^{su}_{N_0}(x,u_x),
\mathscr D^{su}_{N_0}(x,v_x)
\right)
\geq\delta_0.
\end{equation}
where $\operatorname{dist}_{\angle}(A,B):=\inf\{\angle (E,F):E\in A, F\in B\}.$
\end{lem}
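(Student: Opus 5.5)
The plan is to combine \cref{lem:pointwise-su-nonintegrability} (two distinct strong directions exist at every point) with the cylinder diameter estimate \eqref{eq:su-direction-cylinder-diameter} and a compactness argument over $\Lambda_f$. Note first that the horseshoe repeller is transitive: it is conjugate to the full shift. So \cref{lem:pointwise-su-nonintegrability} applies. It gives, for each $x\in\Lambda_f$, two pasts $\mathbf i^x,\mathbf j^x\in\Sigma^-$ with $\theta_x:=\angle(E^{su}(\hat x_{\mathbf i^x}),E^{su}(\hat x_{\mathbf j^x}))>0$.

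\textbf{Step 1 (local separation near one point).} Fix $x$. The map $(y,\mathbf i)\mapsto \hat y_{\mathbf i}$ is a homeomorphism $\Lambda_f\times\Sigma^-\to\hat\Lambda_f$, and $E^{su}$ is continuous on $\hat\Lambda_f$. Using parallel transport to compare directions at nearby base points, I will find a relatively open neighborhood $U_x\ni x$ in $\Lambda_f$ with
\[
\angle\bigl(E^{su}(\hat y_{\mathbf i^x}),E^{su}(\hat y_{\mathbf j^x})\bigr)\geq \theta_x/2\qquad\text{for all } y\in U_x .
\]
Quantitatively this also follows from \Cref{prop Holder continuous} with fixed pasts. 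Next, pick $n_x$ with $C_{\mathrm H}2^{-\gamma n_x}<\theta_x/8$. Let $u$ and $v$ be the length-$n_x$ words ending $\mathbf i^x$ and $\mathbf j^x$. By \eqref{eq:su-direction-cylinder-diameter}, which holds at every base point $y$ with a uniform constant, every direction in $\mathscr D^{su}_{n_x}(y,u)$ is within $\theta_x/8$ of $E^{su}(\hat y_{\mathbf i^x})$, and likewise for $v$. Hence
\[
\operatorname{dist}_\angle\bigl(\mathscr D^{su}_{n_x}(y,u),\mathscr D^{su}_{n_x}(y,v)\bigr)\geq \theta_x/4\quad\text{for } y\in U_x .
\]
The words are automatically distinct, because the two sets are disjoint and nonempty.

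\textbf{Step 2 (compactness and a common length).} Cover the compact set $\Lambda_f$ by finitely many $U_{x_1},\dots,U_{x_k}$. Set $N_0:=\max_l n_{x_l}$ and $\delta_0:=\min_l\theta_{x_l}/4$. To pass from length $n_{x_l}$ to length $N_0$, I extend the words to the left by an arbitrary common block, say $1^{N_0-n_{x_l}}$. The resulting $N_0$-cylinders are contained in the original cylinders, so their direction sets are subsets and the distance between them can only increase. They also remain distinct. For $y\in U_{x_l}$, take $u_y,v_y$ to be these extended words; this gives \eqref{eq:uniform-su-cylinder-separation}.

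\textbf{Main obstacle.} The main care point is Step 1: making the continuity in the base point rigorous uniformly over the pasts in a cylinder. This is handled by two facts. First, \eqref{eq:su-direction-cylinder-diameter} holds with constants independent of $x$. Second, fixed-past continuity comes from \Cref{prop Holder continuous}, which allows comparing angles after parallel transport, with an error that is $O(d(x,y)^\alpha)$.
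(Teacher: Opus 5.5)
Your proof is correct and follows the same route as the paper. Both arguments combine \Cref{lem:pointwise-su-nonintegrability}, the uniform cylinder-diameter bound \eqref{eq:su-direction-cylinder-diameter}, local stability near each $x$, and compactness of $\Lambda_f$. The differences are minor: you get local stability from continuity along the two fixed pasts plus the base-point-uniform diameter bound, where the paper uses joint continuity of $(y,\mathbf a)\mapsto E^{su}(\hat y_{\mathbf a})$ on the compact cylinders; and you make explicit the passage from the lengths $n_{x_l}$ to a single $N_0$, by prepending a common block to pass to sub-cylinders, which the paper leaves implicit.
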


\begin{proof}
Fix $x\in\Lambda_f$. By
\Cref{lem:pointwise-su-nonintegrability}, there exist two histories
$\mathbf i,\mathbf j\in\Sigma^-$ such that
\[
\theta_x
:=
\angle\bigl(
E^{su}(\hat x_{\mathbf i}),
E^{su}(\hat x_{\mathbf j})
\bigr)
> 0.
\]
 Choose $n_x$ sufficiently large so that
\[
C_{\mathrm H}2^{-\gamma n_x}<\frac{\theta_x}{4}.
\]

Let $u_x$ and $v_x$ be the length-$n_x$ terminal words of $\mathbf i$ and $\mathbf j$, respectively. By
\eqref{eq:su-direction-cylinder-diameter},
\[ \operatorname{dist}_{\angle}\left(\mathscr D^{su}_{n_x}(x,u_x),\mathscr D^{su}_{n_x}(x,v_x)
\right)>\frac{\theta_x}{2}.
\]

The map
\[
(y,\mathbf a)\longmapsto E^{su}(\hat y_{\mathbf a})
\]
is continuous on $\Lambda_f\times\Sigma^-$. Since the cylinders
$[u_x]$ and $[v_x]$ are compact, after shrinking to a neighborhood
$U_x$ of $x$, the same two direction sets remain uniformly separated:
there exists $\delta_x>0$ such that
\[
\operatorname{dist}_{\angle}
\left(
\mathscr D^{su}_{n_x}(y,u_x),
\mathscr D^{su}_{n_x}(y,v_x)
\right)
\geq\delta_x
\]
for every $y\in U_x$.

By compactness of $\Lambda_f$, there exist finitely many such neighborhoods
\[
U_{x_1},\ldots,U_{x_m}
\]
covering $\Lambda_f$. Then we can take a uniform $N_0$ and a uniform $\delta_0$, which prove the lemma.\qedhere
\end{proof}

\begin{proof}[Proof of \Cref{prop: small angle}]
Let
$(p_1,\ldots,p_m)$
be the weights of the fully supported Bernoulli measure $\nu$ on
$\Sigma^-$. Set
\[
p_*:=\min_{1\leq r\leq m}p_r>0
\]
and let $N_0$ be given by
\Cref{lem:uniform-su-separation}. We claim that the proposition holds with
\[
\epsilon:=p_*^{N_0}.
\]

Fix $\eta>0$, $x\in\Lambda_f$, and
$\Sigma'\subseteq\Sigma^-$ with
$\nu(\Sigma')>1-\epsilon.$ 
Choose $N_1$ sufficiently large so that
\begin{equation}\label{eq:choose-N1-small-angle}
C_{\mathrm H}2^{-\gamma N_1}<\eta.
\end{equation}

Consider the partition of $\Sigma^-$ into cylinders $[w]$ of length $N_1$.
For each such word
\[
w=(i_{-N_1},\ldots,i_{-1}),
\]
let
\[
t_w
:=
f^{-1}_{i_{-N_1}}
\circ\cdots\circ
f^{-1}_{i_{-1}}(x)
\]
be the corresponding $N_1$-step preimage of $x$.

Applying \Cref{lem:uniform-su-separation} at $t_w$, choose two words
$u_w,v_w$ of length $N_0$ such that
\[
\mathscr D^{su}_{N_0}(t_w,u_w)
\cap
\mathscr D^{su}_{N_0}(t_w,v_w)=
\emptyset.
\]
Let $u_w\star w$ denote the word obtained by placing the block $u_w$
immediately before the block $w$ in the backward history, and define
$v_w\star w$ analogously. Thus
\[
[u_w\star w],[v_w\star w]\subset[w].
\]

Since $\nu$ is Bernoulli,
\[
\nu([u_w\star w])
=\nu([u_w])\nu([w])
\geq
p_*^{N_0}\nu([w])
=\epsilon\nu([w]),
\]
and similarly
\[
\nu([v_w\star w])
\geq
\epsilon\nu([w]).
\]

We claim that for at least one word $w$ of length $N_1$, the set
$\Sigma'$ meets both cylinders
\[
[u_w\star w]
\quad\text{and}\quad
[v_w\star w].
\]
Suppose otherwise. Then for every $w$, at least one of these two cylinders
is disjoint from $\Sigma'$. Choose one such cylinder and denote it by $Q_w$.
Since the cylinders $[w]$ of length $N_1$ are pairwise disjoint, the sets
$Q_w$ are also pairwise disjoint. Hence
\[
\nu((\Sigma')^c)
\geq
\sum_{|w|=N_1}\nu(Q_w)
\geq
\epsilon\sum_{|w|=N_1}\nu([w])
=\epsilon,
\]
contradicting
\[
\nu((\Sigma')^c)<\epsilon.
\]
Thus such a word $w$ exists.

Choose
\[
\mathbf i\in
\Sigma'\cap[u_w\star w],
\qquad
\mathbf j\in
\Sigma'\cap[v_w\star w].
\]
Since $\mathbf i$ and $\mathbf j$ have the same terminal block $w$ of
length $N_1$, both directions belong to
$\mathscr D^{su}_{N_1}(x,w)$. Therefore, by
\eqref{eq:su-direction-cylinder-diameter} and
\eqref{eq:choose-N1-small-angle},
\[
\angle\bigl(
E^{su}(\hat x_{\mathbf i}),
E^{su}(\hat x_{\mathbf j})
\bigr)
<
\eta.
\]

It remains to show that these two directions are distinct. Remove the common
terminal block $w$ from $\mathbf i$ and $\mathbf j$ and regard the remaining
tails as histories over $t_w$. Denote these tails by
$\widetilde{\mathbf i}$ and $\widetilde{\mathbf j}$. Then
\[
\widetilde{\mathbf i}\in[u_w],
\qquad
\widetilde{\mathbf j}\in[v_w],
\]
and hence
\[
E^{su}(\hat t_{w,\widetilde{\mathbf i}})
\neq
E^{su}(\hat t_{w,\widetilde{\mathbf j}})
\]
by the choice of $u_w$ and $v_w$.

By invariance of the strong-unstable bundle,
\[
E^{su}(\hat x_{\mathbf i}) =Df^{N_1}_{t_w}(
E^{su}(\hat t_{w,\widetilde{\mathbf i}})),~~
E^{su}(\hat x_{\mathbf j})
=Df^{N_1}_{t_w}
(E^{su}(\hat t_{w,\widetilde{\mathbf j}})).
\]
Since $f$ is a local diffeomorphism near $\Lambda_f$,
$Df^{N_1}_{t_w}$ is invertible, and its induced action on
$\mathbb P(T_{t_w}M)$ is injective. Consequently,
\[
E^{su}(\hat x_{\mathbf i})
\neq
E^{su}(\hat x_{\mathbf j}).
\]
Combining this with the preceding upper bound gives
\[
0<
\angle\bigl(
E^{su}(\hat x_{\mathbf i}),
E^{su}(\hat x_{\mathbf j})
\bigr)
<
\eta,
\]
which proves the proposition.
\end{proof}

\section{$(n,\ell,\epsilon)$-Dyadic-Spreading}
     \label{sec:nledyadic}
     Consider a $C^{1+\alpha}$ horseshoe surface repeller $(\Lambda_f, f)$ (\Cref{def:horseshoe repeller}) that admits a dominated splitting and is $su$-non-integrable. Let $h: \Sigma^+ \to \Lambda_f$ be a homeomorphism such that $h \circ \sigma = f \circ h$, where $\Sigma^+ = \{1, \dots, m\}^{\mathbb{Z}_{\geq 0}}$ for some integer $m \geq 2$ and $\sigma$ is the left-shift map on $\Sigma^+$. 
     For each $n \in \mathbb{Z}_{\geq 0}$, the space $\Sigma^+$ contains $m^n$ distinct $n$-cylinders; we let $\Lambda_{n,q}$ ($1 \leq q \leq m^n$) denote their respective images under $h$ in $\Lambda_f$, which we will also refer to as $n$-cylinder sets.
     Let $\nu_+$ be a fully supported Bernoulli measure on $\Sigma^+$, and define the pushforward measure $\mu := h_*\nu_+$ on $\Lambda_f$. Finally, let $f^{-1}_1, \dots, f^{-1}_m$ be the local inverses of $f$, and let $\Sigma^- := \{1, \dots, m\}^{\mathbb{Z}_{< 0}}$ be identified with the space of these inverse branches.

\subsection{Local Euclidean structures on the repeller}

\subsubsection{Local strong unstable manifolds}\label{sec 4.1.1}
Define the bijection
\begin{equation}
\label{eqn:Phi map}
\Phi: \Lambda_f \times \Sigma^- \to \hat{\Lambda}_f 
\end{equation}
that maps $(x, \bfi)$ to $\{x_\ell\}_{\ell \leq 0}$, where $x_0 = x$ and $x_{\ell} = f^{-1}_{i_\ell}(x_{{\ell+1}})$ for all $\ell < 0$. Thus, for a horseshoe repeller, the results established in the preceding sections on the inverse limit space \(\hat{\Lambda}_f\) can be equivalently reformulated on \(\Lambda_f\times\Sigma^-\) via the bijection \(\Phi\).

To extend $\Phi$ to a neighborhood of $\Lambda_f$, we choose a sufficiently small $\delta > 0$ such that the local inverses of $f$ are defined on $B(\Lambda_f, \delta)$ and the $\delta$-neighborhoods of the $1$-cylinders $\Lambda_{1,q}$ (for $1 \leq q \leq m$) are mutually disjoint.
Since the local inverse maps for points close to the repeller are well-defined, the inverse limit space on the neighborhood of the repeller $B(\Lambda_f,\delta)$ is well-defined.
Defining the inverse limit space $\widehat{B(\Lambda_f, \delta)}$ associated with $(B(\Lambda_f, \delta), f)$ as in \eqref{eqn:hatlambda}, we extend the bijection as follows:$$\Phi: B(\Lambda_f, \delta) \times \Sigma^- \to \widehat{B(\Lambda_f, \delta)}$$mapping $(y, \mathbf{i}) \mapsto \{y_\ell\}_{\ell \leq 0}$, where $y_0 = y$ and $y_\ell = f^{-1}_{i_\ell}(y_{\ell+1})$ for all $\ell < 0$.

For any $(x,\bfi)\in\Lambda_f\times\Sigma^-$, the {\emph{global strong unstable manifold}} is defined as
\begin{equation}
W^{su}(x,\bfi):=\left\{y\in {B(\Lambda_f,\delta)}:\, 
         \limsup_{\ell\to -\infty}-\frac{1}{\ell}\log d(y_{\ell},x_{\ell})\leq -\lambda_1(x,\bfi)\right\}
\end{equation}
where $\{x_\ell\}=\Phi(x,\mathbf{i})$, $\{y_\ell\}=\Phi(y,\mathbf{i})$ and
$$\lambda_1(x,\bfi):=\liminf_{\ell\to-\infty}\frac{1}{\ell}\log\|D_x(f^{-1}_{i_\ell}\circ\cdots\circ f^{-1}_{i_{-1}} )|_{E^{su}(x,\bfi)}\|.$$
Here $\lambda_1(x,\bfi)>0$ is uniformly bounded away from zero due to the uniform expanding property of the repeller.

We now  fix a constant $\rho_0 > 0$ strictly less than the injectivity radius of $M$, such that for every $x \in M$, the inverse of the exponential map$$ \exp_x^{-1}: B_{\rho_0}(x) \subset M \to T_xM $$is a diffeomorphism onto its image.

There exists a constant $r\in (0,\rho_0)$ small enough, depending only on $f$, such that for every $(x,\bfi)\in \Lambda_f\times \Sigma^-$, 
the {\textit{local strong unstable manifold}}  $\wloc(x,\bfi)\subseteq M$ is well-defined as
\begin{equation}
    W^{su}_{\mathrm{loc}}(x,\mathbf{i}) := \text{connected component of} \,\,x \,\,\text{in}\,\,B_{r}(x)\cap W^{su}(x,\bfi).
\end{equation}
Furthermore, the following properties are standard consequences of the uniform Hadamard–Perron theorem for hyperbolic sets, applied to the inverse branches determined by each
history; see \cite{HirschPughShub1977} and
\cite{KatokHasselblatt1995}. For the endomorphism setting, see also
\cite{Przytycki1976}.     
\begin{enumerate}[(U1)]
\item \label{U:USR} (Uniform Size and Regularity) There exist constants $r_0 > 0$ and $C_1 > 0$ such that for every $(x, \bfi) \in \Lambda_f \times \Sigma^-$, the preimage under the exponential map, $\exp_x^{-1}(W^{su}_{\mathrm{loc}}(x, \bfi))$, is the graph of a $C^{1+\alpha/2}$ function $\varphi: J\to E^{su}(x, \bfi)^{\perp}$ satisfying the following conditions:
\begin{itemize}
\item the domain $J \subseteq E^{su}(x, \bfi)$ contains the open ball $B_{r_0}^{su}(0)$; 
\item the differential satisfies $\|D\varphi\| < 1/100$ on $J$;
\item the $C^{1+\alpha/2}$ norm is uniformly bounded, with $\|\varphi\|_{C^{1+\alpha/2}} \leq C_1$.
\end{itemize}
\item (Uniform Backward Contraction) There exists a constant $\chi > 0$ such that for every $(x, \bfi) \in \Lambda_f \times \Sigma^-$ and all $y, z \in W^{su}_{\mathrm{loc}}(x, \bfi)$, we have$$d(y_\ell, z_\ell) \leq e^{\chi \ell} d(y, z) \quad \text{for all } \ell \leq 0,$$where $\{y_\ell\}_{\ell\leq 0} = \Phi(y, \bfi)$ and $\{z_\ell\}_{\ell\leq 0} = \Phi(z, \bfi)$.
\end{enumerate}

For all sufficiently large $n_0 \in \mathbb{N}$, each $n_0$-cylinder set $\Lambda_{n_0,q}$ (for $1 \leq q \leq m^{n_0}$), there exists an open neighborhood $U_q \subseteq M$ with a diameter sufficiently small compared to $r_0$, satisfying $U_q \cap \Lambda_f = \Lambda_{n_0,q}$.

The existence of such neighborhoods follows from the fact that the horseshoe
repeller is topologically conjugate to a full shift, whose cylinder sets form
a closed-open basis of the subspace topology.

\subsubsection{Straightening map}
\label{subsubsection:straightening map}
\begin{defi}
For a given $\theta > 0$ and $n_0\in\mathbb N$, a $C^1$ submanifold $T \subseteq U_q=:U$ is said to be \emph{$\theta$-uniform $su$-transversal} for an $n_0$-cylinder $\Lambda_{n_0,q}$ if, for every point $y \in T \cap \Lambda_f$, every $\bfi \in \Sigma^-$, and $z,x_0 \in \Lambda_{n_0,q}$,
the angle between its tangent space $T_y T$ and the strong unstable subspace $E^{su}(z, \bfi)$ satisfies$$ \angle\left(D_y\exp_{x_0}^{-1}(T_y T),\, D_z\exp_{x_0}^{-1} (E^{su}(z, \bfi))\right) \geq \theta.$$
\end{defi}

Recall the following basic properties of the dominated splitting: under the identification of $\hat{\Lambda}_f$ with $\Lambda_f \times \Sigma^-$ via $\Phi$, the splitting defined in \eqref{eqn:TM splitting} is continuous, and the angle between $E^{wu}(x)$ and $E^{su}(x, \mathbf{i})$ is uniformly bounded away from $0$ for all $x \in \Lambda_f$ and $\mathbf{i} \in \Sigma^-$.
From now on, we first fix a $\theta$ much smaller than the uniform angle bound and then fix $n_0$ large enough depending on $\theta$ such that for any $q$, $x\in\Lambda_{n_0,q}$ and 
\[
 T_x:=\exp_x\{v\in E^{wu}(x)\}\cap U,
 \]
$T_x$ is a $\theta$-uniform $su$-transversal for $\Lambda_{n_0,q}$. We moreover require $n_0$ large enough that for every $\bfi\in\Sigma^-$, all $x,x_0\in\Lambda_{n_0,q}$ and every $y\in W^{su}_{\mathrm{loc}}(x,\bfi)$,
\[
\angle\Big(D_y\exp_{x_0}^{-1}(T_yW^{su}_{\mathrm{loc}}(x,\bfi)),\ D_{x_0}\exp_{x_0}^{-1}(E^{su}(x_0,\bfi))\Big)\ \leq\ \arccos(1/1.01).
\]

Given $\mathbf{i} \in \Sigma^-$, define the neighborhood $V_{\mathbf{i}} = \bigcup_{x \in \Lambda_{n_0,q}} W^{su}_{\mathrm{loc}}(x, \mathbf{i})$. The \emph{$su$-holonomy} between any two $su$-transversals $T_1$ and $T_2$ is the map$$h^{su}_{\mathbf{i}}: T_1 \cap V_{\mathbf{i}} \to T_2 \cap V_{\mathbf{i}} \qquad \text{given by} \qquad x \mapsto W^{su}_{\mathrm{loc}}(x, \mathbf{i}) \cap T_2.$$
The following lemma follows from the results in \cite{AaronBrown2022}.
\begin{lem}\label{lem holder of Esu}
For the fixed \(\theta\) and \(n_0\), there exists
\(C_2=C_2(f,\theta,n_0)>1\) such that, for every
\(\mathbf i\in\Sigma^-\) and every two
\(\theta\)-uniform transversals \(T_1,T_2\),
the holonomy
\[
h_{\mathbf i}^{su}:T_1\cap V_{\mathbf i}
\longrightarrow T_2\cap V_{\mathbf i}
\]
is \(C_2\)-bi-Lipschitz onto its image.
 \end{lem}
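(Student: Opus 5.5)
We want the holonomy along the strong unstable foliation of a fixed past $\bfi$ between two $\theta$-uniform transversals $T_1,T_2$ to be uniformly bi-Lipschitz. The plan is the classical argument that holonomies of a foliation by $C^{1+\alpha}$ leaves, whose tangent field is Hölder in both the base point and the leaf, are Lipschitz when the leaves are one-dimensional and the ambient dimension is two. This is essentially the codimension-one, domination-driven regularity in \cite{AaronBrown2022}, adapted to a fixed inverse branch. We work in the chart $\exp_{x_0}^{-1}$ on $U=U_q$. By the choice of $n_0$, all leaves $W^{su}_{\mathrm{loc}}(x,\bfi)$, $x\in\Lambda_{n_0,q}$, are graphs over $E^{su}(x_0,\bfi)$ with slope at most $\arccos(1/1.01)$. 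The transversals make angle at least $\theta$ with every such leaf, so locally they are graphs over the transverse direction. In these coordinates the holonomy becomes a map between two curves, both parametrized by the transverse coordinate $s$.

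\textbf{Step 1 (the foliation is a $C^1$-type family in $s$).} The leaves through $V_\bfi$ form a partial foliation indexed by the points of $T_1\cap V_\bfi$. By (U1) each leaf is the graph $t\mapsto(t,\varphi_s(t))$ with uniform $C^{1+\alpha/2}$ bounds.

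\textbf{Step 2 (Lipschitz control of the leaf separation).} The key point is to bound the growth of the distance between two leaves, at transverse coordinates $s,s'$, as they are followed along the leaf direction. Write $\Delta(t)=\varphi_s(t)-\varphi_{s'}(t)$. Its derivative is the difference of the slopes of the two leaves at height $t$. By \Cref{prop Holder continuous}, or more precisely the Hölder continuity of $E^{su}(\cdot,\bfi)$ at fixed past, this is bounded by $C|\Delta(t)|^{\alpha}$. That bound is too weak for Gronwall and only gives Hölder holonomy. To do better, I would use the dynamics instead of a static estimate: iterate backwards along $\bfi$ with $f^{-1}_{i_\ell}\circ\cdots\circ f^{-1}_{i_{-1}}$. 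This contracts distances along leaves by $e^{\chi\ell}$ (property (U2)) and contracts transverse distances more weakly, by domination. It therefore maps the quadrilateral formed by the two leaves and the two transversals to one whose leaf-length is comparable to its transverse width. At that scale the holonomy is trivially bi-Lipschitz with a universal constant, since the transversal angles stay at least $\theta/2$ by the trapping of cone fields. We then push forward again. The key estimate is that the distortion of $Df^n$ restricted to the transverse (weak) direction is uniformly bounded between nearby points. This follows from a bounded-distortion sum $\sum_j |D f|_{E^{wu}}$ Hölder differences $\le C\sum_j d(y_j,z_j)^\alpha<\infty$, using the $C^{1+\alpha}$ regularity and the exponential backward contraction. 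Two-dimensionality is what makes this work: the transverse direction is a line, so its expansion is a scalar cocycle.

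\textbf{Step 3 (assembling the constant).} Combining the bounded distortion with the comparison at the small scale gives $|h^{su}_\bfi(a)-h^{su}_\bfi(b)|\le C_2|a-b|$. Exchanging $T_1$ and $T_2$ gives the reverse inequality. The constant depends only on $f$, $\theta$ and $n_0$, through $\chi$, the $C^{1+\alpha}$ norms, the angle bound and $r_0$. Uniformity in $\bfi$ is automatic, since every estimate is uniform over $\Lambda_f\times\Sigma^-$.

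I expect the main obstacle to be Step 2: choosing, for a pair of points $a,b$, the iterate $n$ at which the leaf length $2^{-n\lambda_1}$-scale matches the transverse separation. The difficulty is doing this uniformly while the images of the transversals are only approximately straight. I would handle this with the cone-field invariance from the proof of \Cref{lem: Holder in inverse limit space}, applied backwards to the weak cone.
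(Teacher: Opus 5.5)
Your outline is sound, but it is not the paper's route. The paper gives no argument for this lemma; it simply invokes Brown's regularity theorem for stable holonomies \cite{AaronBrown2022}, viewing the $su$-leaves of a fixed past as strong stable leaves of the inverse branches.

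You instead run the classical direct argument, and here it is genuinely easier than in Brown's partially hyperbolic framework. For a fixed past $\bfi$, the maps $F_n:=f^{-1}_{i_{-n}}\circ\cdots\circ f^{-1}_{i_{-1}}$ contract \emph{both} directions. Hence the backward orbits of $a,b$ and of $a'=h^{su}_\bfi(a)$, $b'=h^{su}_\bfi(b)$ all converge exponentially, and every distortion sum is automatically finite. Since only a bi-Lipschitz bound (not $C^1$) is needed, the one-dimensionality of the weak direction reduces everything to a scalar estimate. The citation buys brevity and stronger regularity; your version is self-contained and shows how $C_2$ depends on $(f,\theta,n_0)$.

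Two points to tighten.

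\emph{Scale matching is not needed.} The step you single out as the main obstacle can be avoided. Let $w_n:=d(F_na,F_nb)$ and take any $n$ for which the leaf segments $F_n[a,a']$ and $F_n[b,b']$ have length $\ell_n\leq w_n/4$. Such $n$ exist because $\ell_n/w_n\to0$ by domination and transversality. The triangle inequality then gives $d(F_na',F_nb')\in[w_n/2,\,3w_n/2]$. If $\ell_n$ is merely comparable to $w_n$, the triangle inequality gives no lower bound. All distortion constants are uniform in $n$, so nothing has to be chosen uniformly.

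\emph{The distortion sum must include angle terms.} The points $a,b,a',b'$ are generally not in $\Lambda_f$, so $E^{wu}$ is not available there. The sum must therefore be written for $\log\|Df|_L\|$ along the actual tangent lines $L$ of $F_jT_1$ and $F_jT_2$, with the angles between these lines added in. These angles decay exponentially, because the inverse branches repel lines from $E^{su}$ and contract them toward the weak direction. This backward fibre contraction, not trapping alone, is also what keeps the lines at angle $\gtrsim\theta$ from the leaves, since $\theta$ may be smaller than the width of $K_{su}$. The first few terms, where the lines are only $\theta$-away from the leaves, contribute a $\theta$-dependent constant.
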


We follow \cite[Section 8.3]{LEDRAPPIER_YOUNG_B_1985} to construct the straightening maps, a construction that applies identically to every $n_0$-cylinder. For ease of notation, we fix one $q$, drop the subscript $q$ and simply write $\Lambda_{n_0}$ for a fixed $n_0$-cylinder and $U$ for its corresponding isolating neighborhood.
 \begin{equation}
 \label{eqn:x0}
 \text{Fix a point} \,\, x_0 \in \Lambda_{n_0}.
 \end{equation}
 Using the exponential map $\exp_{x_0}$, we introduce a local Euclidean coordinate on $U$, thereby inducing a Euclidean metric via the chart
\[
\exp_{x_0}^{-1}: U \to \mathbb{R}^2,
\]
and choose coordinates such that
\[
D_0\exp_{x_0}(\mathbb{R}\times \{0\}) = E^{wu}(x_0),
\]
so that the horizontal axis in $\mathbb{R}^2$ aligns with the weak-unstable direction at $x_0$.
 We also fixed a $\theta$-uniform $su$-transversal $T_0$ as 
 \begin{equation}
 \label{eqn:tran T0}
 T_0=\exp_{x_0}(\{(x,y)\in\mathbb R^2:y=0\}).
 \end{equation}
Following \cite[Section 8.3]{LEDRAPPIER_YOUNG_B_1985}, we can construct the straightening map as follows:
\begin{prop}
\label{prop:straightening map}
For every $\bi\in \Sigma^-$, we construct a  {\textit{foliation straightening}} map 
\begin{equation}
\label{eqn:sigma i}
 \sigma_\bfi:V_{\bfi}\to \mathbb R^2 
 \end{equation}
 satisfying the following uniform properties:
 \begin{enumerate}[(S1)]
     \item (Straightening): For any $x\in \Lambda_{n_0}$, the image $\sigma_\bfi(W^{su}_{\mathrm{loc}}(x,\bfi))$ is a vertical line segment parallel to the $y$-axis.
     \label{straight:Lip}
     \item (Global bi-Lipschitz):
     The map $\sigma_\bfi$ is a bi-Lipschitz homeomorphism onto its image with  
     a uniform Lipschitz constant $C_3$.
     \label{straight:global Lip}
     \item (Leaf-wise Regularity):
     \label{S:leaf-wise regularity}
    For any local leaf, the restriction
    $$\sigma_\bfi|_{W^{su}_{\mathrm{loc}}(x,\bfi)}:W^{su}_{\mathrm{loc}}(x,\bfi)\to\mathbb R^2$$ is $C^{1+\frac{\alpha}{2}}$ a diffeomorphism onto its image, with a uniform bi-Lipschitz constant $1.01$ and a $C^{1+\frac{\alpha}{2}}$-norm uniformly bounded by $C_4$.
     \item (Inverse Regularity in Tangent Space): 
     \label{S:inverse reg in tang}
    The pullback to the flat tangent space
    $$\exp_{x_0}^{-1}\circ \sigma_\bfi^{-1}|_{\sigma_\bfi(W^{su}_{\mathrm{loc}}(x,\bfi))}: \sigma_\bfi(W^{su}_{\mathrm{loc}}(x,\bfi))\to \mathbb R^2$$ is a $C^{1+\frac{\alpha}{2}}$ curve whose $C^{1+\frac{\alpha}{2}}$-norm is uniformly bounded by $C_5$. Moreover, for any $y\in \sigma_\bfi(W^{su}_{\mathrm{loc}}(x,\bfi))$ one has
    \[
    \|D_y (\exp_{x_0}^{-1}\circ \sigma_\bfi^{-1}|_{\sigma_\bfi(W^{su}_{\mathrm{loc}}(x,\bfi))})\|\in[1,1.01].
    \]
 \end{enumerate}
 We note that $\sigma_\bfi$ is well-defined on $V_{\mathbf i}$, which contains $\Lambda_{n_0}$ and hence has full $\mu_0$-measure, where $\mu_0:=\frac{1}{\mu(\Lambda_{n_0})}\mu|_{\Lambda_{n_0}}$.
 \end{prop}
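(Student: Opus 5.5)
Following \cite[Section 8.3]{LEDRAPPIER_YOUNG_B_1985}, I would define $\sigma_\bfi$ from two coordinates: a transverse one given by $su$-holonomy to the fixed transversal $T_0$, and a leafwise one given by arclength-type position along each leaf. Concretely, for $y\in V_\bfi$ lying on a leaf $W:=W^{su}_{\mathrm{loc}}(x,\bfi)$, let $p(y):=h^{su}_\bfi(y)\in T_0$ be the intersection point $W\cap T_0$. Let $u(y)$ be the first coordinate of $\exp_{x_0}^{-1}p(y)$, which lies on the horizontal axis by \eqref{eqn:tran T0}. Let $v(y)$ be the signed position of $y$ along $W$ measured from $p(y)$. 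The cleanest choice is to take $v(y)$ as the second coordinate of $\exp_{x_0}^{-1}(y)$ minus that of $\exp_{x_0}^{-1}(p(y))=(u(y),0)$, i.e.\ just the second coordinate of $\exp_{x_0}^{-1}(y)$. Then set $\sigma_\bfi(y):=(u(y),v(y))$. With this definition, (S1) holds immediately, since every point of $W$ has the same $u$.

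\textbf{Leafwise properties (S3)--(S4).} On a fixed leaf, $u$ is constant. By (U1) and the choice of $n_0$, the curve $\exp_{x_0}^{-1}(W)$ is a $C^{1+\alpha/2}$ graph over the vertical direction whose tangent makes angle at most $\arccos(1/1.01)$ with $E^{su}(x_0,\bfi)$. I would choose $n_0$ (equivalently $\theta$) so that this tangent is also nearly vertical. This is possible since $E^{su}(x_0,\bfi)$ varies with $\bfi$, but its angle with $E^{wu}(x_0)$ is bounded below, and one may replace "vertical" by a shear adapted to $\bfi$. Then $\sigma_\bfi|_W$ is the inverse of a graph map $t\mapsto(g(t),t)$ with $|g'|$ small. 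Its bi-Lipschitz constant is therefore at most $\sqrt{1+|g'|^2}\le 1.01$, and its inverse satisfies $\|D\|\in[1,1.01]$. The $C^{1+\alpha/2}$ bounds $C_4,C_5$ follow from the uniform bound $C_1$ in (U1) together with smoothness of $\exp_{x_0}$ on the small set $U$. If $E^{su}(x_0,\bfi)$ is not close to the vertical axis, I would instead measure $v$ along the direction $E^{su}(x_0,\bfi)$ via a linear change of coordinates fixing the horizontal axis. The needed angle bound is exactly the $\arccos(1/1.01)$ condition imposed when fixing $n_0$.

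\textbf{Global bi-Lipschitz (S2).} This is the main obstacle, since holonomies are in general only Hölder. Here \Cref{lem holder of Esu} supplies the crucial input: $h^{su}_\bfi$ is uniformly $C_2$-bi-Lipschitz between $\theta$-uniform transversals. For $y,y'\in V_\bfi$, draw through $y'$ the transversal $T_{y'}$, a horizontal segment in the chart, which is $\theta$-uniform by the choice of $n_0$. Let $y''$ be the holonomy image of $y$ on $T_{y'}$. Then two estimates hold:
\begin{itemize}
\item $|u(y)-u(y')|=|u(y'')-u(y')|$, which is comparable to $d(y'',y')$ by \Cref{lem holder of Esu} applied between $T_{y'}$ and $T_0$;
\item $|v(y)-v(y')|=|v(y)-v(y'')|$, which is comparable to $d(y,y'')$ by the leafwise estimates.
\end{itemize}
Finally, uniform transversality (angle $\ge\theta$) gives $d(y,y')\asymp d(y,y'')+d(y'',y')$, with constants depending on $\theta$. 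Combining these yields $C_3$ depending only on $f,\theta,n_0$, uniformly in $\bfi$. Injectivity follows from the leaves being disjoint and $\sigma_\bfi|_W$ being injective.

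For the last remark, $\Lambda_{n_0}\subseteq V_\bfi$ because every $x\in\Lambda_{n_0}$ lies on its own leaf. Hence $\mu_0(V_\bfi)=1$.
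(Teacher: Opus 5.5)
\textbf{Gap: the choice of leafwise coordinate.} Your architecture is the paper's. You use a transverse coordinate from $\bfi$-holonomy to $T_0$ and a leafwise coordinate read in the chart $\exp_{x_0}^{-1}$. (S1) is then automatic, and (S2) comes from the uniformly bi-Lipschitz holonomy of \Cref{lem holder of Esu} together with transversality. The gap is in your choice of leafwise coordinate, which is exactly where the constant $1.01$ and the lower bound $1$ in (S3)--(S4) come from.

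\emph{Primary choice.} Taking $v=$ the second chart coordinate needs every $\bfi$-leaf to be nearly vertical, i.e.\ nearly orthogonal to $E^{wu}(x_0)$. No choice of $n_0$ or $\theta$ achieves this. The angle $\phi_\bfi:=\angle\bigl(E^{wu}(x_0),E^{su}(x_0,\bfi)\bigr)$ is dictated by the dynamics, is only bounded below by domination, and varies with $\bfi$.

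\emph{Fallback.} Your linear change of coordinates fixing the horizontal axis does not repair this. After any such change, the new second coordinate is a linear functional vanishing on that axis, hence a multiple of the old one. If it is to measure length along $E^{su}(x_0,\bfi)$, it must be $v=w_2/\sin\phi_\bfi$. Its level sets are still horizontal lines. So if a leaf tangent makes angle $\psi$ with the horizontal axis, the leafwise derivative of $\exp^{-1}_{x_0}\circ\sigma_\bfi^{-1}$ equals $\sin\phi_\bfi/\sin\psi$. This is $<1$ as soon as the leaf is closer to vertical than $E^{su}(x_0,\bfi)$ is. Its deviation from $1$ is roughly $|\psi-\phi_\bfi|\,|\cot\phi_\bfi|$, which the condition $|\psi-\phi_\bfi|\leq\arccos(1/1.01)$ does not bring below $0.01$. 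So neither version gives (S3)--(S4) as stated. Your closing remark that the needed bound is exactly the $\arccos(1/1.01)$ condition is true only for a different coordinate.

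\textbf{The fix.} Use the paper's choice: orthogonal projection onto the $\bfi$-dependent direction,
\[
q(y):=\langle\exp_{x_0}^{-1}y,e_\bfi\rangle,
\]
where $e_\bfi$ is a unit vector spanning $D_{x_0}\exp^{-1}_{x_0}(E^{su}(x_0,\bfi))$. Its level sets are the lines orthogonal to $e_\bfi$, not horizontal lines.

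\emph{(S3)--(S4).} If $\delta(t):=\exp^{-1}_{x_0}\circ\sigma_\bfi^{-1}(p,t)$ parametrizes a leaf, then $\langle\delta(t),e_\bfi\rangle\equiv t$. Hence $\langle\delta'(t),e_\bfi\rangle=1$ and $\|\delta'(t)\|=1/\cos\angle(\delta'(t),e_\bfi)\in[1,1.01]$, precisely by the $\arccos(1/1.01)$ condition. This gives (S4) and the leafwise constant $1.01$ in (S3).

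\emph{(S2).} Your argument survives with this $q$. The identity $|v(y)-v(y')|=|v(y)-v(y'')|$ relied on horizontal lines being level sets. It now becomes an inequality with additive error $|q(y'')-q(y')|\leq d(y'',y')$. This error is absorbed by the transverse term, which the holonomy bound controls in both directions. For example, $d(y,y'')\leq 1.01\bigl(|q(y)-q(y')|+d(y'',y')\bigr)$ with $d(y'',y')\lesssim|u(y)-u(y')|$.
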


 \begin{proof}[Construction of the straightening map $\sigma_\bfi$]

For each $x\in\Lambda_{n_0}$ and $y \in W^{su}_{\mathrm{loc}}(x,\bfi)$, we define the straightening coordinates $\sigma_\bfi(y) = (p(y),q(y))$ as follows. The horizontal coordinate $p(y)$ is obtained by projecting $y$ along $W^{su}_{\mathrm{loc}}(x,\bfi)$
to the fixed transversal $T_0$, and the vertical coordinate is the projection of the chart point onto the strong-unstable line at the base point:
\[
q(y):=\big\langle \exp_{x_0}^{-1}y,\ e_{\bfi}\big\rangle,
\]
where $e_{\bfi}$ is a unit vector spanning $D_{x_0}\exp_{x_0}^{-1}(E^{su}(x_0,\bfi)).$

By this construction, each local leaf $W^{su}_{\mathrm{loc}}(x,\bfi)$ is mapped to a vertical line segment parallel to the $y$-axis, while different leaves remain separated according to their projections onto $T_0$. This ensures that the foliation is straightened leafwise and that the $C^{1+\alpha/2}$ regularity of each leaf is preserved.

The bi-Lipschitz property of $\sigma_\bfi$ follows from the composition of the following two facts: (i) each leaf is the graph of $\varphi_x$ with $\|D\varphi_x\|<1/100$, providing uniform Lipschitz control along leaves; (ii) the $su$-holonomy between $\theta$-uniform transversals is bi-Lipschitz with constant $C_2$. Consequently, there exists $C_3>0$, depending only on $f$, $n_0$, and $\theta$, such that
\[
\frac{1}{C_3} d(x,y) \leq \|\sigma_\bfi(x)-\sigma_\bfi(y)\| \leq C_3 d(x,y),\quad \forall x,y\in V_\bfi.
\]
 
Leafwise, let $s\mapsto\gamma(s)$ parametrize $W^{su}_{\mathrm{loc}}(x,\bfi)$ by arc length. Then
\[
\frac{d}{ds}\,q(\gamma(s))=\big\langle \big(D_{\gamma(s)}\exp_{x_0}^{-1}\big)\gamma'(s),\ e_{\bfi}\big\rangle\in [1/1.01,\,1].
\]
 Hence the restriction $\sigma_\bfi|_{W^{su}_{\mathrm{loc}}(x,\bfi)}$ is a $C^{1+\alpha/2}$ diffeomorphism with {bi-Lipschitz constant} $1.01$ and $C^{1+\alpha/2}$-norm bounded by $C_4$, inherited from the uniform $C^{1+\alpha/2}$ bound of $\varphi_x$.

Composing with $\exp_{x_0}^{-1}$, the pullback $\exp_{x_0}^{-1}\circ \sigma_\bfi^{-1}$ of each vertical segment is a $C^{1+\alpha/2}$ curve in the tangent space with uniform norm bounded by $C_5$, depending only on $f$, $n_0$, and $\theta$.
Moreover, writing $\delta(t):=\exp_{x_0}^{-1}\circ\sigma_\bfi^{-1}(p,t)$ for the pullback of a vertical segment, one has $\langle\delta(t),e_\bfi\rangle=t$ identically by the definition of $q$, hence $\langle\delta'(t),e_\bfi\rangle=1$ and
\[
\|\delta'(t)\|=\frac{1}{\cos\angle(\delta'(t),e_\bfi)}\in[1,\,1.01],
\]
since $\delta'(t)$ spans $\big(D\exp_{x_0}^{-1}\big)T W^{su}_{\mathrm{loc}}(x,\bfi)$, which makes an angle at most $\arccos(1/1.01)$ with $e_\bfi$ by the choice of $n_0$ above. Thus,
\[
    \|D_y (\exp_{x_0}^{-1}\circ \sigma_\bfi^{-1}|_{\sigma_\bfi(W^{su}_{\mathrm{loc}}(x,\bfi))})\|\in[1,1.01],\quad \forall y\in \sigma_\bfi(W^{su}_{\mathrm{loc}}(x,\bfi)).
\]

Hence, $\sigma_\bfi: V_\bfi\to \mathbb{R}^2$ straightens the foliation, is globally bi-Lipschitz with constant $C_3$, leafwise $C^{1+\alpha/2}$ with constant $C_4$, and has pullback regularity in tangent space bounded by $C_5$. All constants $C_3,C_4,C_5$ are independent of $x\in \Lambda_{n_0}$ and $\bfi\in \Sigma^-$.
\end{proof}

Finally, we define balls on the strong unstable manifolds with respect to the metric induced by the straightening map. For $\bfi \in \Sigma^-$, $x \in \Lambda_{n_0}$, and sufficiently small $r > 0$, these are given by
\begin{equation}
\label{eqn:ball straightening}
B^{su}_{\bfi,s}(x,r) := \sigma_\bfi^{-1} \left( \left\{ (\pi_1\circ\sigma_\bfi(x),y) \in \mathbb{R}^2 \;\middle|\; |y-\pi_2\circ \sigma_\bfi(x)| \leq r \right\} \right).
\end{equation}

\begin{rem}[Standing constants]\label{rem:constants}
Let $\alpha>0$ be the H\"older exponent. Let $C_0$ be the constant of \Cref{prop Holder continuous}, enlarged if necessary, so that it also dominates the Lipschitz constant of $(z,E)\mapsto(D_z\exp^{-1}_{x_0})E$ on $\ptm|_{\Lambda_{n_0}}$. Let $C_1$ be the constant of \ref{U:USR}, $C_2$ that of \Cref{lem holder of Esu}, and $C_3,\ldots,C_5$ those of \Cref{prop:straightening map}. Each of these results remains valid when its constant is enlarged, so we may assume $1<C_0<C_1<\cdots<C_5$. The map $\exp^{-1}_{x_0}$ is bi-Lipschitz on $\Lambda_{n_0}$; enlarging $C_5$ if necessary, both of its Lipschitz constants are at most $C_5$. In particular, for every $\bfi\in\Sigma^-$, $\sigma_\bfi$ is bi-Lipschitz with constant $C_3\leq C_5$ and $\sigma_\bfi\circ\exp_{x_0}$ is bi-Lipschitz with constant $C_5^2$. These constants depend only on the dynamical system and $n_0$; all constants introduced later are expressed in terms of them.
\end{rem}

\subsection{Spreading property of fiber measures}

 The starting point of this section is the $(n,\ell,\epsilon)$-dyadic-spreading condition introduced in \cite{wuProjectionTheoremsCountably2025}. Let $\mathcal{D}^h_k$ be the standard dyadic partition of $\mathbb{R}$ shifted by $h \in \mathbb{R}$ at scale $2^{-k}$ for $k \in \mathbb{N}$. The elements in $\mathcal{D}^h_k$ are of the form $[h+\frac{N}{2^k}, h+\frac{N+1}{2^k})$ for some $N \in \mathbb{Z}$. When $h=0$, we simply write $\mathcal{D}_k$. We first introduce a spreading property for points.
	 \begin{defi}
		Given $n,\ell\in\N$, $\epsilon>0$, $h\in \R$ and a probability measure $\eta$ on $\mathbb{R}$, we say a point $x\in \R$ satisfies \emph{$(n,\ell,\epsilon,h)$-dyadic-spreading} with respect to $\eta$ if  
			\[ \frac{1}{n}\#\{1\leq k\leq n,\ \eta(\mathcal{D}^h_k(x))>2 \eta(\mathcal{D}^h_{k+\ell}(x)) \}>1-\epsilon.\] 
	\end{defi}
	\begin{defi}\label{def-spreading}
        Given $n, \ell \in \mathbb{N}$, $\epsilon > 0$, and $h \in \mathbb{R}$, a probability measure $\eta$ on $\mathbb{R}$ is defined to be \emph{$(n, \ell, \epsilon, h)$-dyadic-spreading} if
        $$\eta \left(\left\{ x:\ x \text{ is }(n,\ell,\epsilon,h)\text{-dyadic-spreading} \right\}\right)  > 1-\epsilon.$$
        In the special case where $h=0$, the measure $\eta$ is referred to as \emph{$(n, \ell, \epsilon)$-dyadic-spreading}.
	\end{defi}

The core of Wu's argument \cite{wuProjectionTheoremsCountably2025} is to demonstrate that the fiber measure satisfies this property, allowing him to use modified sum-set estimates to yield dimension growth, provided that the fiber has positive dimension and the projection is not full-dimensional. 
Meanwhile, the proof by Bárány-Hochman-Rapaport \cite{barany_hochman_rapaport_2019} relies on projection measures—an approach that requires establishing properties such as porosity and is more demanding to handle than fiber measures. The current approach bypasses these difficulties by requiring only the projection dimension. The key point in verifying the dyadic-spreading property of the fiber measure is the positive entropy and the use of fiber dynamics.

\subsubsection{Fiber measures}
Define the map
\begin{align}
F: \Lambda_f \times \Sigma^- &\to \Lambda_f \times \Sigma^-\\
(x, \mathbf{i}) &\mapsto (f^{-1}_{i_{-1}}(x), \sigma(\mathbf{i})),
\end{align}
where, by a slight abuse of notation, $\sigma: \Sigma^- \to \Sigma^-$ denotes the right-shift map. We can naturally view $F$ as the inverse of $\hat{f}$ acting on the inverse limit space.

 Recall that $\mu$ is the Bernoulli measure with full support on $\Lambda_f$. Thus, the lift of $\mu$ on the inverse limit space is the product measure $\hat{\mu} := \mu \times \nu_-$ on $\Lambda_f \times \Sigma^-$, where $\nu_-$ is the fully supported Bernoulli measure that assigns the same weight to each $1$-cylinder set as $\nu_+$ (see the introduction to \Cref{sec:nledyadic}).
    Then $\hat{\mu}$ is $F^{-1}$-ergodic.
    
    We introduce the definition of subordinate partitions.
  \begin{defi}[Subordinate partition in the non-invertible setting]
A measurable partition $\mathcal{A}$ of $\Lambda_f \times \Sigma^-$ is said to be \emph{subordinate to the local strong unstable manifolds} if it satisfies:
\begin{enumerate}[(1)]
    \item For $\hat\mu$-almost every $\xx=(x,\bfi)$, the atom $\mathcal{A}(\xx)$ is contained in the local leaf:
    \begin{equation}
        \mathcal{A}(\xx) \subseteq W^{su}_{\mathrm{loc}}(x,\bfi),
    \end{equation}
    and contains a neighborhood of $\xx$ in $\Lambda_f\cap W^{su}_{\mathrm{loc}}(x,\bfi)$.
    \item The partition is increasing under the inverse map $F$, i.e., 
    for $\hat\mu$-almost every $\xx$,
    \begin{equation}
       [F\calA](\xx):=F(\mathcal{A}(F^{-1}(\xx))) \subseteq \mathcal{A}(\xx).
    \end{equation}
    \item The atoms are generated by negative iterates of $F$ and shrink to points:
    \begin{equation}
        \bigcap_{n\geq 0} [F^n \mathcal{A}](\xx) = \{\xx\}.
    \end{equation}
\end{enumerate}
\end{defi}
     
We define the partition $\mathcal{A}^{su}$ such that the element containing $(x,\mathbf{i}) \in \Lambda_f\times \Sigma^-$ is$$\mathcal{A}^{su}_{\mathbf{i}}(x) := \Lambda_{n_0,q}(x) \cap W^{su}_{\mathrm{loc}}(x,\mathbf{i}).$$This partition is indeed subordinate to the local strong unstable manifolds. Property $(1)$ is immediate from its construction, and the increasing property follows naturally from the structure of horseshoes. Finally, Property $(3)$ holds due to the uniform contraction along the strong unstable leaves.

Following Rokhlin \cite{Rokhlin_1967}, for $\nu_-$-a.e. $\bi\in \Sigma^-$, $\mu$ admits conditional measures with respect to the partition $\calA^{su}_{\bi}$ for $\mu$-a.e. $x\in \Lambda_f$, which we denote by $\mu_{x,\calA^{su}_\bfi}$ or  $\mu_{\xx,\calA^{su}}$.

    The \emph{partial entropy} is defined as follows, using the conditional entropy:
    \begin{equation}
	 H_{\hat{\mu}}(F\calA^{su}|\calA^{su})=\int-\log{\mu}_{\xx,\calA^{su}}([F\calA^{su}](\xx))d \hat{\mu}(\xx). 
    \end{equation}
    We have the following Ledrappier-Young formula for repellers.
	\begin{lem}[\cite{QianXie}]
    \label{lem:fiber measure}
		For $\hat{\mu}$-a.e. $\xx\in\Lambda_f\times\Sigma^-$, the fiber measure $\mu_{x,\calA^{su}_\bfi}$ is exact dimensional with 
        \begin{equation}
		\dim\mu_{x,\calA_\bfi^{su}}=\lim_{r\to 0} \frac{\log \mu_{x,\calA_\bfi^{su}}(B^{su}_{\bi,s}(x,r))}{\log r}
        =\frac{H_{\hat\mu}(F\calA^{su}|\calA^{su})}{\lambda_1{}(\mu,f)}, 
        \end{equation}
        where $\lambda_1(\mu,f)$ is defined as in \eqref{eq:def of LYexponent} and $B^{su}_{\bi,s}(x,r)$ is defined as in \eqref{eqn:ball straightening}.
	\end{lem}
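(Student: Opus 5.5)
The plan follows the classical Ledrappier--Young scheme for the partial dimension along the fast direction, in the horseshoe setting where the subordinate partition $\calA^{su}$ has a particularly simple form. The three ingredients are an entropy estimate for the atoms $[F^n\calA^{su}](\xx)$, a geometric estimate of their size along the leaf, and a sandwich argument comparing these atoms with the balls $B^{su}_{\bfi,s}(x,r)$.

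\emph{Step 1 (measure of refined atoms).} Write $I(\xx):=-\log\mu_{\xx,\calA^{su}}([F\calA^{su}](\xx))$ for the conditional information function. Since $F^{-1}$ preserves $\hat\mu$ and $\calA^{su}$ is increasing, uniqueness of Rokhlin disintegrations shows that $F$ carries conditional measures of $\calA^{su}$ to conditional measures of $F\calA^{su}$, up to normalization. This gives the telescoping identity
\[
-\log\mu_{\xx,\calA^{su}}([F^n\calA^{su}](\xx))=\sum_{j=0}^{n-1}I\bigl(F^{-j}\xx\bigr)
\]
(up to the indexing convention). Along the way we check that $I\in L^1(\hat\mu)$, using the standard bound $\int I\,d\hat\mu\le H(\text{finite partition into }1\text{-cylinders})$: the atom $[F\calA^{su}](\xx)$ is the trace on $\mathcal A^{su}(\xx)$ of a cylinder of one more generation. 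The Birkhoff ergodic theorem for the ergodic map $F^{-1}$ then yields
\[
\mu_{\xx,\calA^{su}}([F^n\calA^{su}](\xx))=e^{-n(H+o(n)/n)},\qquad H:=H_{\hat\mu}(F\calA^{su}|\calA^{su}),
\]
for $\hat\mu$-a.e.\ $\xx$.

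\emph{Step 2 (geometric size of the atoms).} We have $[F^n\calA^{su}](\xx)=F^n(\calA^{su}(F^{-n}\xx))$. This is the image, under the $n$-fold inverse-branch composition along $\bfi$, of an atom that lies in a local leaf of length at most $\operatorname{diam}U_q$. Two facts control these atoms.
\begin{itemize}
\item The leaves are uniformly $C^{1+\alpha/2}$ by (U1) and \ref{S:leaf-wise regularity}, and $E^{su}$ is H\"older by \Cref{prop Holder continuous}. Together with the uniform backward contraction (U2), these give bounded distortion of the inverse branches restricted to leaves. Hence the length of $[F^n\calA^{su}](\xx)$ is comparable, up to a uniform constant, to $\prod_{j}\|Df|_{E^{su}}\|^{-1}$ along the orbit.
\item Each $n_0$-cylinder $\Lambda_{n_0,q}$ is clopen in $\Lambda_f$, and the local leaves have length $r_0\gg\operatorname{diam}U_q$. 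Therefore every atom $\calA^{su}(\yy)$ contains $\Lambda_f\cap W^{su}_{\mathrm{loc}}\cap B(y,r_1)$ for some uniform $r_1>0$. This is the convenient feature of horseshoes: there is no boundary problem, and no Borel--Cantelli control of distances to the atom boundary is needed.
\end{itemize}
Applying the Birkhoff theorem to $\log\|Df|_{E^{su}}\|$, via \eqref{eq:Lyexpo1}, we conclude that for every $\varepsilon>0$ and all large $n$,
\[
B^{su}_{\bfi,s}\bigl(x,e^{-n(\lambda_1+\varepsilon)}\bigr)\cap\Lambda_f\subseteq[F^n\calA^{su}](\xx)\subseteq B^{su}_{\bfi,s}\bigl(x,e^{-n(\lambda_1-\varepsilon)}\bigr).
\]
Here $B^{su}_{\bfi,s}$ is measured in the straightening coordinates, which are $1.01$-bi-Lipschitz along leaves by \ref{S:leaf-wise regularity}.

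\emph{Step 3 (conclusion).} Given $r$, choose $n$ with $e^{-n(\lambda_1+\varepsilon)}\approx r$. Since $\mu_{x,\calA^{su}_\bfi}$ is supported on $\Lambda_f$, the two inclusions of Step 2 and the estimate of Step 1 give
\[
\frac{H}{\lambda_1}-O(\varepsilon)\le\liminf_{r\to0}\frac{\log\mu_{x,\calA^{su}_\bfi}(B^{su}_{\bfi,s}(x,r))}{\log r}\le\limsup_{r\to0}\frac{\log\mu_{x,\calA^{su}_\bfi}(B^{su}_{\bfi,s}(x,r))}{\log r}\le\frac{H}{\lambda_1}+O(\varepsilon).
\]
We use monotonicity in $r$ between consecutive $n$, since consecutive radii differ by a bounded factor. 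Letting $\varepsilon\to0$ gives exact dimensionality with value $H/\lambda_1(\mu,f)$.

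I expect the main obstacle to be Step 1: justifying rigorously that $F$ transports the conditional measures as claimed, and that $I$ is integrable, in the non-invertible inverse-limit framework. I would handle this by working on $\Lambda_f\times\Sigma^-$ with the product structure $\hat\mu=\mu\times\nu_-$ and invoking the uniqueness of Rokhlin disintegration, as in Qian--Xie.
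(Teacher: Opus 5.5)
Your proposal is correct, but it takes a different route from the paper, because the paper does not prove this lemma. It imports the statement from \cite{QianXie} (Prop.~2.4), where exact dimensionality of the leafwise conditional measures is proved for $C^2$ repellers with respect to the induced Riemannian metric on $W^{su}_{\mathrm{loc}}(x,\bfi)$. It then appeals to \cite{AaronBrown2022} to pass to $C^{1+\alpha}$, and uses the bi-Lipschitz property of $\sigma_\bfi$ to transfer the result to the balls $B^{su}_{\bfi,s}(x,r)$.

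You instead give a self-contained Ledrappier--Young argument specialised to horseshoes. Every ingredient you use already appears in Section~4 of the paper:
\begin{itemize}
\item Your Step~1 is \Cref{lem:invariance} together with the telescoping identity \eqref{equ:birkhoff-sum}. Your integrability bound is a correct addition: it rests on $[F\calA^{su}](\xx)$ being the trace on $\calA^{su}(\xx)$ of the $(n_0+1)$-cylinder of $x$, so that $H_{\hat\mu}(F\calA^{su}|\calA^{su})$ is bounded by the entropy of the partition into $1$-cylinders.
\item Your Step~2 is the bounded-distortion claim and the sandwich \eqref{equ:change-scale} from the proof of \Cref{prop:ball}, combined with Birkhoff's theorem for $\log\|Df|_{E^{su}}\|$.
\end{itemize}

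The citation buys generality. It covers arbitrary repellers and subordinate partitions whose atoms have no uniform inner radius, where one needs Borel--Cantelli control of the distance to the atom boundary. It also comes packaged with the transverse theory behind \Cref{lem:exact dim tran}, which is much harder and which your method does not address.

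Your route buys self-containedness. It works directly at $C^{1+\alpha}$ and directly in the straightening metric, so for this particular statement the $C^2\to C^{1+\alpha}$ extension remark becomes unnecessary. For the limit you only need subexponential distortion along leaves; the uniform bounded-distortion estimate is more than this lemma requires, and it is really the multi-scale comparison in \Cref{prop:ball} that uses it.

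The one step you leave implicit is the passage from the local dimension at the centre $x$, for $\hat\mu$-a.e.\ $\xx$, to exact dimensionality of the measure $\mu_{x,\calA^{su}_\bfi}$. This is immediate: conditional measures are constant on atoms, and a $\hat\mu$-full set meets almost every atom in a set of full conditional measure.
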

    \begin{rem}
        The results in \cite{QianXie} are originally stated for $C^2$ repellers. However, using the main theorem in \cite{AaronBrown2022}, these results can be extended to $C^{1+\alpha}$ repellers in a standard way.
    \end{rem}

In \cite[Proposition 2.4]{QianXie}, they proved the exact dimension result for $\mu_{x,\calA^{su}_{\bfi}}$ with respect to the induced Riemannian metric on $W^{su}_{\mathrm{loc}}(x,\bfi)$. Since $\sigma_{\bfi}$ is bi-Lipschitz, we recover the same exact dimension when using balls in $W^{su}_{\mathrm{loc}}$ equipped with the metric induced by the straightening map.

    For brevity in what follows, we denote this dimension by
    \begin{equation}
    \label{eqn:fiber dim}
    \gamma_1:=\frac{H_{\hat\mu}(F\calA^{su}|\calA^{su})}{\lambda_1{}(\mu,f)},
    \end{equation}
    and call it the \emph{fiber dimension of $\mu$}.

	\begin{lem}[Invariance of fiber measure]\label{lem:invariance}
		For any measurable set $B\subset \Lambda_f\times \Sigma^-$ and for $\hat{\mu}$-a.e. $\xx\in \Lambda_f\times \Sigma^-$, 
        \begin{equation}
		 \frac{\mu_{x,\calA^{su}_\bfi}(F(B\cap \cA^{su}(F^{-1}\xx)))}{\mu_{x,\calA^{su}_\bfi}([F\cA^{su}](\xx))}= 
        \mu_{F^{-1}\xx,\cA^{su}}(B). 
        \end{equation}
    \begin{proof}
For any $i \in \{1,\dots,m\}$, denote by $\mu_i$ the conditional measure of $\mu$ on the $i$-th $1$-cylinder. Since $\mu$ is a Bernoulli measure, we have
\[
f_* \mu_i = \mu.
\]
Also note that $F$ is the inverse of $\hat{f}$. 
Therefore, for any measurable set $B \subset \Lambda_f \times \Sigma^-$ and $\hat{\mu}$-a.e. $\xx$, by the disintegration theorem,
\[
\mu_{\xx,\calA^{su}}(F(B \cap \calA^{su}(F^{-1}(\xx)))) = \mu_{\xx,\calA^{su}}([F\calA^{su}](\xx)) \cdot {\mu_{F^{-1}(\xx),\calA^{su}}(B)},
\]
which completes the proof.
\end{proof}
\end{lem}

   \subsubsection{Ball-spreading} 
   We introduce another variant of the spreading property.
	 \begin{defi}
		Given $n,\ell\in\N$, $\epsilon>0$, and a probability measure $\eta$ on $\mathbb{R}$, we say a point $x\in \R$ satisfies \emph{$(n,\ell,\epsilon)$-ball-spreading} with respect to $\eta$ if  
			\[ \frac{1}{n}\#\{1\leq k\leq n,\ \eta(B(x,2^{-k}))>2 \eta(B(x,2^{-k-\ell})) \}>1-\epsilon.\] 
        We say that $x$ satisfies \emph{uniform $(\ell,\epsilon)$-ball-spreading} with respect to $\eta$ if, for every $n\in \N$,
        it is $(n,\ell,\epsilon)$-ball-spreading with respect to $\eta$.
	\end{defi}

  In what follows, we restrict ourselves to a fixed  $n_0$-cylinder $\Lambda_{n_0,q}$ and let $\mu_0:=\frac{1}{\mu(\Lambda_{n_0,q})}\mu|_{\Lambda_{n_0,q}}$ be the conditional measure of $\mu$ on this cylinder. For $\nu_-$-a.e. $\bfi \in \Sigma^-$, the conditional measures of $\mu$ and $\mu_0$ with respect to $\calA^{su}_{\bfi}$ are identical; both equal $\mu_{x,\calA^{su}_{\bfi}}$ for almost every $x \in \Lambda_{n_0}$ with respect to both $\mu$ and $\mu_0$.
  In the following proposition, we use the positive entropy of the fiber measures to obtain the ball-spreading property.

	\begin{prop}\label{prop:ball}
		Suppose $\gamma_1>0$. For any $\epsilon>0$, there exist $\ell\in \N$ and $X_1\subset \Lambda_{n_0}\times\Sigma^-$ with $\mu_0\times\nu_-(X_1)>1-\epsilon$, such that for any $(x,\bfi)\in X_1$, $\sigma_\bfi(x)$ is uniform $(\ell,\epsilon)$-ball-spreading with respect to the push-forward of the fiber measure $({\sigma_\bfi})_*\mu_{x,\calA^{su}_\bfi}$ on  $\sigma_\bfi(W^{su}_{\mathrm{loc}}(x,\bfi))\subset\mathbb R^2$ with respect to the Euclidean metric. Here $\sigma_\bfi$ is the straightening map defined in \eqref{eqn:sigma i}.	
	\end{prop}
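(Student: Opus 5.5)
The plan is to combine three ingredients: the invariance of fiber measures under $F$ (\Cref{lem:invariance}), positivity of the fiber dimension $\gamma_1$ (\Cref{lem:fiber measure}), and a maximal/Birkhoff ergodic argument for the $F^{-1}$-ergodic measure $\hat\mu$. The idea is that the ratio $\eta(B(x,2^{-k}))/\eta(B(x,2^{-k-\ell}))$ at scale $2^{-k}$ around $\sigma_\bfi(x)$ should be comparable, up to bounded distortion, to the same ratio at a fixed scale $r_0$ around the point $F^{n}(x,\bfi)$ for a suitable $n=n(k)$. Spreading at scale $2^{-k}$ thus reduces to a single-scale event evaluated along the $F$-orbit.

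\textbf{Step 1: the single-scale event.} Since $\gamma_1>0$, for $\hat\mu$-a.e. $\xx$ we have $\mu_{\xx,\cA^{su}}(B^{su}_{\bfi,s}(x,r))\leq r^{\gamma_1/2}$ for small $r$. I use this to show the following: for every $\epsilon'>0$ and every fixed $r_0>0$ there is $\ell_0$ such that the set
\[
G:=\{\xx:\ \mu_{\xx,\cA^{su}}(B^{su}_{\bfi,s}(x,r_0))>4\,\mu_{\xx,\cA^{su}}(B^{su}_{\bfi,s}(x,2^{-\ell_0}r_0))\}
\]
satisfies $\hat\mu(G)>1-\epsilon'$. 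The argument goes by monotone convergence. The measure of the ball of radius $2^{-\ell}r_0$ tends to the mass of the point $x$, and this mass is $0$ a.e. because the fiber measure is non-atomic, as $\gamma_1>0$ forces. On the other hand, the mass of the ball of radius $r_0$ is positive, since $x$ lies in the support of its conditional measure.

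\textbf{Step 2: transfer between scales.} For $\xx=(x,\bfi)$ and $n\geq0$, the map $F^{n}$ restricted to the leaf $W^{su}_{\mathrm{loc}}(x,\bfi)$ is the composition of inverse branches $f^{-1}_{i_{-n}}\circ\cdots\circ f^{-1}_{i_{-1}}$. By the $C^{1+\alpha}$ regularity of the leaves, (S3)–(S4) and Hölder continuity of $E^{su}$ (\Cref{prop Holder continuous}), its derivative along the leaf has bounded distortion on the relevant ball. This gives a constant $K$ such that $F^{-n}$ maps $B^{su}(F^n\xx,r)$ between $B^{su}(\xx,K^{-1}\rho_n r)$ and $B^{su}(\xx,K\rho_n r)$, where $\rho_n=\|D(f^n)|_{E^{su}}\|$ at $F^n\xx$ and $\log\rho_n$ grows linearly with slope at most $\log\sup\|Df\|$. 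By \Cref{lem:invariance}, ratios of fiber masses of nested sets inside $[F^n\cA^{su}](\xx)$ are preserved. This requires that the balls stay inside the atom, which I ensure by taking $r_0$ small relative to the atom size; by Property (1) of subordinate partitions, this holds on a large-measure set where the atom contains a ball of radius $r_0$. For each dyadic $k$, choose $n(k)$ with $\rho_{n(k)}r_0\approx 2^{-k}$. Then $k\mapsto n(k)$ is at most $C$-to-one and has positive density. If $F^{n(k)}\xx\in G$, then after absorbing $K^2$ into a shift of $\ell_0$ we obtain $\eta(B(\sigma_\bfi x,2^{-k}))>2\eta(B(\sigma_\bfi x,2^{-k-\ell}))$ with $\ell=\ell_0+2\log_2K+O(1)$.

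\textbf{Step 3: uniformity in $n$.} Apply the maximal ergodic theorem (or Egorov with Birkhoff) to $\mathbf 1_{G^c}$ under $F$. On a set $X_1$ of measure $>1-\epsilon$ we get $\sup_N\frac1N\sum_{j<N}\mathbf 1_{G^c}(F^j\xx)\leq C\epsilon'/\epsilon$. Since each $k$ corresponds to some $j=n(k)$ and the correspondence $k\mapsto n(k)$ is boundedly many-to-one, the proportion of bad $k\leq n$ is at most $C'\epsilon'/\epsilon$ for every $n$. Choosing $\epsilon'\ll\epsilon^2$ then yields uniform $(\ell,\epsilon)$-ball-spreading.

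I expect the main obstacle to be Step 2. The difficulty is controlling the distortion of the straightening coordinates $\sigma_\bfi$ versus $\sigma_{\sigma^n\bfi}$ under the leafwise dynamics, so that Euclidean balls in the straightened coordinates correspond to balls up to a uniform factor $K$ independent of $n$. I would handle this by using the leafwise $1.01$-bi-Lipschitz property (S3) together with the standard bounded-distortion estimate $\sum_j d(y_{-j},z_{-j})^\alpha<\infty$ from (U2).
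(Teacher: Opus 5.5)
Your architecture matches the paper's: a single-scale event coming from $\gamma_1>0$, transport of that event across scales by invariance of fiber measures (\Cref{lem:invariance}), the maximal ergodic theorem for uniformity in $n$, and a bounded-distortion sandwich to pass to straightened balls. The one real problem is that Steps 2--3 run the orbit in the wrong direction, so Step 2 fails as written. Since $\rho_n\geq\kappa^n\geq 1$, the condition $\rho_{n(k)}r_0\approx 2^{-k}$ has no solution once $2^{-k}<r_0$. Moreover, $F^{-n}$ expands leaves, so it sends a ball of fixed radius $r_0$ about $F^n\xx$ to a set of size about $\rho_n r_0$, far outside the local leaf through $x$. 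Thus the fiber measure at $F^n\xx$ at a fixed scale carries no information about small scales at $\xx$.

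Iterating \Cref{lem:invariance} shows what is true: $\mu_{\xx,\cA^{su}}$ restricted to $[F^n\cA^{su}](\xx)=F^n\bigl(\cA^{su}(F^{-n}\xx)\bigr)$ is, after normalization, the image under $F^n$ of $\mu_{F^{-n}\xx,\cA^{su}}$. So the relevant point is $F^{-n}\xx=\hat f^{\,n}\xx$. The repair is as follows. Take $\rho_n:=\|Df^n_x|_{E^{su}(x,\bfi)}\|$, so that $F^n$ maps the $r$-ball about $f^nx$ in the leaf through $F^{-n}\xx$ into the ball of radius $K\rho_n^{-1}r$ about $x$, and onto a set containing the ball of radius $K^{-1}\rho_n^{-1}r$ intersected with $\Lambda_f$. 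Choose $n(k)$ with $\rho_{n(k)}^{-1}r_0\approx 2^{-k}$; this map is at most $C$-to-one with $n(k)\leq Ck$. Require $F^{-n(k)}\xx\in G$, and apply the maximal ergodic theorem to $\mathbf 1_{G^c}$ along $F^{-1}$, exactly as in \Cref{lem:multi-scale}. Enlarging $\ell$ by about $\log_2(1/r_0)$ also covers the first few scales, where $n(k)=0$.

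With the orientation fixed, the obstacle you anticipated disappears. The past of $F^{-n}\xx$ is $\bfi$ extended by the first $n$ symbols of $x$, not $\sigma^n\bfi$. Moreover, $f^nx$ need not lie in $\Lambda_{n_0}$, where the straightening maps are defined. So define $G$ with leaf-metric balls and use $\sigma_\bfi$ only at $\xx$, where (S3) makes leaf balls and straightened balls comparable. The distortion input is then precisely the bounded-distortion claim in the paper's proof, comparing leafwise derivatives with $\alpha(\bfi,k)$. Containment of $r_0$-balls in atoms holds at every point once $r_0<c_1$, because the cylinders are clopen. If you use a large-measure set instead, that condition must be built into $G$, since it is needed at $F^{-n(k)}\xx$ rather than at $\xx$.

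After this correction, the only difference from the paper is the choice of single-scale event. The paper uses the atoms $[F^k\cA^{su}](\xx)$, which $F$ maps exactly onto one another. This makes the scale transfer an exact identity, gets positivity from Birkhoff's theorem with $\int\phi_1=H_{\hat\mu}(F\cA^{su}|\cA^{su})>0$, and uses distortion only once, in \eqref{equ:change-scale}. Your ball-based event needs only non-atomicity of the fiber measures, but you must carry the constant $K$ through every transfer.
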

    \begin{rem}
        Note that if a point is $(n,\ell,\epsilon)$-ball-spreading with respect to a measure, then for any $\ell'>\ell$, this point is $(n,\ell',\epsilon)$-ball-spreading by definition. This observation will be used in Dyadic Property 1 in \cref{sec: step1}.
    \end{rem}

    In the rest of the section, we assume $\gamma_1>0$.
    To prove \cref{prop:ball}, we first use the positive partial entropy assumption to establish a spreading property for the fiber measures with respect to the subordinate partition. We then pass from this partition-level spreading property to the required ball-spreading property.
	We start with two lemmas.
	
	\begin{lem}\label{lem:one-scale}
		For any $\epsilon>0$, there exists $\ell\in \N$ such that
		\[ \hat\mu\left(\Big\{\xx\in\Lambda_{f}\times\Sigma^-:\ -\log\mu_{\xx,\calA^{su}}([F^\ell\calA^{su}](\xx))>\log 2\Big\}\right)>1-\epsilon. \]
	\end{lem}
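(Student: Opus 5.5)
The plan is to combine the positivity of $\gamma_1$ (equivalently, positive partial entropy) with the chain rule for conditional measures along the increasing sequence of partitions $F^j\calA^{su}$. By \Cref{lem:invariance}, for $\hat\mu$-a.e.\ $\xx$ the conditional measure of the refined atom factorizes. Iterating it gives
\[
-\log\mu_{\xx,\calA^{su}}([F^\ell\calA^{su}](\xx))=\sum_{j=0}^{\ell-1}g(F^{-j}\xx),
\qquad g(\yy):=-\log\mu_{\yy,\calA^{su}}([F\calA^{su}](\yy))\geq0 .
\]
The function $g$ is nonnegative and integrable, and its integral is $H_{\hat\mu}(F\calA^{su}|\calA^{su})=\gamma_1\lambda_1(\mu,f)>0$ by \eqref{eqn:fiber dim}. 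So the quantity in question is a Birkhoff sum of $g$ along the orbit of $F^{-1}$.

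The next step applies Birkhoff's ergodic theorem for the $F^{-1}$-ergodic measure $\hat\mu$. It gives $\frac1\ell\sum_{j<\ell}g(F^{-j}\xx)\to\gamma_1\lambda_1(\mu,f)$ for $\hat\mu$-a.e.\ $\xx$. A.e.\ convergence implies convergence in measure, so for $\ell$ large the set where the average exceeds $\tfrac12\gamma_1\lambda_1(\mu,f)$ has measure $>1-\epsilon$. Enlarging $\ell$ further so that $\tfrac12\ell\gamma_1\lambda_1(\mu,f)>\log 2$, every $\xx$ in this set satisfies $-\log\mu_{\xx,\calA^{su}}([F^\ell\calA^{su}](\xx))>\log2$, which is the claim.

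The step I expect to need the most care is the telescoping identity. Two things must be checked there. First, the invariance lemma has to be applied with the correct bookkeeping: the set $[F^{j+1}\calA^{su}](\xx)$ is the image under $F$ of $[F^{j}\calA^{su}](F^{-1}\xx)$, which is intersected with the atom $\cA^{su}(F^{-1}\xx)$. Second, the null sets from Rokhlin disintegration must be made $F$-invariant by taking a countable intersection over the iterates. I would prove the identity by induction on $\ell$, applying \Cref{lem:invariance} with $B=[F^{\ell}\calA^{su}]$ at the point $F^{-1}\xx$. I would also note that integrability of $g$ is standard, since the conditional entropy is finite given finitely many $1$-cylinder preimages; this isn't even essential, as Birkhoff's theorem for nonnegative functions still gives the lower bound. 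An alternative avoiding Birkhoff would use that $\ell\mapsto\mu_{\xx}([F^\ell\calA^{su}](\xx))$ decreases to $\mu_\xx(\{\xx\})=0$, where the last equality holds because $\gamma_1>0$ makes the fiber measures non-atomic; monotone convergence then gives the measure estimate directly. I would mention this as a backup route.
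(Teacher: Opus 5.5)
Your proposal is correct and follows the paper's proof. The paper likewise uses \Cref{lem:invariance} to write $-\log\mu_{\xx,\calA^{su}}([F^\ell\calA^{su}](\xx))$ as the Birkhoff sum $\sum_{0\leq j\leq \ell-1}\phi_1(F^{-j}\xx)$, applies the ergodic theorem for the $F^{-1}$-ergodic measure $\hat\mu$ with limit $H_{\hat\mu}(F\calA^{su}|\calA^{su})>0$, and concludes from a.e.\ convergence to convergence in measure. Your backup route is also valid and avoids Birkhoff's theorem: it uses that the atoms decrease to $\{\xx\}$ and that the fiber measures have no atoms at a.e.\ point, since they are exact dimensional with dimension $\gamma_1>0$.
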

	\begin{proof}
    For every $k\in \N$, we define a measurable function $\phi_k$ on $\Lambda_f\times \Sigma^-$ by 
		$$\phi_k(\xx)=-\log\mu_{\xx,\calA^{su}}([F^k\calA^{su}] (\xx)).$$ Then, due to the invariance of the fiber measure (\cref{lem:invariance}),  
		\begin{align}\label{equ:birkhoff-sum}
			\phi_k(\xx)&=\sum_{1\leq j\leq k}-\log\frac{ \mu_{\xx,\calA^{su}}([F^j\calA^{su}](\xx))}{ \mu_{\xx,\calA^{su} }([F^{j-1}\calA^{su}](\xx))}\\&=\sum_{1\leq j\leq k}-\log\frac{ \mu_{F^{-j+1}\xx,\calA^{su}}([F\calA^{su}](F^{-j+1}\xx))}{ \mu_{F^{-j+1}\xx,\calA^{su}}( \calA^{su}(F^{-j+1}\xx))}=\sum_{0\leq j\leq k-1} \phi_1(F^{-j}\xx) 
		\end{align}
		Since $\hat\mu$ is $F^{-1}$-ergodic, by the Birkhoff ergodic theorem, we have for $\hat\mu$-a.e. $\xx$, as $k\rightarrow\infty$,
		\[\frac{1}{k}\phi_k(\xx) \longrightarrow \int \phi_1(\yy)d\hat\mu(\yy)=H_{\hat\mu}(F\calA^{su}|\calA^{su  })>0\]
		Therefore, for $\hat\mu$-a.e. $\xx$, the function $\phi_k(\xx)\rightarrow\infty$ as $k\rightarrow\infty$, which implies the lemma from the convergence in measure.
	\end{proof}
	
	Recall the maximal ergodic theorem.
    Let $(X, \mathcal{B}, \eta)$ be a probability space, and let $T: X \to X$ be a measure-preserving transformation. Let $\phi \in L^1(X, \eta)$, and denote
	\[ M\phi(x):=\sup_{n} \frac{1}{n}\sum_{1\leq k\leq n} \phi(T^kx). \] 
	Then, for any $R>0$,
	\[ \eta(\{x:\ M\phi(x)>R\})\leq \frac{\|\phi\|_1}{R}. \]
	Using the maximal ergodic theorem, we strengthen the previous lemma to the following:
	\begin{lem}\label{lem:multi-scale}
		For any $\epsilon>0$, there exists $\ell\in \N$ such that 
        \begin{equation}
		\hat\mu \Big(\Big\{\xx:\inf_{n\in\N}\frac{1}{n}\#\left\{1\leq k\leq n, \mu_{\xx,\calA^{su}}([F^{k}\calA^{su}](\xx))>2 \mu_{\xx,\calA^{su}}([F^{k+\ell}\calA^{su}](\xx))\right \}>1-\epsilon\Big\} \Big)>1-\epsilon. 
        \end{equation}
	\end{lem}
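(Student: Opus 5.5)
The idea is to turn the single-scale statement of \cref{lem:one-scale} into a frequency statement over scales. For this we apply the maximal ergodic theorem to the indicator of the bad set, along the orbit of $F^{-1}$. Fix $\epsilon>0$ and set $\epsilon':=\epsilon^2/2$. By \cref{lem:one-scale} applied with $\epsilon'$, choose $\ell$ so that the bad set
\[
G:=\Big\{\xx:\ \mu_{\xx,\calA^{su}}([F^\ell\calA^{su}](\xx))\geq \tfrac12\Big\}
\]
satisfies $\hat\mu(G)<\epsilon'$.

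The first step is to relate the ratio at scale $k$ to a single-scale quantity at a shifted point. Iterating \cref{lem:invariance} as in \eqref{equ:birkhoff-sum}, we obtain for $\hat\mu$-a.e. $\xx$ and every $k$
\[
\frac{\mu_{\xx,\calA^{su}}([F^{k+\ell}\calA^{su}](\xx))}{\mu_{\xx,\calA^{su}}([F^{k}\calA^{su}](\xx))}
=\mu_{F^{-k}\xx,\calA^{su}}([F^{\ell}\calA^{su}](F^{-k}\xx)).
\]
To see this, apply the invariance lemma $k$ times with $B=[F^{\ell}\calA^{su}](F^{-k}\xx)$, using that $F^k$ maps $[F^\ell\calA^{su}](F^{-k}\xx)$ onto $[F^{k+\ell}\calA^{su}](\xx)$. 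The countably many null sets, one for each $k$, can be discarded. Consequently, the inequality $\mu([F^k\calA^{su}](\xx))>2\mu([F^{k+\ell}\calA^{su}](\xx))$ fails exactly when $F^{-k}\xx\in G$.

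The second step applies the maximal ergodic theorem to $T=F^{-1}$, which preserves $\hat\mu$, and to $\phi=\mathbf 1_G$. This gives
\[
\hat\mu\Big(\Big\{\xx:\ \sup_n \tfrac1n\#\{1\leq k\leq n:\ F^{-k}\xx\in G\}\geq\epsilon\Big\}\Big)\leq \frac{\hat\mu(G)}{\epsilon}<\frac{\epsilon}{2}<\epsilon .
\]
The theorem is stated with strict inequality $>R$; taking $R$ slightly below $\epsilon$ handles the non-strict version. On the complement of this set, every $n$ satisfies
\[
\tfrac1n\#\{1\leq k\leq n:\ \text{the good inequality holds at }k\}>1-\epsilon .
\]
The infimum over $n$ is then at least $1-\epsilon$, not necessarily strictly greater. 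To get strictness, run the argument with $\epsilon/2$ in place of $\epsilon$ in the frequency threshold, i.e. choose $\epsilon'=\epsilon^2/4$. Then the infimum is at least $1-\epsilon/2>1-\epsilon$.

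The main obstacle is the first step: the iterated invariance identity must hold simultaneously for all $k$ on a full-measure set. The definitions of the atoms $[F^j\calA^{su}]$ and the conditional measures must be consistent along the $F^{-1}$-orbit. We handle this by the telescoping already displayed in \eqref{equ:birkhoff-sum}, which uses only \cref{lem:invariance} at each point $F^{-j}\xx$, together with the $F$-invariance of $\hat\mu$, which makes each exceptional null set pull back to a null set. Everything else is a direct application of the maximal inequality.
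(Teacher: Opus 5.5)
Your proposal is correct and follows essentially the same argument as the paper. You choose $\ell$ from \cref{lem:one-scale} with a small parameter, use the invariance of fiber measures to identify failure of the doubling inequality at scale $k$ with $F^{-k}\xx$ lying in the bad set, and apply the maximal ergodic theorem along $F^{-1}$ to the indicator of the bad set. Your choice of $\epsilon'=\epsilon^2/4$ with frequency threshold $\epsilon/2$ plays the same role as the paper's $\epsilon_0<\epsilon^2$ with threshold $\epsilon_0^{1/2}$, and both give the strict inequalities.
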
 
	\begin{proof}
    Fix $\epsilon>0$.
    Given any $\epsilon_0>0$, let $\ell\in \N$ be given as in \Cref{lem:one-scale}. Set
        \begin{equation}
        X_{\epsilon_0}:=\Big\{\xx\in\Lambda_{f}\times\Sigma^-:\ -\log\mu_{\xx,\calA^{su}}([F^\ell\calA^{su}](\xx))>\log 2\Big\}.
        \end{equation}
        Set $\phi=1_{X_{\epsilon_0}^c}$, and  we have $\|\phi\|_1<\epsilon_0$. 
        
        We apply the maximal ergodic theorem to the dynamical system  $(\Lambda_f\times \Sigma^-,\hat{\mu},F^{-1})$ and the function $\phi$. More precisely, 
       notice that by the invariance of the fiber measures (\Cref{lem:invariance}), we have for every $k\in \N$,
		\[ \frac{\mu_{\xx,\calA^{su}}([F^{k}\calA^{su}](\xx)) }{ \mu_{\xx,\calA^{su}}([F^{k+\ell}\calA^{su}](\xx))}=\frac{\mu_{F^{-k}\xx,\calA^{su}}(\calA^{su}(F^{-k}\xx)) }{ \mu_{F^{-k}\xx,\calA^{su}}([F^{\ell}\calA^{su}](F^{-k}\xx))} \]
		So $x$ satisfies $\mu_{\xx,\calA^{su}}([F^{k}\calA^{su}](\xx))>2 \mu_{\xx,\calA^{su} }([F^{k+\ell}\calA^{su}](\xx))$ if and only if $F^{-k}\xx\in X_{\epsilon_0}$. 
		  
		Therefore, 
		\[ M\phi(\xx)=\sup_{n} \frac{1}{n}\#\{ 1\leq k\leq n,\ \mu_{\xx,\calA^{su}}([F^{k}\calA^{su}](\xx))\leq 2 \mu_{\xx,\calA^{su}}([F^{k+\ell}\calA^{su}](\xx) \}, \]
		and by the maximal ergodic theorem
		\[\hat\mu\left(\left\{\xx:M\phi(\xx)>{\epsilon_0}^{1/2}\right\}\right)\leq \epsilon_0^{1/2} . \]
        By choosing $\epsilon_0<\epsilon^2$,
        we obtain the lemma. 
	\end{proof}

	\begin{proof}[Proof of \cref{prop:ball}]
    The proof is to compare atoms of $\calA^{su}$ with balls in $\R^2$. For every $\ell\in \N$ and for $\hat{\mu}$-a.e. $\xx$, we define the subset $N_{\ell}(\xx)\subset \N$ by
        \[
        N_{\ell}(\xx)=\{k\in \N:\mu_{x,\calA^{su}_\bfi}([F^{k}\calA^{su}_\bfi](x))>2 \mu_{x,\calA^{su}_\bfi}([F^{k+\ell}\calA^{su}_\bfi](x))\}.
        \]
        Given  $\epsilon>0$, it follows from \cref{lem:multi-scale} that there exists $\ell'\in \N$ such that 
        \begin{equation}
		 \hat{\mu}\left(\left\{\xx: \inf_{n\in \N}\frac{1}{n}\#(N_{\ell'}(\xx)\cap [1,n])>1-\epsilon\right\}\right)\geq 1-\epsilon .
        \end{equation}
		Restricting to $\Lambda_{n_0}$, we obtain
        \begin{equation}\label{equ: size of Nl}
		 \mu_0\otimes \nu_-\left(\left\{\xx: \inf_{n\in \N}\frac{1}{n}\#(N_{\ell'}(\xx)\cap [1,n])>1-\epsilon\right\}\right)\geq 1-\epsilon/\mu(\Lambda_{n_0}).
        \end{equation}
       For a word $\bfi\in\Sigma^-$ and $k\in \N$,  set $[\bfi]_k:=(i_{-1},\cdots,i_{-k})$, and
        \[ 
        \alpha(\bfi,k):=\|Df^{-1}_{i_{-k}}\circ\cdots\circ Df^{-1}_{i_{-1}}(x_0)|_{E^{su}(x_0,\bfi)}\|,
        \]
        where $x_0$ is the point in $\Lambda_{n_0}$ fixed in \eqref{eqn:x0}.

        We claim that there exists a uniform constant $C_1'>1$ such that for any $\bi\in \Sigma^-$, $k\in \mathbb{N}$, $z\in \Lambda_{n_0}$, and $y\in W^{su}_{\mathrm{loc}}(z,\bi)$,
        \[
        \frac{1}{C'_1}\leq \frac{\|Df^{-1}_{i_{-k}}\circ\cdots\circ Df^{-1}_{i_{-1}}(y)|_{T_yW^{su}_{\mathrm{loc}}(z,\bfi)}\|}{\alpha(\bfi,k)}\leq C_1'.
        \]
The proof of the claim is standard, but this property is crucial for establishing the spreading property in the non-linear case with respect to the self-affine case.
\begin{proof}[Proof of the claim]
We use the uniform H\"older continuity and the bounded distortion calculation. More precisely, fix $\bi\in \Sigma^-$, $k\in \N$, $z\in \Lambda_{n_0}$ such that
$
y\in W^{su}_{\mathrm{loc}}(z,\bfi).
$
Let
\[
F_j:=f^{-1}_{i_{-j}}\circ\cdots\circ f^{-1}_{i_{-1}},
\qquad 0\leq j\leq k,
\]
with $F_0=\Id$, and set
\[
x_j:=F_j(x_0),\qquad y_j:=F_j(y),\qquad z_j:=F_j(z).
\]

By construction, we have $y_j\in W^{su}_{\mathrm{loc}}(z_j,\sigma^j\bfi)$ for every
$0\leq j\leq k$, and the local strong unstable manifold is uniformly contracting under the inverse iterations of $f$. Moreover, since each inverse branch is uniformly contracting on the repeller, there exist constants $C_2'>0$ and
$\rho\in(0,1)$ such that
\begin{equation}\label{eq:orbit-estimates}
d(x_j,z_j)+d(y_j,z_j)
\leq
C_2'\cdot\rho^{j},
\qquad
0\leq j\leq k .
\end{equation}

By \cref{prop Holder continuous} and the uniform $\frac{\alpha}{2}$-H\"older continuity of the local strong unstable manifolds \ref{U:USR}, for any $\bfj\in \Sigma^-$, the function
\[
(u,\bfj)\longmapsto
\log\left\|Df^{-1}_{j_{-1}}(u)\big|_{E^{su}(u,\bfj)}\right\|
\]
is uniformly $\frac{\alpha}{2}$-H\"older with respect to $u$ on $\Lambda_f$ or along each local strong unstable leaf. Hence, there exists $C_3'>0$ such that for every
$0\leq j\leq k-1$,
\[
\begin{aligned}
&
\left|
\log\left\|Df^{-1}_{i_{-(j+1)}}(y_j)\big|_{E^{su}(y_j,\sigma^j\bfi)}\right\|
-
\log\left\|Df^{-1}_{i_{-(j+1)}}(x_j)\big|_{E^{su}(x_j,\sigma^j\bfi)}\right\|
\right|
\\
&\leq
C_3'\cdot d(y_j,z_j)^\frac{\alpha}{2}
+
C_3'\cdot d(z_j,x_j)^\frac{\alpha}{2}.
\end{aligned}
\]

Using the chain rule and \eqref{eq:orbit-estimates}, we obtain
\[
\begin{aligned}
\left|
\log
\frac{
\left\|
D\bigl(f^{-1}_{i_{-k}}\circ\cdots\circ f^{-1}_{i_{-1}}\bigr)(y)
\big|_{T_yW^{su}_{\mathrm{loc}}(z,\bfi)}
\right\|
}{
\alpha(\bfi,k)
}
\right|
&\leq
C_3'\cdot
\sum_{j=0}^{k-1}
\bigl(
d(y_j,z_j)^\frac{\alpha}{2}
+
d(z_j,x_j)^\frac{\alpha}{2}
\bigr)
\\
&\leq
2C_3' (C_2')^\frac{\alpha}{2}
\sum_{m=0}^{\infty}\rho^{\frac{\alpha m}{2}}
=:\log C_1' .
\end{aligned}
\]
Therefore,
\[
\frac{1}{C_1'}
\leq
\frac{
\left\|
D\bigl(f^{-1}_{i_{-k}}\circ\cdots\circ f^{-1}_{i_{-1}}\bigr)(y)
\big|_{T_yW^{su}_{\mathrm{loc}}(z,\bfi)}
\right\|
}{
\alpha(\bfi,k)
}
\leq
{C_1'},
\]
which proves the claim.
\end{proof}

        Moreover, note that by the construction of the partition $\calA^{su}$, there exists a constant $c_1\in (0,1)$ such that for any $x\in\Lambda_f$ and $\bfi\in\Sigma^-$, 
        \[
         B^{su}_\bfi(x,c_1)\cap\Lambda_f \subseteq \calA^{su}_\bfi(x)=\cA^{su}(x,\bfi)\subseteq B^{su}_\bfi(x,c_1^{-1})\cap\Lambda_f,
        \]
        where $B^{su}_\bfi(x,r)$ denotes the ball of radius $r$ with center $x$ on $W^{su}_{\mathrm{loc}}(x,\bfi)$ under the Riemannian metric. Combining this with the claim and the fact that the straightening map $\sigma_\bfi$ is uniformly bi-Lipschitz,
        there exists a uniform constant $C_4'>0$ such that the following holds: for any $x\in\Lambda_{n_0}$, $\bfi\in\Sigma^-$, and $k\in \N$,

		\begin{equation} \label{equ:change-scale} B^{su}_{\bfi,s}(x, \alpha (\bfi,k)/C_4')\cap\Lambda_f \subset [F^k\calA^{su}](\xx)\subset  B^{su}_{\bfi,s}(x, C_4'\cdot\alpha(\bfi,k)),
		\end{equation}
        where $B^{su}_{\bfi,s}(x,\cdot)$ are balls on the strong unstable manifolds with respect to the metric induced by the straightening map \eqref{eqn:ball straightening}.
        
		    For every $k\in\mathbb N$, $x\in \Lambda_{n_0}$, and $\bfi\in \Sigma^-$, let $n_1(k,\xx)$ be the maximal $n\in \N$ such that
		\[   B^{su}_{\bfi,s}(x,2^{-k})\cap\Lambda_f\subset[ F^n\calA^{su}](\xx), \]
		and $n_2(k,\xx)$ be the minimal $m\in \N$ such that 
		\[  [F^m\calA^{su}](\xx)\subset B^{su}_{\bfi,s}(x,2^{-k}).
        \]
		Therefore, for $\mu_0\times\nu_-$-a.e. $\xx=(x,\bfi)\in\Lambda_{n_0}\times\Sigma^-$ and $k,\ell\in \N$,
		\[ \frac{\mu_{\xx,\calA^{su}}(B^{su}_{\bfi,s}(x,2^{-k}))}{\mu_{\xx,\calA^{su}}(B^{su}_{\bfi,s}(x,2^{-k-\ell}))}\geq \frac{\mu_{\xx,\calA^{su}}([F^{n_2(k,\xx)}\calA^{su}](\xx))}{\mu_{\xx,\calA^{su}}([F^{n_1(k+\ell,\xx)}\calA^{su}](\xx))}. \]
		By \eqref{equ:change-scale}, there exists $\ell>0$ large enough that depends only on $\ell'$ such that $$n_1(k+\ell,\xx)-n_2(k,\xx)\geq \ell'.$$ Hence, if $n_2(k,\xx)\in N_{\ell'}(\xx)$, we have 
        \[
        {\mu_{\xx,\calA^{su}}(B^{su}_{\bfi,s}(x,2^{-k}))}\geq 2 \mu_{\xx,\calA^{su}}(B^{su}_{\bfi,s}(x,2^{-k-\ell})).
        \]
It remains to obtain an upper estimate for
\[
    \#\{1\leq k\leq n,\ n_2(k,\xx)\in N_{\ell'}(\xx)^c \}.
\]
By \eqref{equ:change-scale}, there exists a constant $C>0$ such that, for every
$\xx$, the map $k\mapsto n_2(k,\xx)$ is at most $C$-to-one and satisfies
$n_2(k,\xx)\leq Ck$ for all $k\in\mathbb N$. 
Together with \eqref{equ: size of Nl}, for all $\xx$ in a set whose $\mu_0\times\nu_-$ measure is greater than {$1-\epsilon/\mu(\Lambda_{n_0})$}, we have
\[
    \#\{1\leq k\leq n,\ n_2(k,\xx)\in N_{\ell'}(\xx)^c \}
    \leq
    C \#\{1\leq n_2\leq Cn,\ n_2\in N_{\ell'}(\xx)^c \}
    \leq
    C^2\epsilon n.
\]
Choosing $\epsilon$ in the preceding estimate sufficiently small completes the proof.
	\end{proof}

\subsubsection{Ball-spreading and dyadic-spreading}

In the previous subsection, we showed that the positive entropy of the fiber measure implies the ball-spreading property. For the additive-combinatorial argument used later, however, the more classical and convenient formulation is the dyadic-spreading property, and the following lemma provides the passage from the former to the latter.
 
  \begin{lem}[Ball-spreading to dyadic-spreading]\label{lem:ball-dyadic-spreading}
  Let $\eta$ be a probability measure on $\R$, $\ell\in \N$, and $\epsilon>0$ with $\epsilon>2^{-\ell+1}$. Suppose
 	 a point $x\in \R$ satisfies uniform $(\ell,\epsilon)$-ball-spreading with respect to $\eta$. Then there exist 
     $N=N(x,\ell,\epsilon)\in \N$ and
     $I=I(x,\ell,\epsilon)\subset [0,1] $ with $\mathrm{Leb}(I)>1-\epsilon$, such that for any $h\in I$ and $n>N$, the point $x$ is also $(n,2\ell,10\epsilon, h)$-dyadic-spreading with respect to $\eta$. The $N$ can be chosen to be measurable with respect to $x$.
	\end{lem}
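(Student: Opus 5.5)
The plan is to compare dyadic cells with balls around $x$, and to use a random shift $h$ to ensure that $x$ is rarely close to the boundary of its cell. For $h\in[0,1]$ and $k\in\N$, let $d_k(h):=\operatorname{dist}(x,\partial\mathcal D^h_k(x))\cdot 2^{k}\in[0,1)$. As $h$ ranges over $[0,1]$ with Lebesgue measure, $d_k(h)$ is uniformly distributed on $[0,1)$ for each fixed $k$. The reason is that $x-h$ modulo $2^{-k}$ is uniform once $h$ is uniform on an interval whose length is a multiple of $2^{-k}$, and for $k\geq0$ the interval $[0,1]$ has this property. Call scale $k$ \emph{bad} for $h$ if $d_k(h)<2^{-\ell}$, i.e.\ $x$ lies within $2^{-k-\ell}$ of the boundary of $\mathcal D^h_k(x)$. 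Then $\mathrm{Leb}\{h: k \text{ bad}\}\leq 2^{-\ell+1}<\epsilon$.

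The key containments are as follows. If scale $k$ is good, then $B(x,2^{-k-\ell})\subset \mathcal D^h_k(x)$. In all cases $\mathcal D^h_{k+2\ell}(x)\subset B(x,2^{-k-2\ell})\subset B(x,2^{-k-\ell})$. Hence, whenever $k$ is good and $\eta(B(x,2^{-k-\ell}))>2\eta(B(x,2^{-k-2\ell}))$, we get
\[
\eta(\mathcal D^h_k(x))\geq \eta(B(x,2^{-k-\ell}))>2\eta(B(x,2^{-k-2\ell}))\geq 2\eta(\mathcal D^h_{k+2\ell}(x)).
\]
Thus scale $k$ fails to be dyadic-spreading for parameter $2\ell$ only if either $k$ is bad for $h$, or $k+\ell$ fails the ball-spreading condition (with parameter $\ell$) at $x$. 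The uniform $(\ell,\epsilon)$-ball-spreading hypothesis, applied with $n+\ell$ in place of $n$, shows that the second alternative occurs for at most $\epsilon(n+\ell)$ values of $k\leq n$. For $n>N_0:=\ell$ this is at most $2\epsilon n$.

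It remains to control the number of bad scales, uniformly in $n>N$, for most $h$. The expected number of bad scales in $[1,n]$ is at most $2^{-\ell+1}n<\epsilon n$. A Markov bound then gives, for each fixed $n$, $\mathrm{Leb}\{h:\#\text{bad}\cap[1,n]>5\epsilon n\}\leq 1/5$, but this is neither small enough nor uniform in $n$. This is the main obstacle, and I would handle it by an ergodic argument. Under the correspondence $h\mapsto x-h$ modulo $1$, written in binary, the indicator that $k$ is bad depends on the binary digits at positions $k+1,\dots,k+\ell$ (being all $0$s or all $1$s). These digits are i.i.d.\ fair bits under Lebesgue measure, so the indicators form a stationary process driven by the doubling map, which is ergodic. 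By Birkhoff's theorem, $\frac1n\#\{k\leq n \text{ bad}\}\to 2^{-\ell+1}<\epsilon$ for a.e.\ $h$. By Egorov's theorem there exist $N_1$ and a set $I$ with $\mathrm{Leb}(I)>1-\epsilon$ such that $\#\{k\leq n\text{ bad}\}<\epsilon n$ for all $h\in I$ and $n>N_1$; alternatively, the maximal ergodic theorem applied as in \Cref{lem:multi-scale} gives the same conclusion. Taking $N=\max(N_0,N_1)$, the total number of failing scales is at most $3\epsilon n<10\epsilon n$, which is the claim.

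For measurability, $N_1$ depends only on $\ell$ and $\epsilon$ once the set $I$ is chosen through the variable $x-h$. Some care is needed here, since $I$ depends on $x$ through a translation. I would define $I$ as $x-I_0$ modulo $1$ for a fixed set $I_0$, suitably intersected with $[0,1]$. With this choice, $N$ is a measurable (even constant) function of $x$, using the minimal valid $N$, which is measurable because it is defined by countably many measurable conditions.
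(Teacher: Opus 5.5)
Your proof is correct and follows essentially the same route as the paper. The paper also uses equidistribution of $2^k(x-h)$ modulo $1$ for a.e.\ $h$ (Borel's normal number theorem, which is your Birkhoff argument for the doubling map), together with Egorov's theorem, so that the scales at which $x$ lies within $2^{-k-\ell}$ of the boundary of $\mathcal D^h_k(x)$ have frequency below $\epsilon$. It then intersects the remaining scales with the shifted ball-spreading scales via $\mathcal D^h_k(x)\supset B(x,2^{-k-\ell})\supset B(x,2^{-k-2\ell})\supset\mathcal D^h_{k+2\ell}(x)$.

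Two small remarks. Choosing $I$ as a translate of a fixed set in the variable $x-h$ is a good move: it makes $N$ depend only on $(\ell,\epsilon)$, which justifies the measurability clause that the paper merely asserts. However, the maximal-ergodic-theorem alternative you mention would not suffice under the sole hypothesis $\epsilon>2^{-\ell+1}$. It bounds the exceptional set only by $2^{-\ell+1}/R$, and this is below $\epsilon$ for $R$ of order $\epsilon$ only when $2^{-\ell+1}\lesssim\epsilon^2$, so stick with Birkhoff plus Egorov.
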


   \begin{proof}
   For every $n\in \N$ and $h\in [0,1]$, define the index sets:
\begin{align}
&A_n(x,h) := \{ 1\leq k\leq n,\, d(2^k(x-h),\mathbb{Z})>2^{-\ell} \},\\
&B_n(x) := \{ 1\leq k\leq n,\, \eta(B(x,2^{-k-\ell}))>2\eta (B(x,2^{-k-2\ell}))\}.
\end{align}
   By Borel's normal number theorem, for $\mathrm{Leb}$-a.e.  $y\in[0,1]$, the sequence $\{2^k y\}_{k\in\mathbb N}$ is equidistributed in $\R/\Z$ with respect to $\mathrm{Leb}$ (see, for instance, \cite{KuipersNiederreiter}).  
This implies that, 
for 
$\mathrm{Leb}$-a.e. $h\in[0,1]$,
the sequence 
$\{2^k(x-h)\}_{k\in\mathbb N}$ is equidistributed  in $\R/\Z$ 
with respect to $\mathrm{Leb}$. In particular, we have for $\mathrm{Leb}$-a.e. $h\in[0,1]$,
		\begin{equation}
			\lim_{n\to\infty} \frac{1}{n}\# A_n(x,h)=1-2^{-\ell+1}. 
		\end{equation}
		By Egorov's theorem, for any $\epsilon\in (2^{-\ell+1},1)$, there exist a set $I\subset [0,1]$ with $\mathrm{Leb}(I)>1-\epsilon$ and $\ N\in \N$ such that for each $h\in I$ and for all $n>N$, 
		\begin{equation}\label{equ:separate-z}
			\frac{1}{n}\#A_n(x,h)>1-\epsilon. 
		\end{equation}  
		For every $k\in A_n(x,h)$, we obtain
		\[ \cD_k^h(x)\supset B(x,2^{-k-\ell})\supset B(x,2^{-k-2\ell})\supset \cD_{k+2\ell}^h(x),\]
		which yields 
		\[ \frac{\eta(\cD_k^h(x))}{\eta(\cD_{k+2\ell}^h(x))}\geq\frac{\eta(B(x,2^{-k-\ell}))}{\eta(B(x,2^{-k-2\ell}))} \]
        whenever $\eta(\cD_{k+2\ell}^h(x))>0$. From this, we deduce the set inclusion:
        \begin{equation}
        A_n(x,h) \cap B_n(x) \subset \{ 1\leq k\leq n : \eta(\mathcal{D}^h_k(x))>2\eta(\mathcal{D}^h_{k+2\ell}(x))\}.
        \end{equation}
        Thus, the lemma holds provided we take $N\gg \ell/\epsilon$.
	\end{proof}

\section{Sum set estimate}\label{sec:additive}

The version of the sum set estimate needed here is Theorem 4.7 from \cite{wuProjectionTheoremsCountably2025}, whose proof is based on Hochman's inverse theorem \cite{hochman_self-similar_2014} and the Balog-Szemer\'edi-Gowers theorem (Corollary 2.36 of \cite{TV}).

\begin{thm}\label{thm:growth}
For any $\epsilon\in (0,1)$ and $\ell\in \N$, there exists $\delta>0$ such that the following holds for $n\geq n(\epsilon,\ell)$.

Let $\eta$ be a probability measure on $[0,1]\cap 2^{-n}\Z$ and $A\subset [0,1]\cap 2^{-n}\Z$ with $\eta(A)\geq 1/2$ and each element in $A$ satisfying $(n,\ell,\epsilon)$-dyadic-spreading.
Let $B$ be a subset of $[0,1]\cap 2^{-n}\Z$ with $|B|\leq 2^{(1-2\epsilon)n}$. Suppose for each $a\in A$, there is a subset $B_a\subset B$ with $|B_a|\geq |B|^{1-\delta}$. Then we have 
\[
\left|\bigcup_{a\in A}(a+B_a)\right|\geq |B|^{1+\delta}.
\]
\end{thm}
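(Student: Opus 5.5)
This is Theorem 4.7 of Wu. The plan is to reproduce his argument in entropy language: Hochman's inverse theorem combined with the Balog--Szemer\'edi--Gowers theorem (BSG).

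\textbf{Reduction to measures.} Suppose, for contradiction, that $S:=\bigcup_{a\in A}(a+B_a)$ has $|S|<|B|^{1+\delta}$. I would form the measure
\[
\theta:=\sum_{a\in A}\eta(a)\,\delta_a\otimes u_{B_a}
\]
on $A\times B$, where $u_{B_a}$ is uniform on $B_a$. Its image under $(a,b)\mapsto a+b$ is supported on $S$. The hypothesis $|B_a|\geq|B|^{1-\delta}$ then says that, in the sense of BSG, the pairs $(a,b)$ carry a dense set of ``additive edges'' with few sums.

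\textbf{Applying BSG.} The measure-weighted BSG (Corollary 2.36 of \cite{TV}) should produce:
\begin{itemize}
\item a set $A'\subseteq A$ with $\eta(A')\geq|B|^{-O(\delta)}$, concentrated in the sense that enough $\eta$-mass of spreading points survives;
\item a set $B'\subseteq B$ with $|B'|\geq|B|^{1-O(\delta)}$;
\item the bound $|A'+B'|\leq|B|^{1+O(\delta)}$.
\end{itemize}
This can be rephrased as follows. Let $\eta'$ be $\eta$ restricted to $A'$ and normalized, and let $u$ be uniform on $B'$. Then
\[
H_n(\eta'*u)\leq H_n(u)+O(\delta n).
\]
The restriction step must keep a large proportion of the dyadic-spreading property. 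Since spreading is a pointwise property of each $a\in A$, it persists for $\eta'$ at least in the form ``many points are spreading''. One caveat is that the conditional measures of $\eta'$ on dyadic cells differ from those of $\eta$. I would handle this by choosing $A'$ as a union of dyadic cells through a multiscale pigeonholing, or by replacing $\eta'$ with $\eta$ on cells where the ratio is controlled.

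\textbf{Inverse theorem.} Hochman's inverse theorem applies when $H_n(\eta'*u)\leq H_n(u)+\delta' n$ with $\delta'$ small in terms of $(\epsilon,\ell)$. It yields a partition of scales $[1,n]$ into three kinds:
\begin{itemize}
\item scales where $u$ is almost full (entropy close to $1$ per scale);
\item scales where $\eta'$ is almost atomic (entropy close to $0$);
\item a remainder of small proportion.
\end{itemize}
Dyadic-spreading rules out atomicity: at a spreading scale $k$, the cell $\mathcal D_k(x)$ loses at least half its mass going down to scale $k+\ell$, so the component measure has entropy at least roughly $c(\ell)>0$ over that block of $\ell$ scales. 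Spreading holds at a proportion $1-\epsilon$ of scales for most $x$. Hence $\eta'$ is non-atomic on most scales, which forces $u$ to be almost full on most scales. That gives
\[
H_n(u)\geq(1-\epsilon-o(1))\,n,
\]
contradicting
\[
\log|B'|\leq\log|B|\leq(1-2\epsilon)n.
\]
The constants are then tuned so that $O(\delta)$ and $\delta'$ lie below the resulting gap.

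\textbf{Main obstacle.} The delicate step is converting the BSG output into the entropy hypothesis of the inverse theorem while keeping the spreading information quantitatively intact. The issue is that BSG loses polynomial factors $|B|^{O(\delta)}$ in the $\eta$-mass, and spreading must survive conditioning on a set of mass only $|B|^{-O(\delta)}$. I would first try an averaging (Markov) argument over the dyadic components of $\eta$: the set of non-spreading behaviour has mass less than $\epsilon$, while a subset of mass $2^{-O(\delta n)}$ that is a union of cells at coarse scales still inherits spreading at most fine scales. This is exactly Wu's treatment, which the proof would follow.
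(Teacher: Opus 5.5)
Your plan follows the route the paper relies on. The paper gives no proof of its own; it cites Wu's Theorem 4.7, which is proved by a Balog--Szemer\'edi--Gowers step followed by Hochman's inverse theorem. The BSG step is needed in its imbalanced form, since the effective size of $\eta$ can be far below $|B|$. In that argument, dyadic-spreading rules out atomic components, and the gap between $1-\epsilon$ and $1-2\epsilon$ gives the contradiction, as in your outline.

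For the step you flag, you do not need $A'$ to be a union of coarse cells. Write $\log(1/\eta(A'))=\mathrm{KL}(\eta'\|\eta)$ and split it by the chain rule along the dyadic filtration into the $\eta'$-averaged divergences between the one-step conditional distributions of $\eta'$ and $\eta$ on the cells $\mathcal D_k(x)$. When $\eta(A')\geq 2^{-O(\delta n)}$, Markov's and Pinsker's inequalities then make the depth-$\ell$ components of $\eta'$ and $\eta$ close in total variation at all but a small proportion of (point, scale) pairs, which is all the entropy count needs.
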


Wu's theorem is formulated on the dyadic grid and gives a cardinality estimate for linear sumsets. In the next section, we will instead need a covering-number estimate at a rescaled local scale, where the translating parameters are also allowed to undergo a small non-linear perturbation. The following corollary records precisely the form needed for that application, and will be deduced from \Cref{thm:growth} through the reductions developed in this section.

The use of the covering number is motivated by the need to analyze sets $B \subset \R$ that are not $2^{-n}$-discrete; by discretizing at scale $2^{-n}$, we can naturally frame our conclusions in terms of covering numbers. Specifically, for a subset $B \subset \R$ and $r > 0$, let $\cN_r(B)$ denote the minimal number of open balls of radius $r$ required to cover $B$.

\begin{cor}[Nonlinear, rescaled form in covering]\label{cor:final-growth-addcom}
For every \(\epsilon_*\in(0,\frac12)\), \(\ell_*\in\mathbb N\), $C,C'\in\mathbb N$, and $s>\frac{1}{100}$, there exists
\(0<\delta=\delta(\epsilon_*,\ell_*)\leq1\) such that the following holds for all
\(n\geq N(\epsilon_*,\ell_*,C',C)\). Let \(I\) be an interval of length
\(s 2^{-(C+1)n}\), and let \(\eta\) be a probability measure on \(I\) whose
affine normalization to \([0,1]\) is \((n,\ell_*,\epsilon_*/6)\)-dyadic-spreading with
respect to the standard dyadic partition. Let \(A\subset\supp\eta\) be a discrete set such that, for the affine
normalization \(T_I:I\to[0,1]\),
\begin{equation}
\label{eqn:lower bound measure}
\eta\left(\bigcup_{a\in A}T_I^{-1}(\cD_n(T_I(a)))\right)>\frac23.
\end{equation}
Let
\[
B\subset [-3C'2^{-(C+1)n},3C' 2^{-(C+1)n}]
\]
satisfy
\[
\cN_{s 2^{-(C+2)n}}(B)
\leq 2^{(1-\epsilon_*)n }.
\]
Assume that for every \(a\in A\) there is a set \(B_a\subset B\) with
\begin{equation}
\label{eqn:lower bound Ba}
\cN_{s 2^{-(C+2)n}}(B_a)
\geq
\cN_{s 2^{-(C+2)n}}(B)^{1-\delta}.
\end{equation}
Finally, let \(\phi:A\to\mathbb R\) satisfy
\[
|\phi(a)-a|\leq s 2^{-(C+3)n},
\qquad a\in A.
\]
Then
\[
\cN_{s 2^{-(C+2)n}}
\left(
\bigcup_{a\in A}(\phi(a)+B_a)
\right)
\geq
\frac1{200}
\cN_{s 2^{-(C+2)n}}(B)^{1+\delta}.
\]
\end{cor}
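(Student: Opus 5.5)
The plan is to reduce \Cref{cor:final-growth-addcom} to \Cref{thm:growth} by rescaling and discretizing. Set $\rho:=s2^{-(C+1)n}$ and apply the affine map $x\mapsto x/\rho$, which sends $I$ to an interval of length $1$ and sends $B$ into $[-3C'/s,3C'/s]$, an interval of length $O(C')$ since $s>1/100$. The resolution $s2^{-(C+2)n}$ becomes $2^{-(C+1)n}\cdot$ (up to the factor $2^{-n}\cdot$ const), and the perturbation $s2^{-(C+3)n}$ becomes $2^{-(C+2)n}$. Concretely, I would work at the dyadic resolution $2^{-n}$ after a second normalization, so that covering numbers at scale $s2^{-(C+2)n}$ of the original sets correspond, up to a multiplicative constant depending only on $C'$, to cardinalities of $2^{-n}$-discretizations. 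Covering numbers and cardinalities of discretizations agree up to a factor of $3$, and the perturbation $\phi$ moves each translate by at most one grid cell at this scale. Hence replacing $\phi(a)$ by the grid point of $a$ changes the covering number of the union by at most a constant factor. This accounts for the loss $1/200$.

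Next I would discretize each object. Let $\tilde B$ and $\tilde B_a$ be the grid points at scale $2^{-n}$ meeting $B$ and $B_a$. After cutting the $O(C')$-length range into $O(C')$ unit intervals and pigeonholing, I would assume $\tilde B\subset[0,1]\cap 2^{-n}\Z$ while losing only a constant factor, or I would simply shift and scale by a bounded factor, absorbing $C'$ into $n(\epsilon,\ell)$ through the hypothesis $n\geq N(\epsilon_*,\ell_*,C',C)$. The hypothesis $\cN(B)\leq 2^{(1-\epsilon_*)n}$ gives $|\tilde B|\leq 2^{(1-2\epsilon)n}$ for $\epsilon=\epsilon_*/3$ once $n$ is large. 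The lower bound \eqref{eqn:lower bound Ba} transfers to $|\tilde B_a|\geq|\tilde B|^{1-\delta'}$ once constants are absorbed for large $n$, using $|\tilde B|\geq 1$.

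For the translating set, let $\tilde\eta$ be the push-forward of the normalized $\eta$ to the grid $[0,1]\cap2^{-n}\Z$, assigning to each dyadic cell its mass at the left endpoint. Let $\tilde A$ be the set of grid points whose dyadic cells contain a point of $A$ and are $(n,\ell_*,\epsilon_*/6)$-dyadic-spreading. Dyadic spreading is a property of dyadic cells at scales $k\leq n$ (and $k+\ell\leq n+\ell$). The main obstacle is the scales $k+\ell_*>n$: there the ratio $\eta(\cD_k)/\eta(\cD_{k+\ell})$ for the discretized measure can differ. These are at most $\ell_*$ values of $k$, which cost only $\ell_*/n<\epsilon_*/6$ in frequency for large $n$. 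Thus spreading points of $\eta$ project to points that are $(n,\ell_*,\epsilon_*/3)$-spreading for $\tilde\eta$. Since the non-spreading set has mass $<\epsilon_*/6$, the hypothesis \eqref{eqn:lower bound measure} gives $\tilde\eta(\tilde A)\geq 2/3-\epsilon_*/6\geq1/2$. For each $\tilde a\in\tilde A$ I would choose one representative $a\in A$ and use its $B_a$.

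Finally, apply \Cref{thm:growth} with $\epsilon=\epsilon_*/3$, $\ell=\ell_*$ and $\delta'$ given by that theorem. This yields $|\bigcup_{\tilde a}(\tilde a+\tilde B_a)|\geq|\tilde B|^{1+\delta'}$. Take $\delta=\delta'/2$ to absorb the constant losses in the exponents. Undoing the discretization, grid sums $\tilde a+\tilde b$ lie within $O(1)$ cells of $\phi(a)+b$, so each point is counted by boundedly many grid points. The covering number of $\bigcup(\phi(a)+B_a)$ at scale $s2^{-(C+2)n}$ is therefore at least an absolute constant times the cardinality above, which gives the stated bound with factor $1/200$.
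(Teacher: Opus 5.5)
Your route is the paper's: normalize by $T_I$, discretize at $2^{-n}$ with left-endpoint masses (losing $\ell_*/n$ in spreading frequency), rescale by a bounded factor to place $B$ in $[0,1]$, apply \Cref{thm:growth}, and undo everything with Hausdorff-distance estimates. (After $x\mapsto x/\rho$ the resolution is exactly $2^{-n}$ and the perturbation is $2^{-2n}$, not $2^{-(C+2)n}$, so no second normalization is needed; this slip is harmless.)

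The gap is in transferring the hypothesis $\cN(B_a)\geq\cN(B)^{1-\delta}$ to $|\tilde B_a|\geq|\tilde B|^{1-\delta'}$ (all covering numbers at scale $2^{-n}$ after normalization). Discretization only gives $\cN(B)\leq|\tilde B|\leq3\cN(B)$. Hence $|\tilde B_a|\geq\cN(B)^{1-\delta}\geq(|\tilde B|/3)^{1-\delta}$. To reach $|\tilde B|^{1-\delta'}$ you need $|\tilde B|^{\delta'-\delta}\geq3^{1-\delta}$, which is a lower bound on $|\tilde B|$. Taking $n$ large does not supply one: the hypotheses bound $\cN(B)$ only from above, and $B$ may have bounded size for every $n$. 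For instance, $\cN(B)=1$ with $|\tilde B|=3$ and $|\tilde B_a|=1$ is allowed, and then the hypothesis of \Cref{thm:growth} is simply false. The bound $|\tilde B|\geq1$ you invoke does not help.

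The missing step is a case split, which is exactly what the paper does with $\delta=\delta_0/4$. If $\cN(B)^{2\delta}<200$, the conclusion is immediate. Indeed $A\neq\emptyset$ by the mass hypothesis, so $\cN\bigl(\bigcup_{a}(\phi(a)+B_a)\bigr)\geq\cN(B_a)\geq\cN(B)^{1-\delta}\geq\frac1{200}\cN(B)^{1+\delta}$. Otherwise $\cN(B)^{\delta}\geq\sqrt{200}$, which absorbs the factor $3$ (or the paper's $6$) into the exponent, and your argument then goes through with $\delta=\delta'/2$.

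Also discard the pigeonholing alternative for fitting $B$ into a unit interval. Restricting to one interval either destroys the lower bounds on the $B_a$, or leaves translating points of $\eta$-mass only of order $1/C'$, below the $1/2$ that \Cref{thm:growth} requires. The bounded rescaling by $2^{-m}$, $m=m(C')$, is the right move. You should say explicitly that it shifts dyadic levels by $m$ and costs $m/(n+m)$ in spreading frequency, which is why $N$ must depend on $C'$.
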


We postpone the proof of \Cref{cor:final-growth-addcom} until the end of the section. We first record the elementary covering-number estimates needed in this section. Let $d_{\mathrm{H}}(\cdot, \cdot)$ denote the Hausdorff distance between subsets of $\R$.

\begin{lem}\label{lem:discritization}
Let $A$ be a subset of $\R$. Then the following holds.
\begin{enumerate}[(1)]

\item For any $C>1$ and $n\in \N$, we have
\begin{equation}
\cN_{C2^{-n}}(A)\leq \cN_{2^{-n}}(A) \leq (2C) \cN_{C2^{-n}}(A).
\end{equation}

\item Given another subset $A'\subset \R$ with $d_{\mathrm{H}}(A,A')\leq 2^{-n}$ for some $n\in \N$, we have
\begin{equation}
\frac{1}{3} \cN_{2^{-n}}(A')\leq \cN_{2^{-n}}(A) \leq 3 \cN_{2^{-n}}(A').
\end{equation}

\item For $r>0$, let $A^{(r)}$ be the $r$-neighborhood of $A$, that is $A^{(r)}:=A+B(0,r)$. Then, for any $n\in \N$,
\begin{equation}
3\cN_{2^{-n}}(A)\geq \cN_{2^{-n}}(A^{(2^{-n})})\geq \frac{1}{2} |A^{(2^{-n})}\cap 2^{-n}\Z|\geq \frac{1}{2}\cN_{2^{-n}}(A). 
\end{equation}

\end{enumerate}
\end{lem}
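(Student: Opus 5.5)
The plan is to prove the three items separately by elementary covering arguments on the real line. Throughout, an open ball of radius $r$ in $\R$ is an open interval of length $2r$. We use two standard facts. First, $\cN_r(A)$ is at least the maximal number of points of $A$ that are pairwise more than $2r$ apart. Second, $\cN_r(A)$ is at most the number of grid cells of $r\Z$ meeting $A$.

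For (1), the left inequality $\cN_{C2^{-n}}(A)\leq\cN_{2^{-n}}(A)$ is immediate, since each ball of radius $2^{-n}$ is contained in the concentric ball of radius $C2^{-n}$. For the right inequality, take an optimal cover of $A$ by $\cN_{C2^{-n}}(A)$ intervals of length $2C2^{-n}$. Each such interval can be covered by $\lceil C\rceil+1\leq 2C$ intervals of length $2\cdot2^{-n}$, because $C>1$. Summing over the cover gives $\cN_{2^{-n}}(A)\leq 2C\,\cN_{C2^{-n}}(A)$.

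For (2), start from an optimal cover of $A'$ by intervals $B(c_j,2^{-n})$. Since every point of $A$ lies within distance $2^{-n}$ of some point of $A'$ (in the closed sense), the enlarged intervals $B(c_j,2\cdot 2^{-n}+0)$ cover $A$. To avoid the boundary issue, first shrink slightly using the open-ball slack, or equivalently use the triangle inequality with the closed Hausdorff bound. Each interval of length $4\cdot2^{-n}$ is covered by $3$ intervals of length $2\cdot2^{-n}$ (really $2$, plus one to absorb the boundary). This gives $\cN_{2^{-n}}(A)\leq3\cN_{2^{-n}}(A')$, and the reverse inequality follows by symmetry.

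For (3), the first inequality follows the same pattern. From a cover of $A$ by $N=\cN_{2^{-n}}(A)$ intervals of radius $2^{-n}$, enlarging each to radius $2\cdot2^{-n}$ covers $A^{(2^{-n})}$. Each enlarged interval is covered by $3$ radius-$2^{-n}$ intervals, so $\cN_{2^{-n}}(A^{(2^{-n})})\leq3N$. For the middle inequality, observe that any open interval of length $2\cdot2^{-n}$ contains at most $2$ points of $2^{-n}\Z$. Hence covering $A^{(2^{-n})}\cap2^{-n}\Z\subset A^{(2^{-n})}$ needs at least half as many balls as there are such grid points. For the last inequality, note that every $a\in A$ has a grid point $g\in2^{-n}\Z$ with $|a-g|\leq 2^{-n-1}<2^{-n}$, so $g\in A^{(2^{-n})}$. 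The open balls $B(g,2^{-n})$ centered at these grid points then cover $A$. Therefore $\cN_{2^{-n}}(A)\leq|A^{(2^{-n})}\cap2^{-n}\Z|$, which yields the stated bound with room to spare.

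The only delicate point is the boundary bookkeeping between open balls and the closed Hausdorff-distance condition in (2). We handle it by counting constants generously, which the factor $3$ allows. Everything else is routine.
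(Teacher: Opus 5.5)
The paper states this lemma without proof, as an elementary fact, and your arguments for (2) and (3) are the standard ones and are correct. There is, however, one false step in (1). The inequality $\lceil C\rceil+1\leq 2C$ fails for every $C\in(1,\tfrac32)$; for instance $C=1.1$ gives $3>2.2$.

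The fix is to count more carefully. Place $m$ open intervals of length $2\cdot2^{-n}$ consecutively with small overlaps. They cover an open interval of length $2C2^{-n}$ as soon as $m>C$. Hence $m=\lfloor C\rfloor+1$ suffices, and $\lfloor C\rfloor+1\leq C+1<2C$ since $C>1$, which gives the right-hand inequality of (1).

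In (2) the boundary bookkeeping can be stated cleanly. Let $\{B(c_j,2^{-n})\}$ be a finite cover of $A'$ and let $a\in A$. Choose $a'_k\in A'$ with $|a-a'_k|\to\operatorname{dist}(a,A')\leq 2^{-n}$. By finiteness, one $B(c_j,2^{-n})$ contains infinitely many of the $a'_k$, so $|a-c_j|\leq 2\cdot2^{-n}$. Thus $A\subset\bigcup_j[c_j-2\cdot2^{-n},c_j+2\cdot2^{-n}]$. Each of these closed intervals of length $4\cdot2^{-n}$ is covered by three open intervals of length $2\cdot2^{-n}$, which gives the factor $3$.
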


We now combine the enlargement of the support, the discretization argument, and the non-linear perturbation of \cref{thm:growth} in the following corollary.

\begin{cor}[Non-linear]\label{cor:growth-nonlinear}
For any $\epsilon\in (0,\frac{1}{12})$ and $\ell\in \N$, there exists $0<\delta\leq1$ such that the following holds for any $C>1$ and all $n\geq n(\epsilon,\ell,C)$.

Let $\eta$ be a probability measure on $[0,1]$ satisfying $(n,\ell,\epsilon)$-dyadic-spreading. Let $A\subset\supp \eta$ be a discrete set with
\[
\eta\left(\bigcup_{a\in A}\cD_n(a)\right)>\frac23.
\]
Let $B$ be a subset of $[-C,C]$ with
\[
\cN_{2^{-n}}(B)\leq 2^{(1-6\epsilon)n}.
\]
Suppose for each $a\in A$, there is a subset $B_a\subset B$ with
\[
\cN_{2^{-n}}(B_a)\geq \left(6\cN_{2^{-n}}(B)\right)^{1-\delta}.
\]
Let $\phi:[0,1]\rightarrow [-1,2]$ be a map satisfying
\[
|\phi(x)-x|\leq 2^{-2n}
\qquad\text{for every }x\in A.
\]
Then
\[
\cN_{2^{-n}}
\left(
\bigcup_{a\in A}(\phi(a)+B_a)
\right)
\geq
\frac{1}{200}\cN_{2^{-n}}(B)^{1+\delta}.
\]
\end{cor}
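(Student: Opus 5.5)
The plan is to reduce \Cref{cor:growth-nonlinear} to \Cref{thm:growth} by discretizing everything onto the grid $2^{-n}\Z$, absorbing the perturbation $\phi$ and the enlargement of $B$ into constant-factor losses via \Cref{lem:discritization}.

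\textbf{Step 1: discretize the translating set.} Replace each $a\in A$ by the left endpoint $\bar a\in 2^{-n}\Z\cap[0,1]$ of $\cD_n(a)$, and let $\bar A$ be the set of endpoints of those cells that contain a point of $A$ which is $(n,\ell,\epsilon)$-dyadic-spreading. Since the spreading condition only involves $\eta(\cD_k(a))$ for $k\le n+\ell$, we push $\eta$ forward to $\bar\eta$ on $2^{-n}\Z$ via $a\mapsto\bar a$. Then $\bar\eta(\cD_k(\bar a))=\eta(\cD_k(a))$ for $k\le n$, but the scales $k\in(n-\ell,n]$ are not preserved. There are only $\ell$ such scales, which is at most an $\ell/n<\epsilon$ fraction once $n$ is large, so we lose at most a factor $2$ in $\epsilon$. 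The spreading points carry $\eta$-mass $>1-\epsilon$, and $\eta(\bigcup_{a\in A}\cD_n(a))>2/3$. Hence $\bar\eta(\bar A)\ge 2/3-\epsilon\ge1/2$ for $\epsilon<1/12$, and every element of $\bar A$ is $(n,\ell,2\epsilon)$-dyadic-spreading. For each $\bar a$ fix one representative $a$.

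\textbf{Step 2: discretize and rescale $B$.} Set $\bar B:=B^{(2^{-n})}\cap2^{-n}\Z\subset[-C-1,C+1]$. By \Cref{lem:discritization}(3), $|\bar B|\le 6\cN_{2^{-n}}(B)\le 6\cdot2^{(1-6\epsilon)n}$. The hypothesis $\cN(B_a)\ge(6\cN(B))^{1-\delta}$ gives $|\bar B_a|\ge \cN(B_a)\ge |\bar B|^{1-\delta}$, where $\bar B_a:=B_a^{(2^{-n})}\cap 2^{-n}\Z$; this is exactly why the factor $6$ appears in the hypothesis. To land in $[0,1]$, split $[-C-1,C+1]$ into $O(C)$ unit intervals and translate. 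The cleanest route is to apply \Cref{thm:growth} to the rescaled sets $(\bar B+C+1)/(2C+2)$ at resolution $2^{-n'}$ with $n'=n-\log_2(2C+2)$. The constant $C$ then only shifts $n$ and costs a constant factor, which is absorbed by requiring $n\ge n(\epsilon,\ell,C)$ and the margin $2^{-6\epsilon n}$ versus the theorem's $2^{-2\epsilon n}$. The spreading of $\bar\eta$ must be transported under this rescaling too; we handle it by rescaling only $B$ and treating the sumset coordinate-wise. Concretely, \Cref{thm:growth} is applied with $A$-parameter in $[0,1]$ and $B$ cut into the pieces $\bar B\cap[j,j+1)$, choosing a piece $j$ carrying many of the $\bar B_a$ by pigeonhole. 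This costs a factor $O(C)$ in $|\bar B_a|$, absorbed by slightly decreasing $\delta$. \Cref{thm:growth} then yields $|\bigcup_{\bar a}(\bar a+\bar B_a)|\ge|\bar B|^{1+\delta'}$.

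\textbf{Step 3: return to covering numbers with the perturbation.} For $a\in A$ we have $|\phi(a)-\bar a|\le 2^{-n}+2^{-2n}$, and every point of $\bar B_a$ is within $2^{-n}$ of $B_a$. Hence the Hausdorff distance between $\bigcup_a(\phi(a)+B_a)$ and $\bigcup_{\bar a}(\bar a+\bar B_a)$ is at most $3\cdot2^{-n}$. Applying \Cref{lem:discritization}(1),(2),(3) converts the cardinality bound into $\cN_{2^{-n}}(\bigcup_a(\phi(a)+B_a))\ge c|\bar B|^{1+\delta'}$ for an absolute $c$ (with care, $c\ge 1/200$). Since $|\bar B|\ge\cN_{2^{-n}}(B)/2$ by \Cref{lem:discritization}(3), the stated bound follows after reducing $\delta$ to swallow the constant $2^{1+\delta}$; we can take $\delta=\delta'/2$ and $n$ large.

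\textbf{Main obstacle.} I expect the delicate step to be the bookkeeping in Step 2. The losses must be polynomially small in $|\bar B|$ and not merely constant, so they can be absorbed into a slightly smaller $\delta$; this requires $|\bar B|$ to be large. If $|\bar B|$ is bounded, the conclusion should follow trivially from a single $B_a$. In particular I will check that the pigeonhole over the $O(C)$ unit pieces and the constant $200$ come out consistently.
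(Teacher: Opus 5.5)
Your overall architecture matches the paper's: discretize at scale $2^{-n}$, invoke \Cref{thm:growth}, then absorb the discretization and the perturbation $\phi$ through Hausdorff-distance comparisons of covering numbers. Step 3 is fine up to constants; only a one-sided inclusion is needed there. The genuine gap is in Step 2, the normalization of $B\subset[-C,C]$ into $[0,1]$.

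Your concrete plan is to cut $\bar B$ into the $O(C)$ unit pieces $\bar B\cap[j,j+1)$ and fix one piece $j$ by pigeonhole. This does not produce the hypotheses of \Cref{thm:growth}. That theorem needs a single set $B$ and a set of translations of $\eta$-mass at least $\frac12$, each translation having a large $B_a\subset B$. Different $\bar a$ may carry the bulk of $\bar B_a$ in different pieces, so fixing one $j$ only guarantees translations of mass $\gtrsim 1/C$. You cannot repair this by conditioning $\bar\eta$ on that subset, because conditioning destroys the dyadic-spreading hypothesis. If instead you fold all pieces modulo $1$ to keep the mass, you pay multiplicative factors of order $C$, both in $|\bar B_a|$ and when unfolding the sumset. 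These cannot be absorbed into a $\delta$ independent of $C$ with the absolute constant $\frac1{200}$, because $\cN_{2^{-n}}(B)$ is not forced to be large. When $\cN_{2^{-n}}(B)^{2\delta}$ lies between an absolute constant and a power of $C$, neither the lossy growth bound nor the trivial bound $\cN_{2^{-n}}(B_a)\geq(6\cN_{2^{-n}}(B))^{1-\delta}$ yields the conclusion. Your other suggestion, rescaling only $B$ by $1/(2C+2)$, destroys the relation between $a+B_a$ and the rescaled sets altogether.

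The missing idea is to rescale the translation set and $B$ together by a dyadic factor and pass to a finer scale. Choose $m=m(\epsilon,C)$ with $2^{-m}(C+1)\leq\frac12$ and $2^{(1-6\epsilon)m}\geq 6$. Push $\bar\eta$, $\bar A$, $\bar B$, $\bar B_{\bar a}$ forward by $x\mapsto 2^{-m}x$, adding $\frac12$ to $\bar B$ and to the $\bar B_{\bar a}$, so that everything lies in $[0,1]\cap 2^{-(n+m)}\Z$. Then apply \Cref{thm:growth} at scale $n+m$ with parameter $3\epsilon$. This gives the following:
\begin{itemize}
\item The map conjugates the discrete sumset exactly, so there is no multiplicative loss.
\item The mass of the translation set is unchanged.
\item The spreading scales merely shift by $m$, a fraction $m/(n+m)$ of scales that is absorbed for $n\geq n(\epsilon,\ell,C)$.
\item The size bound is preserved: $|\bar B|\leq 6\cdot 2^{(1-6\epsilon)n}\leq 2^{(1-6\epsilon)(n+m)}$.
\end{itemize}
Thus $C$ enters only the threshold, and $\delta=\min\{\delta_0,1\}$ works, where $\delta_0$ comes from \Cref{thm:growth} for the parameters $(3\epsilon,\ell)$.

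There is also a smaller slip in Step 1. Define $\bar A$ as the left endpoints of the cells $\cD_n(a)$, $a\in A$, that also meet the set $G$ of $(n,\ell,\epsilon)$-spreading points. Do not require the point of $A$ itself to be spreading: $A$ is discrete, may contain no spreading point at all, and then your $\bar A$ would be empty. With the corrected definition, $\bar\eta(\bar A)>\frac23-\epsilon$ follows from $\eta(G^c)<\epsilon$. Each such endpoint inherits $(n,\ell,2\epsilon)$-spreading from any point of $G$ in its cell.
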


\begin{proof}
Fix $\epsilon\in (0,\frac{1}{12})$, $\ell\in\N$, and $C>1$. Let $\delta_0>0$ be the constant given by \Cref{thm:growth} for the parameters $(3\epsilon,\ell)$, and set
\(
\delta:=\min\{\delta_0,1\}.
\)
We take $n$ sufficiently large depending on $\epsilon,\ell$, and $C$, and write
\(
r:=2^{-n}.
\)

\medskip\noindent\textbf{Step 1: Discretization.}
For every $\mathcal D\in\mathcal D_n$, let $x_{\mathcal D}$ be the left endpoint of $\mathcal D$, and define
\[
\eta':=\sum_{\mathcal D\in\mathcal D_n}\eta(\mathcal D)\delta_{x_{\mathcal D}}.
\]
Let
\[
A':=\{x_{\mathcal D_n(a)}:a\in A\}.
\]
Then
\[
\eta'(A')
=
\eta\left(\bigcup_{a\in A}\mathcal D_n(a)\right)
>
\frac23.
\]
For each $a'\in A'$, set
\[
\widetilde B_{a'}
:=
\bigcup_{a\in A\cap\mathcal D_n(a')}B_a.
\]
Thus $\widetilde B_{a'}\subset B$ and
\[
\cN_r(\widetilde B_{a'})
\geq
\left(6\cN_r(B)\right)^{1-\delta}.
\]

We discretize $B$ and the sets $\widetilde B_{a'}$ at scale $r$ by setting
\[
D:=B^{(r)}\cap r\Z,
\qquad
D_{a'}:=\widetilde B_{a'}^{(r)}\cap r\Z.
\]
By \Cref{lem:discritization},
\[
\cN_r(B)\leq |D|\leq 6\cN_r(B)
\]
and, for every $a'\in A'$,
\[
|D_{a'}|
\geq
\cN_r(\widetilde B_{a'})
\geq
\left(6\cN_r(B)\right)^{1-\delta}
\geq
|D|^{1-\delta}
\geq
|D|^{1-\delta_0}.
\]
Moreover,
\[
|D|
\leq
6\cdot 2^{(1-6\epsilon)n}.
\]

We next verify the spreading property after the discretization. Let $G$ be the set of points which satisfy $(n,\ell,\epsilon)$-dyadic-spreading with respect to $\eta$. By assumption,
\[
\eta(G)>1-\epsilon.
\]
If $\mathcal D\in\mathcal D_n$ intersects $G$, then for every $x\in G\cap\mathcal D$ the points $x$ and $x_{\mathcal D}$ belong to the same dyadic atom at every level $k\leq n$. Hence, for $1\leq k\leq n-\ell$, the corresponding dyadic masses for $\eta$ and $\eta'$ agree. Consequently, if $n$ is sufficiently large so that $\ell<\epsilon n$, then $x_{\mathcal D}$ satisfies $(n,\ell,2\epsilon)$-dyadic-spreading with respect to $\eta'$. Therefore there exists
\[
G'\subset\supp\eta'
\]
with
\[
\eta'(G')>1-\epsilon
\]
such that every point of $G'$ satisfies $(n,\ell,2\epsilon)$-dyadic-spreading with respect to $\eta'$.

\medskip\noindent\textbf{Step 2: Reduction to Wu's theorem.}
Choose an integer $m=m(\epsilon,C)$ sufficiently large so that, with
\[
S_m(x):=2^{-m}x,
\]
we have
\[
S_m(D)+\frac12\subset[0,1]
\]
and
\[
6\leq 2^{(1-6\epsilon)m}.
\]
Set
\[
\eta^\#=(S_m)_*\eta',
\qquad
A^\#=S_m(A'),
\qquad
D^\#=S_m(D)+\frac12,
\]
and, for $a^\#=S_m(a')$,
\[
D_{a^\#}^\#:=S_m(D_{a'})+\frac12.
\]
All these discrete objects lie on the grid $2^{-(n+m)}\Z$.

For a point of $G'$, the more than $n(1-2\epsilon)$ good scales for $\eta'$ become good scales for $\eta^\#$ after shifting the scale index by $m$. Since $m$ is fixed, for all sufficiently large $n$,
\[
n(1-2\epsilon)>(n+m)(1-3\epsilon).
\]
It follows that every point of $S_m(G')$ satisfies $(n+m,\ell,3\epsilon)$-dyadic-spreading with respect to $\eta^\#$. Moreover,
\[
\eta^\#(S_m(G'))>1-\epsilon.
\]
Hence
\[
\eta^\#\left(A^\#\cap S_m(G')\right)
>
\frac23-\epsilon
>
\frac12.
\]

The cardinality bound on $D^\#$ is
\[
|D^\#|
=
|D|
\leq
6\cdot 2^{(1-6\epsilon)n}
\leq
2^{(1-6\epsilon)(n+m)}.
\]
Also, for every $a^\#\in A^\#$,
\[
|D_{a^\#}^\#|
=
|D_{a'}|
\geq
|D|^{1-\delta_0}
=
|D^\#|^{1-\delta_0}.
\]
We may therefore apply \Cref{thm:growth}, with parameters $(3\epsilon,\ell)$ and scale $n+m$, to
\[
\eta^\#,\qquad
A^\#\cap S_m(G'),\qquad
D^\#,\qquad
D_{a^\#}^\#.
\]
This gives
\[
\left|
\bigcup_{a^\#\in A^\#\cap S_m(G')}
\left(a^\#+D_{a^\#}^\#\right)
\right|
\geq
|D^\#|^{1+\delta_0}.
\]
Scaling back and enlarging the union to all of $A'$ yields
\begin{equation}\label{eq:merged-discrete-growth}
\left|
\bigcup_{a'\in A'}(a'+D_{a'})
\right|
\geq
|D|^{1+\delta}.
\end{equation}

\medskip\noindent\textbf{Step 3: Passage to covering numbers.}
Set
\[
U:=\bigcup_{a\in A}(a+B_a),
\qquad
U':=\bigcup_{a'\in A'}(a'+\widetilde B_{a'}).
\]
Since
\[
|a-x_{\mathcal D_n(a)}|\leq r
\]
for every $a\in A$, we have
\[
d_{\mathrm H}(U,U')\leq r.
\]
Therefore, by \Cref{lem:discritization},
\begin{equation}\label{eq:merged-U-Uprime}
\cN_r(U)\geq\frac13\cN_r(U').
\end{equation}
On the other hand,
\[
\bigcup_{a'\in A'}(a'+D_{a'})
\subset
(U')^{(r)}\cap r\Z.
\]
Using \Cref{lem:discritization} once more,
\[
\cN_r(U')
\geq
\frac16
\left|
\bigcup_{a'\in A'}(a'+D_{a'})
\right|.
\]
Together with \eqref{eq:merged-discrete-growth}, \eqref{eq:merged-U-Uprime}, and
\[
|D|\geq\cN_r(B),
\]
we obtain
\begin{equation}\label{eq:merged-linear-covering}
\cN_r(U)
\geq
\frac1{18}\cN_r(B)^{1+\delta}.
\end{equation}

\medskip\noindent\textbf{Step 4: Non-linear perturbation.}
Set
\[
U_\phi:=\bigcup_{a\in A}(\phi(a)+B_a).
\]
For every $a\in A$,
\[
d_{\mathrm H}(a+B_a,\phi(a)+B_a)
\leq
|\phi(a)-a|
\leq
r^2,
\]
and hence
\[
d_{\mathrm H}(U,U_\phi)\leq r^2<r.
\]
By \Cref{lem:discritization} and \eqref{eq:merged-linear-covering},
\[
\cN_r(U_\phi)
\geq
\frac13\cN_r(U)
\geq
\frac1{54}\cN_r(B)^{1+\delta}
\geq
\frac1{200}\cN_r(B)^{1+\delta}.
\]
This proves the corollary.
\end{proof}

It remains to obtain the rescaled form stated in \Cref{cor:final-growth-addcom}. This follows by applying the preceding non-linear covering-number estimate after an affine normalization of the interval $I$.

\begin{proof}[Proof of \cref{cor:final-growth-addcom}]
Let
\(
\epsilon:=\frac{\epsilon_*}{6}.
\)
Then $\epsilon\in(0,\frac1{12})$. Let
\(
0<\delta_0\leq1
\)
be the constant given by \Cref{cor:growth-nonlinear} for the parameters $(\epsilon,\ell_*)$, and set
\(
\delta:=\frac{\delta_0}{4}.
\)

Let \(T_I:I\to[0,1]\) be the affine normalization and extend it affinely to
\(\mathbb R\). Its slope is \(s^{-1}2^{(C+1)n}\). For
\(a^\#=T_I(a)\), define
\[
A^\#:=T_I(A),
\qquad
B^\#:=s^{-1}2^{(C+1)n}B,
\qquad
B_{a^\#}^\#:=s^{-1}2^{(C+1)n}B_a,
\]
and
\[
\phi^\#(a^\#):=T_I(\phi(a)).
\]
The scale \(s 2^{-(C+2)n}\) becomes \(2^{-n}\), and covering numbers are
preserved under this affine normalization:
\[
\cN_{2^{-n}}(B^\#)
=
\cN_{s 2^{-(C+2)n}}(B),
\qquad
\cN_{2^{-n}}(B_{a^\#}^\#)
=
\cN_{s 2^{-(C+2)n}}(B_a).
\]
Moreover, since \(s>\frac1{100}\),
\[
B^\#
\subset
[-300C',300C']
\subset
[-1000C',1000C'].
\]
The mass condition becomes
\[
(T_I)_*\eta
\left(
\bigcup_{a^\#\in A^\#}\cD_n(a^\#)
\right)
>
\frac23,
\]
and \((T_I)_*\eta\) is \((n,\ell_*,\epsilon)\)-dyadic-spreading by assumption. Also,
\[
\cN_{2^{-n}}(B^\#)
\leq
2^{(1-\epsilon_*)n}
=
2^{(1-6\epsilon)n},
\]
and
\[
|\phi^\#(a^\#)-a^\#|
\leq
s^{-1}2^{(C+1)n}\cdot
s2^{-(C+3)n}
=
2^{-2n}.
\]
For all sufficiently large $n$, the values $\phi^\#(a^\#)$ lie in $[-1,2]$; extend $\phi^\#$ arbitrarily to a map
\(
\phi^\#:\,[0,1]\to[-1,2].
\)

Write
\(
N_B:=\cN_{2^{-n}}(B^\#).
\)
We distinguish two cases.

If
\(
N_B^{2\delta}<200,
\)
then, since $A^\#$ is nonempty and every $B_{a^\#}^\#$ satisfies the assumed lower bound,
\[
\begin{split}
\cN_{2^{-n}}
\left(
\bigcup_{a^\#\in A^\#}
(\phi^\#(a^\#)+B_{a^\#}^\#)
\right)
&\geq
\cN_{2^{-n}}(B_{a^\#}^\#)\geq
N_B^{1-\delta}\geq
\frac1{200}N_B^{1+\delta}.
\end{split}
\]
Thus the desired conclusion holds.

Suppose now that
\(
N_B^{2\delta}\geq200.
\)
Since $\delta=\delta_0/4$, we have
\[
N_B^{\delta_0-\delta}
=
N_B^{3\delta_0/4}
=
\left(N_B^{\delta_0/2}\right)^{3/2}
\geq
200^{3/2}
>
6^{1-\delta_0}.
\]
Therefore, for every $a^\#\in A^\#$,
\[
\cN_{2^{-n}}(B_{a^\#}^\#)
\geq
N_B^{1-\delta}
\geq
(6N_B)^{1-\delta_0}.
\]
All the hypotheses of \Cref{cor:growth-nonlinear} are now satisfied with
\[
\epsilon=\frac{\epsilon_*}{6},
\qquad
\ell=\ell_*,
\qquad
C_1=1000C'.
\]
Hence,
\[
\cN_{2^{-n}}
\left(
\bigcup_{a^\#\in A^\#}
\left(\phi^\#(a^\#)+B_{a^\#}^\#\right)
\right)
\geq
\frac1{200}
N_B^{1+\delta_0}
\geq
\frac1{200}
N_B^{1+\delta}.
\]
Scaling back by \(T_I^{-1}\) proves \Cref{cor:final-growth-addcom}.
\end{proof}

\section{Projection and growth of dimension}\label{sec: sec5}	
\subsection{Statement and preparation}
\label{subsec:statement and preparation}
Following the setup and notations of \Cref{subsubsection:straightening map},
throughout \cref{sec: sec5}, we work with a fixed cylinder $\Lambda_{n_0}$ and consider the conditional measure $\mu_0$ of $\mu$ on $\Lambda_{n_0}$, given by
\begin{equation}
\mu_0=\frac{1}{\mu(\Lambda_{n_0})}\mu|_{\Lambda_{n_0}}.
\end{equation}

For every $z\in\Lambda_{n_0}$ and $\bfi\in\Sigma^-$, let
$T_{z,\bfi}$ be the local transversal through $z$ such that
$\exp^{-1}_{x_0}(T_{z,\bfi})$ is a segment of the line through
$\exp^{-1}_{x_0}(z)$ orthogonal to
\[
\big(D_z\exp^{-1}_{x_0}\big)\big(E^{su}_{\bfi}(z)\big).
\]
As in \Cref{subsubsection:straightening map}, after shrinking $\Lambda_{n_0}$ if necessary, these transversals are $\theta$-uniform, and every
relevant local $su$-manifold intersects each transversal considered as above in
exactly one point. Throughout this section, all $\theta$-uniform transversals
are chosen with this unique-intersection
property.

For $\bfi\in\Sigma^-$ and any such $\theta$-uniform $su$-transversal $T$, define
\begin{equation}
\pi^{su}_{\bfi,T}:\Lambda_{n_0}\to T
\end{equation}
by letting $\pi^{su}_{\bfi,T}(x)$ be the unique point of
$W^{su}_{\mathrm{loc}}(x,\bfi)\cap T$. We denote the corresponding transverse measure by
\begin{equation}
\label{eqn:muTbfi}
\mu^T_\bfi:=(\pi^{su}_{\bfi,T})_*\mu_0 .
\end{equation}
We define balls with respect to the transverse metric. For $\bfi\in \Sigma^-$, $x\in \Lambda_{n_0}$, and sufficiently small $r>0$, these are given by
\begin{equation}
B^T_\bfi(x,r):=\{u\in T: d_T(\pi^{su}_{\bfi,T}(x),u)\leq r\},
\end{equation}
where $d_T$ is the induced Riemannian metric on $T$.

We have the following result on the exact dimensionality of transverse measures.
\begin{lem}[\cite{QianXie}]
\label{lem:exact dim tran}
There exists $\gamma_2\in[0,1]$
such that for $\mu_0\times\nu_-$-a.e. $(x,\bfi)$ and any $\theta$-uniform transversal $T$,
\begin{equation}
\lim_{r\to0}
\frac{\log \mu^T_\bfi\bigl(B^T_\bfi(x,r)\bigr)}
{\log r}
=
\gamma_2 .
\end{equation}
Moreover, the measure $\mu$ itself is exact dimensional, and one has
\[
\dim \mu=\gamma_1+\gamma_2,
\]
where $\gamma_1$ is the fiber dimension of $\mu$ given in \eqref{eqn:fiber dim}.
{We also have the Ledrappier-Young type entropy formula:
\begin{equation}\label{equ:entropy-formula}
    h_\mu(f)=\gamma_2\lambda_2(\mu,f)+\gamma_1\lambda_1(\mu,f) 
\end{equation}}
\end{lem}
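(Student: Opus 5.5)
The lemma is attributed to Qian--Xie, so I will not re-derive the Ledrappier--Young theory. The plan is to check how their results transfer to the present objects: Bernoulli measures on $C^{1+\alpha}$ horseshoe surface repellers, the fixed cylinder $\Lambda_{n_0}$, the normalized measure $\mu_0$, and arbitrary $\theta$-uniform transversals $T$.

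\textbf{Step 1: transverse dimension for one transversal.} Qian--Xie's Ledrappier--Young theory for repellers, lifted to the inverse limit $(\hat\Lambda_f,\hat f)$, gives the following. For $\hat\mu$-a.e.\ $(x,\bfi)$, the quotient of $\mu$ along the strong-unstable foliation $\{W^{su}_{\mathrm{loc}}(\cdot,\bfi)\}$ has local dimension $\gamma_2$ at $x$. Here $\gamma_2$ is the transverse dimension, defined via the entropy of the partition into local leaves relative to the weak exponent. The formula $h_\mu(f)=\gamma_1\lambda_1+\gamma_2\lambda_2$ also comes out of their argument. I would extend these results from $C^2$ to $C^{1+\alpha}$ exactly as in the remark after \cref{lem:fiber measure}, using \cite{AaronBrown2022} for the regularity of holonomies. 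Transferring to $\mu_0$ is harmless, because normalizing a restriction to the clopen set $\Lambda_{n_0}$ changes local masses only by a constant factor. Ergodicity of $\hat\mu$ makes $\gamma_2$ constant almost everywhere.

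\textbf{Step 2: independence of the transversal.} By \cref{lem holder of Esu}, for two $\theta$-uniform transversals $T_1,T_2$ the holonomy $h^{su}_\bfi\colon T_1\cap V_\bfi\to T_2\cap V_\bfi$ is $C_2$-bi-Lipschitz. Moreover $\pi^{su}_{\bfi,T_2}=h^{su}_\bfi\circ\pi^{su}_{\bfi,T_1}$, so $\mu^{T_2}_\bfi=(h^{su}_\bfi)_*\mu^{T_1}_\bfi$. It follows that
\[
B^{T_2}_\bfi(x,r/C_2)\subseteq h^{su}_\bfi\bigl(B^{T_1}_\bfi(x,r)\bigr)\subseteq B^{T_2}_\bfi(x,C_2 r).
\]
Hence the lower and upper local dimensions agree for $T_1$ and $T_2$. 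The constant $C_2$ is uniform in $\bfi$, so the full-measure set of $(x,\bfi)$ can be chosen once and works for every $\theta$-uniform transversal.

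\textbf{Step 3: exact dimension of $\mu$ and the entropy formula.} The standard Ledrappier--Young product argument gives exact dimensionality with $\dim\mu=\gamma_1+\gamma_2$. Work in the straightening chart $\sigma_\bfi$ of \cref{prop:straightening map}, which is bi-Lipschitz with constant $C_3$ by (S2). There a ball of radius $r$ is comparable to a product of a vertical segment and a horizontal segment, so
\[
\mu_0(B(x,r))\approx \int_{B^T_\bfi(x,r)} \mu_{y,\calA^{su}_\bfi}\bigl(B^{su}_{\bfi,s}(y,r)\bigr)\,d\mu^T_\bfi(y).
\]
The lower bound $\dim\mu\geq\gamma_1+\gamma_2$ follows from Egorov-type uniformity of the fiber dimension (\cref{lem:fiber measure}) and the transverse dimension (Step 1) on a large set, combined with a density argument.

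The upper bound $\dim\mu\leq\gamma_1+\gamma_2$ is the main obstacle. The integral ranges over nearby leaves, and on them the fiber dimension need not be uniform at scale $r$. I would first try Ledrappier--Young's trick: control the exceptional leaves by the maximal function and Borel--Cantelli along dyadic $r$. Qian--Xie already carry this out, so I mainly need to confirm that their estimates only use bounded distortion (the claim in the proof of \cref{prop:ball}) and Lipschitz holonomies, both of which hold in the $C^{1+\alpha}$ setting.
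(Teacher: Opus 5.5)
Your proposal follows the paper's route. The paper gives no independent argument: it takes the exact dimensionality, $\dim\mu=\gamma_1+\gamma_2$, and the entropy formula directly from \cite{QianXie}, extended from $C^2$ to $C^{1+\alpha}$ via \cite{AaronBrown2022} as in the remark after \cref{lem:fiber measure}. It then gets independence of $\gamma_2$ from $T$ exactly as in your Step 2, from the uniformly bi-Lipschitz $su$-holonomies of \cref{lem holder of Esu}.

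Your Step 3 sketch adds detail the paper omits; its lower-bound half is essentially the argument of \cref{lem:2ll-lower}. Like the paper, though, you ultimately rely on Qian--Xie for the substantive upper bound $\dim\mu\leq\gamma_1+\gamma_2$.
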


The value of \(\gamma_2\) is independent of the particular choice of the
transversal \(T\), since the \(su\)-holonomies between \(\theta\)-uniform
transversals are uniformly bi-Lipschitz (see \Cref{lem holder of Esu}). We call
this common value the \emph{transverse dimension of \(\mu\)}.

\Cref{thm:main-two} is our main theorem in \cref{sec: sec5}. 
Before the proof, we first explain how \cref{thm:base} follows from it.
\begin{proof}[Proof of \cref{thm:base}]
By definition of Lyapunov dimension \eqref{eqn:def LY} and the entropy formula \eqref{equ:entropy-formula},
one can see that \cref{thm:base} follows directly from \cref{thm:main-two}.
\end{proof}

We first exclude the zero-dimensional transverse case.
\begin{lem}\label{lem:gamma2-positive}
If $\gamma_1>0$, then $\gamma_2>0$.
\end{lem}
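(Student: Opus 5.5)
The plan is to argue by contradiction-free comparison: I will show directly that $\gamma_2\geq\gamma_1$ whenever one can find two pasts whose strong unstable leaves are uniformly transverse on a set of positive $\mu_0$-measure. The point is that projecting along a $\bfj$-leaf is bi-Lipschitz when restricted to a transverse $\bfi$-leaf, so it cannot destroy the $\gamma_1$-dimensional fiber measure living on that $\bfi$-leaf.

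\emph{Step 1 (uniformly transverse pasts on a positive-measure set).} I fix a point $x_1\in\Lambda_{n_0}$ in the support of $\mu_0$. \Cref{lem:uniform-su-separation} provides words $u=u_{x_1}$ and $v=v_{x_1}$ of length $N_0$ such that the direction sets $\mathscr D^{su}_{N_0}(x_1,u)$ and $\mathscr D^{su}_{N_0}(x_1,v)$ are $\delta_0$-separated. By continuity of $(y,\mathbf a)\mapsto E^{su}(\hat y_{\mathbf a})$ on the compact cylinders, this separation persists, with $\delta_0/2$ in place of $\delta_0$, for all $y$ in a relative neighborhood $O\subset\Lambda_{n_0}$ of $x_1$. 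Since $\mu_0$ has full support, $\mu_0(O)>0$, and $\nu_-([u]),\nu_-([v])>0$. By \Cref{lem:fiber measure}, \Cref{lem:exact dim tran} and Fubini, I can then choose $\bfi\in[u]$ and $\bfj\in[v]$ with the following two properties: for $\mu_0$-a.e.\ $x\in O$ the fiber measure $\mu_{x,\calA^{su}_\bfi}$ is exact dimensional of dimension $\gamma_1$; and $\mu^T_\bfj$ is exact dimensional of dimension $\gamma_2$ for a fixed $\theta$-uniform transversal $T$.

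\emph{Step 2 (the projection restricted to an $\bfi$-leaf is bi-Lipschitz).} Shrinking $O$ if needed (to a small cylinder, so that atoms of a refined partition $\calA^{su}_\bfi\cap O$ stay inside $O$), I use that for $y$ on $W^{su}_{\mathrm{loc}}(x,\bfi)\cap\Lambda_f$ near $O$ the angle between $E^{su}_\bfi(y)$ and $E^{su}_\bfj(y)$ is at least $\delta_0/4$. The leaves are uniformly $C^{1+\alpha/2}$ by (U1), and the $\bfj$-holonomy is bi-Lipschitz by \Cref{lem holder of Esu}. From these I want to conclude that $\pi^{su}_{\bfj,T}$, restricted to $W^{su}_{\mathrm{loc}}(x,\bfi)\cap O$, is bi-Lipschitz onto its image with a uniform constant. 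One way to get this is to regard the $\bfi$-leaf piece itself as a $\theta'$-uniform transversal for the $\bfj$-foliation and apply \Cref{lem holder of Esu} with $\theta'=\delta_0/4$. I expect this to be the main technical point: the lemma is stated for the fixed $\theta$, so I need the uniform bi-Lipschitz holonomy for transversals of angle $\delta_0/4$. It follows from the same argument in \cite{AaronBrown2022}, after possibly shrinking $n_0$-cylinders.

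\emph{Step 3 (mixture argument).} On $O$, disintegrate
$$\mu_0|_O=\int \mu_{x,\calA^{su}_\bfi}\,d\mu_0|_O(x),$$
which gives
$$(\pi^{su}_{\bfj,T})_*\mu_0|_O=\int(\pi^{su}_{\bfj,T})_*\mu_{x,\calA^{su}_\bfi}\,d\mu_0(x).$$
By Step 2, each component $(\pi^{su}_{\bfj,T})_*\mu_{x,\calA^{su}_\bfi}$ has Hausdorff dimension $\gamma_1$. Now suppose $Z\subset T$ is Borel with $\mu^T_\bfj$-full measure on $\pi^{su}_{\bfj,T}(O)$. Then $Z$ has full measure for almost every component, so $\dim_{\mathrm H}Z\geq\gamma_1$. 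On the other hand, $\mu^T_\bfj$ is exact dimensional of dimension $\gamma_2$, so its restriction to the positive-measure set $\pi^{su}_{\bfj,T}(O)$ is still concentrated on a set of Hausdorff dimension $\gamma_2$ (local dimensions are unchanged by restriction). Hence $\gamma_2\geq\gamma_1>0$.
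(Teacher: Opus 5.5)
Your proposal is correct, and it proves more than the lemma asks: it gives the inequality $\gamma_2\geq\gamma_1$.

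\textbf{Comparison with the paper's proof.} The paper argues by contradiction. It assumes $\gamma_2=0$ and invokes the zero-transverse-dimension criterion of Ledrappier--Xie: near $\mu_0\times\nu_-$-a.e.\ $(x,\bfi)$, the measure $\mu_0$ is then carried by the single plaque $W^{su}_{\mathrm{loc}}(x,\bfi)$. It then uses \Cref{prop: small angle} to pick two good pasts at $x$ with distinct strong directions. Locally, $\mu_0$ is therefore carried by the intersection of two transverse plaques, which is $\{x\}$, contradicting non-atomicity.

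That proof never uses $\gamma_1>0$ and needs no geometric estimate. Its price is that it rests on an external structural theorem.

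Your slicing argument works differently. The transverse measure $\mu^T_\bfj$ dominates the mixture $(\pi_\bfj)_*(\mu_0|_O)$ of projected $\bfi$-fiber measures. On $O$ the two strong directions are separated, so $\pi_\bfj$ does not lower the dimension of those fiber measures. This needs only \Cref{lem:uniform-su-separation}, the exact dimensionality of fibers and transverse measures, and standard local-dimension facts: $\dim_{\mathrm H}\{z:\underline d_\mu(z)\leq s\}\leq s$, and the domination $(\pi_\bfj)_*(\mu_0|_O)\leq\mu^T_\bfj$. It buys a quantitative inequality. Combined with $h_\mu>0$ and the entropy formula \eqref{equ:entropy-formula}, it even gives $\gamma_2>0$ without assuming $\gamma_1>0$.

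\textbf{A correction to your Step 2.} The paper's $\theta$-uniform $su$-transversals must make angle at least $\theta$ with \emph{every} strong direction $E^{su}(z,\bfk)$, including $E^{su}_\bfi$. So an $\bfi$-plaque is not such a transversal for any $\theta'$, and \Cref{lem holder of Esu} does not apply to it as stated.

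You also do not need full bi-Lipschitz control. Only the co-Lipschitz bound $d_T(\pi_\bfj y,\pi_\bfj y')\geq c\,d(y,y')$ enters the dimension estimate, and it follows from the tools already in the paper:
\begin{enumerate}[(a)]
\item Compose with the $\bfj$-holonomy from a genuine $\theta$-uniform transversal through $y$ (for instance $\exp_y E^{wu}(y)$) to $T$. This holonomy is $C_2$-bi-Lipschitz.
\item The point $y'$ lies within $O(d^{1+\alpha/2})$ of the line through $y$ in direction $E^{su}_\bfi(y)$, by \ref{U:USR}.
\item The $\bfj$-plaque through $y'$ has tangent within $C_0d^{\alpha}$ of $E^{su}_\bfj(y)$, by \Cref{prop Holder continuous}, and has uniformly bounded $C^{1+\alpha/2}$ norm.
\item Hence it meets that transversal at distance at least $c\sin(\delta_0/2)\,d$ from $y$, once $d$ is small. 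This is guaranteed by taking $O$ of small diameter.
\end{enumerate}
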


\begin{proof}
Suppose, by contradiction, that $\gamma_2=0$.
Hence, by the zero transverse dimension criterion as in \cite{Ledrappier_Xie}, the following local support property holds on a full $\mu_0\times\nu_-$ measure set: for $(x,\bfi)$ in this set, after restricting to a sufficiently small neighborhood of $x$, the measure $\mu_0$ is supported on the local strong unstable manifold $W^{su}_{\mathrm{loc}}(x,\bfi)$.

Let $\mathcal G\subset\Lambda_{n_0}\times\Sigma^-$ be this full measure set. By Fubini's theorem, there is a full $\mu_0$-measure subset $X_0\subset\Lambda_{n_0}$ such that, for every $x\in X_0$, the section
\[
    \Sigma_x:=\{\bfi\in\Sigma^-:(x,\bfi)\in\mathcal G\}
\]
has full $\nu$-measure. Choose $x\in X_0\cap\supp\mu_0$. Since $\mu_0$ is the normalized restriction of a fully supported Bernoulli measure to the cylinder $\Lambda_{n_0}$, we have $\supp\mu_0=\Lambda_{n_0}$ and $\mu_0$ has no atoms.

Applying {\cref{prop: small angle}} to the full measure set $\Sigma_x$, we can choose two histories $\bfi,\bfj\in\Sigma_x$ such that
\[
    \angle\bigl(E^{su}(x,\bfi),E^{su}(x,\bfj)\bigr)>0.
\]
Since $(x,\bfi),(x,\bfj)\in\mathcal G$, in a sufficiently small neighborhood $U_x$ of $x$ the measure $\mu_0$ is supported both on $W^{su}_{\mathrm{loc}}(x,\bfi)$ and on $W^{su}_{\mathrm{loc}}(x,\bfj)$. These two $C^1$ plaques have distinct tangent directions at $x$; hence, after shrinking $U_x$ if necessary,
\[
    W^{su}_{\mathrm{loc}}(x,\bfi)\cap W^{su}_{\mathrm{loc}}(x,\bfj)\cap U_x=\{x\}.
\]
Therefore, $\mu_0|_{U_x}$ is supported on the single point $x$. Since $x\in\supp\mu_0$, this forces $\mu_0(\{x\})>0$, contradicting the non-atomicity of $\mu_0$. Thus $\gamma_2>0$.
\end{proof}

    Proving \Cref{thm:main-two} requires bounding the measure of $2^{-C\ell}\times 2^{-C'\ell}$ rectangles. The common strategy of applying the Lebesgue density theorem (e.g., \cite[Proposition 3.8]{barany_ledrappier-young_2015}) cannot be used here, as it fails for rectangles \cite[2.8.20]{federer_geometric_1969}. As an alternative, we utilize a variant of the Lebesgue density theorem formulated for partitions (see \cite[2.9.8]{federer_geometric_1969}).

	\begin{defi}
For $\ell,\ell'\in\N$ and $h\in[0,1]$, let $\cD^{h}_{\ell\times\ell'}$ be the partition of $\R^2$ into the half-open rectangles
\[
\big[i2^{-\ell},(i+1)2^{-\ell}\big)\times\big[h+j2^{-\ell'},h+(j+1)2^{-\ell'}\big),\qquad i,j\in\Z,
\]
that is, the dyadic partition into $2^{-\ell}\times2^{-\ell'}$ rectangles with sides parallel to $e_1,e_2$ and grid origin $(0,h)$.
For $\bfi\in\Sigma^-$, let $\cD^{\bfi,h}_{\ell\times\ell'}$ be the pullback partition of $\Lambda_{n_0}\cap V_\bfi$ whose atoms are the nonempty sets $\sigma_\bfi^{-1}(R)\cap\Lambda_{n_0}$, $R\in\cD^{h}_{\ell\times\ell'}$, where $\sigma_\bfi\colon V_\bfi\to\R^2$ is the straightening map of \eqref{eqn:sigma i}; for $x\in\Lambda_{n_0}\cap V_\bfi$, we write $\cD^{\bfi,h}_{\ell\times\ell'}(x)$ for the atom containing $x$. When $\ell=\ell'$ we abbreviate $\cD^{\bfi,h}_{\ell}:=\cD^{\bfi,h}_{\ell\times\ell}$.
\end{defi}
It follows from \cref{prop:straightening map} that every atom $Q\in\cD^{\bfi,h}_{\ell}$ satisfies
\begin{equation}
\label{eqn:atom diam}
\diam (Q)\leq C_3\sqrt{2}\,2^{-\ell},
\end{equation}
where $C_3>1$ is the constant of \ref{straight:global Lip}.

	\begin{lem}\label{lem:2ll-lower}
		For any $C,C'\in\mathbb N$, $h\in[0,1]$, and $\mu_0\times\nu_-$-a.e. $(x,\bfi)\in\Lambda_{n_0}\times\Sigma^-$, we have
		\[ \liminf_{\ell\rightarrow\infty}\frac{\log \mu_0(\cD^{\bfi,h}_{C\ell\times C'\ell}(x)) }{-\ell\log2}\geq C'\gamma_1+C\gamma_2. \]
	\end{lem}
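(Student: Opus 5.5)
The plan is to run a Ledrappier--Young style disintegration of the rectangle through the $\bfi$-leaves. In the straightening coordinates $\sigma_\bfi$, the atom $\cD^{\bfi,h}_{C\ell\times C'\ell}(x)$ is the union, over the leaves $W^{su}_{\mathrm{loc}}(y,\bfi)$ whose horizontal coordinate lies in a horizontal interval of length $2^{-C\ell}$, of vertical leaf pieces of length $2^{-C'\ell}$. Recall that the horizontal coordinate is the $su$-holonomy projection to $T_0$. The horizontal set of leaves is, up to the bi-Lipschitz constant $C_2$ of \Cref{lem holder of Esu}, contained in $(\pi^{su}_{\bfi,T_0})^{-1}B^{T_0}_\bfi(x,C_2 2^{-C\ell})$. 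We therefore write
\[
\mu_0(Q)=\int_{\pi^{su}_{\bfi,T_0}(Q)}\mu_{y,\calA^{su}_\bfi}\bigl(Q\cap W^{su}_{\mathrm{loc}}(y,\bfi)\bigr)\,d\mu^{T_0}_\bfi(u),
\]
with $Q=\cD^{\bfi,h}_{C\ell\times C'\ell}(x)$. This uses that, for fixed $\bfi$, the conditional measures of $\mu_0$ on the partition into local $\bfi$-leaves are the fiber measures $\mu_{y,\calA^{su}_\bfi}$ (up to the cylinder normalization). Each integrand is at most the fiber measure of a straightened ball $B^{su}_{\bfi,s}(y',2^{-C'\ell})$ for some $y'$ in that leaf.

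The integrand is not uniform, so we use a density argument on good sets. Fix $\epsilon>0$. By \Cref{lem:fiber measure}, \Cref{lem:exact dim tran} and Egorov, we obtain a set $G\subset\Lambda_{n_0}\times\Sigma^-$ of measure $>1-\epsilon$ and a threshold $r_0$ on which the following bounds hold for $r<r_0$:
\[
\mu_{y,\calA^{su}_\bfi}\bigl(B^{su}_{\bfi,s}(y,r)\bigr)\leq r^{\gamma_1-\epsilon},
\qquad
\mu^{T_0}_\bfi\bigl(B^{T_0}_\bfi(y,r)\bigr)\leq r^{\gamma_2-\epsilon}.
\]
We then estimate $\mu_0(Q\cap G_\bfi)$ rather than $\mu_0(Q)$, where $G_\bfi$ is the $\bfi$-section of $G$. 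For $y\in Q\cap G_\bfi$, the leaf piece is contained in the ball $B^{su}_{\bfi,s}(y,2\cdot2^{-C'\ell})$. The integral is then at most $(2\cdot2^{-C'\ell})^{\gamma_1-\epsilon}$ times $\mu^{T_0}_\bfi$ of a $C_2 2^{-C\ell}$-ball. That transverse ball is centred at the projection of some good point, and it sits inside the ball of radius $2C_2 2^{-C\ell}$ about that point's projection. This yields
\[
\mu_0(Q\cap G_\bfi)\lesssim 2^{-\ell(C'\gamma_1+C\gamma_2-(C+C')\epsilon)}.
\]

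It remains to pass from $Q\cap G_\bfi$ to $Q$ itself. Here we use the partition density theorem \cite[2.9.8]{federer_geometric_1969} mentioned just above. The partitions $\cD^{\bfi,h}_{C\ell\times C'\ell}$, $\ell\in\N$, are nested when $C,C'$ are fixed: passing from $\ell$ to $\ell+1$ refines by dyadic factors $2^C$ and $2^{C'}$, so they form a filtration. Their atoms shrink to points because of \eqref{eqn:atom diam} and \ref{straight:global Lip}. By the martingale convergence theorem, $\mu_0(Q_\ell(x)\cap G_\bfi)/\mu_0(Q_\ell(x))\to 1_{G_\bfi}(x)$ for $\mu_0$-a.e.\ $x$. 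Hence for a.e.\ $x\in G_\bfi$ and large $\ell$ we have $\mu_0(Q)\leq 2\mu_0(Q\cap G_\bfi)$, which gives the $\liminf$ bound $\geq C'\gamma_1+C\gamma_2-(C+C')\epsilon$. Fubini over $\bfi$ and letting $\epsilon\to0$ along a countable sequence gives the claim for a.e.\ $(x,\bfi)$.

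I expect the main obstacle to be the measurability and uniformity of the disintegration step. One must ensure that the conditional measures of $\mu_0$ along the $\bfi$-leaves, for a fixed $\bfi$, are exactly the fiber measures of \Cref{lem:fiber measure}, so that the good-set bounds can be applied inside the integral for $\nu_-$-a.e.\ fixed $\bfi$. One must also handle leaves that cross $Q$ but are not themselves centred at good points. This is why the bounds are centred at good points and the radii are enlarged by factors of $2$ and $C_2$.
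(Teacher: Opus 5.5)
Your proposal is correct and is essentially the paper's proof. Both use Egorov on the exact-dimensionality statements to obtain a good set, then a partition density theorem to pass from $Q$ to $Q\cap G_\bfi$ (your martingale convergence along the nested partitions $\cD^{\bfi,h}_{C\ell\times C'\ell}$ is what the paper's appeal to Federer 2.9.8 supplies), and finally disintegrate along the $\bfi$-leaves, with the fiber bound taken at a good point of each leaf and the transverse bound at the center. The only cosmetic slip is that comparing the horizontal interval with a $d_{T_0}$-ball needs only the bi-Lipschitz comparison coming from $\sigma_\bfi$ (the paper's constant $C''$), not the holonomy constant $C_2$, since the horizontal coordinate is already the holonomy projection to $T_0$. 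This does not affect the argument.
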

	\begin{proof}
        Since the fiber measures and the transverse measures are exact dimensional (\Cref{lem:fiber measure} and \Cref{lem:exact dim tran}), Egorov's theorem implies that for any $\epsilon>0$, there exist $\ell_0\in \N$ and 
        a subset $J_1\subset\Lambda_{n_0}\times\Sigma^-$ with $ \mu_0\times\nu_-(J_1)>1-\epsilon$ such that for all $(x,\bfi)\in J_1$ and $r<2^{-\ell_0}$, we have 
		\begin{align}
			\label{equ:fiber-upp}\mu_{x,\calA^{su}_\bfi}(B_{\bfi,s}^{su}(x,r))&\leq r^{\gamma_1-\epsilon}, \\
			\label{equ:proj-upp}\mu_0(B^{T_0}_\bfi(x,r))&\leq r^{\gamma_2-\epsilon},
		\end{align}
        where $T_0$ is the $\theta$-uniform $su$-transversal given in \eqref{eqn:tran T0}.
 		
		Applying the Lebesgue density theorem for partitions (\cite[2.9.8, 2.8.19]{federer_geometric_1969})
        and Egorov's theorem to every partition $\cD^{\bfj,h}_{C\ell\times C'\ell}$ for $\bfj\in \Sigma^-$, we obtain a subset $J_2\subset J_1$ with ${\mu_0}\times\nu_-(J_2)>1-2\epsilon$ such that for all $(x,\bfi)\in J_2$ and $\ell>\ell_1(\bfi)$, we have
		\[ \mu_0(J_1\cap \cD^{\bfi,h}_{C\ell\times C'\ell}(x))\geq \frac{1}{2}\mu_0(\cD^{\bfi,h}_{C\ell\times C'\ell}(x)). \]
        Consequently, for any $(x,\bfi)\in J_2$, disintegrating the measure over the transversal yields
		\[   \mu_0(\cD^{\bfi,h}_{C\ell\times C'\ell}(x))\leq 2\mu_0(J_1\cap \cD^{\bfi,h}_{C\ell\times C'\ell}(x))\leq 2\int_{B^{T_0}_{\bfi}(x,C''2^{-C\ell})} \mu_{y,\calA^{su}_\bfi}(\cD^{\bfi,h}_{C\ell\times C'\ell}(x)\cap J_1) d{\mu_0}(y),\]
        where $C''>0$ is a constant arising from the bi-Lipschitz property of $\sigma_{\bfi}$.
        To bound the integrand, suppose
$\calA^{su}_{\bfi}(y)\cap J_1\cap \cD^{\bfi,h}_{C\ell\times C'\ell}(x)\neq \emptyset$. Choosing a point
$z$ in the intersection and 
applying \eqref{equ:fiber-upp}, we obtain
		\[\mu_{y,\calA^{su}_\bfi}(\cD^{\bfi,h}_{C\ell\times C'\ell}(x)\cap J_1)\leq \mu_{z,\calA^{su}_\bfi}(B^{su}_{\bfi,s}(z,2^{-C'\ell}))\leq 2^{-C'\ell(\gamma_1-\epsilon)}.  \]
        Substituting this bound into the integral and applying \eqref{equ:proj-upp} to the transverse measure yields
		\[   \mu_0(\cD^{\bfi,h}_{C\ell\times C'\ell}(x))\leq 2^{-C'\ell(\gamma_1-\epsilon)+1} \mu_0(B_\bfi^{T_0}(x,C''2^{-C\ell}) )\leq (C'')^{\gamma_2-\epsilon}\cdot 2^{-C'\ell\gamma_1-C\ell\gamma_2+(C+C')\ell\epsilon+1}.\]
        Letting $\epsilon\to0$ completes the proof.
	\end{proof}
	
	The proof of \cref{thm:main-two} also requires the following two auxiliary lemmas. The first lemma provides a doubling scale, which is essential for transferring density from balls to boxes in the later proof.
	
\begin{lem}[Existence of a doubling scale]\label{lem:doubling-scale}
Suppose $x\in\Lambda_{n_0}$ satisfies the following property: there exist $\ell_0\in\mathbb{N}$ and $\epsilon\in (0,\frac{1}{3})$ such that for any $r\leq 2^{-\ell_0}$, we have
\begin{equation}
\label{eqn:measure r 2r}
r^{\gamma_1+\gamma_2+\epsilon} \leq \mu_0(B(x,C_5^{-10}r)) \leq \mu_0(B(x,C_5^{10}r)) \leq r^{\gamma_1+\gamma_2-\epsilon},
\end{equation}
where $C_5>1$ is the constant given in \ref{S:inverse reg in tang}. Then, for any $\ell > \ell_0+10\log_2C_5$, there exists a scale $r=2^{-k}$ with $\ell < k \leq 2\ell$ such that
$$ \frac{\mu_0(B(x,2r))}{\mu_0( B(x,r))} < 8. $$
\end{lem}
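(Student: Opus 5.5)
The plan is a telescoping (pigeonhole) argument on the dyadic scales $2^{-k}$, $\ell<k\leq 2\ell$. Write $d:=\gamma_1+\gamma_2$. I argue by contradiction: suppose that for every $k$ with $\ell<k\leq 2\ell$ one has $\mu_0(B(x,2^{-k+1}))\geq 8\,\mu_0(B(x,2^{-k}))$. Chaining these $\ell$ inequalities gives
\[
\mu_0\bigl(B(x,2^{-\ell})\bigr)\geq 8^{\ell}\,\mu_0\bigl(B(x,2^{-2\ell})\bigr).
\]

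Next I compare both sides with the hypothesis \eqref{eqn:measure r 2r}, inserting the factors $C_5^{\pm10}$ so that the radii match. For the upper bound, take $r=C_5^{-10}2^{-\ell}$. Then $B(x,2^{-\ell})=B(x,C_5^{10}r)$, so
\[
\mu_0\bigl(B(x,2^{-\ell})\bigr)\leq r^{d-\epsilon}.
\]
This requires $r\leq 2^{-\ell_0}$, which holds since $\ell>\ell_0$. For the lower bound, take $r'=C_5^{10}2^{-2\ell}$. This requires $r'\leq 2^{-\ell_0}$, which is exactly where the assumption $\ell>\ell_0+10\log_2C_5$ is used. Then $B(x,2^{-2\ell})=B(x,C_5^{-10}r')$, so
\[
\mu_0\bigl(B(x,2^{-2\ell})\bigr)\geq (r')^{d+\epsilon}.
\]

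Combining these with the chained inequality gives
\[
8^{\ell}\leq \frac{(C_5^{-10}2^{-\ell})^{d-\epsilon}}{(C_5^{10}2^{-2\ell})^{d+\epsilon}}
= C_5^{-20d}\,2^{\ell(d+3\epsilon)}\leq 2^{\ell(d+3\epsilon)}.
\]
Here the last step uses $C_5>1$ and $d\geq 0$. Since $d=\gamma_1+\gamma_2\leq 2$ and $\epsilon<1/3$, we get $d+3\epsilon<3$. Hence $2^{3\ell}\leq 2^{\ell(d+3\epsilon)}<2^{3\ell}$, a contradiction. Therefore some $k\in(\ell,2\ell]$ satisfies the doubling bound $\mu_0(B(x,2r))<8\,\mu_0(B(x,r))$ with $r=2^{-k}$.

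The bound $\gamma_1+\gamma_2\leq 2$ holds because $\gamma_1,\gamma_2\in[0,1]$ (\Cref{lem:exact dim tran}). This gap between $d+3\epsilon$ and $3$ is the only real point of the argument. Everything else is bookkeeping: checking that both radii lie below $2^{-\ell_0}$ and that the constant factors coming from $C_5$ go in the favorable direction, which they do here.
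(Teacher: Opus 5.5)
Your proposal is correct and follows the paper's proof essentially verbatim: the same contradiction hypothesis chained over the $\ell$ scales, the same radii $C_5^{-10}2^{-\ell}$ and $C_5^{10}2^{-2\ell}$, and the same exponent comparison. You also state explicitly the final step that the paper leaves implicit, namely that $\gamma_1+\gamma_2\leq 2$ and $\epsilon<\tfrac13$ give $\gamma_1+\gamma_2+3\epsilon<3$.
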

\begin{proof}
Let $\gamma := \gamma_1+\gamma_2$. Fix $\ell > \ell_0+10\log_2C_5$. Suppose for contradiction that for all $k\in\mathbb{N}$ with $\ell < k \leq 2\ell$, we have
$$ \frac{\mu_0(B(x,2^{-k+1}))}{\mu_0(B(x,2^{-k}))} \geq 8. $$
Iterating this inequality over the $\ell$ steps from $k=2\ell$ down to $k=\ell+1$, we obtain
$$ \mu_0(B(x,2^{-\ell})) \geq 8^{\ell} \cdot \mu_0(B(x,2^{-2\ell})). $$
To bound these terms using \eqref{eqn:measure r 2r}, we set $r_1 = C_5^{-10}2^{-\ell}$ and $r_2 = C_5^{10}2^{-2\ell}$. Since $C_5>1$ and $\ell > \ell_0+10\log_2C_5$, it follows that $r_1 \leq 2^{-\ell_0}$ and $r_2\leq 2^{-\ell_0}$ . Applying the upper bound of \eqref{eqn:measure r 2r} with $r_1$ yields $\mu_0(B(x,2^{-\ell})) \leq r_1^{\gamma-\epsilon}$. Applying the lower bound with $r_2$ yields $\mu_0(B(x,2^{-2\ell})) \geq r_2^{\gamma+\epsilon}$. Substituting these into our iterated estimate gives$$ (C_5^{-10}2^{-\ell})^{\gamma-\epsilon} \geq 8^{\ell} \cdot (C_5^{10}2^{-2\ell})^{\gamma+\epsilon}. $$
Since $C_5 > 1$, this forces $2^{\ell(\gamma+3\epsilon)} > 2^{3\ell}$, which implies $3 \leq \gamma+3\epsilon$, providing the desired contradiction.\end{proof}
    
	The second lemma utilizes the existence of a local doubling scale to transfer density bounds from balls to their constituent subsets.
    \begin{lem}[Density transfer]\label{lem:density-of-box}
Let $x\in\Lambda_{n_0}$ and $r>0$ be such that 
\begin{equation}
\label{eqn:2rr8}
\frac{\mu_0(B(x,2r))}{\mu_0(B(x,r))}<8,
\end{equation}
and let $\cF$ be a pairwise disjoint countable collection of Borel subsets satisfying 
\begin{equation}
\label{eqn:xrF}
B(x,r)\subseteq \bigcup_{F\in\cF}F\subseteq B(x,2r).
\end{equation}
Then for any Borel set $G$ and $c>0$ satisfying
\begin{equation}
\label{eqn:xrG}
\frac{\mu_0(B(x,2r)\cap G)}{\mu_0(B(x,2r))}>1-c, 
\end{equation}
there exists a set $F$ in $\cF$ such that 
\[\frac{\mu_0(F\cap G)}{\mu_0(F)}>1-8c. \]
\end{lem}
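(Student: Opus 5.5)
The plan is a direct averaging (pigeonhole) argument over the collection $\cF$, using the doubling bound \eqref{eqn:2rr8} to compare the total mass of $\cF$ with $\mu_0(B(x,2r))$. We argue by contradiction: suppose that every $F\in\cF$ satisfies $\mu_0(F\cap G)\leq(1-8c)\mu_0(F)$. Equivalently, $\mu_0(F\setminus G)\geq 8c\,\mu_0(F)$ for all $F\in\cF$. Sets with $\mu_0(F)=0$ play no role, since they contribute nothing to either side.

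First, we sum this inequality over $\cF$. The members of $\cF$ are pairwise disjoint and their union $\bigcup\cF$ lies in $B(x,2r)$ by \eqref{eqn:xrF}. Hence
\[
\mu_0\bigl(B(x,2r)\setminus G\bigr)\geq \sum_{F\in\cF}\mu_0(F\setminus G)\geq 8c\sum_{F\in\cF}\mu_0(F)=8c\,\mu_0\Bigl(\bigcup\cF\Bigr).
\]

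Second, we bound the total mass of $\cF$ from below. By \eqref{eqn:xrF}, $\bigcup\cF\supseteq B(x,r)$, so the doubling assumption \eqref{eqn:2rr8} gives
\[
\mu_0\Bigl(\bigcup\cF\Bigr)\geq\mu_0(B(x,r))>\tfrac18\,\mu_0(B(x,2r)).
\]
Combining the two steps yields $\mu_0(B(x,2r)\setminus G)>c\,\mu_0(B(x,2r))$. This contradicts \eqref{eqn:xrG}, which states exactly that $\mu_0(B(x,2r)\setminus G)<c\,\mu_0(B(x,2r))$.

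The only delicate point is non-degeneracy. The strict inequality in \eqref{eqn:2rr8} presupposes $\mu_0(B(x,r))>0$, so $\bigcup\cF$ has positive mass. Therefore some $F\in\cF$ has $\mu_0(F)>0$, and the ratio $\mu_0(F\cap G)/\mu_0(F)$ is defined for the $F$ we produce. Countability of $\cF$ is used only for countable additivity in the first step.
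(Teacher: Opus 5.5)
Your proof is correct and is essentially the paper's argument: assume every $F\in\cF$ has $G$-density at most $1-8c$, sum $\mu_0(F\setminus G)\geq 8c\,\mu_0(F)$ over the disjoint family, and play the inclusions \eqref{eqn:xrF} against the doubling and density hypotheses. The only difference is that you close the contradiction against \eqref{eqn:xrG} rather than against \eqref{eqn:2rr8}, which is immaterial. Your remark that any $F$ violating the assumed inequality must have $\mu_0(F)>0$ correctly handles the degenerate sets.
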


\begin{proof}
Suppose for contradiction that for all $F\in\cF$, we have
\[ \frac{\mu_0(F\cap G)}{\mu_0(F)}\leq 1-8c. \]
This implies $\mu_0(F \setminus G)\geq 8c\cdot\mu_0(F)$ for all $F\in\cF$.
Summing over all sets in $\cF$, and using the fact that they are pairwise disjoint, we obtain
\[ \mu_0\left(\bigcup_{F\in\cF}(F \setminus G)\right) = \sum_{F\in\cF}\mu_0(F \setminus G) \geq 8c\sum_{F\in\cF}\mu_0(F). \]
It follows from the inclusion relation \eqref{eqn:xrF} that $\bigcup_{F\in\cF}F \supseteq B(x,r)$ and $\bigcup_{F\in\cF}(F \setminus G) \subseteq B(x,2r) \setminus G$, which yields
\begin{equation}
\mu_0(B(x,2r) \setminus G)\geq 8c\cdot \mu_0(B(x,r)).
\end{equation}
From the density assumption on $G$ \eqref{eqn:xrG}, we know that $\mu_0(B(x,2r) \setminus G) < c \mu_0(B(x,2r))$. Combining these gives
\begin{equation}
c \mu_0(B(x,2r)) > 8c \mu_0(B(x,r)).
\end{equation}
Since $c>0$, we obtain $\mu_0(B(x,2r)) > 8 \mu_0(B(x,r))$, which contradicts the doubling assumption \eqref{eqn:2rr8}.
\end{proof}
	
        Now we start to prove \cref{thm:main-two}.
			 We prove by contradiction. Suppose $\gamma_1>0$, but $\gamma_2<1$. \Cref{lem:gamma2-positive} established that  $\gamma_2=0$ cannot occur. Hence, in the sequel, we assume
\begin{equation}
\label{eqn:assumption gamma 2}
    0<\gamma_2<1.
\end{equation}

 \subsection{Selection of the points}\label{sec: step1}
In this subsection, we construct a good set of large measure that satisfies several properties and will be used in the subsequent steps.
		We define the space $X$ and the measure $m$:
		\[
		X:=\Lambda_{n_0}\times\Sigma^-\times[0,1], \quad m:=\mu_0\times \nu_-\times \mathrm{Leb}.
		\]
		We introduce several definitions and properties.
		Throughout, sets denoted $\widetilde{G}$ are subsets of $\Lambda_{n_0}\times\Sigma^-$, while sets denoted $G$ are subsets of $X$.
\begin{defi}
Given a set $S\subset X$, we define the fiber sets
\begin{align}
&S(x,\cdot,h):=\{\bfi\in\Sigma^-:\, (x,\bfi,h)\in S \} \quad \text{for}\,\,(x,h)\in \Lambda_{n_0}\times [0,1],\\
&S(\cdot,\bfi,h):=\{x\in\Lambda_{n_0}:\, (x,\bfi,h)\in S \}\quad \text{for}\,\,(\bfi,h)\in \Sigma^-\times [0,1].
\end{align}
\end{defi}

Throughout, 
\[
1<C_0<C_1<\cdots<C_5.
\]
These are the constants of \Cref{rem:constants}. Let $\epsilon>0$ be the constant given in \Cref{prop: small angle}. 
Fix $C\in 3\N$ such that
\begin{equation}\label{eqn:choice of C}
C\alpha>10 \quad\text{and}\quad C>3(1+\log_2C_3),
\end{equation}
where $\alpha$ is the exponent in the standing assumption $f\in C^{1+\alpha}$. The constant $C$ is taken to be divisible by $3$ so that $Ck/3$ is an integer in \cref{prop:output-step1}, and $C\alpha>10$ so that the angle is stable on $B(x,2^{-Ck/3})$ there. Choose $\epsilon_1>0$ such that
\begin{equation}\label{eq:choice-eps1}
{
\epsilon_*:=60(C+1)\epsilon_1<\frac{1-\gamma_2}{2}
}
\quad\text{and}\quad
\epsilon_1<\frac{\epsilon}{6},
\end{equation}
{where $\epsilon_*/6=10(C+1)\epsilon_1$ is the dyadic-spreading error produced in \Cref{claim:spreading-normalized};} such $\epsilon_1$ exists since $\gamma_2\in(0,1)$ by \eqref{eqn:assumption gamma 2}.

\medskip\noindent\textbf{Dyadic spreading (Property~1).}
		For the given $\epsilon_1>0$, by \cref{prop:ball}, there exist $\ell_1=\ell_1(\epsilon_1)\in \N$ and $\widetilde{G}_1\subseteq \Lambda_{n_0}\times \Sigma^-$ with $\mu_0\times \nu_-(\widetilde{G}_1)>1-\epsilon_1$ such that for any $(x,\bfi)\in \widetilde{G}_1$, the point $\sigma_\bfi(x)$ is $(\ell_1,\epsilon_1)$-ball-spreading on $\sigma_\bfi(\calA^{su}_\bfi(x))$ with respect to $(\sigma_\bfi)_*\mu_{x,\calA^{su}_\bfi}$. Enlarging $\ell_1$ if necessary, we may assume $2^{-\ell_1+1}<\epsilon_1$.
		By \cref{lem:ball-dyadic-spreading}, for any $(x,\bfi)\in \widetilde{G}_1$ the set of $h\in[0,1]$ satisfying the following dyadic-spreading property:
		\begin{enumerate}[(P1)]
			\item The point $\sigma_{\bfi}(x)$ is $(n,2\ell_1,10\epsilon_1,h)$-dyadic-spreading on $\sigma_{\bfi}(\calA^{su}_\bfi(x))$ with respect to $(\sigma_{\bfi})_*\mu_{x,\calA^{su}_\bfi}$ for every $n\geq n_1(x,\bfi,\ell_1,\epsilon_1)$, where $n_1(x,\bfi,\ell_1,\epsilon_1)$ denotes the least such integer
            \label{P1:spreading property}
		\end{enumerate}
		has Lebesgue measure larger than $1-\epsilon_1$. Setting
		\[
		G_1:=\{(x,\bfi,h)\in \widetilde{G}_1\times[0,1]:\ (x,\bfi,h) \text{ satisfies (P1)}\}\subset X,
		\]
		Fubini's theorem gives $m(G_1)>1-2\epsilon_1$.

We use the following remark repeatedly. If $S\subseteq X$ is measurable and $\ell\colon S\to\N\cup\{\infty\}$ is measurable and finite $m$-a.e. on $S$, then the sets $S\cap\{\ell\leq L\}$ increase to $S$ up to an $m$-null set. So, by continuity from below, for every $\rho>0$, there is $L^*\in\N$ with
		\begin{equation}\label{eq:threshold}
			m\big(S\cap\{\ell\leq L^*\}\big)>m(S)-\rho .
		\end{equation}
        Since $n_1$ is measurable in $(x,\bfi)$, \eqref{eq:threshold} applied with $S=G_1$, $\ell=n_1$, and $\epsilon=\epsilon_1$ yields $n^*_1=n^*_1(\ell_1,\epsilon_1)$; replacing $G_1$ with $G_1\cap\{n_1\leq n^*_1\}$, property (P1) holds at every point of $G_1$ with the common threshold $n^*_1$ and $m(G_1)>1-3\epsilon_1$.

		Let $\delta=\delta(\epsilon_*,2\ell_1)\in (0,1)$ be the constant given in \Cref{cor:final-growth-addcom}. Since $\delta$ depends on $\ell_1$, the parameter $\epsilon_2$ can only be fixed at this point, after (P1). Choose $\epsilon_2>0$ such that
		\begin{equation}\label{eq:choice-eps2}
		\gamma_2+(2C+3)\epsilon_2<1-\epsilon_*,\quad \frac{\gamma_2-(2C+4)\epsilon_2}{\gamma_2+(2C+3)\epsilon_2}>1-\delta,\quad  \text{and}\quad {\epsilon_2<\min\{\frac{\delta \gamma_2}{100 C},\,\frac{\epsilon}{6},\,\frac{1}{48},\,\frac{\epsilon_*}{96}\}}.
		\end{equation}
		the first condition is compatible with the others because \eqref{eq:choice-eps1} gives $\gamma_2+\epsilon_*<1$. Note that \eqref{eq:choice-eps1} and \eqref{eq:choice-eps2} together give $3\epsilon_1+3\epsilon_2<\epsilon$, which is the hypothesis of \Cref{prop: small angle}, and that the final condition in \eqref{eq:choice-eps2} forces $\epsilon_2\leq1/48$, so that $16\epsilon_2<\tfrac13$, which will be used in  \eqref{equ:A-large-mass}. {Moreover, $\epsilon_2<\epsilon_*/96$ gives $16\epsilon_2<\epsilon_*/6$, which will be used in \Cref{claim:spreading-normalized}.}

\medskip\noindent\textbf{Dimension estimates (Properties~2--4).}
		Throughout the argument, we only use the dyadic scales $r=2^{-n}$, $n\in\N$, and we fix a single $\theta$-uniform $su$-transversal $T$: any two such transversals are bi-Lipschitz equivalent with a universal constant, which affects $\log\mu^{T}_\bfi\big(B^{T}_\bfi(x,r)\big)/\log r$ only to a lower order. Thus, by the exact rate of \Cref{lem:exact dim tran}, the estimate (P3) for $T$ yields (P3) for every $\theta$-uniform $su$-transversal at all sufficiently small $r$, the loss being absorbed into $\ell_2$.
		By \Cref{lem:exact dim tran} and \Cref{lem:2ll-lower}, for $m$-a.e. $(x,\bfi,h)\in X$ there is a least integer $\ell_2(x,\bfi,h)\in\N$ such that, for every $n\geq \ell_2(x,\bfi,h)$ and $r=2^{-n}$:
       
        \begin{enumerate}[label=(P\arabic*),start=2]
			\item Measure dimension estimate:
			\[r^{\gamma_1+\gamma_2+\epsilon_2}\leq\mu_0(B(x,C_5^{-10}r))\leq\mu_0(B(x,C_5^{10}r))\leq r^{\gamma_1+\gamma_2-\epsilon_2},\]
			where the constant $C_5>1$ is given in \ref{S:inverse reg in tang};
            \item Transverse dimension estimate:
			\[r^{\gamma_2+\epsilon_2}\leq \mu_{\bfi}^T\big(B^T_\bfi(x,C^{-10}_7r)\big)\leq \mu_{\bfi}^T\big(B^T_\bfi(x,C^{10}_7r)\big)\leq r^{\gamma_2-\epsilon_2},\]
			where $B^T_\bfi(x,\rho)$ denotes the ball in $T$ of radius $\rho$ centred at $\pi^{su}_{\bfi,T}(x)$, and the constant $C_7>1$, depending only on the dynamical system, is specified in \eqref{eq:C7} below; without loss of generality, we assume $C_7\geq \max\{C_5, 4\}$, which is used in \eqref{equ:npij};
            \label{P3:transverse dimension}
			\item Rectangle upper bound:
			\[\mu_0\big(\cD^{\bfi,h}_{(C+2)n\times (C+1)n}(x)\big)\leq r^{(C+1)\gamma_1+(C+2)\gamma_2-\epsilon_2}.\]\label{P4:retangle bound}
		\end{enumerate}
        Since (P2)--(P4) are imposed only at the countably many scales $r=2^{-n}$, each condition defining a Borel subset of $X$, the function $\ell_2$ is measurable; the same applies to $\ell_3$ and $\ell_4$ below. By \eqref{eq:threshold} with $S=X$, $\ell=\ell_2$, $\epsilon=\epsilon_2$, fix $\ell^*_2=\ell^*_2(\epsilon_1,\epsilon_2)\geq\ell_1$ and set
		\[
		G_2:=G_1\cap\{(x,\bfi,h)\in X:\ \ell_2(x,\bfi,h)\leq \ell^*_2\},
		\]
		so that $m(G_2)>1-3\epsilon_1-\epsilon_2$ and every point of $G_2$ enjoys (P1)--(P4) at the common thresholds $n^*_1$ and $\ell^*_2$.

\medskip\noindent\textbf{Ball density (Properties~5--6).}
		Since $G_2$ is Borel, for $\nu_-\times\mathrm{Leb}$-a.e. $(\bfi,h)$ the fiber $G_2(\cdot,\bfi,h)$ is $\mu_0$-measurable; applying the density theorem for Radon measures \cite[Cor.~2.14]{Mattila_1995} to this fiber and integrating in $(\bfi,h)$, we obtain for $m$-a.e. $(x,\bfi,h)\in G_2$ a least $\ell_3(x,\bfi,h)\in\N$ such that:
		\begin{enumerate}[(P5)]
			\item Ball density of $G_2$:
			\[\mu_0(B(x,r)\cap G_2(\cdot,\bfi,h))\geq (1-\epsilon_2)\mu_0(B(x,r))\qquad \text{for all } r=2^{-n},\ n\geq\ell_3(x,\bfi,h).\]
		\end{enumerate}
        By \eqref{eq:threshold} with $S=G_2$, $\ell=\ell_3$, $\epsilon=\epsilon_2$, fix $\ell^*_3\geq\ell^*_2$ and set $G_3:=G_2\cap\{\ell_3\leq\ell^*_3\}$, so that $m(G_3)>1-3\epsilon_1-2\epsilon_2$.
        
		Repeating the argument with $G_3$ in place of $G_2$, we obtain for $m$-a.e. $(x,\bfi,h)\in G_3$ a least $\ell_4(x,\bfi,h)\in\N$ such that:
		\begin{enumerate}[(P6)]
			\item Ball density of $G_3$:
			\[\mu_0(B(x,r)\cap G_3(\cdot,\bfi,h))\geq (1-\epsilon_2)\mu_0(B(x,r))\qquad \text{for all } r=2^{-n},\ n\geq\ell_4(x,\bfi,h).\]
		\end{enumerate}
		together with $\ell^*_4=\ell^*_4(\epsilon_1,\epsilon_2)\geq\ell^*_3$ such that
		\[
		G_4:=G_3\cap\{(x,\bfi,h)\in X:\ \ell_4(x,\bfi,h)\leq\ell^*_4\}
		\]
		satisfies $m(G_4)>1-3\epsilon_1-3\epsilon_2$.
		The two applications play different roles: (P6) is applied at the points of $G_4$ at which the argument is centered, whereas (P5) is available at \emph{every} point of $G_3$, in particular at the nearby points supplied by (P6). This is how they are used in \cref{prop:output-step1} and \cref{claim:lowerRa}.

		For later reference, the constants of this subsection are fixed in the order
		\[
		\epsilon\ \rightsquigarrow\ C\ \rightsquigarrow\ \epsilon_1\ \rightsquigarrow\ \ell_1,\,n^*_1\ \rightsquigarrow\ \delta\ \rightsquigarrow\ \epsilon_2\ \rightsquigarrow\ \ell^*_2,\,\ell^*_3,\,\ell^*_4 ,
		\]
		each depending only on those preceding it; the constant $K$ is fixed in \eqref{eqn:choice of K}.

\medskip\noindent\textbf{Choice of the point.}
\begin{defi}\label{def:angle}
For $z\in\Lambda_{n_0}$ and $\bfi,\bfj\in\Sigma^-$, we set
\[
\angle_z(\bfi,\bfj):=\angle\Big(\big(D_z\exp_{x_0}^{-1}\big)\big(E^{su}_{\bfi}(z)\big),\ \big(D_z\exp_{x_0}^{-1}\big)\big(E^{su}_{\bfj}(z)\big)\Big).
\]
\end{defi}
This is the angle used in the planar geometry arguments below. Since the Riemannian metric is smooth and $\Lambda_{n_0}$ lies in a small neighborhood of $x_0$, the maps $D_z\exp^{-1}_{x_0}$ and their inverses are uniformly bounded on $\Lambda_{n_0}$, and $z\mapsto D_z\exp^{-1}_{x_0}$ is Lipschitz there, in the sense of the identification of nearby tangent spaces by parallel transport used in \Cref{prop Holder continuous}. Consequently, $\tan\angle_z(\bfi,\bfj)$ is comparable to $\tan\angle\big(E^{su}_{\bfi}(z),E^{su}_{\bfj}(z)\big)$ up to a universal constant, and the H\"older estimate of \Cref{prop Holder continuous} transfers to $\angle_z$. Enlarging $C_0$ if necessary, we assume that the constant $C_0>1$ of \Cref{prop Holder continuous} bounds this comparison constant and this Lipschitz constant as well.

        Since $m(G_4)>1-3\epsilon_1-3\epsilon_2$, Fubini's theorem provides a pair $(x,h)\in\Lambda_{n_0}\times[0,1]$ with
		\[
		\nu_-\big(G_4(x,\cdot,h)\big)>1-3\epsilon_1-3\epsilon_2 ,
		\]
		which we fix for the rest of the proof.
		By \eqref{eq:choice-eps1} and \eqref{eq:choice-eps2}, we have $3\epsilon_1+3\epsilon_2<\epsilon$, so \Cref{prop: small angle} applies to $G_4(x,\cdot,h)$ for \emph{every} value of its parameter.

Let $k_0$ be the threshold in \Cref{rem:geom-input-nonlinear-displacement}, and set
\(
C'_0:=\lceil C_5^3\rceil.
\)
We now fix $K$ by
\begin{align}\label{eqn:choice of K}
K:=\max\Bigg\{\lceil\log_2 C_0\rceil,\ &
\bigg\lceil\frac{\log_2(10C_5^2)}{\epsilon_2}\bigg\rceil,
\Bigg\lceil\frac{4\log_2(200C_8)}{\delta\gamma_2}\Bigg\rceil,
 k_0,
 N(\epsilon_*,2\ell_1,C'_0,C),\\
&\Bigg\lceil\frac{2}{C\epsilon_2}
\Big(\big(10+\lceil\log_2 C_5\rceil\big)
\big(\gamma_1+\gamma_2+\epsilon_2\big)+2\Big)\Bigg\rceil
\Bigg\}+4 .
\end{align}
Applying \Cref{prop: small angle} to $G_4(x,\cdot,h)$ with parameter $\eta=2^{-(\ell_4^*+n_1^*+K)}/4C_0$, and using the comparison above, we obtain $\bfi,\bfj\in G_4(x,\cdot,h)$ such that
		\[
		\tau:=\tan\angle_x(\bfi,\bfj)
		\]
		satisfies $0<\tau\leq 2C_0\eta$. Let $k\in\N$ be the unique integer with $2^{-k-1}<\tau\leq2^{-k}$. Then
		\begin{equation}\label{eqn:k large}
		k\geq \ell^*_4+n^*_1+K.
		\end{equation}

With $k':=(C+1)k$, we obtain
\begin{align}
&4C_0\,2^{-\alpha Ck/3}<2^{-k-2},\label{eqn:K-angle}\\
&\tfrac{k'\epsilon_2}{2}\geq\big(10+\lceil\log_2 C_5\rceil\big)\big(\gamma_1+\gamma_2+\epsilon_2\big)+2,\label{eqn:K-count}\\
&\tfrac{k\delta\gamma_2}{4}\geq\log_2(200C_8).
\label{eqn:K-C8}
\end{align}
Moreover,
\[
k\geq k_0
\qquad\text{and}\qquad
k\geq N(\epsilon_*,2\ell_1,C'_0,C).
\]
These bounds follow from \eqref{eqn:k large} and \eqref{eqn:choice of K}; they are used, respectively, in \eqref{eq:angle-stable}, \Cref{claim:lowerRa}, \eqref{eqn:k final}, the application of \Cref{rem:geom-input-nonlinear-displacement}, and the application of \Cref{cor:final-growth-addcom}.

		Since $\bfi,\bfj\in G_4(x,\cdot,h)$, we have $x\in G_4(\cdot,\bfi,h)\cap G_4(\cdot,\bfj,h)$.
		By \Cref{prop Holder continuous} and the choice \eqref{eqn:choice of C} of $C$,
		\begin{equation}\label{eq:angle-stable}
		\tau_y:=\tan\angle_y(\bfi,\bfj)\in[2^{-k-2},2^{-k+2}]\qquad\text{for every }y\in B\big(x,2^{-Ck/3}\big);
		\end{equation}
		indeed, $C\alpha>10$ gives $4C_0\,2^{-\alpha Ck/3}<4C_0\,2^{-3k}<2^{-k-2}$, where the last inequality follows from \eqref{eqn:k large}.
		Throughout we write
		\[
		G:=G_3(\cdot,\bfi,h)\cap G_3(\cdot,\bfj,h)\subseteq\Lambda_{n_0}.
		\]
		We now establish the main output of this step.

        \begin{prop}\label{prop:output-step1}
There exists $y\in G\cap B\big(x,2^{-Ck/3}\big)$ such that
            \[
            \tau_y\in[2^{-k-2},2^{-k+2}]
            \qquad\text{and}\qquad
            \frac{\mu_{y,\calA^{su}_\bfi}\big(\cD^{\bfi,h}_{Ck}(y)\cap G\big)}{\mu_{y,\calA^{su}_\bfi}\big(\cD^{\bfi,h}_{Ck}(y)\big)}\geq1-16\epsilon_2 .
            \]
        \end{prop}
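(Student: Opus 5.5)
We pass from the full-measure density statements (P5), (P6) at $x$ to a statement on a single box, and then from the box to a single leaf through a point $y$ of that box.

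\textbf{Step 1: density of $G$ in a ball around $x$.} Since $x\in G_4(\cdot,\bfi,h)\cap G_4(\cdot,\bfj,h)$ and $k\geq\ell_4^*$, property (P6) holds for both pasts at every dyadic radius $r\le 2^{-\ell_4^*}$. It gives $\mu_0(B(x,r)\cap G_3(\cdot,\bfi,h))\ge(1-\epsilon_2)\mu_0(B(x,r))$, and the same with $\bfj$. Intersecting the two, the set $G$ has relative measure at least $1-2\epsilon_2$ in every such ball $B(x,r)$.

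\textbf{Step 2: a doubling scale and density transfer to a box.} By (P2) at $x$ (valid since $k\ge\ell_2^*$), \Cref{lem:doubling-scale} applies with $\epsilon_2$. Taking $\ell:=Ck/3$, it produces a dyadic $r=2^{-k_1}$ with $Ck/3<k_1\le 2Ck/3$ and $\mu_0(B(x,2r))<8\mu_0(B(x,r))$.

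Let $\cF$ be the family of atoms of $\cD^{\bfi,h}_{Ck}$ that meet $B(x,r)$. By \eqref{eqn:atom diam}, each atom has diameter at most $C_3\sqrt2\,2^{-Ck}$. The choice \eqref{eqn:choice of C}, $C>3(1+\log_2 C_3)$, gives $Ck-k_1\ge Ck/3>1+\log_2C_3$, so every atom in $\cF$ lies in $B(x,2r)$. Thus $\cF$ satisfies \eqref{eqn:xrF}. Step 1 at radius $2r$ gives \eqref{eqn:xrG} with $c=2\epsilon_2$. \Cref{lem:density-of-box} then yields an atom $Q\in\cF$ with $\mu_0(Q\cap G)>(1-16\epsilon_2)\mu_0(Q)$. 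Moreover $Q\subset B(x,2r)\subset B(x,2^{-Ck/3})$.

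\textbf{Step 3: from the box to a leaf.} Disintegrate $\mu_0|_Q$ along the partition $\calA^{su}_\bfi$. Because $\sigma_\bfi$ straightens the $\bfi$-leaves, $Q\cap\calA^{su}_\bfi(y)$ equals $\cD^{\bfi,h}_{Ck}(y)$ on the leaf through each $y\in Q$. We have
\[
\mu_0(Q\cap G)=\int \mu_{y,\calA^{su}_\bfi}(Q\cap G)\,d\mu_0(y) \quad\text{and}\quad \mu_0(Q)=\int \mu_{y,\calA^{su}_\bfi}(Q)\,d\mu_0(y).
\]
Averaging the leafwise ratios against the weights $\mu_{y,\calA^{su}_\bfi}(Q)$, some leaf carrying positive mass of $Q$ has ratio $\ge 1-16\epsilon_2$. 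The set of such leaves has positive $\mu_0$-mass inside $Q$, and $G$ has positive conditional mass on those leaves, so we may choose $y$ in $G\cap Q$ on such a leaf. The ratio is the same for every point of a leaf, so choosing $y\in G$ costs nothing. Equivalently, the set $\{y\in Q\cap G:\text{ratio}\ge1-16\epsilon_2\}$ has positive measure, since the complementary leaves cannot carry all the mass of $Q\cap G$.

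Finally $y\in B(x,2^{-Ck/3})$, so \eqref{eq:angle-stable} gives $\tau_y\in[2^{-k-2},2^{-k+2}]$.

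\textbf{Main obstacle.} The delicate step is Step 3: the weighted average must pass to a point that is simultaneously in $G$ and on a good leaf. A plain average could land on a leaf where $G$ has zero conditional measure. We handle this with the positive-measure argument above, choosing $y$ among $\mu_0$-typical points for which the Rokhlin disintegration is defined. A secondary check is that $Ck$ exceeds the thresholds $\ell^*_2$ and $\ell_4^*$ and the scale bound $\ell>\ell_0+10\log_2C_5$ required by \Cref{lem:doubling-scale}; both follow from \eqref{eqn:k large}.
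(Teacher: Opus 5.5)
Your proposal is correct and follows the paper's proof essentially step for step. You apply (P6) for both pasts to get density $1-2\epsilon_2$ of $G$ in $B(x,2r)$, where \Cref{lem:doubling-scale} at level $Ck/3$ supplies the doubling radius $r$. \Cref{lem:density-of-box} then transfers this to an atom $Q$ of $\cD^{\bfi,h}_{Ck}$ inside $B(x,2^{-Ck/3})$, and disintegrating along $\calA^{su}_\bfi$ and choosing a point of $G$ on a good leaf gives $y$. The only differences are cosmetic: you establish the ball density before the doubling scale, and you spell out the threshold check $Ck/3>\ell_2^*+10\log_2 C_5$, which the paper leaves implicit.
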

        \begin{proof}
            By \eqref{eq:angle-stable}, the angle condition holds at every point of $B(x,2^{-Ck/3})$, so it suffices to produce a point of $G$ in that ball satisfying the density estimate.

           Since $x\in G_4(\cdot,\bfi,h)\subseteq G_2(\cdot,\bfi,h)$, it satisfies \eqref{eqn:measure r 2r}, so \cref{lem:doubling-scale}, applied with parameter $k_{\mathrm{doub}}=Ck/3$, produces a scale $r=2^{-\ell}$ with $Ck/3<\ell\leq2Ck/3$ such that
            \begin{equation}\label{eq:doubling}
                \frac{\mu_0\big(B(x,2r)\big)}{\mu_0\big(B(x,r)\big)}<8 .
            \end{equation}
            Since $\ell\geq Ck/3+1$, we have $2r\leq2^{-Ck/3}$, hence
            \begin{equation}\label{eq:balls-nested}
                B(x,2r)\subseteq B\big(x,2^{-Ck/3}\big),
            \end{equation}
            so every point of $B(x,2r)$ satisfies the angle condition.

            As $C\geq3$, we have $\ell-1\geq Ck/3\geq k\geq\ell^*_4$, so (P6) applies at $x$ at the scale $2r=2^{-(\ell-1)}$, once for $\bfi$ and once for $\bfj$. Each application gives density at least $1-\epsilon_2$, so
            \begin{equation}\label{eq:G-density}
                \mu_0\big(B(x,2r)\setminus G\big)\leq2\epsilon_2\,\mu_0\big(B(x,2r)\big).
            \end{equation}

           Let $\cF\subseteq\cD^{\bfi,h}_{Ck}$ be the collection of atoms of positive $\mu_0$-measure meeting $B(x,r)$. By \eqref{eqn:atom diam} and $\ell\leq2Ck/3$, every $Q\in\cF$ satisfies
            \[
            \diam(Q)\leq\sqrt2\,C_3\,2^{-Ck}\leq\sqrt2\,C_3\,2^{-Ck/3}\,r\leq r
            \]
            where we used $C>3(1+\log_2C_3)$ \eqref{eqn:choice of C}, and hence $Q\subseteq B(x,2r)$. The atoms of $\cF$ are therefore pairwise disjoint, cover $B(x,r)$, and are contained in $B(x,2r)$, so \cref{lem:density-of-box} applies with the doubling estimate \eqref{eq:doubling} and the density estimate \eqref{eq:G-density}: there is $Q\in\cF$ with
            \begin{equation}\label{eq:box-density}
                \mu_0(Q\setminus G)\leq16\epsilon_2\,\mu_0(Q).
            \end{equation}
            Write $\mu_{y,\bfi}:=\mu_{y,\calA^{su}_\bfi}$, so that $\mu_0=\int\mu_{y,\bfi}\,d\mu_0(y)$. As $\mu_0(Q)>0$, averaging \eqref{eq:box-density} yields a point $y$ with $\mu_{y,\bfi}(Q)>0$ and
            \begin{equation}\label{eq:fiber-density}
                \mu_{y,\bfi}(Q\setminus G)\leq16\epsilon_2\,\mu_{y,\bfi}(Q).
            \end{equation}
            Since $16\epsilon_2<1$, this gives $\mu_{y,\bfi}(Q\cap G)>0$; as $\mu_{y,\bfi}$ is carried by $\calA^{su}_\bfi(y)$, we may pick $y'\in Q\cap G\cap\calA^{su}_\bfi(y)$. The conditional measures being constant on atoms, $\mu_{y',\bfi}=\mu_{y,\bfi}$, so \eqref{eq:fiber-density} holds at $y'$. Finally $Q=\cD^{\bfi,h}_{Ck}(y')$ and $y'\in Q\subseteq B(x,2r)\subseteq B\big(x,2^{-Ck/3}\big)$, so $y'$ satisfies both conclusions.
        \end{proof}

	\subsection{Local projection for sets}\label{sec:local-projection-for-sets}
We now use the output of \Cref{sec: step1} to compare the projections induced by
two nearby but distinct strong unstable directions. The local strong unstable
manifolds are used only as plaques through points of $\Lambda_{n_0}$; we do not
use an ambient foliation in a neighborhood of $\Lambda_{n_0}$. All H\"older
estimates below are used either for points in $\Lambda_{n_0}$ or for points lying
on each local plaque.

Throughout this subsection, $\exp^{-1}_{x_0}$ is taken on the fixed normal-coordinate
neighborhood $U$, while the straightening map has its full domain
\[
\exp^{-1}_{x_0}:U\to T_{x_0}M\simeq\R^2,
\qquad
\sigma_\bfi:V_\bfi\to\R^2.
\]
We use their restrictions to $\Lambda_{n_0}$ when discussing dynamical sets,
but we retain the full domain $V_\bfi$ when working with ambient plaque
segments. Put
\[
\Omega:=\exp^{-1}_{x_0}(\Lambda_{n_0})\subseteq\R^2,
\qquad
\widetilde V_\bfi:=\exp^{-1}_{x_0}(V_\bfi).
\]
The planar geometry arguments on the repeller are carried out on $\Omega$,
whereas the composition
\[
S:=\sigma_\bfi\circ\exp_{x_0}:\widetilde V_\bfi\to\R^2
\]
is also available on the ambient plaque segments used below. Sets carrying
dynamical meaning --- $G$, $G'$, $R_0$, $R_1$, $R$, and later $L$,
$L_{\mathrm{good}}$, $\cR_t$, $\cB_t$ --- are subsets of $\Lambda_{n_0}$;
we write $\exp^{-1}_{x_0}(A)$ or $\sigma_\bfi(A)$ when a statement about such
a set $A$ is made in one of the two coordinate pictures, and abbreviate
$\widetilde A:=\exp^{-1}_{x_0}(A)$. Objects produced by the planar arguments
themselves --- the interval $I_k$, the finite set $F$, the measure $\zeta_k$,
and the projections $B_t$ --- live in the plane or on the line
$\exp^{-1}_{x_0}(T)$.

Let $y$ be the point obtained in \Cref{sec: step1}. As there, we write
\[
G:=G_3(\cdot,\bfi,h)\cap G_3(\cdot,\bfj,h),
\qquad
G':=G_2(\cdot,\bfi,h)\cap G_2(\cdot,\bfj,h),
\]
so that $y\in G\subseteq G'\subseteq\Lambda_{n_0}$.
We choose a local transversal $T$ through $y$ such that $\exp^{-1}_{x_0}(T)$ is the line through $\exp^{-1}_{x_0}(y)$ orthogonal to $\big(D_y\exp^{-1}_{x_0}\big)\big(E^{su}_{\bfi}(y)\big)$. As explained at the beginning of \Cref{subsec:statement and preparation}, $T$ is a $\theta$-uniform $su$-transversal, and every relevant
$\bfi$- or $\bfj$-plaque through a point of $\Lambda_{n_0}$ intersects $T$
in exactly one point.

Set
\[
R_0:=\cD^{\bfi,h}_{(C+1)k\times Ck}(y),
\]
and let $R_1$ be the union of all atoms of $\cD^{\bfi,h}_{(C+1)k}$ whose closure meets $R_0$. In the straightened picture, $\sigma_\bfi(R_0)$ is a rectangle of size $2^{-(C+1)k}\times2^{-Ck}$; $\sigma_\bfi(R_1)$ is contained in a rectangle of size
$3\cdot 2^{-(C+1)k}
\ \times\
\big(2^{-Ck}+2\cdot 2^{-(C+1)k}\big)$.

Finally set
\[
R:=R_1\cap G' .
\]
In particular $R\subseteq\Lambda_{n_0}$, and every point of $R$ enjoys (P1)--(P4).

We record the local regularity estimates used below. Let
\begin{align}
L'
    :=
    \sigma_{\bfi}^{-1}\bigl(\cD^{h}_{Ck}(\sigma_{\bfi}(y))
    \cap
    \sigma_{\bfi}(\mathcal{W}^{su}_{\mathrm{loc}}(y,\bfi))\bigr) \qquad \text{and} \qquad
    L
    :=
    \cD^{\bfi,h}_{Ck}(y)
    \cap
    \calA^{su}_\bfi(y).
\end{align}

By construction, $L'\subset V_\bfi$ and $L\subset\Lambda_{n_0}\cap V_\bfi$.
Hence $\sigma_\bfi^{-1}$ and
$\exp^{-1}_{x_0}\circ\sigma_\bfi^{-1}$ are well-defined on all plaque
segments used below.

Let
\[
    c_0:=\left\|D_{\sigma_\bfi(y)}\left(\exp_{x_0}^{-1}\circ\sigma_\bfi^{-1}\right)\right\| ,
\]
the derivative being taken in the one-dimensional leafwise variable; it measures the distortion of leaf-length by the straightening map at $y$. By \Cref{prop:straightening map}\ref{S:inverse reg in tang}, $c_0\in[1,1.01]$.

By \Cref{prop:straightening map}, \Cref{prop Holder continuous}, and \ref{U:USR}, the plaques, the straightening map, and its leafwise derivative satisfy the uniform regularity assumptions \ref{R1:leaf graph}--\ref{R4:leafwise derivative} used in the appendix. Hence the three planar geometry inputs cited below apply to the present data with uniform constants.

For $\bfk\in\{\bfi,\bfj\}$, define
\[
\pi_\bfk:\Omega\longrightarrow\exp^{-1}_{x_0}(T),
\qquad
\pi_\bfk(\widetilde z)
:=
\exp^{-1}_{x_0}\Big(
W^{su}_{\mathrm{loc}}(z,\bfk)\cap T
\Big),
\qquad
z:=\exp_{x_0}(\widetilde z).
\]
The intersection in this definition uniquely exists. Thus $\pi_\bfk$ is well-defined on all of
$\Omega$.

By \Cref{prop:output-step1},
\[
    2^{-k-2}
    \leq
    \tau_{y}=\tan\angle_y(\bfi,\bfj)
    \leq
    2^{-k+2},
\]
so that, since $c_0\in[1,1.01]$,
\begin{equation}
\label{eqn:sa bound}
    s_{\mathrm{a}}:=c_0\,\tau_y\,2^{k}\in\left[\tfrac14,\ 4.04\right].
\end{equation}

Put
\[
\widetilde y:=\exp^{-1}_{x_0}(y),
\qquad
\widetilde R:=\exp^{-1}_{x_0}(R).
\]
Recall that
\[
S=\sigma_\bfi\circ\exp_{x_0}:
\widetilde V_\bfi\to\R^2
\]
was defined above on the full straightening domain.
Since $y\in R_0$, the definition of $R_1$ gives
\[
S(\widetilde R)=\sigma_\bfi(R)
\subset
\big[S_1(\widetilde y)-3\cdot2^{-(C+1)k},\,
S_1(\widetilde y)+3\cdot2^{-(C+1)k}\big]
\times
\big[S_2(\widetilde y)-3\cdot2^{-Ck},\,
S_2(\widetilde y)+3\cdot2^{-Ck}\big].
\]
Moreover, $C\alpha>10>6$ and
$\tau_y\in[2^{-k-2},2^{-k+2}]$. Thus
\Cref{lem:geom-input-proj-curved-rectangle}, applied with $A=3$, yields a
uniform constant $C_7>1$ such that
\begin{equation}\label{eq:C7}
    \pi_\bfj(\widetilde R)
    \subset
    B^T\left(
        \pi_\bfj(\widetilde y),
        C_7 2^{-(C+1)k}
    \right).
\end{equation}
By a slight abuse of notation, for $\bfk\in\{\bfi,\bfj\}$ we write
\[
    \pi_\bfk\mu_0:=(\pi_\bfk\circ\exp_{x_0}^{-1})_*\mu_0
    =\bigl(\exp_{x_0}^{-1}\bigr)_*\mu^T_\bfk ,
\]
that is, the transverse measure $\mu^T_\bfk$ defined in \eqref{eqn:muTbfi} read in the chart $\exp^{-1}_{x_0}$; since $\exp^{-1}_{x_0}$ is bi-Lipschitz on $\Lambda_{n_0}$, the estimates (P3) apply to it, the distortion being absorbed by the exponents $C_7^{\pm10}$ there.
Since $R\subseteq G'$, the estimate (P3) is available for $\bfk=\bfj$ at every point of $\pi_\bfj\bigl(\exp^{-1}_{x_0}(G')\bigr)$. Since $s_{\mathrm{a}}\geq\tfrac14\geq C_7^{-10}$, \ref{P3:transverse dimension} applies at both radii, and comparing the measures of the large and the small balls gives
\begin{equation}\label{equ:npij}
\begin{split}
\cN_{s_{\mathrm{a}}\cdot2^{-(C+2)k}}
\left(
\pi_\bfj\left(\exp_{x_0}^{-1}(R)\right)
\right)
&\leq
\frac{
\pi_\bfj\mu_0
\left(
B^T\left(
\pi_\bfj(\exp_{x_0}^{-1}(y)),
C_7 2^{-(C+1)k}
\right)
\right)
}{
\inf_{z\in\pi_\bfj(\exp^{-1}_{x_0}(G'))}
\pi_\bfj\mu_0
\left(
B^T\left(z,s_{\mathrm{a}} 2^{-(C+2)k}\right)
\right)
}
\\
&\leq
2^{(C+2)k(\gamma_2+\epsilon_2)-(C+1)k(\gamma_2-\epsilon_2)}
\\
&=
2^{k(\gamma_2+(2C+3)\epsilon_2)} .
\end{split}
\end{equation}

\subsection{Lower estimates on covering number}\label{sec: 6.4}
Now we establish the matching lower bound for the $\bfj$-projection.
\begin{lem}\label{lem:proj-tube}
With $\delta$ as in \Cref{cor:final-growth-addcom},
\[
\cN_{s_{\mathrm{a}}\cdot2^{-(C+2)k}}
\bigl(\pi_\bfj(\exp_{x_0}^{-1}(R))\bigr)
\geq
2^{k\gamma_2(1+\delta/2)}.\]
\end{lem}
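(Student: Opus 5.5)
The plan is to realize $\pi_\bfj(\exp_{x_0}^{-1}(R))$, at resolution $s_{\mathrm a}2^{-(C+2)k}$, as a perturbed sumset of the form handled by \Cref{cor:final-growth-addcom}, with $n=k$. The translation set comes from the fiber measure on the $\bfi$-leaf through $y$; the translated pieces are $\bfi$-projections of small boxes. Then \eqref{eqn:K-C8} absorbs the constant $1/200$ and a power of $k$.

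\textbf{Step 1: the fiber set and its spreading.} Take $L=\cD^{\bfi,h}_{Ck}(y)\cap\calA^{su}_\bfi(y)$. Partition $\sigma_\bfi(L)$ into the $2^k$ dyadic subintervals of length $2^{-(C+1)k}$ of the vertical segment $\cD^h_{Ck}(\sigma_\bfi(y))$. In each subinterval carrying fiber mass on $G$, pick a point $z_t\in L\cap G$. The parameter $t$ is the rescaled vertical coordinate: the affine normalization of $\sigma_\bfi(L)$ to $[0,1]$, multiplied by the factor $s_{\mathrm a}$, so that $I$ has length $s_{\mathrm a}2^{-(C+1)k}$.

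Let $\eta$ be the normalized restriction of $(\sigma_\bfi)_*\mu_{y,\calA^{su}_\bfi}$ to this segment, transported to $I$. The spreading hypothesis of \Cref{cor:final-growth-addcom} with $\ell_*=2\ell_1$ comes from (P1) at points of $G\subseteq G_1(\cdot,\bfi,h)$. The dyadic scales $Ck,\dots,(C+1)k$ of the original picture become scales $1,\dots,k$ after normalization. Since the shift $h$ is aligned with the grid of $\cD^{\bfi,h}$, only the fraction $(C+1)/1$ of bad scales is lost, which gives error $10(C+1)\epsilon_1=\epsilon_*/6$; this is the content of the \emph{claim:spreading-normalized} referred to in the choice of constants. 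The mass condition \eqref{eqn:lower bound measure} follows from \Cref{prop:output-step1}: $G$ has fiber density $\geq1-16\epsilon_2>2/3$ in $\cD^{\bfi,h}_{Ck}(y)$, and the spreading points can be intersected in as well.

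\textbf{Step 2: the pieces $B_t$ and their sizes.} Put $\cB_t:=R\cap B(z_t,c\,2^{-(C+1)k})$, or more precisely the union of the atoms of $\cD^{\bfi,h}_{(C+1)k}$ adjacent to $z_t$, intersected with $G'$. Set $B_t:=\pi_\bfi(\widetilde{\cB_t})-\pi_\bfi(\widetilde z_t)$ and $B:=\bigcup_tB_t$, which lies in an interval of length $O(2^{-(C+1)k})$.

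\emph{Upper bound for $B$.} The argument is as in \eqref{equ:npij}, using (P3) for $\bfi$ at points of $G'$ and the fact that $\pi_\bfi(\widetilde R)$ lies in a $C_7 2^{-(C+1)k}$-ball (leaves are straight in $\sigma_\bfi$ coordinates). This gives $\cN(B)\le2^{k(\gamma_2+(2C+3)\epsilon_2)}\le2^{(1-\epsilon_*)k}$ by \eqref{eq:choice-eps2}.

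\emph{Lower bound for $B_t$} (the \emph{claim:lowerRa}). Apply (P5) at $z_t\in G_3$ at scale $2^{-(C+1)k}$, so that most of $\mu_0$ on the ball lies in $G'$. Then cover: each $s_{\mathrm a}2^{-(C+2)k}$-interval of $T$ pulls back to a box of $\cD^{\bfi,h}_{(C+2)k\times(C+1)k}$ type. Its mass is bounded by (P4) at a point of $R$, while the ball mass is at least $2^{-(C+1)k(\gamma_1+\gamma_2+\epsilon_2)}$ by (P2). The quotient gives $\cN(B_t)\gtrsim2^{k(\gamma_2-(2C+4)\epsilon_2)}$ after using \eqref{eqn:K-count} to kill constants. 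The second condition in \eqref{eq:choice-eps2} then gives $\cN(B_t)\ge\cN(B)^{1-\delta}$.

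\textbf{Step 3: the $\bfj$-projection as a sumset.} This is the main obstacle. Here I would invoke the planar geometry inputs of the appendix.

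The commutator lemma (\Cref{rem:geom-input-commutator}) gives that, on $\cB_t$, $\pi_\bfj=\pi_\bfj(\widetilde z_t)+(\pi_\bfi-\pi_\bfi(\widetilde z_t))+O(2^{-(C+3)k})$. This holds because the $\bfi$- and $\bfj$-plaques nearly coincide on a $2^{-(C+1)k}$-box, by Hölder continuity with $C\alpha>10$.

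The displacement lemma (\Cref{rem:geom-input-nonlinear-displacement}, valid for $k\ge k_0$) gives $\pi_\bfj(\widetilde z_t)=\pi_\bfj(\widetilde y)+t+O(s_{\mathrm a}2^{-(C+3)k})$. This is where $c_0$ and $\tau_y$ enter, through $s_{\mathrm a}$.

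Hence $\pi_\bfj(\widetilde R)\supseteq\bigcup_t(\phi(t)+B_t)$ up to Hausdorff distance $\le s_{\mathrm a}2^{-(C+2)k}$, with $|\phi(t)-t|\le s_{\mathrm a}2^{-(C+3)k}$. By \Cref{lem:discritization}(2), covering numbers change by a factor of at most $3$.

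\textbf{Step 4: conclusion.} Apply \Cref{cor:final-growth-addcom} with $C'=C'_0$ and $k\ge N$, yielding $\cN\ge\frac1{600}\cN(B)^{1+\delta}$. Combine this with the lower bound $\cN(B)\ge\cN(B_t)\ge2^{k(\gamma_2-(2C+4)\epsilon_2)}$ and $\epsilon_2<\delta\gamma_2/(100C)$, so that the exponent is at least $k\gamma_2(1+3\delta/4)$. Finally, \eqref{eqn:K-C8} absorbs the constants $200C_8$, giving $2^{k\gamma_2(1+\delta/2)}$.
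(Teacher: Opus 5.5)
Your proposal is essentially the paper's own proof. It uses the same set $F$ of one good fiber point per $2^{-(C+1)k}$-cell of $L$, parametrized by $t=-c_0\mathfrak m_\bfj\lambda$ as in the paper; this parameter must be measured from $y$ and carry the sign of the slope for the displacement lemma to read $\pi_\bfj(\widetilde z_t)\approx t$. It then uses the same boxes $\cR_t$ with the bounds \eqref{equ:upperbound-NB} and \eqref{equ:Ba} from (P2)--(P5), the commutator and displacement lemmas, \Cref{cor:final-growth-addcom}, and the exponent count \eqref{eqn:k final}.

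The one correction is in Step 3. \Cref{rem:geom-input-commutator} gives an error $A'2^{-(C+2)k}$, not $O(2^{-(C+3)k})$, because the $\bfi$- and $\bfj$-plaques do not nearly coincide on the box: their slopes differ by $\tau_y\approx2^{-k}$ over a height $2^{-(C+1)k}$. The Hausdorff error is therefore a possibly large constant multiple of the resolution $s_{\mathrm a}2^{-(C+2)k}$, and the covering loss is the constant $C_8$ of \eqref{equ:covering-perturbation} rather than a factor $3$. This is harmless, since \eqref{eqn:K-C8} absorbs $200C_8$.
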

Assuming \Cref{lem:proj-tube} and comparing it with the upper bound \eqref{equ:npij}, we get 
\begin{equation}
\label{eqn:contradiction}
    \gamma_2(1+\delta/2)\leq\gamma_2+(2C+3)\epsilon_2 ,
\end{equation}
which contradicts \eqref{eq:choice-eps2}. This rules out the standing assumption $\gamma_2<1$; hence $\gamma_2=1$, and the proof of \cref{thm:main-two} is complete. 

It therefore remains only to prove \Cref{lem:proj-tube}; the proof occupies
the rest of this subsection and \Cref{subsec:dimension growth}. In the rest of
this subsection, we use the fiber measure of $y$ to construct subsets
$\widetilde{\cB}_t\subseteq\exp^{-1}_{x_0}(R)$, $t\in F$, whose
$\bfi$-projections $B_t$ are almost as large as their union $B$
(\eqref{equ:upperbound-NB}--\eqref{equ:Ba}); in \Cref{subsec:dimension growth},
we apply the non-linear sum-set estimate \Cref{cor:final-growth-addcom} to show
that the $\bfj$-projection of $\bigcup_{t\in F}\widetilde{\cB}_t$ is larger
than $B$ by a definite power, which is the claimed bound.

\medskip
\textbf{Proof of \Cref{lem:proj-tube}.}
Let $\widetilde y:=\exp^{-1}_{x_0}(y)$, and let $(u_1,u_2)$ be the orthonormal frame at $\widetilde y$ with $u_1$ spanning $\exp^{-1}_{x_0}(T)$ and $u_2$ spanning $\bigl(D_y\exp^{-1}_{x_0}\bigr)\bigl(E^{su}_{\bfi}(y)\bigr)$; these are orthogonal by the choice of $T$. All components below are taken in this frame. We identify $\exp^{-1}_{x_0}(T)$ with $\R$ via the $u_1$-coordinate, taking $\widetilde y$ as the origin. 

Let $\mathfrak m_0$ be the signed slope of $T_{\widetilde y}\bigl(\exp^{-1}_{x_0}(W_\bfj(y))\bigr)$ relative to $u_2$, that is, the ratio of its $u_1$-component to its $u_2$-component. Since $u_2$ spans $\bigl(D_y\exp^{-1}_{x_0}\bigr)\bigl(E^{su}_{\bfi}(y)\bigr)$, the definition of $\angle_y(\bfi,\bfj)$ gives
\[
    |\mathfrak m_0|=\tan\angle_y(\bfi,\bfj)=\tau_y .
\]
Since $\sigma_\bfi$ carries the $\bfi$-leaves to the lines $\{p=\mathrm{const}\}$, the set $\sigma_\bfi(L)$ lies in one such line; let $v$ be its unit tangent at $\sigma_\bfi(y)$, oriented so that the $u_2$-component of
\[
    D_{\sigma_\bfi(y)}\bigl(\exp_{x_0}^{-1}\circ\sigma_\bfi^{-1}\bigr)v
\]
is positive. That image vector spans $\bigl(D_y\exp^{-1}_{x_0}\bigr)\bigl(E^{su}_{\bfi}(y)\bigr)=\R u_2$; hence its $u_1$-component vanishes and its $u_2$-component equals its norm, namely $c_0$.

We write the points of the line $\sigma_\bfi(y)+\R v$ in the form $\sigma_\bfi(y)+\lambda v$, $\lambda\in\R$, and define the affine map $\varphi_1^{-1}$ on this line by
\[
    \varphi^{-1}_1\big(\sigma_{\bfi}(y)+\lambda v\big):=-\lambda\,c_0\,\mathfrak m_0 ,
\]
which is injective since $\mathfrak m_0\neq0$ by $\tau_y>0$; we write $\varphi_1$ for its inverse. Set
\[
    I_k:=\varphi_1^{-1}\Big(\cD^{h}_{Ck}\big(\sigma_\bfi(y)\big)\cap\big(\sigma_\bfi(y)+\R v\big)\Big),
\]
where $\cD^{h}_{Ck}(z)$ denotes the atom of $\cD^{h}_{Ck}$ in $\mathbb{R}^2$ containing $z\in\R^2$. Since the line is parallel to the vertical sides of the atoms, the intersection is a dyadic segment of length $2^{-Ck}$, and it contains $\sigma_\bfi(L)$, as $\sigma_\bfi$ maps $\cD^{\bfi,h}_{Ck}(y)$ into the atom and $\calA^{su}_\bfi(y)$ into the line. Hence $I_k$ is an interval containing the origin, of length
\begin{equation}
\label{eqn:length of Ik}
    c_0\tau_y\,2^{-Ck}=s_{\mathrm{a}}2^{-(C+1)k}.
\end{equation}
For a Borel set $A$, we define the normalized restriction by $\mu^A(\cdot)=\mu(\cdot \cap A)/\mu(A)$. Set
\[
    \zeta_k:=\big(\varphi_1^{-1}\circ\sigma_\bfi\big)_*\big(\mu_{y,\calA^{su}_\bfi}\big)^{\cD^{\bfi,h}_{Ck}(y)},
\]
a probability measure supported in $\overline{I_k}$.

For $0\leq i\leq k$ and $t\in I_k$, set
\[
    s_{\mathrm{a}}\!\cdot\!\cD_{(C+1)k+i}(t)
    :=\varphi_1^{-1}\Big(\cD^{h}_{Ck+i}\big(\varphi_1(t)\big)\cap\big(\sigma_\bfi(y)+\R v\big)\Big),
\]
the cell containing $t$ of the $\varphi_1^{-1}$-image of the level-$(Ck+i)$ dyadic partition of the line; since $\big|(\varphi_1^{-1})'\big|=c_0\tau_y=s_{\mathrm{a}}2^{-k}$, this cell is an interval of length $s_{\mathrm{a}}2^{-(C+1)k-i}$, so that the subscript records the length. By dyadic nesting, these cells lie in $I_k$ and, for each fixed $i$, form a partition of $I_k$, which we denote $s_{\mathrm{a}}\!\cdot\!\cD_{(C+1)k+i}$; it is with respect to this family of partitions that $\zeta_k$ is dyadic-spreading.

Let $L_{\mathrm{good}}:=L\cap G$. For each cell $Q\in s_{\mathrm{a}}\!\cdot\!\cD_{(C+2)k}$ with
\[
    \zeta_k\Big(Q\cap\big(\varphi_1^{-1}\circ\sigma_\bfi\big)(L_{\mathrm{good}})\Big)>0 ,
\]
choose one point of $Q\cap\big(\varphi_1^{-1}\circ\sigma_\bfi\big)(L_{\mathrm{good}})\cap\supp\zeta_k$, and let $F\subset I_k$ be the resulting finite set. 

For $t\in F$, let $z_t\in L_{\mathrm{good}}$ be the unique point with $\big(\varphi_1^{-1}\circ\sigma_\bfi\big)(z_t)=t$. We claim that
\begin{equation}\label{equ:A-large-mass}
    \zeta_k\left(\bigcup_{t\in F}s_{\mathrm{a}}\cdot\cD_{(C+2)k}(t)\right)>\frac23.
\end{equation}
We verify \eqref{equ:A-large-mass}. The measure $\zeta_k$ is carried by $(\varphi_1^{-1}\circ\sigma_\bfi)(L)$, and every cell containing no point of $F$ meets $(\varphi_1^{-1}\circ\sigma_\bfi)(L_{\mathrm{good}})$ in zero $\zeta_k$-measure; hence the mass outside the union in \eqref{equ:A-large-mass} is at most
\[
    \zeta_k\Big(I_k\setminus(\varphi_1^{-1}\circ\sigma_\bfi)(L_{\mathrm{good}})\Big)
    =\big(\mu_{y,\calA^{su}_\bfi}\big)^{\cD^{\bfi,h}_{Ck}(y)}\Big(\cD^{\bfi,h}_{Ck}(y)\setminus G\Big)
    \leq16\epsilon_2 ,
\]
the equality because $\varphi_1^{-1}\circ\sigma_\bfi$ is injective and $\zeta_k(I_k)=1$, and the inequality by \Cref{prop:output-step1}. Since $16\epsilon_2<\tfrac13$ by \eqref{eq:choice-eps2}, the bound \eqref{equ:A-large-mass} follows.

\begin{defi}\label{def:neighbor-family}
For $z\in\Lambda_{n_0}$ and $m\in\N$, the \emph{neighbor family} of $z$ at scale $2^{-m}$ is defined by
\begin{equation}
N(z,m):=\Big\{\sigma_{\bfi}^{-1}(Q)\cap\Lambda_{n_0}:\ Q\in\cD^{h}_{m},\ Q\cap B\big(\sigma_{\bfi}(z),2^{-m-10}\big)\neq\emptyset\Big\},
\end{equation}
a finite collection of atoms of $\cD^{\bfi,h}_{m}$; the dependence on $\bfi$ and $h$ is suppressed from the notation.
\end{defi}

Two properties of the neighbor family will be used. First,
\begin{equation}\label{eqn:neighbor-card}
\#N(z,m)\leq4 .
\end{equation}
Second, since $C_5\geq C_3\geq\mathrm{Lip}(\sigma_\bfi)$ by \Cref{prop:straightening map}, every $w\in\Lambda_{n_0}\cap B\big(z,C_5^{-1}2^{-m-10}\big)$ satisfies $\sigma_\bfi(w)\in B\big(\sigma_\bfi(z),2^{-m-10}\big)$, and therefore
\begin{equation}\label{eqn:neighbor-cover}
\bigcup_{\cR\in N(z,m)}\cR\ \supseteq\ B\big(z,C_5^{-1}2^{-m-10}\big)\cap\Lambda_{n_0}.
\end{equation}

\begin{claim}\label{claim:lowerRa}
For every $t\in F$, there exists $\cR_t\in N\big(z_t,(C+1)k\big)$ with $\cR_t\subseteq R_1$ and
\begin{equation}\label{equ:muatom}
\mu_0\big(\cR_t\cap G'\big)\geq2^{-(C+1)k(\gamma_1+\gamma_2+2\epsilon_2)}.
\end{equation}
\end{claim}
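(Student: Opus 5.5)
We will use the fact that each $z_t$ lies in $G\subseteq G_3(\cdot,\bfi,h)$, so (P5) is available at $z_t$ and at all scales $2^{-n}$ with $n\geq\ell^*_3$. We combine it with the lower measure bound (P2), which holds at $z_t$ because $G\subseteq G'$. This gives a ball around $z_t$ of large measure and high density in $G'$. A pigeonhole over the at most four atoms of the neighbor family then produces a single atom carrying a proportional share of the good mass.

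\emph{Step 1 (choice of scale and ball).} Set $n:=(C+1)k+10+\lceil\log_2C_5\rceil$ and $r:=2^{-n}\leq C_5^{-1}2^{-(C+1)k-10}$. Since $k\geq\ell^*_4\geq\ell^*_3\geq\ell^*_2$ by \eqref{eqn:k large}, both (P2) and (P5) apply at $z_t$ with this $r$. In fact (P2) applies to the smaller ball $B(z_t,C_5^{-10}r')$ for a suitable dyadic $r'$; we will adjust the exponent by choosing $r'$ with $C_5^{-10}r'\leq r$, at the cost of another additive $10\log_2C_5$ in the exponent. We obtain
\[
\mu_0\big(B(z_t,r)\big)\geq 2^{-(n+10\lceil\log_2C_5\rceil)(\gamma_1+\gamma_2+\epsilon_2)} .
\]
(P5) for the fiber $G_2(\cdot,\bfi,h)$ gives density $1-\epsilon_2$. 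We want density in $G'=G_2(\cdot,\bfi,h)\cap G_2(\cdot,\bfj,h)$. Since $z_t\in G_3(\cdot,\bfj,h)$ too, (P5) for $\bfj$ gives the same bound, and hence
\[
\mu_0\big(B(z_t,r)\cap G'\big)\geq(1-2\epsilon_2)\,\mu_0\big(B(z_t,r)\big)\geq\tfrac12\mu_0\big(B(z_t,r)\big).
\]

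\emph{Step 2 (pigeonhole).} By \eqref{eqn:neighbor-cover}, the ball $B(z_t,r)\cap\Lambda_{n_0}$ is covered by the atoms of $N(z_t,(C+1)k)$. By \eqref{eqn:neighbor-card} there are at most $4$ of them, so some atom $\cR_t$ satisfies $\mu_0(\cR_t\cap G')\geq\frac18\mu_0(B(z_t,r))$. Collecting constants, the exponent loss is $(20+2\lceil\log_2C_5\rceil)(\gamma_1+\gamma_2+\epsilon_2)+3$ in $\log_2$. By \eqref{eqn:K-count} this is at most $(C+1)k\epsilon_2$; if needed we split $\epsilon_2$ appropriately, and the slack in \eqref{eqn:K-count} is exactly designed for this, possibly with minor constant bookkeeping. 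This yields \eqref{equ:muatom}.

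\emph{Step 3 (containment $\cR_t\subseteq R_1$).} This step is the main point to check. The point $z_t$ lies in $L\subseteq\cD^{\bfi,h}_{Ck}(y)$ and on the leaf $\calA^{su}_\bfi(y)$. Hence $\sigma_\bfi(z_t)$ has the same horizontal coordinate as $\sigma_\bfi(y)$, and lies in the same level-$Ck$ vertical dyadic interval, so $\sigma_\bfi(z_t)\in\sigma_\bfi(R_0)$ (suitably closed). Any $Q\in\cD^h_{(C+1)k}$ meeting $B(\sigma_\bfi(z_t),2^{-(C+1)k-10})$ is a level-$(C+1)k$ square. Its closure is within distance $2^{-(C+1)k-10}$ of a point of $\overline{\sigma_\bfi(R_0)}$. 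Because $\sigma_\bfi(R_0)$ is a union of level-$(C+1)k$ grid cells in the horizontal direction, aligned to the same grid, and Ck-dyadic intervals vertically are unions of $(C+1)k$-cells, such a $Q$ either lies in $\sigma_\bfi(R_0)$ or is adjacent to it with closures meeting. By definition of $R_1$, its atom then lies in $R_1$. The only subtlety is boundary adjacency, which is handled by the closure convention in the definition of $R_1$.
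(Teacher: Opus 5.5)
Your proof follows the paper's argument step for step: the same scale $n=(C+1)k+10+\lceil\log_2C_5\rceil$, (P5) applied for both $\bfi$ and $\bfj$ to get density $1-2\epsilon_2$ in $G'$, the lower bound from (P2), a pigeonhole over the at most four atoms of $N(z_t,(C+1)k)$ with \eqref{eqn:K-count} absorbing the constants, and the grid-adjacency argument for $\cR_t\subseteq R_1$. The one correction is in Step~1: you do not need an auxiliary $r'$. (P2) already bounds the smaller ball $B(z_t,C_5^{-10}r)$ from below, so monotonicity gives $\mu_0(B(z_t,r))\geq r^{\gamma_1+\gamma_2+\epsilon_2}$ directly, and the total loss is $(10+\lceil\log_2C_5\rceil)(\gamma_1+\gamma_2+\epsilon_2)+3$, which \eqref{eqn:K-count} covers. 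With your extra shift the loss is actually $(10+11\lceil\log_2C_5\rceil)(\gamma_1+\gamma_2+\epsilon_2)+3$, not the $(20+2\lceil\log_2C_5\rceil)(\cdots)+3$ you state, and \eqref{eqn:K-count} does not cover that in general.
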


\begin{proof}[Proof of the claim]
Write $k':=(C+1)k$, $c_5:=\lceil\log_2C_5\rceil$, and $n:=k'+10+c_5$, so that $2^{-n}\leq C_5^{-1}2^{-k'-10}$.
Since $z_t\in L_{\mathrm{good}}\subseteq G\subseteq G'$, the point $z_t$ satisfies the density property (P5), for both $\bfi$ and $\bfj$, and the measure estimate (P2), at every dyadic scale $2^{-m}$ with $m\geq\ell^*_4$; note that $n\geq k\geq\ell^*_4$.
By \eqref{eqn:neighbor-cover} at scale $m=k'$ and $2^{-n}\leq C_5^{-1}2^{-k'-10}$,
\begin{align}
\mu_0\Big(\bigcup_{\cR\in N(z_t,k')}\big(\cR\cap G'\big)\Big)
&\geq\mu_0\big(B(z_t,2^{-n})\cap G'\big)%
\geq(1-2\epsilon_2)\,\mu_0\big(B(z_t,2^{-n})\big)\\
&\geq(1-2\epsilon_2)\,2^{-n(\gamma_1+\gamma_2+\epsilon_2)}%
\geq2^{-k'(\gamma_1+\gamma_2+\frac32\epsilon_2)},
\end{align}
where the second inequality is (P5), applied once for $\bfi$ and once for $\bfj$; the third is the lower bound of (P2), together with $C_5^{-10}\leq1$; and the last holds since \eqref{eqn:K-count} gives
\[
\tfrac{k'\epsilon_2}{2}\geq(10+c_5)(\gamma_1+\gamma_2+\epsilon_2)+2
\geq(n-k')(\gamma_1+\gamma_2+\epsilon_2)+\log_2\tfrac{1}{1-2\epsilon_2}.
\]
By \eqref{eqn:neighbor-card}, some atom $\cR_t\in N(z_t,k')$ satisfies
\[
\mu_0\big(\cR_t\cap G'\big)\geq\tfrac14\,2^{-k'(\gamma_1+\gamma_2+\frac32\epsilon_2)}\geq2^{-k'(\gamma_1+\gamma_2+2\epsilon_2)},
\]
the last inequality again by \eqref{eqn:K-count}, whose right-hand side is at least $2$.
It remains to check $\cR_t\subseteq R_1$. Since the coordinate $p$ is constant on $\calA^{su}_\bfi(y)$ and $z_t\in\cD^{\bfi,h}_{Ck}(y)$, we have $z_t\in R_0$, so $\sigma_\bfi(z_t)$ lies in the cell $\cD^{h}_{(C+1)k\times Ck}\big(\sigma_\bfi(y)\big)$. The cell of $\cR_t$ lies within distance $2^{-k'-10}<2^{-k'}$ of $\sigma_\bfi(z_t)$, and two cells of the level-$k'$ grid with disjoint closures are at distance at least $2^{-k'}$; hence the closure of the cell of $\cR_t$ meets $\overline{\cD^{h}_{(C+1)k\times Ck}\big(\sigma_\bfi(y)\big)}$, and $\cR_t\subseteq R_1$ by the definition of $R_1$.
\end{proof}

For each $t\in F$, define
\[
    \widetilde{\cB}_t:=\exp_{x_0}^{-1}\left(\cR_t\cap G'\right),
    \qquad
    B_t:=\pi_\bfi\big(\widetilde{\cB}_t\big)\subseteq\exp^{-1}_{x_0}(T),
\]
and set
\[
    \widetilde{\cB}:=\bigcup_{t\in F}\widetilde{\cB}_t,
    \qquad
    B:=\bigcup_{t\in F}B_t=\pi_\bfi\big(\widetilde{\cB}\big).
\]
Since $\cR_t\subseteq R_1$ for each $t\in F$ and $R=R_1\cap G'$, we have 
\begin{equation}
\label{eqn:inclusion B}
B\subseteq\pi_\bfi\big(\exp^{-1}_{x_0}(R)\big). 
\end{equation}
The argument leading to \eqref{eq:C7} and \eqref{equ:npij} applies verbatim with $\pi_\bfi$ in place of $\pi_\bfj$ and gives
\begin{equation}\label{equ:upperbound-NB}
    \cN_{s_{\mathrm{a}}\cdot2^{-(C+2)k}}(B)
    \leq
    2^{k(\gamma_2+(2C+3)\epsilon_2)} .
\end{equation}
Next, we give a lower estimate for each $B_t$. By \eqref{equ:muatom} and \ref{P4:retangle bound},
\begin{equation}\label{equ:Ba}
\begin{split}
\cN_{s_{\mathrm{a}}\cdot2^{-(C+2)k}}(B_t)
&\geq
\frac{
\mu_0(\cR_t\cap G')
}{
10C_5^2\cdot
\sup_{x\in G'}
\mu_0\left(\cD^{\bfi,h}_{(C+2)k\times(C+1)k}(x)\right)
}
\\
&\geq
\frac{
2^{-(C+1)k(\gamma_1+\gamma_2+2\epsilon_2)}
}{
10C_5^2\cdot
2^{-k((C+1)\gamma_1+(C+2)\gamma_2-\epsilon_2)}
}
\geq
2^{k(\gamma_2-(2C+4)\epsilon_2)}
\\&
\geq
\cN_{s_{\mathrm{a}}\cdot2^{-(C+2)k}}(B)^{1-\delta}.
\end{split}
\end{equation}
For the first inequality, fix $p\in\exp^{-1}_{x_0}(T)$ and consider the fiber $\pi_\bfi^{-1}\big(B(p,s_{\mathrm{a}}2^{-(C+2)k})\big)$. By the bi-Lipschitz property of the holonomy between $T$ and $T_0$, which defines the $p$-coordinate of $\sigma_\bfi$ (\Cref{lem holder of Esu}), and the bi-Lipschitz bound on $\exp^{-1}_{x_0}|_{\Lambda_{n_0}}$, the $p$-coordinate of this fiber ranges over an interval of length at most $2C_5^2\,s_{\mathrm{a}}2^{-(C+2)k}$; by \eqref{eqn:sa bound}, this interval meets at most $10C_5^2$ grid intervals of width $2^{-(C+2)k}$. On the other hand, $\sigma_\bfi\circ\exp_{x_0}\big(\widetilde{\cB}_t\big)=\sigma_\bfi\big(\cR_t\cap G'\big)$ is contained in the cell of $\cR_t$, which has height $2^{-(C+1)k}$. Hence
\[
\sigma_{\bfi}\circ\exp_{x_0}\Big(\pi_\bfi^{-1}\big(B(p,s_{\mathrm{a}}2^{-(C+2)k})\big)\cap\widetilde{\cB}_t\Big)
\]
meets at most $10C_5^2$ atoms of $\cD^{h}_{(C+2)k\times(C+1)k}$, each of which contains a point of $\sigma_\bfi(G')$; summing over a covering of $B_t$ by $\cN_{s_{\mathrm{a}}2^{-(C+2)k}}(B_t)$ balls gives the first inequality.
The third inequality amounts to $2^{k\epsilon_2}\geq10C_5^2$, which holds since $k\geq K$ by \eqref{eqn:k large} and $K\geq\log_2(10C_5^2)/\epsilon_2$ by \eqref{eqn:choice of K}. The last inequality follows from \eqref{equ:upperbound-NB} and the second condition in \eqref{eq:choice-eps2}.

\subsection{Dimension growth}
\label{subsec:dimension growth}
We now complete the proof of \Cref{lem:proj-tube} by applying the non-linear sum-set estimate \Cref{cor:final-growth-addcom}. 

By the definition of $N\big(z_t,(C+1)k\big)$ (\Cref{def:neighbor-family}), the cell of $\cR_t$ has diameter at most $\sqrt2\,2^{-(C+1)k}$ and lies within $2^{-(C+1)k-10}$ of $\sigma_\bfi(z_t)$; pulling back by $\sigma_\bfi^{-1}$, with bi-Lipschitz constant $C_3$, and applying $\exp^{-1}_{x_0}$, with Lipschitz constant at most $C_5$, we obtain
\[
    \widetilde{\cB}_t\subseteq B\big(\widetilde z_t,\,10C_5^2\,2^{-(C+1)k}\big).
\]
Note also that $\pi_\bfi(\widetilde z_t)=0$: the point $z_t$ lies on $\calA^{su}_\bfi(y)$, whose local leaf meets $T$ at $y$, and $\widetilde y$ is the origin of our identification of $\exp^{-1}_{x_0}(T)$ with $\R$.
Since $z_t$ and $y$ lie on the same $\bfi$-plaque and in the same level-$Ck$ dyadic atom,
\[
\big|\sigma_\bfi(z_t)-\sigma_\bfi(y)\big|\leq2^{-Ck}.
\]
The bi-Lipschitz bound for $S^{-1}$ therefore gives
\[
d(\widetilde z_t,\widetilde y)\leq C_5^2\,2^{-Ck}.
\]
Together with
\[
\widetilde{\cB}_t
\subseteq
B\bigl(\widetilde z_t,10C_5^2\,2^{-(C+1)k}\bigr),
\]
this verifies the hypotheses of \Cref{rem:geom-input-commutator} with
$A_0=10C_5^2$, $\widetilde z=\widetilde z_t$, and
$B_{\widetilde z}=\widetilde{\cB}_t$. Hence \Cref{rem:geom-input-commutator} gives a constant $C_8>1$ such that
\begin{equation}\label{equ:dhba-}
    d_{\mathrm{H}}\Big(\pi_\bfj(\widetilde{\cB}_t)-\pi_\bfj(\widetilde z_t),\ \pi_\bfi(\widetilde{\cB}_t)\Big)\leq \frac{C_8}{10}\,2^{-(C+2)k}.
\end{equation}
Consequently,
\begin{equation}\label{equ:dhba}
    d_{\mathrm{H}}\left(
    \pi_\bfj(\widetilde{\cB}),
    \bigcup_{t\in F}\left(B_t+\pi_\bfj(\exp_{x_0}^{-1}(z_t))\right)
    \right)
    \leq \frac{C_8}{10} 2^{-(C+2)k}.
\end{equation}
Since \(s_{\mathrm{a}}\) is bounded below \eqref{eqn:sa bound}, one-dimensional covering stability under
Hausdorff perturbations gives, 
\begin{equation}\label{equ:covering-perturbation}
\cN_{s_{\mathrm{a}} 2^{-(C+2)k}}(\pi_\bfj(\widetilde{\cB}))
\geq
\frac{1}{C_8}
\cN_{s_{\mathrm{a}} 2^{-(C+2)k}}
\left(
\bigcup_{t\in F}\left(B_t+\pi_\bfj(\exp_{x_0}^{-1}(z_t))\right)
\right).
\end{equation}

We verify the hypotheses of
\Cref{cor:final-growth-addcom} with the help of \cref{rem:geom-input-nonlinear-displacement}. 

Consider the affine normalization $\varphi_2\colon\cD^{h}_{Ck}(\sigma_\bfi(y))\cap(\sigma_\bfi(y)+\R v)\to[0,1)$; then $\varphi_2\circ\varphi_1\colon I_k\to[0,1)$ is an affine normalization of $I_k$, and by the definition of $\zeta_k$,
\begin{equation}\label{eqn:normalized zeta}
    (\varphi_2\circ\varphi_1)_*\zeta_k=(\varphi_2\circ\sigma_\bfi)_*\mu_y^{\square},
    \qquad
    \mu_y^{\square}:=\big(\mu_{y,\calA^{su}_\bfi}\big)^{\cD^{\bfi,h}_{Ck}(y)},
\end{equation}
where $\big(\mu_{y,\calA^{su}_\bfi}\big)^{\cD^{\bfi,h}_{Ck}(y)}$ is the conditional measure on $\cD^{\bfi,h}_{Ck}(y)\cap \supp (\mu_{y,\calA^{su}_\bfi})$.

\begin{claim}\label{claim:spreading-normalized}
Every point of $(\varphi_2\circ\sigma_\bfi)\big(G\cap\supp\mu_y^{\square}\big)$ is {$(k,2\ell_1,\epsilon_*/6)$-dyadic-spreading} with respect to $(\varphi_2\circ\sigma_\bfi)_*\mu_y^{\square}$, and this set of points has $(\varphi_2\circ\sigma_\bfi)_*\mu_y^{\square}$-measure at least {$1-\epsilon_*/6$}.
\end{claim}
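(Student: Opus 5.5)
The plan is to transfer property (P1) from the fiber measure $(\sigma_\bfi)_*\mu_{y,\calA^{su}_\bfi}$ on the whole leaf to its normalized restriction to the single atom $\cD^{\bfi,h}_{Ck}(y)$, rescaled by $\varphi_2$. There are three steps: pass from the leaf to the atom, rescale by $\varphi_2$, and renormalize the count. The first observation is that for a point $w=\sigma_\bfi(z)$ with $z\in G\cap\supp\mu_y^{\square}$, the dyadic cells $\cD^h_{m}(w)$ on the vertical line $\sigma_\bfi(y)+\R v$ with $m\geq Ck$ are contained in the level-$Ck$ cell $\cD^h_{Ck}(\sigma_\bfi(y))\cap(\sigma_\bfi(y)+\R v)$. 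This holds because the vertical grid has origin $h$ and the $p$-coordinate is constant on the leaf. So for $m\geq Ck$, the ratio $\eta(\cD^h_m(w))/\eta(\cD^h_{m+2\ell_1}(w))$ is the same for $\eta=(\sigma_\bfi)_*\mu_{y,\calA^{su}_\bfi}$ and for its normalized restriction $(\sigma_\bfi)_*\mu_y^\square$. Moreover, $\varphi_2$ is an affine map of slope $2^{Ck}$ sending this cell onto $[0,1)$ and the level-$(Ck+i)$ dyadic cells of the line exactly onto the standard level-$i$ dyadic cells of $[0,1)$. Consequently, scale $i$ is good for $\varphi_2(w)$ with respect to the normalized measure if and only if scale $Ck+i$ is good for $w$ with respect to the fiber measure.

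Next I use (P1) at $z$. Since $z\in G\subseteq G_3(\cdot,\bfi,h)\subseteq G_1(\cdot,\bfi,h)$, the point $\sigma_\bfi(z)$ is $(n,2\ell_1,10\epsilon_1,h)$-dyadic-spreading for every $n\geq n_1^*$. The fiber measures of $z$ and $y$ coincide because $z\in\calA^{su}_\bfi(y)$. I apply (P1) with $n=(C+1)k\geq n_1^*$, which is allowed by \eqref{eqn:k large}. Among the scales $1,\dots,(C+1)k$, at most $10\epsilon_1(C+1)k$ are bad. Restricting to the window $Ck+1,\dots,(C+1)k$, at most $10(C+1)\epsilon_1 k=(\epsilon_*/6)k$ scales are bad. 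Since by the first step these correspond to the scales $1,\dots,k$ for $\varphi_2(w)$, the point $\varphi_2(w)$ is $(k,2\ell_1,\epsilon_*/6)$-dyadic-spreading. The boundary effect, namely that scales up to $k+2\ell_1$ must stay inside the atom, is harmless because the cells only get smaller. I will need to check the strict versus non-strict inequality in the definition of spreading; if necessary, I can take $\epsilon_1$ slightly smaller, which \eqref{eq:choice-eps1} permits.

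For the measure statement, the set $(\varphi_2\circ\sigma_\bfi)(G\cap\supp\mu_y^\square)$ has $(\varphi_2\circ\sigma_\bfi)_*\mu_y^\square$-measure $\mu_y^\square(G)$, because $\varphi_2\circ\sigma_\bfi$ is injective on the leaf. By \Cref{prop:output-step1}, $\mu_y^\square(G)\geq 1-16\epsilon_2$, and $16\epsilon_2<\epsilon_*/6$ by \eqref{eq:choice-eps2}. This gives the second assertion.

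The main obstacle is the first step: verifying that the dyadic grid on the leaf used in (P1), namely $\cD^h$ along the vertical coordinate in $\sigma_\bfi$-coordinates, is exactly the grid that $\varphi_2$ normalizes. This requires that $\varphi_2$ be orientation- and origin-compatible with the grid shifted by $h$, so that no misalignment of cells arises. If $\varphi_2$ were only defined up to reflection, I would replace the half-open cells by their reflections. This changes the cells only at their endpoints, a set of measure zero for the non-atomic fiber measure, so the spreading counts are unaffected.
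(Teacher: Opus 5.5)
Your proposal is correct and follows essentially the same route as the paper: apply (P1) at $z$ with $n=(C+1)k$, discard the first $Ck$ scales so that at most $10(C+1)\epsilon_1 k=(\epsilon_*/6)k$ bad scales remain in the window $Ck+1\leq m\leq (C+1)k$, and transfer these to levels $1,\dots,k$ using two facts: the measures are proportional on $\cD^h_{Ck}(\sigma_\bfi(y))$, and $\varphi_2$ is affine. The mass bound then follows from \Cref{prop:output-step1} and $16\epsilon_2<\epsilon_*/6$. Your side worries are unnecessary: the strict inequality is inherited directly from (P1), and $\varphi_2$ maps the half-open cell onto $[0,1)$, so it is orientation-preserving and no reflection issue arises.
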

\begin{proof}
Let $z\in G\cap\supp\mu_y^{\square}$. Since $\mu_y^{\square}$ is carried by $\calA^{su}_\bfi(y)\cap\cD^{\bfi,h}_{Ck}(y)$, we have $z\in\calA^{su}_\bfi(y)$, hence $\mu_{z,\calA^{su}_\bfi}=\mu_{y,\calA^{su}_\bfi}$. As $z\in G\subseteq G_1(\cdot,\bfi,h)$ and $(C+1)k\geq n^*_1$, \ref{P1:spreading property} applies at $z$ with $n=(C+1)k$: the point $\sigma_\bfi(z)$ is $\big((C+1)k,2\ell_1,10\epsilon_1,h\big)$-dyadic-spreading on $\sigma_\bfi\big(\calA^{su}_\bfi(y)\big)$ with respect to $(\sigma_\bfi)_*\mu_{y,\calA^{su}_\bfi}$.
For $m\in\N$, say that $\sigma_\bfi(z)$ is \emph{$(m,2\ell_1)$-good} if
\[
    (\sigma_\bfi)_*\mu_{y,\calA^{su}_\bfi}\big(\cD^h_{m}(\sigma_\bfi(z))\big)
    >2\,(\sigma_\bfi)_*\mu_{y,\calA^{su}_\bfi}\big(\cD^h_{m+2\ell_1}(\sigma_\bfi(z))\big).
\]
For $m\geq Ck$, both cells above are contained in $\cD^h_{Ck}(\sigma_\bfi(y))$, on which the measures $(\sigma_\bfi)_*\mu_{y,\calA^{su}_\bfi}$ and $(\sigma_\bfi)_*\mu_y^{\square}$ are proportional; hence for such $m$, goodness may equivalently be read with respect to $(\sigma_\bfi)_*\mu_y^{\square}$. The spreading property gives
\begin{align}
    &\#\big\{Ck+1\leq m\leq(C+1)k:\ \sigma_\bfi(z)\ \text{is}\ (m,2\ell_1)\text{-good}\big\}\\
    \geq&\#\big\{1\leq m\leq(C+1)k:\ \sigma_\bfi(z)\ \text{is}\ (m,2\ell_1)\text{-good}\big\}-Ck\\
    \geq&(C+1)k(1-10\epsilon_1)-Ck=k\big(1-10(C+1)\epsilon_1\big){ =k(1-\epsilon_*/6)}.
\end{align}
Since $\varphi_2$ carries the segments $\cD^h_{Ck+i}\cap\big(\sigma_\bfi(y)+\R v\big)$, $i\geq0$, onto the standard level-$i$ dyadic intervals of $[0,1)$, and $(\sigma_\bfi)_*\mu_y^{\square}$ is carried by the segment $\cD^h_{Ck}(\sigma_\bfi(y))\cap(\sigma_\bfi(y)+\R v)$, it follows that $\varphi_2(\sigma_\bfi(z))$ is {$(k,2\ell_1,\epsilon_*/6)$-dyadic-spreading} with respect to $(\varphi_2\circ\sigma_\bfi)_*\mu_y^{\square}$.
Finally, \Cref{prop:output-step1} gives $\mu_y^{\square}(G)\geq1-16\epsilon_2>1/2$, and therefore
\[
    (\varphi_2\circ\sigma_\bfi)_*\mu_y^{\square}\Big((\varphi_2\circ\sigma_\bfi)\big(G\cap\supp\mu_y^{\square}\big)\Big)\geq\mu_y^{\square}\big(G\cap\supp\mu_y^{\square}\big)=\mu_y^{\square}(G){\geq1-16\epsilon_2>1-\epsilon_*/6}. 
\]
\end{proof}
\Cref{claim:spreading-normalized} therefore shows that the affine normalization of $\zeta_k$ is $(k,2\ell_1,\epsilon_*/6)$-dyadic-spreading, and hence verifies the spreading hypothesis of \Cref{cor:final-growth-addcom}.

\begin{claim}\label{lem:apply-final-growth}
The set \(F\subset I_k\), the set
\(B\), the subsets $\{B_t\subset B:t\in F\}$, and the map
\[
    \phi:F\to\mathbb R,
    \qquad
    \phi(t):=\pi_\bfj(\exp_{x_0}^{-1}(z_t)),
\]
satisfy the hypotheses of \Cref{cor:final-growth-addcom} with \(\ell_*=2\ell_1\).
\end{claim}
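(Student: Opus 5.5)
We check the hypotheses of \Cref{cor:final-growth-addcom} one at a time, with $n:=k$, $\ell_*=2\ell_1$, $\epsilon_*$ as in \eqref{eq:choice-eps1}, $s:=s_{\mathrm a}$ and $C'=C'_0=\lceil C_5^3\rceil$.

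\emph{Scales and the measure.} By \eqref{eqn:sa bound}, $s=s_{\mathrm a}\geq\frac14>\frac1{100}$, and $k\geq N(\epsilon_*,2\ell_1,C'_0,C)$ by the choice \eqref{eqn:choice of K} of $K$. The interval is $I=I_k$, which by \eqref{eqn:length of Ik} has length $s2^{-(C+1)k}$, and we take $\eta:=\zeta_k$. The affine normalization $T_{I_k}=\varphi_2\circ\varphi_1$ pushes $\zeta_k$ to $(\varphi_2\circ\sigma_\bfi)_*\mu_y^{\square}$ by \eqref{eqn:normalized zeta}. By \Cref{claim:spreading-normalized} this measure is $(k,2\ell_1,\epsilon_*/6)$-dyadic-spreading. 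The set is $A:=F\subset\supp\zeta_k$.

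\emph{Mass condition.} For \eqref{eqn:lower bound measure}, note that $T_{I_k}^{-1}(\cD_k(T_{I_k}(t)))=s_{\mathrm a}\cdot\cD_{(C+2)k}(t)$, because $\varphi_1$ carries the level-$(Ck+k)$ cells of the line onto these cells. Condition \eqref{eqn:lower bound measure} is then exactly \eqref{equ:A-large-mass}.

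\emph{The sets $B$ and $B_t$.} We have $\pi_\bfi(\widetilde z_t)=0$ and $\widetilde{\cB}_t\subseteq B(\widetilde z_t,10C_5^2 2^{-(C+1)k})$. Since $\pi_\bfi$ restricted to such small sets is Lipschitz with a constant controlled by $C_5$ (straightening plus chart), this gives $B\subseteq[-3C'2^{-(C+1)k},3C'2^{-(C+1)k}]$, possibly after enlarging $C'_0$ by a universal factor. This bookkeeping is the step most likely to need adjustment: one may have to redefine $C'_0$ as a larger fixed multiple of $C_5^3$ before $K$ is fixed.

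\emph{Covering bounds.} The upper bound comes from \eqref{equ:upperbound-NB} and the first condition in \eqref{eq:choice-eps2}:
\[
\cN_{s2^{-(C+2)k}}(B)\leq 2^{k(\gamma_2+(2C+3)\epsilon_2)}\leq 2^{(1-\epsilon_*)k}.
\]
The lower bound \eqref{eqn:lower bound Ba} for each $B_t\subset B$ is \eqref{equ:Ba}.

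\emph{The displacement $\phi$.} This is the main obstacle. We must show $|\phi(t)-t|\leq s2^{-(C+3)k}$, where $\phi(t)=\pi_\bfj(\widetilde z_t)$ and $t=\varphi_1^{-1}(\sigma_\bfi(z_t))=-\lambda c_0\mathfrak m_0$. Here $z_t$ lies on the $\bfi$-leaf through $y$ at leaf parameter $\lambda$, with $|\lambda|\leq2^{-Ck}$.

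Linearizing at $y$, the $\bfi$-leaf has $u_2$-displacement $\approx c_0\lambda$. The $\bfj$-leaf through $z_t$ has slope $\approx\mathfrak m_0$ relative to $u_2$. Projecting along it back to $T$ therefore shifts the $u_1$-coordinate by about $-c_0\lambda\mathfrak m_0=t$. The error comes from several sources: the curvature of the $\bfi$-leaf ($O(\lambda^{1+\alpha/2})$, which only affects $u_1$ to second order), the variation of $c_0$ along the leaf, and the Hölder variation of $\mathfrak m$ between $y$ and $z_t$ combined with the curvature of the $\bfj$-leaf. All of these are of size $O(2^{-Ck(1+\alpha/2)})$ and $O(\tau_y 2^{-Ck(1+\alpha/2)})$. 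Since $C\alpha>10$, both are at most $2^{-(C+3)k}/4$ for $k\geq k_0$.

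This is precisely the content of \Cref{rem:geom-input-nonlinear-displacement}, which the choice $K\geq k_0$ anticipates. So we invoke that lemma with the regularity data \ref{R1:leaf graph}--\ref{R4:leafwise derivative}, with $z=z_t$ and $\lambda$ as above. It gives $|\pi_\bfj(\widetilde z_t)-t|\leq s_{\mathrm a}2^{-(C+3)k}$, using $s_{\mathrm a}\geq\frac14$.
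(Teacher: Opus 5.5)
Your proposal is correct and follows the paper's proof essentially item by item: the length of $I_k$ and $k\geq N(\epsilon_*,2\ell_1,C'_0,C)$, then $F\subset\supp\zeta_k$ with \eqref{equ:A-large-mass}, the covering bounds \eqref{equ:upperbound-NB} and \eqref{equ:Ba}, spreading via \Cref{claim:spreading-normalized}, and the displacement bound via \Cref{rem:geom-input-nonlinear-displacement}.

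The only deviation is in how you contain $B$. The paper uses $B\subseteq\pi_\bfi(\exp^{-1}_{x_0}(R_1))$. Since $\pi_\bfi$ is constant along $\bfi$-plaques, only the horizontal straightened width of $\sigma_\bfi(R_1)$ matters, and that width is $3\cdot2^{-(C+1)k}$. This gives $B\subset2^{-(C+1)k}[-3C_5^3,3C_5^3]$ with $C'_0=\lceil C_5^3\rceil$ exactly as stated. Your planar-ball Lipschitz estimate loses extra powers of $C_5$, so it needs $C'_0$ enlarged before $K$ is fixed; as you anticipated, that enlargement is harmless.
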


\begin{proof}

\begin{enumerate}[(1)]
\item Recall that $I_k$ is an interval of length $s_{\mathrm{a}}2^{-(C+1)k}$ \eqref{eqn:length of Ik}. Moreover, the choice of $K$ gives
\[
k\geq N(\epsilon_*,2\ell_1,C'_0,C),
\]
which is the large-scale hypothesis in \Cref{cor:final-growth-addcom}.

\item By construction, we have $F\subset \supp \zeta_k$, and \eqref{equ:A-large-mass} verifies \eqref{eqn:lower bound measure}.

\item For $B$, recall that \eqref{eqn:inclusion B} gives
\[B\subseteq \pi_{\bfi}(\exp_{x_0}^{-1}R_1),\]
with $\sigma_{\bfi}(R_1)$ contained in a rectangle of size $(3\cdot 2^{-(C+1)k})\times (2^{-Ck}+2\cdot 2^{-(C+1)k})$. Hence, using the bi-Lipschitz property of holonomy between $T$ and $T_0$ (\Cref{lem holder of Esu}), which defines $\sigma_{\bfi}$, and  the Lipschitz bound for $\exp_{x_0}^{-1}$ and $\sigma_{\bfi}^{-1}$, we obtain 
\[B\subset 2^{-(C+1)k}[-3C^3_5,3C^3_5].\]
Moreover,
\eqref{equ:upperbound-NB} and \eqref{eq:choice-eps2} give
\[
    \cN_{s_{\mathrm{a}} 2^{-(C+2)k}}(B)
    \leq
    2^{k(\gamma_2+(2C+3)\epsilon_2)}
    \leq
    2^{(1-\epsilon_*)k}.
\]

\item For each $t\in F$, the lower bound for $\cN_{s_{\mathrm{a}} 2^{-(C+2)k}}(B_t)$ given in \eqref{equ:Ba} verifies \eqref{eqn:lower bound Ba}.

\item Finally, since $k\geq k_0$ by \eqref{eqn:k large} and \eqref{eqn:choice of K}, \Cref{rem:geom-input-nonlinear-displacement} gives for every $t\in F$,
\[
    |\phi(t)-t|=|\pi_\bfj(\exp_{x_0}^{-1}z_t)-t|\leq s_{\mathrm{a}} 2^{-(C+3)k}.
\]
\end{enumerate}
Thus, all hypotheses of \Cref{cor:final-growth-addcom} are satisfied.
\end{proof}

Combining \Cref{cor:final-growth-addcom} with \Cref{claim:spreading-normalized} and  \Cref{lem:apply-final-growth}, we
get
\[
\cN_{s_{\mathrm{a}} 2^{-(C+2)k}}
\left(
\bigcup_{t\in F}(B_t+\pi_\bfj(\exp_{x_0}^{-1}(z_t)))
\right)
>
\frac1{200}
\cN_{s_{\mathrm{a}} 2^{-(C+2)k}}(B)^{1+\delta}.
\]
Together with \eqref{equ:covering-perturbation} and the inclusion
\(\widetilde{\cB}\subset\exp_{x_0}^{-1}R\), this gives
\begin{equation}\label{equ:lower-proj-final}
\cN_{s_{\mathrm{a}} 2^{-(C+2)k}}
\left(
\pi_\bfj(\exp_{x_0}^{-1}R)
\right)
\geq
\frac1{200C_8}
\cN_{s_{\mathrm{a}} 2^{-(C+2)k}}(B)^{1+\delta}.
\end{equation}
Since \(B_t\subset B\) and \(F\neq\emptyset\) by \eqref{equ:A-large-mass}, 
\eqref{equ:Ba} gives
\[
 \cN_{s_{\mathrm{a}} 2^{-(C+2)k}}(B)\geq \cN_{s_{\mathrm{a}} 2^{-(C+2)k}}(B_t)
    \geq
    2^{k(\gamma_2-(2C+4)\epsilon_2)}.
\]
Substituting this into \eqref{equ:lower-proj-final} gives
\[
\cN_{s_{\mathrm{a}}2^{-(C+2)k}}\big(\pi_\bfj(\exp^{-1}_{x_0}(R))\big)
\geq\frac{1}{200C_8}\,2^{k(1+\delta)(\gamma_2-(2C+4)\epsilon_2)}
\geq2^{k\gamma_2(1+\delta/2)} ,
\]
where the last inequality holds because
\begin{equation}
\label{eqn:k final}
k\Big[(1+\delta)\big(\gamma_2-(2C+4)\epsilon_2\big)-\gamma_2\big(1+\tfrac{\delta}{2}\big)\Big]
=k\Big[\tfrac{\delta\gamma_2}{2}-(1+\delta)(2C+4)\epsilon_2\Big]
\geq\frac{k\delta\gamma_2}{4}
\geq\log_2(200C_8).
\end{equation}
The middle inequality uses $\epsilon_2<\delta\gamma_2/100C$, which gives $(1+\delta)(2C+4)\epsilon_2\leq\tfrac{2(2C+4)}{100C}\delta\gamma_2\leq\tfrac{\delta\gamma_2}{4}$, and the final one is \eqref{eqn:choice of K}.
This proves \Cref{lem:proj-tube}. \qed

As explained in \eqref{eqn:contradiction}, \Cref{lem:proj-tube}
contradicts \eqref{equ:npij} under the assumption \(\gamma_2<1\). Therefore
\(\gamma_2=1\), and the proof of \Cref{thm:main-two} is complete.

\section{Approximation by Bernoulli horseshoes}\label{sec: Bernoulli horseshoe approx}

This section introduces \textit{sub-additive singular equilibrium} (SSE) measures and studies their basic properties. We show that, on $su$-non-integrable horseshoes, SSE measures can be approximated by Bernoulli measures supported on horseshoes. Using the Lyapunov dimension formula for Bernoulli measures, we prove that the Hausdorff dimension of an $su$-non-integrable surface horseshoe is the zero of the sub-additive pressure function $P_{\mathrm{sub}}(s)$ (\Cref{prop: irre-horseshoe}), which was previously obtained as an upper bound in \cite{cao_dimension_2019}.
Then, using the horseshoe approximation method in \cite{cao_dimension_2019}, we extend this result to $su$-non-integrable surface repellers (\Cref{thm:dominated}) and to $C^r$-generic surface repellers (\Cref{thm:generic_full}). Moreover, for these repellers, the following mass variational principle holds:
\[
\dim_{\mathrm H}\Lambda_f
=
\sup_{\mu\in\mathcal{M}_e(\Lambda_f,f)}\{\dim_{\mathrm H}\mu\},
\]
where the notation $\mathcal M_e(\Lambda_f,f)$ is defined in \eqref{eqn:MeXf}.
We also present a polynomial counterexample showing that the generic or $su$-non-integrability assumption cannot be removed. Finally, we give a short proof of a general lower bound for repellers in arbitrary dimensions using the Ledrappier-Young entropy formula.

Then, we complete the proofs of our main theorems in the following subsections.

\subsection{SSE measures and approximation by Bernoulli horseshoes}

Let $(\Lambda_f,f)$ be a $C^{1+\alpha}$ surface repeller. We use the sub-additive singular potentials, the sub-additive topological pressure $P_{\mathrm{sub}}$, the function $F_\mu$, and the Lyapunov dimension $\dim_{\mathrm{LY}}(\mu,f)$ introduced in \Cref{sec:2.1}. In particular, by \eqref{eq:sub-pressure-lyapunov} and \eqref{eqn:def LY},
\begin{equation}\label{eq:hsub-pressure-lyapunov}
P_{\mathrm{sub}}(s)
=
\sup_{\mu\in\mathcal M_e(\Lambda_f,f)}F_\mu(s)
\qquad \text{for any}\quad 0\leq s\leq 2.
\end{equation}

The main result of this subsection is as follows.
\begin{prop}
\label{prop: irre-horseshoe}
Let $(\Lambda_f,f)$ be a $C^{1+\alpha}$ surface horseshoe repeller. Assume that it admits a dominated splitting and is $su$-non-integrable. Then
\[
\dim_{\mathrm H}\Lambda_f
=
\dim_{\mathrm B}\Lambda_f
=
\sup_{\nu\in\mathcal{M}_e(\Lambda_f,f)}
\{\dim_{\mathrm H}\nu\}
=
s_0,
\]
where $s_0$ is the unique zero of $P_{\mathrm{sub}}(s)$.
\end{prop}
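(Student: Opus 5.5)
The upper bounds are already available: by Zhang and Ban--Cao--Hu, $\dim_{\mathrm H}\Lambda_f\leq\overline{\dim}_{\mathrm B}\Lambda_f\leq s_0$. Trivially $\sup_\nu\dim_{\mathrm H}\nu\leq\dim_{\mathrm H}\Lambda_f$. So the whole statement reduces to the lower bound
\[
\sup_{\nu\in\mathcal M_e(\Lambda_f,f)}\dim_{\mathrm H}\nu\geq s_0 .
\]
This also forces $\underline{\dim}_{\mathrm B}\Lambda_f=\overline{\dim}_{\mathrm B}\Lambda_f=s_0$.

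By the variational characterization \eqref{eqn:s0 variational}, it suffices, for each $\epsilon>0$, to find an ergodic $\nu$ with $\dim_{\mathrm H}\nu\geq s_0-\epsilon$. I would proceed in three steps.

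\textbf{Step 1 (a maximizing measure).} Take $\mu^*$ an SSE measure, that is, an ergodic equilibrium state for the sub-additive potential at $s=s_0$. Such a measure exists because entropy is upper semicontinuous on an expanding system and the Lyapunov exponents are continuous in the measure (via the remark after Oseledets' theorem, using the dominated splitting). By construction $F_{\mu^*}(s_0)=0$, hence $\dim_{\mathrm{LY}}(\mu^*,f)=s_0$. If passing to an ergodic component is needed, the sup in \eqref{eqn:s0 variational} is approached anyway, so it is enough to work with an ergodic $\mu$ satisfying $\dim_{\mathrm{LY}}(\mu,f)>s_0-\epsilon$.

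\textbf{Step 2 (Bernoulli approximation at an iterate).} Fix a large $n$. Let $\nu_n$ be the Bernoulli measure on $\Sigma_m^+$, viewed as a horseshoe for $f^n$ on $m^n$ symbols, whose weights on $n$-cylinders are $\mu([w])$. This is $f^n$-invariant, and $\bar\nu_n:=\frac1n\sum_{j<n}f^j_*h_*\nu_n$ is $f$-invariant. Its entropy satisfies $\frac1n H_\mu(\mathcal P^n)\to h_\mu(f)$. Since the $su$-direction is continuous and the $wu$-direction depends only on the point, one has $\lambda_i(\nu_n,f^n)/n\to\lambda_i(\mu,f)$. To see this, use \eqref{eq:Lyexpo1}--\eqref{eq:Lyexpo2} together with the weak-$*$ closeness of $\bar\nu_n$ to $\mu$ on long cylinders, and bounded distortion of $\log\|Df|_{E^{su}}\|$ on cylinders. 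Hence
\[
\dim_{\mathrm{LY}}(h_*\nu_n,f^n)\to\dim_{\mathrm{LY}}(\mu,f).
\]
Now $(\Lambda_f,f^n)$ is again a $C^{1+\alpha}$ horseshoe surface repeller with a dominated splitting. It is $su$-non-integrable, because pre-histories of $f^n$ are exactly those of $f$ and the $E^{su}$ directions coincide. Therefore \Cref{thm:base} applies to $f^n$ and the $f^n$-Bernoulli measure $h_*\nu_n$, giving $\dim_{\mathrm H}h_*\nu_n=\dim_{\mathrm{LY}}(h_*\nu_n,f^n)$.

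\textbf{Step 3 (averaging back).} The measures $f^j_*h_*\nu_n$ are images of $h_*\nu_n$ under $f^j$, which is a local bi-Lipschitz diffeomorphism on cylinders, so they have the same Hausdorff dimension. An ergodic component of $\bar\nu_n$ is absolutely continuous with respect to some $f^j_*h_*\nu_n$ and hence inherits this dimension. This yields ergodic $f$-measures with $\dim_{\mathrm H}\geq s_0-2\epsilon$.

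The main obstacle is Step 2: proving convergence of the Lyapunov exponents of the Bernoulli approximations to those of $\mu$ in the non-conformal setting. For the top exponent I would use domination to write $\lambda_1(\nu_n,f^n)=\int\log\|Df^n|_{E^{su}}\|\,d\hat\nu_n$ and compare, cylinder by cylinder, with $\log\|Df^n|_{E^{su}}\|$ along $\mu$-typical pasts. Uniform H\"older continuity of $E^{su}$ (\Cref{prop Holder continuous}) controls the discrepancy on concatenations of $n$-blocks, since the strong direction after each block is forgotten up to an exponentially small error by fiber contraction. The weak exponent is additive and presents no difficulty.
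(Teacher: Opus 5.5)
Your architecture is the paper's: take a maximizing measure, approximate it by Bernoulli measures for $f^n$ whose weights are the masses of $n$-cylinders, apply \Cref{thm:base} to $(\Lambda_f,f^n)$, and average back. There is, however, a genuine gap where you invoke \Cref{thm:base}.

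That theorem concerns $f^n$-Bernoulli measures in the sense of \Cref{def:Bernoulli measure}, and these have all weights \emph{strictly positive}. Nothing in your argument guarantees $\mu(h([w]))>0$ for every $n$-word $w$. In Step 1 you explicitly allow $\mu$ to be any ergodic measure with $\dim_{\mathrm{LY}}(\mu,f)>s_0-\epsilon$, and even for the SSE measure you never show $\supp\mu=\Lambda_f$. If some weights vanish, $h_*\nu_n$ is fully supported only on the sub-horseshoe coded by the words of positive mass. The $su$-non-integrability of $(\Lambda_f,f)$ does not pass to sub-horseshoes.

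This is not cosmetic. If $\mu$ is carried by an $su$-integrable sub-horseshoe, all your approximants live on it too, and there $\dim_{\mathrm H}=\dim_{\mathrm{LY}}$ need not hold. Compare \cref{sec:counterexample}, and the proof of \Cref{thm:dominated}, where exactly this situation has to be repaired by adding an inverse branch. Inside the proof of \Cref{thm:base}, positivity is used through $p_*>0$ in \Cref{prop: small angle} and through $\supp\mu_0=\Lambda_{n_0}$ in \Cref{lem:gamma2-positive}.

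The missing ingredient is full support of the measure being approximated, and the paper obtains it as follows. By domination and \Cref{lem: Holder in inverse limit space}, $F_\mu(s_0)=h_\mu(f)+\int\phi\,d\hat\mu$ with the H\"older potential $\phi(\hat x)=-(\max\{s_0-1,0\}\log\|Df|_{E^{su}(\hat x)}\|+\min\{s_0,1\}\log\|Df|_{E^{wu}(\pi(\hat x))}\|)$. Hence SSE measures are projections of equilibrium states of $\phi$ on the transitive system $(\hat\Lambda_f,\hat f)$. That equilibrium state is unique and Gibbs, hence fully supported, so every weight $\mu_0(h([w]))$ is positive. Alternatively, you could keep an arbitrary $\mu$ but use the weights $(1-\eta)\mu(h([w]))+\eta m^{-n}$ and let $\eta\to0$, since entropy and exponents of a Bernoulli measure depend continuously on its weights.

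Two minor remarks. First, the exponent convergence you flag as the main obstacle is immediate: $\bar\nu_n\to\mu$ weak-$*$ (check on cylinders), $\mu\mapsto\hat\mu$ is weak-$*$ continuous, and the integrands in \eqref{eq:Lyexpo1}--\eqref{eq:Lyexpo2} are continuous. Second, in Step 3 absolute continuity alone only gives an upper bound on dimension. You do not need ergodic components, because $\bar\nu_n$ is itself $f$-ergodic (an $f$-invariant set is $f^n$-invariant). A set has full $\bar\nu_n$-measure iff it has full measure for every $f^j_*h_*\nu_n$, so $\dim_{\mathrm H}\bar\nu_n=\dim_{\mathrm H}h_*\nu_n$.
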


\begin{prop}\label{prop zhang upperbound}
Let $\Lambda_f$ be a repeller for a $C^1$ expanding map $f$. Then
\[
\dim_{\mathrm H}\Lambda_f
\leq
\underline{\dim}_{\mathrm B}\Lambda_f
\leq
\overline{\dim}_{\mathrm B}\Lambda_f
\leq s_0,
\]
where $s_0$ is the unique zero of $P_{\mathrm{sub}}(s)$.
\end{prop}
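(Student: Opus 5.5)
The first two inequalities are formal: $\dim_{\mathrm H}Z\leq\underline{\dim}_{\mathrm B}Z\leq\overline{\dim}_{\mathrm B}Z$ holds for any totally bounded $Z$. So the content is $\overline{\dim}_{\mathrm B}\Lambda_f\leq s_0$. This is the result of Zhang \cite{zhang_dynamical_1997} for $\dim_{\mathrm H}$, extended to the upper box dimension by Ban, Cao and Hu \cite{BanCaoHu}. I would reproduce their covering argument in outline.

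Fix $s>s_0$, so that $P_{\mathrm{sub}}(s)<0$. By the variational principle of Cao, Feng and Huang quoted after \eqref{eqn:sub-additive potential}, this equals the topological sub-additive pressure. Hence there are $N\in\mathbb N$ and $\beta>0$ with
\[
\sum_{|w|=N}\sup_{x\in\Lambda[w]}\exp\bigl(-\varphi^s_N(x)\bigr)\leq e^{-\beta N},
\]
using the Markov cylinders of \cref{subsec:coding}. Uniform continuity of $Df$ on a neighborhood of $\Lambda_f$ lets us replace $\sup$ by the value at any point, at the cost of a factor $e^{o(N)}$, which is absorbed by taking $N$ large.

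The key geometric step works cylinder by cylinder. For an $nN$-cylinder $\Lambda[w]$, the map $f^{nN}$ sends it onto a cylinder of uniform size. By the expansion, $\Lambda[w]$ is contained in the image under the local inverse branch of a ball of fixed radius $r_0$. Because $f$ is only $C^1$, I would not linearize the whole branch at once. Instead I would compose the $n$ blocks of $N$ inverse steps, applying the mean value inequality on each block and using submultiplicativity of $\alpha_1$ and of $\alpha_1\alpha_2$ under composition. This shows that $\Lambda[w]$ lies in a set coverable by roughly $(\alpha_1/\alpha_2)^{\,s-[s]}$-many balls of radius $\alpha_2$ (in the plane; in general, by the usual singular-value count $\alpha_{d-[s]}\cdots$). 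Here $\alpha_j$ are the singular values of the inverse derivative along the block product, and the loss factor is $e^{n o(N)}$. With this, the $s$-dimensional covering cost of $\Lambda[w]$ is at most $\exp(-\sum\varphi^s_N)$, and sub-multiplicativity gives a total cost $\leq e^{-\beta nN/2}$.

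To get box dimension rather than Hausdorff dimension, fix $\rho$ and stop each branch of the tree of cylinders at the first time its smallest relevant semi-axis drops below $\rho$. This is a Moran-type stopping that yields $\rho$-balls. Summing $\rho^s$ over all pieces gives $N(\Lambda_f,\rho)\rho^s\leq C$, hence $\overline{\dim}_{\mathrm B}\leq s$. Letting $s\downarrow s_0$ finishes the proof.

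The main obstacle is the $C^1$ (rather than $C^{1+\alpha}$) distortion control along long inverse compositions. There is no bounded distortion, so the ellipse-covering estimate only holds up to subexponential errors. The fix I would try first is block-by-block linearization over a fixed large $N$, combined with the continuity of $D f$. Since $s$ is taken strictly greater than $s_0$, the resulting $e^{\epsilon nN}$ losses are dominated by the pressure gap $e^{-\beta nN}$.
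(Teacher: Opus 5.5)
Your proposal matches the paper's proof, which also treats $\dim_{\mathrm H}\leq\underline{\dim}_{\mathrm B}\leq\overline{\dim}_{\mathrm B}$ as standard and gets the last inequality by citation: Zhang for $\dim_{\mathrm H}\Lambda_f\leq s_0$, and Feng--Simon (Definition~6.1 and Theorem~1.3) for $\overline{\dim}_{\mathrm B}\Lambda_f\leq s_0$, a bound the introduction attributes to Ban--Cao--Hu as you do. In your optional sketch, fix two points: for $1\leq s\leq 2$, covering an ellipse with semi-axes $\alpha_1\geq\alpha_2$ takes about $\alpha_1/\alpha_2$ balls of radius $\alpha_2$ (cost $\alpha_1\alpha_2^{s-1}$), not $(\alpha_1/\alpha_2)^{s-[s]}$, while for $s<1$ one ball of radius $\alpha_1$ suffices; and the block-by-block re-covering only needs continuity of $Df$ at a small scale for the fixed $N$, so you do not need the claimed $e^{o(N)}$ comparison of $\varphi^s_N$ across an entire $N$-cylinder, which in any case does not follow from uniform continuity of $Df$ alone.
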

\begin{proof}
The inequality $\dim_{\mathrm H}\Lambda_f\leq s_0$ was established in \cite{zhang_dynamical_1997}, and the one $\overline{\dim}_{\mathrm B}\Lambda_f\leq s_0$ was proved in  \cite[Definition~6.1 and Theorem~1.3]{feng_simon_dimension_part_i_2023}. The middle inequalities are the standard relations between Hausdorff and lower and upper box dimensions.\qedhere
\end{proof}

We approximate the lower bound for $\dim_{\mathrm H}\Lambda_f$ by approximating a sub-additive singular equilibrium measure by Bernoulli measures.

\begin{defi}
\label{def:SSE measure}
An $f$-invariant ergodic measure $\mu$ on $\Lambda_f$ is called a \emph{sub-additive singular equilibrium measure}, or an \emph{SSE measure}, if 
\begin{equation}
F_{\mu}(s_0)=0,
\end{equation}
where $F_{\mu}$ is defined as in \eqref{eqn:def LY} and $s_0$ is the unique zero of  $P_{\mathrm{sub}}$.
\end{defi}

SSE measures always exist in {{our setting}}. Indeed, let $\{-\varphi_n^{s_0}\}_{n\geq1}$ be defined as in \eqref{eqn:singular potential}. Since it is
sub-additive, the associated Lyapunov functional satisfies
\[
\eta\longmapsto
\lim_{n\to\infty}\frac1n\int-\varphi_n^{s_0}\,\mathrm d\eta
=
\inf_{n\geq1}\frac1n\int-\varphi_n^{s_0}\,\mathrm d\eta
\]
and is upper semi-continuous on the compact space $\mathcal M(\Lambda_f,f)$. The entropy map is also upper semi-continuous for repellers. Hence, {{the functional}} $F_\mu(s_0)$ attains its maximum, and the ergodic-decomposition formulas for entropy and sub-additive Lyapunov functionals yield an ergodic maximizing component; see \cite{cao_thermodynamic_2008,cao_dimension_2019}. Since $P_{\mathrm{sub}}(s_0)=0$, this component is an SSE measure.

\begin{prop}\label{prop:sse-max-lyap}
An $f$-invariant ergodic measure $\mu$ on $\Lambda_f$ is an SSE measure if and only if
\begin{equation}
\label{eqn:SSE LY}
\dim_{\mathrm{LY}}(\mu,f)
=
\sup_{\nu\in\mathcal{M}_e(\Lambda_f,f)}
\{\dim_{\mathrm{LY}}(\nu,f)\}.
\end{equation}
\end{prop}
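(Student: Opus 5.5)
The plan is to derive both implications from the variational characterization \eqref{eqn:s0 variational}, $s_0=\sup_{\nu\in\mathcal M_e(\Lambda_f,f)}\dim_{\mathrm{LY}}(\nu,f)$. To use it, we combine it with two facts from \Cref{subsubsec:sub-additive pressures}.

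\begin{enumerate}[(a)]
\item For every ergodic $\nu$, the function $F_\nu$ is continuous and strictly decreasing on $[0,2]$. Its slopes are at most $-\lambda_2(\nu,f)\leq-\log\kappa<0$, and its unique zero is $\dim_{\mathrm{LY}}(\nu,f)$.
\item By \eqref{eq:hsub-pressure-lyapunov}, $P_{\mathrm{sub}}(s_0)=\sup_\nu F_\nu(s_0)=0$.
\end{enumerate}

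\emph{($\Rightarrow$)} Suppose $F_\mu(s_0)=0$. Since $\dim_{\mathrm{LY}}(\mu,f)$ is the unique zero of the strictly decreasing function $F_\mu$, we get $\dim_{\mathrm{LY}}(\mu,f)=s_0$. By \eqref{eqn:s0 variational}, $s_0$ equals the supremum in \eqref{eqn:SSE LY}, so \eqref{eqn:SSE LY} holds.

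\emph{($\Leftarrow$)} Suppose $\dim_{\mathrm{LY}}(\mu,f)=\sup_\nu\dim_{\mathrm{LY}}(\nu,f)$. By \eqref{eqn:s0 variational}, the right-hand side is $s_0$. So $s_0$ is the zero of $F_\mu$, that is, $F_\mu(s_0)=0$, and $\mu$ is an SSE measure.

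Thus the whole argument reduces to \eqref{eqn:s0 variational}, which the paper has already established using the uniform negative slope bound $-\log\kappa$. The only point I expect to need care is that bound. It is valid because every ergodic $\nu$ has $\lambda_2(\nu,f)\geq\log\kappa$, so each affine piece of $F_\nu$ has slope at most $-\log\kappa$. This strict monotonicity is exactly what makes the zero of each $F_\nu$ unique.

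It is also worth noting that (b) is not needed beyond \eqref{eqn:s0 variational}. The supremum in \eqref{eqn:SSE LY} may fail to be attained in general, but that does not matter here, since both directions only compare $\dim_{\mathrm{LY}}(\mu,f)$ with $s_0$.
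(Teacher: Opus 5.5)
Your proof is correct and is essentially the paper's argument. Like the paper, you invoke the identity $s_0=\sup_{\nu}\dim_{\mathrm{LY}}(\nu,f)$ (\Cref{lem:1.1}, equivalently \eqref{eqn:s0 variational}), then use that $F_\mu$ is strictly decreasing with unique zero $\dim_{\mathrm{LY}}(\mu,f)$, so $F_\mu(s_0)=0$ if and only if $\dim_{\mathrm{LY}}(\mu,f)=s_0$. One minor correction to your aside: in this setting the paper shows that SSE measures always exist, using upper semicontinuity of entropy and of the sub-additive Lyapunov functional, so the supremum is in fact attained. As you say, this plays no role in the equivalence.
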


We start the proof with the following lemma.
\begin{lem}\label{lem:1.1}
Let $s_0$ be the unique zero of $P_{\mathrm{sub}}(s)$. Then
\[
s_0
=
\sup_{\mu\in\mathcal{M}_e(\Lambda_f,f)}\{\dim_{\mathrm{LY}}(\mu,f)\} .
\]
\end{lem}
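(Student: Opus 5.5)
The plan is to compare the two characterizations directly. We have $P_{\mathrm{sub}}(s)=\sup_{\mu}F_\mu(s)$ by \eqref{eq:hsub-pressure-lyapunov}. Each $t_\mu:=\dim_{\mathrm{LY}}(\mu,f)$ is the unique zero of $F_\mu$, and $s_0$ is the unique zero of $P_{\mathrm{sub}}$. The key quantitative input is that every $F_\mu$ is continuous, piecewise affine and strictly decreasing, with slopes uniformly bounded above by $-\log\kappa<0$. The slopes on $[0,1]$ and $[1,2]$ are $-\lambda_2(\mu,f)$ and $-\lambda_1(\mu,f)$, and both are $\leq-\log\kappa$ by uniform expansion. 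Set $t^*:=\sup_\mu t_\mu$; the goal is to show $t^*=s_0$.

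\textbf{Step 1} shows $s_0\geq t^*$. Fix $\mu$ and $s<t_\mu$. Since $F_\mu$ is strictly decreasing with $F_\mu(t_\mu)=0$, we get $F_\mu(s)>0$, hence $P_{\mathrm{sub}}(s)\geq F_\mu(s)>0$. Because $P_{\mathrm{sub}}$ is strictly decreasing with zero $s_0$, this forces $s<s_0$. Letting $s\uparrow t_\mu$ gives $t_\mu\leq s_0$, and taking the supremum over $\mu$ gives $t^*\leq s_0$.

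\textbf{Step 2} shows $s_0\leq t^*$. Take $s\in(t^*,2]$; if $t^*=2$ there is nothing to prove. For every $\mu$ we have $s>t_\mu$, so the slope bound yields
\[
F_\mu(s)\leq F_\mu(t_\mu)-\log\kappa\,(s-t_\mu)\leq-\log\kappa\,(s-t^*).
\]
This bound is uniform in $\mu$. Taking the supremum gives $P_{\mathrm{sub}}(s)\leq-\log\kappa\,(s-t^*)<0$, so $s>s_0$. Since $s>t^*$ was arbitrary, $s_0\leq t^*$.

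The only point needing care is the uniform slope bound in Step 2. Without it, the supremum of negative numbers could be $0$ and the inequality would fail to be strict. This is exactly where the uniform expansion constant $\kappa$ enters, through $\lambda_2(\mu,f)\geq\log\kappa$. It also handles the case $t_\mu$ near $1$, where the slope changes, because piecewise affinity with every slope $\leq-\log\kappa$ still gives the linear bound across the break point. The argument is essentially the remark following \eqref{eqn:s0 variational}, made explicit.
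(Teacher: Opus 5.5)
Your proof is correct and follows essentially the same two-sided monotonicity argument as the paper's proof. The one minor difference is in Step 2. The paper only concludes $P_{\mathrm{sub}}(s)\leq 0$ for $s>t^*$ and then uses strict decrease and uniqueness of the zero of $P_{\mathrm{sub}}$. So the strict negativity you extract from the explicit uniform slope bound is not actually needed, although that same bound is what underlies the strict monotonicity of $P_{\mathrm{sub}}$.
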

\begin{proof}
Set
\[
D:=\sup_{\mu\in\mathcal{M}_e(\Lambda_f,f)}\{\dim_{\mathrm{LY}}(\mu,f)\} .
\]
We first show that $P_{\mathrm{sub}}(s)>0$ whenever $s<D$. Indeed, choose $\mu\in\mathcal M_e(\Lambda_f,f)$ such that $\dim_{\mathrm{LY}}(\mu,f)>s$. Since $F_\mu$ is strictly decreasing and $F_\mu(\dim_{\mathrm{LY}}(\mu,f))=0$, we have $F_\mu(s)>0$. By \eqref{eq:hsub-pressure-lyapunov},
\[
P_{\mathrm{sub}}(s)\geq F_\mu(s)>0 .
\]
Hence $s\leq s_0$ for every $s<D$, and therefore $D\leq s_0$.

Conversely, if $s>D$, then $\dim_{\mathrm{LY}}(\mu,f)<s$ for every $\mu\in\mathcal M_e(\Lambda_f,f)$. Again using the monotonicity of $F_\mu$, we get
\[
F_\mu(s)<0,
\qquad \forall \mu\in\mathcal M_e(\Lambda_f,f).
\]
Thus $P_{\mathrm{sub}}(s)\leq0$. Since $P_{\mathrm{sub}}$ is decreasing and its zero is unique, this implies $s\geq s_0$. Letting $s\downarrow D$ gives $s_0\leq D$. Therefore $s_0=D$.
\end{proof}

\begin{proof}[Proof of \Cref{prop:sse-max-lyap}]
By \Cref{lem:1.1}, the right-hand side of \eqref{eqn:SSE LY} is equal to $s_0$. Given any $f$-invariant ergodic measure $\mu$ on $\Lambda_f$, since $F_\mu(t)$ is strictly decreasing and has the unique zero $\dim_{\mathrm{LY}}(\mu,f)$, 
\[
F_\mu(s_0)=0\quad \text{if and only if} \quad \dim_{\mathrm{LY}}(\mu,f)=s_0.
\]
This is exactly the definition of an SSE measure.
\end{proof}

We now prove \Cref{prop: irre-horseshoe}. Let $(\Lambda_f,f)$ be a $C^{1+\alpha}$ surface horseshoe repeller with $(\Lambda_f,f)\cong (\Sigma^+,\sigma)$ \eqref{eqn:horseshoe def}. Assume that it admits a dominated splitting and is $su$-non-integrable. Let $(\hat\Lambda_f,\hat f)$ be the corresponding inverse limit space and $\pi:(\hat \Lambda, \hat f)\to (\Lambda,f)$ be the projection map \eqref{eqn:projection inverse limit space}. 
For $\mu\in\mathcal M(\Lambda_f,f)$, let $\hat\mu$ be the unique $\hat{f}$-invariant probability measure on $\hat \Lambda_f$ such that 
$\pi_*\hat\mu=\mu$.

Recall that for $C^{1+\alpha}$ repellers, the dominated splitting is H\"older
continuous (\cref{lem: Holder in inverse limit space}); consequently, the integrands in
\eqref{eq:Lyexpo1} and \eqref{eq:Lyexpo2} are H\"older continuous on
$\hat\Lambda_f$, and so is the potential
$$\phi(\hat x):=-\left(\max\{s_0-1,0\}\log\|Df|_{E^{su}(\hat x)}\|+\min\{s_0,1\}\log\|Df|_{E^{wu}(\pi(\hat x))}\| \right)  $$
in
\eqref{eq:sub-pressure-lyapunov}, being a linear combination of them. By definition, one can prove that the SSE measures of $(\Lambda_f,f)$ are the $\pi$ projection of
the equilibrium states of $\phi$ on $(\hat\Lambda_f,\hat f)$.
Since $f|_{\Lambda_f}$ is transitive (a horseshoe
repeller is coded by a full shift of finite type), and the potential is H\"older, the classical thermodynamic formalism
for expanding maps \cite[Thm.~1.22]{bowen_equilibrium_1975}, \cite{ruelle_thermodynamic_1978} gives a unique
equilibrium state of $\phi$, which is a Gibbs measure and therefore has full
support in $\hat \Lambda_f$. Hence, $(\Lambda_f,f)$ has a unique SSE measure
$\mu_0$, and $\operatorname{supp}\mu_0=\Lambda_f$. We approximate $\mu_0$ by
Bernoulli measures with respect to the iterates of $f$.

For each $n\geq1$, let $\mathcal P_n$ be the set of $n$-cylinders of the full shift $\Sigma^+$. Then the system $(\Lambda_f,f^n)$ is homeomorphic to the one-sided full shift coded by the alphabet set $\mathcal{P}_n$. Define the Bernoulli measure $\overline\mu_{B,n}$ for $(\Lambda_f,f^n)$ by assigning to each $n$-cylinder $C\in\mathcal P_n$ the weight
\[
\overline\mu_{B,n}(C)=\mu_0(C),
\]
and then taking the product measure on the full shift over the alphabet $\mathcal P_n$. Since $\mu_0$ has full support, $\overline\mu_{B,n}$ has full support for $(\Lambda_f,f^n)$. Moreover,
\[
\overline\mu_{B,n}\xrightarrow[n\to\infty]{*}\mu_0.
\]
Indeed, given any $k\geq 1$ and any $C\in\mathcal P_k$, for any $n\geq k$, $C$ is a disjoint union of $n$-cylinders; by the definition of $\overline\mu_{B,n}$,
\[
\overline\mu_{B,n}(C)=\mu_0(C).
\]

The measures $\overline\mu_{B,n}$ are $f^n$-invariant but not necessarily $f$-invariant. Hence, we define
\[
\mu_{B,n}
:=
\frac1n\sum_{i=0}^{n-1}f_*^i\overline\mu_{B,n}.
\]
Then $\mu_{B,n}\in\mathcal M_e(\Lambda_f,f)$.  Indeed, if $A$ is an $f$-invariant Borel set, then $A$ is $f^n$-invariant and
\[
f_*^i\overline\mu_{B,n}(A)
=
\overline\mu_{B,n}(f^{-i}A)
=
\overline\mu_{B,n}(A).
\]
Therefore
\[
\mu_{B,n}(A)=\overline\mu_{B,n}(A).
\]
Since $\overline\mu_{B,n}$ is Bernoulli and hence $f^n$-ergodic, $\overline\mu_{B,n}(A)\in\{0,1\}$. Thus $\mu_{B,n}$ is $f$-ergodic. The convergence
\begin{equation}
\label{eqn:Bernoulli weak con}
\mu_{B,n}\xrightarrow[n\to\infty]{*}\mu_0
\end{equation}
follows from the weak-$*$ convergence of $\overline\mu_{B,n}$ to $\mu_0$ and the fact that for any $(n-i)-$cylinder $C$, one has $f^{i}_*\overline\mu_{B,n}(C)=\mu_0(C)$.

\begin{lem}\label{lem:bernoulli-dim-equality}
For every $n\geq1$,
\[
\dim_{\mathrm{LY}}(\mu_{B,n},f)=\dim_{\mathrm{H}}\mu_{B,n}.
\]
\begin{proof}
By assumption,
$(\Lambda_f,f)$ is $su$-non-integrable. Then for any $n\in\mathbb N$, $(\Lambda_f,f^n)$ is also $su$-non-integrable. Hence, we can apply \Cref{thm:base} to the $su$-non-integrable horseshoe $(\Lambda_f,f^n)$ 
and the Bernoulli measure $\overline\mu_{B,n}$, and obtain that
\[
\dim_{\mathrm{LY}}(\overline\mu_{B,n},f^n)
=
\dim_{\mathrm H}\overline\mu_{B,n}.
\]
It remains to compare the Hausdorff dimension, entropy, and Lyapunov exponents of $\overline\mu_{B,n}$ with those of its $f$-invariant average $\mu_{B,n}$.

First, we use the definition of the Hausdorff dimension of a measure from \Cref{sec:2.1}. Since $f$ is a local diffeomorphism near the compact repeller, for every $i\geq0$ there is a finite Borel partition of $\Lambda_f$ such that the restriction of $f^i$ to each element is bi-Lipschitz. Using the finite stability of Hausdorff dimension, and the analogous finite family of local inverse branches, we obtain
\[
\dim_{\mathrm H} f^i(A)=\dim_{\mathrm H}A,
\qquad
\dim_{\mathrm H} f^{-i}(A)=\dim_{\mathrm H}A
\]
for every Borel set $A\subseteq\Lambda_f$. Consequently,
\[
\dim_{\mathrm H}(f_*^i\eta)=\dim_{\mathrm H}\eta
\]
for every Borel probability measure $\eta$ on $\Lambda_f$. Since $\mu_{B,n}$ is a finite convex combination with positive weights of the measures $f_*^i\overline\mu_{B,n}$, a Borel set has full $\mu_{B,n}$-measure if and only if it has full measure for every component. Therefore
\[
\dim_{\mathrm H}\mu_{B,n}
=
\max_{0\leq i<n}\dim_{\mathrm H}(f_*^i\overline\mu_{B,n})
=
\dim_{\mathrm H}\overline\mu_{B,n}.
\]

Next, we compare the Lyapunov exponents. For each $i=0,\ldots,n-1$, the measure $f_*^i\overline\mu_{B,n}$ is $f^n$-invariant and has the same Lyapunov exponents for $f^n$ as $\overline\mu_{B,n}$. Hence
\[
\lambda_j(\mu_{B,n},f^n)
=
\lambda_j(\overline\mu_{B,n},f^n),
\qquad j=1,2.
\]
Since $\mu_{B,n}$ is $f$-ergodic,
\[
\lambda_j(\mu_{B,n},f^n)=n\lambda_j(\mu_{B,n},f),
\]
and therefore
\[
\lambda_j(\overline\mu_{B,n},f^n)=n\lambda_j(\mu_{B,n},f),
\qquad j=1,2.
\]

Finally, we compare entropy. By the property of the entropy map, it follows that
\[
h_{\mu_{B,n}}(f^n)
=
\frac1n\sum_{i=0}^{n-1}h_{f_*^i\overline\mu_{B,n}}(f^n).
\]
For each $i$, the systems $(f^i_*\overline\mu_{B,n},f^n)$ and $(\overline\mu_{B,n},f^n)$ have the same entropy. More explicitly, for any finite partition $\mathcal P$,
\begin{align}
 h_{f_*^i\overline\mu_{B,n}}(f^n,\mathcal P)
&=
 h_{\overline\mu_{B,n}}(f^n,f^{-i}\mathcal P)
 \leq h_{\overline\mu_{B,n}}(f^n),
\end{align}
and applying the same argument to $f^{n-i}_*(f^i_*\overline\mu_{B,n})=\overline\mu_{B,n}$ gives the reverse inequality. Hence
\[
h_{f_*^i\overline\mu_{B,n}}(f^n)=h_{\overline\mu_{B,n}}(f^n).
\]
It follows that
\[
h_{\mu_{B,n}}(f^n)=h_{\overline\mu_{B,n}}(f^n).
\]
Since $\mu_{B,n}$ is $f$-invariant,
\[
h_{\mu_{B,n}}(f^n)=n h_{\mu_{B,n}}(f).
\]
Therefore
\begin{equation}
\label{eqn:entropy fn}
h_{\overline\mu_{B,n}}(f^n)=n h_{\mu_{B,n}}(f).
\end{equation}
The Lyapunov dimension is determined only by the entropy and the Lyapunov exponents, with the same normalization under passing from $f$ to $f^n$. Hence
\[
\dim_{\mathrm{LY}}(\overline\mu_{B,n},f^n)
=
\dim_{\mathrm{LY}}(\mu_{B,n},f).
\]
Combining this with the Hausdorff dimension equality proves the lemma.
\end{proof}
\end{lem}

\begin{lem}\label{lem:bernoulli-lyap-convergence}
We have
\[
\dim_{\mathrm{LY}}(\mu_{B,n},f)
\xrightarrow[n\to\infty]{}
\dim_{\mathrm{LY}}(\mu_0,f)
\]
\end{lem}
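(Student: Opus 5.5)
Since $\dim_{\mathrm{LY}}(\nu,f)$ on a surface is, by \eqref{eq:intro-LY}, a continuous function of the triple $\bigl(h_\nu(f),\lambda_1(\nu,f),\lambda_2(\nu,f)\bigr)$ on the region $\lambda_1\geq\lambda_2\geq\log\kappa>0$, it suffices to prove the three convergences
\[
h_{\mu_{B,n}}(f)\to h_{\mu_0}(f),\qquad \lambda_j(\mu_{B,n},f)\to\lambda_j(\mu_0,f),\ j=1,2.
\]
Continuity of the piecewise formula is clear: the two branches agree when $h=\lambda_2$, and the denominators are bounded below by $\log\kappa$.

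\emph{Lyapunov exponents.} By the Remark after \Cref{thm:oseledets-repeller}, with the dominated splitting, $\lambda_1$ and $\lambda_2$ are the integrals \eqref{eq:Lyexpo1}, \eqref{eq:Lyexpo2} of continuous functions on $\hat\Lambda_f$ (respectively on $\Lambda_f$), so they are weak-$*$ continuous in $\hat\nu$. We have $\mu_{B,n}\to\mu_0$ weak-$*$ by \eqref{eqn:Bernoulli weak con}, so we need $\hat\mu_{B,n}\to\hat\mu_0$ on $\hat\Lambda_f\cong\Sigma^-\times\Lambda_f$, i.e.\ on the two-sided shift. Any two-sided cylinder $[w]_{-a}^{b}$ is the $\hat f^{a}$-image of a one-sided cylinder of length $a+b$, and $\hat\nu$ is $\hat f$-invariant. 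Hence $\hat\nu([w]_{-a}^b)=\nu(\text{one-sided cylinder})$, and convergence on one-sided cylinders (which are clopen) transfers. Thus $\lambda_j(\mu_{B,n},f)\to\lambda_j(\mu_0,f)$.

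\emph{Entropy.} This is the main point, since entropy is only upper semicontinuous. Upper semicontinuity (the system is expansive) gives $\limsup_n h_{\mu_{B,n}}(f)\leq h_{\mu_0}(f)$. For the lower bound, I would compute directly using \eqref{eqn:entropy fn}:
\[
h_{\mu_{B,n}}(f)=\tfrac1n h_{\overline\mu_{B,n}}(f^n)=\tfrac1n H_{\mu_0}(\mathcal P_n),
\]
because a Bernoulli measure on the alphabet $\mathcal P_n$ has entropy equal to the Shannon entropy of its weights, and these weights are $\mu_0(C)$ for $C\in\mathcal P_n$. Since $\mathcal P_1$ is a generating partition for the full shift, the Kolmogorov--Sinai theorem gives $\tfrac1nH_{\mu_0}(\mathcal P_n)\to h_{\mu_0}(f)$. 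This yields full convergence of the entropies, and no semicontinuity is actually needed.

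Combining the three convergences with the continuity of \eqref{eq:intro-LY} proves the lemma. The only delicate point is the lift to the natural extension for the $\lambda_1$ integral, which the cylinder argument above handles.
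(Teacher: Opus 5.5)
Your proof is correct and follows the paper's argument: lift the weak-$*$ convergence to the inverse limit and use the continuous integrands \eqref{eq:Lyexpo1}--\eqref{eq:Lyexpo2} for the exponents, use the identity $h_{\mu_{B,n}}(f)=\tfrac1n H_{\mu_0}(\mathcal P_n)$ together with Kolmogorov--Sinai for the entropy, and conclude by continuity of the Lyapunov dimension in $(h,\lambda_1,\lambda_2)$. Your two-sided cylinder argument also usefully justifies the convergence $\hat\mu_{B,n}\to\hat\mu_0$, which the paper asserts without proof.
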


\begin{proof}
We have already shown that $\mu_{B,n}\to\mu_0$ in the weak-$*$ topology \eqref{eqn:Bernoulli weak con}. Passing to the inverse limit gives
\[
\hat\mu_{B,n}\xrightarrow[n\to\infty]{*}\hat\mu_0 .
\]
By \Cref{lem: Holder in inverse limit space}, the functions
\[
\hat x\mapsto \log\|Df|_{E^{su}(\hat x)}\|,
\qquad
\hat x\mapsto \log\|Df|_{E^{wu}(\pi(\hat x))}\|
\]
are continuous. Hence, it follows from \eqref{eq:Lyexpo1} and \eqref{eq:Lyexpo2} that for $j=1,2$,
\[
\lambda_j(\mu_{B,n},f)
\xrightarrow[n\to\infty]{}
\lambda_j(\mu_0,f).
\]

It remains to prove the convergence of entropies. Recall that $\mathcal P_k$ is the set of $k$-cylinders in the full-shift $\Sigma^+$. Hence, $\mathcal P_1$ is a generating partition for $(\Lambda_f,f)$ and 
\[
h_{\mu_0}(f)=h_{\mu_0}(f,\mathcal P_1)
=
\lim_{k\to\infty}\frac1k H_{\mu_0}(\mathcal P_k).
\]
By the construction of $\overline\mu_{B,n}$, for each $n$,
\[
H_{\overline\mu_{B,n}}(\mathcal P_n)=H_{\mu_0}(\mathcal P_n).
\]
Moreover, $\overline\mu_{B,n}$ is Bernoulli for the map $f^n$ with alphabet $\mathcal P_n$, so
\[
h_{\overline\mu_{B,n}}(f^n)=H_{\overline\mu_{B,n}}(\mathcal P_n)=H_{\mu_0}(\mathcal P_n).
\]
Combining with \eqref{eqn:entropy fn}, we obtain
\[
h_{\mu_{B,n}}(f)=\frac1n h_{\overline\mu_{B,n}}(f^n)
=
\frac1n H_{\mu_0}(\mathcal P_n).
\]
Hence
\[
h_{\mu_{B,n}}(f)
\xrightarrow[n\to\infty]{}
h_{\mu_0}(f).
\]
Since the Lyapunov dimension is a continuous function of the entropy and Lyapunov exponents as long as the exponents remain positive, we conclude that
\[
\dim_{\mathrm{LY}}(\mu_{B,n},f)
\xrightarrow[n\to\infty]{}
\dim_{\mathrm{LY}}(\mu_0,f) .
\]
\iffalse
The last equality follows from \Cref{prop:sse-max-lyap}, because $\mu$ is the SSE measure.
\fi
\end{proof}

\begin{proof}[Proof of \Cref{prop: irre-horseshoe}]
By \Cref{lem:1.1} and \Cref{prop:sse-max-lyap},
\[
s_0=\dim_{\mathrm{LY}}(\mu_0,f)
=
\sup_{\nu\in\mathcal M_e(\Lambda_f,f)}
\{\dim_{\mathrm{LY}}(\nu,f)\}.
\]
By \Cref{lem:bernoulli-lyap-convergence} and \Cref{lem:bernoulli-dim-equality},
\[
\sup_{n\geq1}\{\dim_{\mathrm H}\mu_{B,n}\}=s_0.
\]
Since each $\mu_{B,n}$ is an ergodic $f$-invariant measure on $\Lambda_f$,
\[
s_0
\leq
\sup_{\nu\in\mathcal M_e(\Lambda_f,f)}
\{\dim_{\mathrm H}\nu\}
\leq
\dim_{\mathrm H}\Lambda_f.
\]
We complete the proof by using the inequalities established in \Cref{prop zhang upperbound}.
\end{proof}

\subsection{Dimensions of $su$-non-integrable surface repellers}

In this subsection, we establish the following non-perturbative dimension formula.  
\begin{thm}
\label{thm:dominated}
Let $(\Lambda_f,f)$ be a $C^{1+\alpha}$ surface repeller. Assume that $(\Lambda_f,f)$ admits a dominated splitting and is $su$-non-integrable. Then
\begin{equation}
\label{eqn:dim formula surface repeller}
\dim_{\mathrm{H}}\Lambda_f = \dim_{\mathrm{B}}\Lambda_f = \sup_{\mu\in\mathcal{M}_e(\Lambda_f,f)} \{\dim_{\mathrm{H}} \mu\} = s_0,\end{equation}where $s_0$ is the unique zero of $P_{\mathrm{sub}}(s)$.
\end{thm}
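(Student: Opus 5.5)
The upper bound $\dim_{\mathrm H}\Lambda_f\leq\overline{\dim}_{\mathrm B}\Lambda_f\leq s_0$ is \Cref{prop zhang upperbound}. Since $\dim_{\mathrm H}\mu\leq\dim_{\mathrm H}\Lambda_f$ for every $\mu\in\mathcal M_e(\Lambda_f,f)$, the whole chain \eqref{eqn:dim formula surface repeller} follows once we show
\[
\sup_{\mu\in\mathcal M_e(\Lambda_f,f)}\dim_{\mathrm H}\mu\;\geq\; s_0 .
\]
For this I will produce, for each $\varepsilon>0$, an ergodic measure $\mu$ with $\dim_{\mathrm H}\mu>s_0-\varepsilon$. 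The measure will live on an $su$-non-integrable horseshoe sub-repeller of some iterate $f^N$, where \Cref{prop: irre-horseshoe} applies. The reduction to transitive components is harmless: $s_0$ is the maximum of the zeros of the pressures of the components, Hausdorff dimension is finitely stable, and every component is $su$-non-integrable by \Cref{def:su-nonintegrable}. So assume $\Lambda_f$ is transitive.

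\textbf{Step 1 (horseshoe approximation).} By \eqref{eqn:s0 variational}, pick an ergodic $\nu$ with $\dim_{\mathrm{LY}}(\nu,f)>s_0-\varepsilon/2$. If $h_\nu(f)=0$, then $s_0<\varepsilon/2$ and there is nothing to prove, so assume $h_\nu(f)>0$. Apply the Katok-type horseshoe approximation of Cao--Pesin--Zhao \cite{cao_dimension_2019}. This gives $N$ and a compact $f^N$-invariant set $H\subset\Lambda_f$ such that $(H,f^N)$ is conjugate to a full shift on $m\geq2$ symbols and $\frac1N h_{\mathrm{top}}(f^N|_H)\geq h_\nu(f)-\varepsilon'$. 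Moreover, along all orbits in $H$ the singular values of $Df^{N}$ satisfy $\frac1N\log\alpha_j\approx\lambda_j(\nu,f)$ up to $\varepsilon'$. Because the dominated splitting restricts to $H$, the measure of maximal entropy on $(H,f^N)$, call it $\eta$, has $\dim_{\mathrm{LY}}(\eta,f^N)>s_0-\varepsilon/2-O(\varepsilon')$. Hence the pressure zero $s_0(H,f^N)$ is at least $s_0-\varepsilon$.

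\textbf{Step 2 (restoring non-integrability).} If $(H,f^N)$ is already $su$-non-integrable, \Cref{prop: irre-horseshoe} gives a measure of dimension $s_0(H,f^N)>s_0-\varepsilon$. Pushing it forward and averaging over $f^i$, $0\leq i<N$, as in \Cref{lem:bernoulli-dim-equality}, yields an $f$-ergodic measure with the same Hausdorff dimension.

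If $H$ is $su$-integrable, I will enlarge it by one extra symbol. By \Cref{lem:pointwise-su-nonintegrability}, a point $p\in H$ has two pre-histories in $\Lambda_f$ with distinct strong directions. By transitivity and the Markov coding, this produces a finite orbit segment leaving $H$ and returning, whose associated $E^{su}$-direction at $p$ differs from the invariant line field of $H$. Take a large multiple $M$ of $N$. Build the horseshoe $H'$ for $f^{M}$ with alphabet consisting of all $M/N$-blocks of $H$ together with one extra block, namely this excursion padded to length $M$. Then $H'$ contains two pre-histories of some point with different strong directions, so $(H',f^M)$ is $su$-non-integrable. It still has a dominated splitting, being a subset of $\Lambda_f$.

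Adding symbols only increases the set of invariant measures, so $s_0(H',f^M)\geq s_0(H'',f^M)$, where $H''\subset H'$ is the sub-horseshoe using only $H$-blocks, which is just $H$ itself. Hence $s_0(H',f^M)\geq s_0-\varepsilon$. Applying \Cref{prop: irre-horseshoe} to $(H',f^M)$ and averaging as above finishes the lower bound.

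\textbf{Main obstacle.} I expect the main obstacle to be Step 2, specifically checking that $H'$ is genuinely a horseshoe repeller in the sense of \Cref{def:horseshoe repeller}. Three things need care: the extra inverse branch must have an image disjoint from the others, it must map into the isolating neighborhood, and the $su$-non-integrability witness must persist when the padding is appended. The first two I will handle using the uniform size of the inverse branches and the SSC coming from the Markov cylinders of Step 1. For persistence, I will use the invertibility of $Df^n$ on projective lines, exactly as in the proof of \Cref{lem:pointwise-su-nonintegrability}.

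Equality with box dimension is not a separate issue: it follows from $s_0\leq\dim_{\mathrm H}\Lambda_f\leq\underline{\dim}_{\mathrm B}\Lambda_f\leq\overline{\dim}_{\mathrm B}\Lambda_f\leq s_0$.
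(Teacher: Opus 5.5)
Your proposal is essentially the paper's proof. It uses Cao--Pesin--Zhao horseshoe approximation, then \Cref{prop: irre-horseshoe} plus averaging over $f^i$ when the horseshoe is $su$-non-integrable; otherwise it adjoins one extra inverse branch through a deep preimage $q$ of $p$, chosen so that every history through $q$ has strong direction different from the horseshoe's line field at $p$, and uses the old horseshoe's measure of maximal entropy as the competitor. One correction: disjointness of the new Markov domain does not follow from the SSC of the old cylinders, but (as in the paper) from the fact that if the new image met an old one, the two inverse branches of $f^M$ would coincide on the connected ball, forcing $q$ into the horseshoe and contradicting the choice of $q$; also obtain the excursion from transitivity of $f^N$ on the component containing the horseshoe, so that its length is a multiple of $N$ and can be padded to $M$.
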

\noindent
\Cref{thm:B} is \eqref{eqn:dim formula surface repeller} without the supremum.

By the spectral decomposition, $\Lambda_f$ is a finite union of $f$-invariant transitive components. The sub-additive pressure $P_{\mathrm{sub}}$ of the union is the maximum of the component pressures, and once the theorem is proved for each transitive component, the Hausdorff and box dimensions, as well as the variational supremum over ergodic measures, are the corresponding maxima. It therefore suffices to work on one transitive component, and we assume below that
\[(\Lambda_f,f)\quad \text{is transitive}.\]

We shall use the following horseshoe approximation result, which is implicitly derived from the construction in \cite[Section 5]{cao_dimension_2019}. See also \cite[Section 3.4]{CaoZhao2017}.
\begin{prop}\label{prop: cao approxi}
Let $(\Lambda_f,f)$ be a $C^{1+\alpha}$ surface repeller, and let $\mu\in\mathcal M_e(\Lambda_f,f)$ satisfy
\[
h_\mu(f)>0,
\qquad
\lambda_1(\mu,f)>\lambda_2(\mu,f)>0 .
\]
Then for every sufficiently small $\epsilon>0$, there exist $N\in\mathbb N$ and a horseshoe repeller $(Q_\epsilon,f^N)$ with $Q_\epsilon\subseteq\Lambda_f$ such that:
\begin{itemize}
    \item %
    $h_{\mathrm{top}}(f^N,Q_\epsilon) 
    \geq N(h_\mu(f)-\epsilon).$

    \item $(Q_\epsilon,f^N)$ admits a dominated splitting $E^{su}\oplus E^{wu}$, and for every $\nu\in\mathcal M(Q_\epsilon,f^N)$ and every $i\in\{1,2\}$,
    \[
    \lambda_i(\nu,f^N)
    \in
    [N(\lambda_i(\mu,f)-\epsilon),N(\lambda_i(\mu,f)+\epsilon)].
    \]
\end{itemize}
\end{prop}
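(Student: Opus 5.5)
The plan is to run Katok's horseshoe construction in the expanding setting, following \cite[Section 5]{cao_dimension_2019}. Inverse branches of $f$ are uniform contractions, so no Pesin charts are needed; the only non-uniformity to handle is in the Lyapunov exponents of $\mu$.

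\emph{Step 1 (a uniform good set).} Pass to the natural extension $\hat\mu$. By Oseledets (\Cref{thm:oseledets-repeller}) and Egorov, for given $\epsilon>0$ I will fix $n_1$ and a compact set $\hat K\subseteq\hat\Lambda_f$ with $\hat\mu(\hat K)>1-\epsilon$ on which the following hold uniformly for $n\geq n_1$:
\begin{itemize}
\item the angle between the Oseledets directions $E_{\lambda_1}(\hat x)$ and $E_{\lambda_2}(\hat x)$ is bounded below and these directions vary continuously;
\item $\frac1n\log\|D f^n|_{E_{\lambda_i}}\|$ lies within $\epsilon/4$ of $\lambda_i(\mu,f)$.
\end{itemize}
Let $K=\pi(\hat K)$. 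By the Katok entropy formula, there is a point $x\in K$ and a radius $r>0$ with the following property. For infinitely many $n$, one finds at least $e^{n(h_\mu(f)-\epsilon/2)}$ points $y_1,\dots,y_M\in K\cap B(x,r)$ such that $f^n(y_j)\in K\cap B(x,r/2)$ and the $y_j$ are pairwise $(n,\delta)$-separated. By pigeonholing on returns, I then fix one such $n=:N$.

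\emph{Step 2 (the IFS).} Because $f$ is expanding, for each $j$ the inverse branch of $f^N$ along the orbit of $y_j$ is defined on $B(x,r)$. Call it $g_j$. It maps $B(x,r)$ into $B(y_j,C\kappa^{-N}r)\subseteq B(x,r)$. Choosing $\delta$ small relative to $r$, separation forces the images $g_j(B(x,r))$ to be pairwise disjoint. So $\{g_j\}$ is a contracting IFS with SSC, and its attractor $Q_\epsilon$ is $f^N$-invariant. Moreover, $(Q_\epsilon,f^N)$ is conjugate to the full shift on $M$ symbols, hence a horseshoe repeller with $h_{\mathrm{top}}(f^N,Q_\epsilon)=\log M\geq N(h_\mu(f)-\epsilon)$.

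\emph{Step 3 (domination and exponents; main obstacle).} Every point of $Q_\epsilon$ follows a concatenation of the chosen orbit segments. I will build a cone $\mathcal C^{su}$ around the transported direction $E_{\lambda_1}$ at $x$ and a complementary cone $\mathcal C^{wu}$. Using the $C^{1+\alpha}$ bounded-distortion estimate along contracting branches (as in the claim in the proof of \Cref{prop:ball}), $D_zf^N$ for $z\in g_j(B(x,r))$ is uniformly close, after normalisation, to $D_{y_j}f^N$. That derivative expands $E_{\lambda_1}(y_j)$ by $e^{N(\lambda_1\pm\epsilon/4)}$ and $E_{\lambda_2}(y_j)$ by $e^{N(\lambda_2\pm\epsilon/4)}$. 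The delicate point is that $E_{\lambda_1}$ depends on the past, so it must be controlled at both endpoints: the Oseledets directions at $y_j$ and at $f^N(y_j)$ are both close to those at $x$, since $\hat K$ is compact and the splitting is continuous on it. With $N$ large, since $\lambda_1>\lambda_2$ the gap $e^{N(\lambda_1-\lambda_2-\epsilon)}$ beats the angular errors. Hence $D f^N(\mathcal C^{su})\subset\mathcal C^{su}$ with expansion in $[e^{N(\lambda_1-\epsilon)},e^{N(\lambda_1+\epsilon)}]$, and $(Df^N)^{-1}$ preserves $\mathcal C^{wu}$ with the analogous bounds for $\lambda_2$. Invariant cones give a dominated splitting over the inverse limit of $(Q_\epsilon,f^N)$. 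Finally, for any $\nu\in\mathcal M(Q_\epsilon,f^N)$, \eqref{eq:Lyexpo1}--\eqref{eq:Lyexpo2} integrate one-step expansion rates that all lie in these windows, which yields the stated bounds on $\lambda_i(\nu,f^N)$.

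I expect the main obstacle to be exactly this uniform cone control: making sure the errors from non-uniform Oseledets data, nonlinearity, and the past-dependence of $E^{su}$ are all absorbed by choosing $r$, then $\delta$, then $N$ in the right order.
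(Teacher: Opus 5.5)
Your outline is the Katok-type construction the paper relies on: inverse branches of $f^N$ over a ball around a regular point (cf.\ the Markov domains $U_i=f^{-N}_{x_i}B(x_0,\rho)$ described in the proof of \Cref{thm:dominated}, following \cite[Section 5]{cao_dimension_2019}). However, Step 3 does not go through as written.

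First, the past-dependence is not actually controlled. Continuity of the splitting on $\hat K$ is continuity for $\hat d$, whereas Step 1 only produces base returns $f^N(y_j)\in\pi(\hat K)\cap B(x,r/2)$. This ensures neither that the history relevant for the block, $\hat f^N\hat y_j$, lies in $\hat K$, nor that it is $\hat d$-close to $\hat x$. The points $f^{N-1}(y_j),f^{N-2}(y_j),\dots$ may lie near preimages of $x$ other than $x_{-1},x_{-2},\dots$. So $E_{\lambda_1}(\hat f^N\hat y_j)$ can be far from $E_{\lambda_1}(\hat x)$, which is exactly the $su$-non-integrability phenomenon studied in this paper, and then a narrow cone around $E_{\lambda_1}(\hat x)$ is not mapped into itself. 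The recurrence and counting have to be done for $\hat\mu$. With $\hat S:=\hat K\cap\hat B$, where $\hat B$ is a small $\hat d$-ball about $\hat x$, the set $\hat S\cap\hat f^{-N}\hat S$ has $\hat\mu$-measure at least $\frac12\hat\mu(\hat S)^2$ for infinitely many $N$. You then extract $(N,\delta)$-separated base points from its projection, whose $\mu$-measure is at least as large. Alternatively, use the wide cone $\{v:\angle(v,E_{\lambda_2}(x))\geq\beta\}$, which involves only the base-dependent $E_{\lambda_2}$; you would still need $\hat f^N\hat y_j\in\hat K$.

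Your quantifiers for disjointness are also reversed. If $g_j(B(x,r))\cap g_l(B(x,r))\neq\emptyset$, then $g_j=g_l$ on the connected ball, so $d(f^ky_j,f^ky_l)\leq\kappa^{-(N-k)}r$ for all $k\leq N$. Separation therefore needs $r<\delta$, not $\delta$ small relative to $r$. Since $f$ is expanding, the Katok count holds at a fixed small $\delta$, so you can fix $\delta$ first.

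Second, the comparison of $D_zf^N$ with $D_{y_j}f^N$ does not follow from the distortion argument in the proof of \Cref{prop:ball}. That argument bounds a scalar cocycle along a pre-existing, uniformly H\"older invariant line field. Along $y_j,f(y_j),\dots,f^N(y_j)$ there is no such field: the intermediate points are not in $\hat K$, so the angle between $E_{\lambda_1}$ and $E_{\lambda_2}$, and the growth rates, are only tempered there. In a non-conformal product, the H\"older error $\lesssim(2r)^\alpha\kappa^{-\alpha(N-k)}$ at time $k$ must be weighed against that angle. Summing the errors naively through the partial products amplifies them by condition numbers growing at rate $\lambda_1-\lambda_2$.

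What is missing is the linear part of Pesin theory: Lyapunov (adapted) norms along the block, or equivalently tempered bounds such as $\angle(E_{\lambda_1},E_{\lambda_2})(\hat f^k\hat y_j)\gtrsim e^{-\epsilon\min(k,N-k)}$, with matching control of the rates. These require forward regularity at $\hat y_j$ and \emph{backward} regularity at $\hat f^N\hat y_j$. So $\hat K$ must also include the backward Oseledets limits of the natural extension, which is again a condition on histories. With these bounds, the perturbations are summable once $\epsilon$ is small compared with $\alpha\log\kappa$ and $r$ is small compared with the constants on $\hat K$. The cone invariance and the exponent windows then follow step by step. Your remark that no Pesin theory is needed holds only for the non-linear part (inverse branches on a uniform ball). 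The correct order of choices is $\epsilon$, then $\hat K$ and $\delta$, then $r$, then $N$.
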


\begin{rem}\label{rem 2.2}
In particular, if $\nu_{\mathrm{MME}}$ denotes the measure of maximal entropy of $(Q_\epsilon,f^N)$, then the two estimates in \Cref{prop: cao approxi} imply
\[
\dim_{\mathrm{LY}}(\nu_{\mathrm{MME}},f^N)
\geq
\dim_{\mathrm{LY}}(\mu,f)-\epsilon'(\epsilon),
\]
where $\epsilon'(\epsilon)\to0$ as $\epsilon\to0$.
\end{rem}

\begin{rem}\label{rem:iterate-approximation}
If $(Q_\epsilon,f^N)$ satisfies the conclusions of \Cref{prop: cao approxi}, then $(Q_\epsilon,f^{mN})$ satisfies the analogous conclusions for every $m\in\mathbb N$, after replacing $N$ by $mN$. Indeed, entropy and Lyapunov exponents scale linearly under iteration.
\end{rem}

Now, we prove \cref{thm:dominated}.
\begin{proof}[Proof of \cref{thm:dominated}]
By reducing to transitive components, assume that $(\Lambda_f,f)$ is transitive. Let $\mu$ be an ergodic SSE measure of $(\Lambda_f,f)$. Since the repeller admits a dominated splitting, it follows that $\lambda_1(\mu,f)>\lambda_2(\mu,f)>0$.

If $h_\mu(f)=0$, then $\dim_{\mathrm{LY}}(\mu,f)=0$. Since $\mu$ is an SSE measure, \Cref{prop:sse-max-lyap} gives $s_0=0$. By \Cref{prop zhang upperbound},
\[
0\leq
\dim_{\mathrm H}\Lambda_f
\leq
\underline{\dim}_{\mathrm B}\Lambda_f
\leq
\overline{\dim}_{\mathrm B}\Lambda_f
\leq s_0=0.
\]
Hence the conclusion holds. We therefore assume below that $h_\mu(f)>0$.

Fix $\epsilon>0$. Applying \Cref{prop: cao approxi} to $\mu$, we obtain a horseshoe $(Q_\epsilon,f^N)\subseteq(\Lambda_f,f^N)$. The dominated splitting of $(Q_\epsilon,f^N)$ is the restriction of the dominated splitting of $(\Lambda_f,f^N)$.

We first treat the case where there exists a sequence of $\epsilon$ goes to zero such that $(Q_\epsilon,f^N)$ is $su$-non-integrable for each $\epsilon$. By \Cref{prop: irre-horseshoe,lem:1.1},
\[
\dim_{\mathrm H}Q_\epsilon
=
\dim_{\mathrm B}Q_\epsilon
=
\sup_{\nu\in\mathcal{M}_e(Q_\epsilon,f^N)}
\{\dim_{\mathrm H}\nu\}
=
\sup_{\nu\in\mathcal{M}_e(Q_\epsilon,f^N)}
\{\dim_{\mathrm{LY}}(\nu,f^N)\}.
\]

Let $\nu_{\mathrm{MME}}$ be the measure of maximal entropy of $(Q_\epsilon,f^N)$. By \Cref{rem 2.2},
\[
\dim_{\mathrm{LY}}(\nu_{\mathrm{MME}},f^N)
\geq
\dim_{\mathrm{LY}}(\mu,f)-\epsilon'(\epsilon)
=s_0-\epsilon'(\epsilon),
\]
where $\epsilon'(\epsilon)\to0$ as $\epsilon\to0$. Hence
\[
\dim_{\mathrm H}\Lambda_f
\geq
\dim_{\mathrm H}Q_\epsilon
\geq
s_0-\epsilon'(\epsilon).
\]
Letting $\epsilon\to0$ and using \Cref{prop zhang upperbound}, we obtain
\[
\dim_{\mathrm H}\Lambda_f=s_0.
\]

It remains to obtain the mass variational principle. For every $a>0$, choose $\epsilon>0$ sufficiently small so that
\[
\dim_{\mathrm H}Q_\epsilon\geq \dim_{\mathrm H}\Lambda_f-a.
\]
By \Cref{prop: irre-horseshoe}, there exists an ergodic measure
\[
\overline\nu\in\mathcal M_e(Q_\epsilon,f^N)
\]
such that
\[
\dim_{\mathrm H}\overline\nu
\geq
\dim_{\mathrm H}Q_\epsilon-a.
\]
Define the $f$-invariant averaged measure
\[
\nu:=\frac1N\sum_{i=0}^{N-1}f_*^i\overline\nu .
\]
As in the proof of \Cref{lem:bernoulli-dim-equality}, $\nu$ is $f$-ergodic and
\[
\dim_{\mathrm H}\nu
=
\dim_{\mathrm H}\overline\nu.
\]
Therefore
\begin{equation}
\label{eqn:measure geq fractal}
\dim_{\mathrm H}\nu
\geq
\dim_{\mathrm H}\Lambda_f-2a.
\end{equation}
Since $a>0$ is arbitrary,
\[
\dim_{\mathrm H}\Lambda_f
=
\sup_{\eta\in\mathcal M_e(\Lambda_f,f)}
\{\dim_{\mathrm H}\eta\}
=
s_0.
\]

It remains to explain how to reduce the case where $(Q_\epsilon,f^N)$ is $su$-integrable to the previous case. Let $(\Lambda_{f,*},f^N)$ be the transitive component of $(\Lambda_f,f^N)$ that contains $(Q_\epsilon,f^N)$.
We use the $su$-non-integrability of $(\Lambda_{f,},f^N)$ to add one extra inverse branch to the horseshoe construction.

In the proof of \Cref{prop: cao approxi}, the horseshoe is obtained as follows. One first chooses a ball $B(x_0,\rho)$ and points $x_1,\ldots,x_h\in B(x_0,\rho)$ such that
\[
U_i:=f^{-N}_{x_i}B(x_0,\rho)\subseteq B(x_0,\rho),
\qquad 1\leq i\leq h,
\]
are pairwise disjoint. 
Here, \(f^{-N}_{x_i}\) is the
inverse branch of \(f^N\) on \(B(x_0,\rho)\) characterized by
\[
f^N\circ f^{-N}_{x_i}
=
\operatorname{Id}_{B(x_0,\rho)},
\qquad
f^{-N}_{x_i}\bigl(f^N(x_i)\bigr)=x_i.
\]
In particular, \(f^{-N}_{x_i}\) maps \(B(x_0,\rho)\)
diffeomorphically onto \(U_i\). Let
\[
U:=\bigcup_{i=1}^h U_i.
\]
then the maximal invariant set in $U$ is the horseshoe $Q_\epsilon$. In this case, we call the open neighborhoods $U_i$ the \emph{Markov domains} of the horseshoe $(Q_\epsilon,f^N)$.

{
Put
\[
F:=f^N,
\]
and let $\widehat{Q_\epsilon}$ and
$\widehat{\Lambda_{f,*}}$ be the inverse limit spaces of $(Q_{\epsilon},F)$ and $(\Lambda_{f,*},F)$, respectively; we
identify an $F$-history with the corresponding $f$-history obtained by
inserting the intermediate iterates. By abuse of notation, we denote both projection maps $\widehat{Q_\epsilon}\to Q_{\epsilon}$ and $\widehat{\Lambda_{f,*}}\to \Lambda_{f,*}$ by $\pi$.

Fix $p\in Q_\epsilon$, and let $E^{su}_Q(p)$ be the common strong unstable
direction of the histories in $\widehat{Q_\epsilon}$ over $p$. Since
$(\Lambda_{f,*},F)$ is $su$-non-integrable, there exists
$\hat y\in\widehat{\Lambda_{f,*}}$ with $\pi(\hat{y})=p$ such that
\[
E^{su}(\hat y)\neq E^{su}_Q(p).
\]
By the continuity of $E^{su}$, we may choose $k_1$ sufficiently large and set
\[
q:=\pi(\hat F^{-k_1}\hat y),
\qquad
F^{k_1}q=p,
\]
so that every $\hat x_2\in\widehat{\Lambda_{f,*}}$ satisfying
\[
\pi(\hat x_2)=p,
\qquad
\pi(\hat F^{-k_1}\hat x_2)=q
\]
also satisfies
\begin{equation}
\label{eqn:Esu different}
E^{su}(\hat x_2)\neq E^{su}_Q(p).
\end{equation}

By compactness and uniform expansion, there exists $r_{\mathrm{inv}}>0$
such that, for every $z\in\Lambda_{f,*}$, the inverse branch of
$F=f^N$ sending $F(z)$ to $z$ is defined on
$B(F(z),r_{\mathrm{inv}})$ and maps it into $B(z,r_{\mathrm{inv}})$.
Shrinking $\rho$ at the first choice if necessary, we can assume that $\rho<10^{-3}\cdot r_{\mathrm{inv}}$
and that $B(x_0,\rho)$ lies in an isolating neighborhood of
$\Lambda_{f,*}$ without loss of generality.
Then, the inverse branch
\[
f^{-k_1N}_q=F^{-k_1}_q
\]
is defined on $B(x_0,\rho)$, and we set
\[
V:=F^{-k_1}_q B(x_0,\rho).
\]

By transitivity of $(\Lambda_{f,*},F)$, there exist
\[
w\in B\left(x_0,\frac{\rho}{100}\right)\cap\Lambda_{f,*}
\qquad\text{and}\qquad
k_2\in\mathbb N
\]
such that $F^{k_2}w\in V$. Since $V$ can be made arbitrarily small by
increasing $k_1$, the inverse branch $F^{-k_2}_w$ is defined on $V$ and
maps it into $B(x_0,\rho)$.

Set
\[
k:=k_1+k_2,
\qquad
g_+:=F^{-k_2}_w\circ F^{-k_1}_q,
\qquad
U'_+:=g_+\bigl(B(x_0,\rho)\bigr).
\]
If $g_i:=f^{-N}_{x_i}$, then, for every word
\[
\mathbf a=(a_1,\ldots,a_k)\in\{1,\ldots,h\}^k,
\]
define
\[
g_{\mathbf a}:=
g_{a_k}\circ\cdots\circ g_{a_1},
\qquad
U'_{\mathbf a}:=
g_{\mathbf a}\bigl(B(x_0,\rho)\bigr).
\]
The sets $U'_{\mathbf a}$ are precisely the pairwise disjoint Markov
domains of $(Q_\epsilon,F^k)$.

We claim that
\[
U'_+\cap U'_{\mathbf a}=\emptyset
\qquad
\text{for every }\mathbf a\in\{1,\ldots,h\}^k.
\]
Indeed, if the intersection were nonempty, the two inverse branches
$g_+$ and $g_{\mathbf a}$ of $F^k$ on the connected ball
$B(x_0,\rho)$ would coincide, and hence
\[
q_0:=g_+(p)=g_{\mathbf a}(p)\in Q_\epsilon.
\]
It would follow that
\[
q=F^{k_2}q_0\in Q_\epsilon,
\]
so the orbit segment from $q$ to $p$ would extend to a history
$\hat x\in\widehat{Q_\epsilon}$ with
\[
\pi(\hat F^{-k_1}\hat x)=q \quad \text{and} \quad E^{su}(\hat{x})=E^{su}_Q(p).
\]
By taking $\hat x_2=\hat x$, this contradicts \eqref{eqn:Esu different}.

Let $Q_\epsilon^+$ be the maximal $F^k$-invariant set in
\[
\left(\bigcup_{\mathbf a\in\{1,\ldots,h\}^k}U'_{\mathbf a}\right)
\cup U'_+.
\]
All orbit segments defining these branches remain in the fixed isolating
neighborhood; therefore,
\[
Q_\epsilon\subseteq Q_\epsilon^+\subseteq\Lambda_{f,*}\subseteq\Lambda_f.
\]
An $F^k$-history of $p$ using the new branch $g_+$ expands to an
$F$-history $\hat x_2\in\widehat{\Lambda_{f,*}}$ satisfying
\[
\pi(\hat F^{-k_1}\hat x_2)=q.
\]
For every $\hat x_1\in\widehat{Q_\epsilon}$ over $p$, the choice of $q$
gives
\[
E^{su}(\hat x_1)\neq E^{su}(\hat x_2).
\]
Since the dominated splitting and its strong unstable direction are
unchanged under iteration, $(Q_\epsilon^+,F^k)
=(Q_\epsilon^+,f^{kN})$ is $su$-non-integrable.
}

Since $Q_\epsilon\subseteq Q_\epsilon^+$, the measure of maximal entropy  $\nu_0$ of $(Q_\epsilon,f^{kN})$ is an $f^{kN}$-ergodic invariant measure on $Q_\epsilon^+$.
 Applying \Cref{prop: irre-horseshoe,lem:1.1} to $(Q_\epsilon^+,f^{kN})$ gives
\[
\dim_{\mathrm H}Q_\epsilon^+
=
\dim_{\mathrm B}Q_\epsilon^+=
\sup_{\nu\in\mathcal{M}_e(Q_\epsilon^+,f^{kN})}
\{\dim_{\mathrm{H}} \nu\}
\geq \dim_{\mathrm{LY}}(\nu_0,f^{kN}).
\]
Applying \Cref{rem 2.2} and \Cref{rem:iterate-approximation} to $(Q_{\epsilon},f^{kN})$ and $\nu_0$, we conclude that
\[
\dim_{\mathrm H}\Lambda_f
\geq
\dim_{\mathrm H}Q_\epsilon^+
\geq
s_0-\epsilon'(\epsilon).
\]
Meanwhile, arguing as in \eqref{eqn:measure geq fractal} for $f^{kN}$-ergodic invariant measures on $Q_{\epsilon}^+$, we obtain that
\[
\dim_{\mathrm H}\Lambda_f
\geq
\sup_{\nu\in\mathcal M_e(\Lambda_f,f)}
\{\dim_{\mathrm H}\nu\}
\geq \dim_{\mathrm{H}} Q_{\epsilon}^+-2\epsilon'(\epsilon).
\]
By letting $\epsilon\to 0$ and using \Cref{prop zhang upperbound}, we obtain \eqref{eqn:dim formula surface repeller}, completing the proof of \Cref{thm:dominated}.
\end{proof}

\begin{rem}
Let $(\Lambda_f,f)$ be a $C^{1+\alpha}$ transitive $su$-non-integrable surface repeller. If an ergodic measure of full dimension exists, then it must be the unique SSE measure and have full support. Equivalently, this happens if and only if
{
\(
\dim_{\mathrm{LY}}(\mu_{\mathrm{SSE}},f)
=
\dim_{\mathrm H}\mu_{\mathrm{SSE}} .
\)
}
\end{rem}

\subsection{Dimensions of $C^r$-generic surface repellers}

In this subsection, we prove the following generic dimension formula..

\begin{thm}
\label{thm:generic_full}
Let $r\in(1,\infty]$. For every $C^r$ surface repeller
$(\Lambda_f,f)$, there exist a $C^r$-open neighborhood
$U_f\subseteq C^r(M,M)$ of $f$ and a subset $V\subseteq U_f$ that is
residual in the $C^r$ topology of $U_f$ such that, for every $g\in V$,
the continuation
$(\Lambda_g,g)$ satisfies
\begin{equation}
\label{eqn:dim formula generic}
\dim_{\mathrm{H}}\Lambda_g
=
\dim_{\mathrm{B}}\Lambda_g
=
\sup_{\mu\in\mathcal{M}_e(\Lambda_g,g)}
\{\dim_{\mathrm{H}}\mu\}
=
s_0(g),
\end{equation}
where $s_0(g)$ is the unique zero of
$s\mapsto P_{\mathrm{sub}}(s,g)$.
\end{thm}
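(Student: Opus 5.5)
The upper bound is automatic: for every $g\in U_f$, \Cref{prop zhang upperbound} gives $\dim_{\mathrm H}\Lambda_g\leq\overline{\dim}_{\mathrm B}\Lambda_g\leq s_0(g)$. Moreover $\sup_\mu\dim_{\mathrm H}\mu\leq\dim_{\mathrm H}\Lambda_g$. So the task is to produce a residual set on which $\sup_{\mu\in\mathcal M_e(\Lambda_g,g)}\dim_{\mathrm H}\mu\geq s_0(g)$. As in \Cref{thm:dominated}, we first reduce to a single transitive component, since continuations preserve the spectral decomposition. We then write the residual set as $V=\bigcap_{q\in\mathbb N}V_q$, where
\[
V_q:=\Bigl\{g\in U_f:\ \text{there is }\nu\in\mathcal M_e(\Lambda_g,g)\text{ with }\dim_{\mathrm H}\nu>s_0(g)-\tfrac1q\Bigr\}.
\]
It suffices to show that each $V_q$ contains an open dense subset of $U_f$. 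Two facts feed into this. First, $g\mapsto s_0(g)$ is continuous on $U_f$. This should follow from the conjugacy of continuations together with continuity of singular values in the $C^1$ topology, which gives uniform convergence of the potentials $\varphi^s_n/n$ and hence continuity of $P_{\mathrm{sub}}(s,g)$ in $g$; the slope bound $-\log\kappa$ then transfers this to the zeros. Second, the lower bound is witnessed by horseshoes, which persist under perturbation.

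Fix $g_1\in U_f$ and $q$. We aim to find $g_2$ arbitrarily $C^r$-close to $g_1$ and an open neighbourhood $W\ni g_2$ with $W\subseteq V_q$. Let $\mu$ be an SSE measure of $g_1$. There are three cases.
\begin{itemize}
\item[(a)] If $h_\mu(g_1)=0$, then $s_0(g_1)=0$. By continuity $s_0<\tfrac1q$ near $g_1$, and the condition defining $V_q$ holds trivially (take any $\nu$, e.g.\ a periodic measure), so an entire neighbourhood lies in $V_q$.
\item[(b)] If $\lambda_1(\mu,g_1)=\lambda_2(\mu,g_1)$ (average-conformal case), I would use the conformal/Bowen-type argument.
\item[(c)] Otherwise \Cref{prop: cao approxi} applies.
\end{itemize}
Case (b) needs some care. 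The cleanest route is to note that the Cao--Pesin--Zhao construction for $\lambda_1=\lambda_2$ yields a horseshoe on which $g_1^N$ is $\epsilon$-quasi-conformal, so its dimension is within $\epsilon'(\epsilon)$ of $h/\lambda$ by a Bowen-type estimate. Alternatively, one can perturb so as to separate the exponents on a horseshoe and fall back to case (c).

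In case (c), \Cref{prop: cao approxi} gives a horseshoe $(Q_\epsilon,g_1^N)$ with a dominated splitting, together with \Cref{rem 2.2}: the MME $\nu_{\mathrm{MME}}$ satisfies $\dim_{\mathrm{LY}}(\nu_{\mathrm{MME}},g_1^N)\geq s_0(g_1)-\epsilon'$. This horseshoe repeller has a continuation $Q_\epsilon(g)$ for $g$ in a $C^r$-neighbourhood, still with a dominated splitting. Its entropy stays the same because the systems are conjugate, and its Lyapunov exponents move by at most $\epsilon$ uniformly over measures, since they are integrals of continuous functions of $E^{su},E^{wu}$ and these vary continuously with $g$. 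By the density result of \Cref{appendix:B}, applied to the horseshoe $(Q_\epsilon,g_1^N)$, there is $g_2$ arbitrarily close to $g_1$ such that $Q_\epsilon(g)$ is $su$-non-integrable for all $g$ in an open set $W\ni g_2$. For $g\in W$, \Cref{prop: irre-horseshoe} gives an ergodic $\overline\nu$ on $Q_\epsilon(g)$ with
\[
\dim_{\mathrm H}\overline\nu\geq s_0(Q_\epsilon(g),g^N)-\epsilon\geq\dim_{\mathrm{LY}}(\nu_{\mathrm{MME},g},g^N)-\epsilon\geq s_0(g_1)-3\epsilon'.
\]
Averaging as in \eqref{eqn:measure geq fractal} gives a $g$-ergodic $\nu$ with the same dimension. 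Finally, by continuity of $s_0$, shrinking $W$ gives $s_0(g)<s_0(g_1)+\epsilon'$. Taking $4\epsilon'<1/q$ yields $W\subseteq V_q$. Hence $V_q$ contains an open dense set, and $V$ is residual.

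I expect the main obstacles to be of two kinds. The first is to make sure that the $su$-non-integrability produced by \Cref{appendix:B} is genuinely open and dense for the horseshoe continuation under the iterate $g^N$. The second is the bookkeeping that makes the horseshoe's Lyapunov dimension uniformly close to $s_0$ over the neighbourhood $W$. The average-conformal case (b) is the remaining delicate point; if the quasi-conformal Bowen argument is not available from earlier results, I would handle it by a small perturbation creating domination on a sub-horseshoe with nearly equal exponents. This costs at most $\epsilon$ in Lyapunov dimension, because when $\lambda_1\approx\lambda_2$ the Lyapunov dimension is insensitive to which direction carries the domination.
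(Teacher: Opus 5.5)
Your overall plan matches the paper's: a case split on an SSE measure (zero entropy, average conformal, or $h>0$ with $\lambda_1>\lambda_2$). In the last case it uses the Cao--Pesin--Zhao horseshoe and its continuation with the same entropy and nearby exponents, then \Cref{appendix:B} and \Cref{prop: irre-horseshoe}, averaging back to $g$, continuity of $s_0$, and Baire. Two steps, however, do not go through as written.

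\textbf{Applying \Cref{appendix:B} to an iterate.} In case (c) you apply \Cref{appendix:B} to $(Q_\epsilon,g_1^N)$. That proposition perturbs the map whose repeller is under consideration. It therefore produces maps $\tilde F$ near $F=g_1^N$ whose continuation is $su$-non-integrable. Nothing guarantees that such an $\tilde F$, or any map in the resulting open set, has the form $h^N$ with $h$ near $g_1$, so you do not get $g_2$ near $g_1$. You flagged this, but it needs an actual fix. The paper instead applies \Cref{appendix:B} to the cyclic $g$-repeller $\widetilde Q_\epsilon=\bigcup_{j=0}^{N-1}g^j(Q_\epsilon)$. This repeller is transitive, non-invertible and carries the dominated splitting, so the perturbations are perturbations of $g$ itself. $su$-non-integrability of $(\widetilde Q_\epsilon(h),h)$ is then passed to the component $(Q_\epsilon(h),h^N)$ by sampling histories at times $0,-N,-2N,\ldots$.

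\textbf{The average-conformal case.} Case (b) is not proved. \Cref{prop: cao approxi} is only available for $\lambda_1>\lambda_2$. Your quasi-conformal horseshoe, its robustness under perturbation, and the Bowen-type lower bound would all have to be established separately, and the ``perturb to separate the exponents'' alternative is not substantiated. The missing observation is that openness is not needed in this case. By \Cref{lem:average-conformal-sse}, every map with an average-conformal SSE measure already satisfies \eqref{eqn:dim formula generic} exactly, and so does every map with a zero-entropy SSE measure; call the set of such maps $U_{f,1}$. Density of the interior of $V_q$ then follows from a dichotomy on an arbitrary nonempty open $B\subseteq U_f$:
\begin{itemize}
\item either $B$ meets $U_f\setminus U_{f,1}$, where every SSE measure falls in your case (c), and the horseshoe argument gives a nonempty open subset of $B$ inside $V_q$;
\item or $B\subseteq U_{f,1}\subseteq V_q$, and $B$ is itself open.
\end{itemize}
This is exactly how the paper closes the Baire argument.

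\textbf{Continuity of $s_0$ (minor).} The closeness of $\varphi^s_n/n$ for $g$ near $g_1$ is not uniform in $n$, so your sketch does not give continuity of $s_0$. What it does give is upper semicontinuity: sub-multiplicativity of the singular value function makes $P_{\mathrm{sub}}(s,\cdot)$ an infimum over $n$ of cylinder-sum expressions, each continuous in $g$. That is the only direction you use, so this is harmless. The paper cites \cite[Theorem~3.5]{cao_dimension_2019} for full continuity.
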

\noindent
\Cref{thm:A} is \eqref{eqn:dim formula generic} without the supremum.

Recall that repellers are $C^1$ structurally stable
\cite{shub_endomorphisms_1969,przytycki_omega_1977}. More precisely, if
$U$ is an isolating neighborhood of a repeller $(\Lambda_f,f)$, then for
every map $g$ in a sufficiently small $C^1$ neighborhood of $f$,
\[
\Lambda_g:=\{x\in U:g^n(x)\in U\text{ for all }n\geq0\}
\]
is a repeller and $g|_{\Lambda_g}$ is topologically conjugate to
$f|_{\Lambda_f}$. We call $(\Lambda_g,g)$ the \emph{continuation} of
$(\Lambda_f,f)$. For each fixed $r\in(1,\infty]$, the intersection of
such a $C^1$ neighborhood with $C^r(M,M)$ is open in the $C^r$ topology.

We first isolate the average-conformal case. An ergodic measure
$\mu\in\mathcal M_e(\Lambda_f,f)$ is called \emph{average conformal} if
\[
\lambda_1(\mu,f)=\lambda_2(\mu,f).
\]
By the Ledrappier--Young dimension formula \cite[Theorem 2.7]{QianXie} and the exactness of the dimension of invariant measures \cite[Theorem 2.8]{QianXie}, we have the following.

\begin{lem}\label{lem:average-conformal}
Let $(\Lambda_f,f)$ be a $C^{1+\alpha}$ surface repeller and
$\mu\in\mathcal{M}_e(\Lambda_f,f)$. If $\mu$ is average conformal, then
\[
\dim_{\mathrm H}\mu=\dim_{\mathrm{LY}}(\mu,f).
\]
\end{lem}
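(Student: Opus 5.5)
Since $\mu$ is average conformal, $\lambda:=\lambda_1(\mu,f)=\lambda_2(\mu,f)>0$, and by \eqref{eq:intro-LY} (equivalently \eqref{eqn:def LY}) the Lyapunov dimension reduces to the conformal value
\[
\dim_{\mathrm{LY}}(\mu,f)=\frac{h_\mu(f)}{\lambda},
\]
since the two branches of \eqref{eq:intro-LY} agree when $\lambda_1=\lambda_2$: if $h_\mu\leq\lambda$ we get $h_\mu/\lambda$, and otherwise $1+(h_\mu-\lambda)/\lambda=h_\mu/\lambda$. So the plan is to show $\dim_{\mathrm H}\mu=h_\mu(f)/\lambda$.

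For the upper bound, I would invoke \cite[Thm.~1.4]{feng_simon_dimension_part_i_2023}, which gives $\dim_{\mathrm H}\mu\leq\dim_{\mathrm{LY}}(\mu,f)$. Alternatively, one can argue directly: an $n$-cylinder of the Markov coding containing a $\mu$-typical point has, by \eqref{eq:oseledets-singular-values} with $\alpha_1(x,f^n)\approx\alpha_2(x,f^n)\approx e^{n\lambda}$, diameter $e^{-n(\lambda+o(1))}$. It also has measure $e^{-n(h_\mu+o(1))}$ by Shannon--McMillan--Breiman. Hence the lower local dimension is at most $h_\mu/\lambda$.

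For the lower bound, I would use the Ledrappier--Young formula for repellers \cite[Theorem 2.7]{QianXie} together with exact dimensionality \cite[Theorem 2.8]{QianXie}. When the exponents coincide there is only one Lyapunov level, so the Ledrappier--Young filtration has a single step. The formula then reads $h_\mu(f)=\lambda\cdot\dim\mu$, where $\dim\mu$ is the exact dimension, which yields $\dim_{\mathrm H}\mu=h_\mu/\lambda$.

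To make this robust, I would alternatively run a direct argument. Take a $\mu$-typical $x$ and radius $r=e^{-n\lambda}$. Average conformality gives $\|D_xf^n\|\cdot\|(D_xf^n)^{-1}\|=e^{o(n)}$, and since $f$ is $C^{1+\alpha}$ and expanding, bounded distortion along the backward branch keeps this control uniform on the ball. It follows that $f^n$ maps $B(x,e^{-n(\lambda+o(1))})$ into a set of diameter $e^{o(n)}\cdot$(small fixed scale). Hence $B(x,r)$ is contained in a union of at most $e^{o(n)}$ cylinders of length $n-o(n)$, each of mass $e^{-n(h_\mu+o(1))}$. This gives $\underline d_\mu(x)\geq h_\mu/\lambda$.

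The main obstacle is this last step: controlling the number of $n$-cylinders meeting $B(x,r)$. The difficulty is that average conformality is only asymptotic and non-uniform. I would handle it by restricting to a Pesin-type set of large measure on which the $e^{o(n)}$ bounds hold uniformly, using Egorov together with the ratio of singular values from \eqref{eq:oseledets-singular-values}. On that set I would then apply a Lebesgue density argument, as in \Cref{lem:2ll-lower}, to pass from cylinders to balls. Citing \cite{QianXie} directly sidesteps this obstacle, and that is the route I would take in the written proof.
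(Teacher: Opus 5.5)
Your proposal is correct and follows the paper's route. The paper deduces the lemma directly from the Ledrappier--Young formula \cite[Theorem 2.7]{QianXie} together with exact dimensionality \cite[Theorem 2.8]{QianXie}; with a single Lyapunov level these give $h_\mu(f)=\lambda\dim\mu$, and your observation that both branches of \eqref{eq:intro-LY} collapse to $h_\mu(f)/\lambda$ when $\lambda_1=\lambda_2$ completes the identification. The separate upper bound via \cite[Thm.~1.4]{feng_simon_dimension_part_i_2023} and your direct cylinder-counting sketches are therefore unnecessary, since the cited formula already yields equality; the sketches would also need more care with non-uniform distortion.
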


\begin{lem}
\label{lem:average-conformal-sse}
Let $(\Lambda_f,f)$ be a $C^{1+\alpha}$ surface repeller. Suppose that
it admits an SSE measure $\mu$ which is average conformal. Then
\[
\dim_{\mathrm H}\Lambda_f
=
\dim_{\mathrm B}\Lambda_f
=
\sup_{\nu\in\mathcal M_e(\Lambda_f,f)}\{\dim_{\mathrm H}\nu\}
=
s_0(f).
\]
\end{lem}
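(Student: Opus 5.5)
The plan is to combine the upper bound of \Cref{prop zhang upperbound} with a lower bound coming from the SSE measure itself. By \Cref{prop zhang upperbound},
\[
\dim_{\mathrm H}\Lambda_f\leq\underline{\dim}_{\mathrm B}\Lambda_f\leq\overline{\dim}_{\mathrm B}\Lambda_f\leq s_0(f).
\]
It therefore suffices to show $s_0(f)\leq\sup_{\nu\in\mathcal M_e(\Lambda_f,f)}\dim_{\mathrm H}\nu$. The reverse inequality $\sup_\nu\dim_{\mathrm H}\nu\leq\dim_{\mathrm H}\Lambda_f$ is immediate, since each such $\nu$ is carried by $\Lambda_f$ and so has Hausdorff dimension at most $\dim_{\mathrm H}\Lambda_f$.

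For the lower bound, let $\mu$ be the given average conformal SSE measure. By \Cref{prop:sse-max-lyap} together with \Cref{lem:1.1}, $\dim_{\mathrm{LY}}(\mu,f)=s_0(f)$. By \Cref{lem:average-conformal}, $\dim_{\mathrm H}\mu=\dim_{\mathrm{LY}}(\mu,f)$. Chaining these,
\[
s_0(f)=\dim_{\mathrm H}\mu\leq\sup_{\nu\in\mathcal M_e(\Lambda_f,f)}\dim_{\mathrm H}\nu\leq\dim_{\mathrm H}\Lambda_f\leq s_0(f).
\]
All these inequalities are then equalities. In particular $\underline{\dim}_{\mathrm B}\Lambda_f=\overline{\dim}_{\mathrm B}\Lambda_f=s_0(f)$, so the box dimension exists and equals $s_0(f)$.

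No domination or non-integrability hypothesis is needed here. The only substantive input is \Cref{lem:average-conformal}, which rests on the Ledrappier--Young formula of \cite{QianXie} for $C^{1+\alpha}$ repellers. In the equal-exponent case that formula yields $\dim\mu=h_\mu(f)/\lambda$ with $\lambda=\lambda_1=\lambda_2$. This agrees with \eqref{eq:intro-LY} in both regimes: $h_\mu/\lambda_2$ when $h_\mu\leq\lambda_2$, and $1+(h_\mu-\lambda)/\lambda=h_\mu/\lambda$ otherwise.

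The one point I would double-check is that the SSE definition and \Cref{prop:sse-max-lyap} need no transitivity or domination. They do not: \Cref{lem:1.1} and \Cref{prop:sse-max-lyap} are proved for general $C^{1+\alpha}$ surface repellers by monotonicity of $F_\mu$ alone.
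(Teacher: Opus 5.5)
Your proof is correct and is essentially the paper's argument. You take the upper-bound chain from \Cref{prop zhang upperbound} and close it with $s_0(f)=\dim_{\mathrm{LY}}(\mu,f)=\dim_{\mathrm H}\mu\leq\sup_{\nu}\dim_{\mathrm H}\nu$, obtained via \Cref{lem:1.1}, \Cref{prop:sse-max-lyap} and \Cref{lem:average-conformal}, so all the inequalities become equalities.
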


\begin{proof}
By \Cref{prop zhang upperbound},
\begin{align}
\sup_{\nu\in\mathcal{M}_e(\Lambda_f,f)}
\{\dim_{\mathrm H}\nu\}
&\leq \dim_{\mathrm H}\Lambda_f
\leq \underline{\dim}_{\mathrm B}\Lambda_f
\leq \overline{\dim}_{\mathrm B}\Lambda_f
\leq s_0(f).
\end{align}
On the other hand, \Cref{lem:1.1}, \Cref{prop:sse-max-lyap}, and \Cref{lem:average-conformal}
give
\begin{align}
s_0(f)
&=
\sup_{\nu\in\mathcal{M}_e(\Lambda_f,f)}
\{\dim_{\mathrm{LY}}(\nu,f)\}
=
\dim_{\mathrm{LY}}(\mu,f)
=
\dim_{\mathrm H}\mu
\leq
\sup_{\nu\in\mathcal{M}_e(\Lambda_f,f)}
\{\dim_{\mathrm H}\nu\}.
\end{align}
Thus all the inequalities above are equalities.
\end{proof}

\begin{proof}[Proof of \Cref{thm:generic_full}]
Fix $r\in(1,\infty]$ and choose
\[
0<\alpha_r<\min\{r-1,1\}.
\]
Every map considered below
is therefore of class $C^{1+\alpha_r}$, so all the results from the
previous subsections apply with the exponent $\alpha_r$.

Let $U_f\subseteq C^r(M,M)$ be a sufficiently small $C^r$-open
neighborhood of $f$ contained in a $C^1$ structural-stability
neighborhood. Thus every $g\in U_f$ admits the continuation
$(\Lambda_g,g)$, and this continuation admits an SSE measure. Define
\begin{align}
U_{f,1}
&:=
\{g\in U_f:\text{ some SSE measure of }(\Lambda_g,g)
\text{ is average conformal or has zero metric entropy}\},
\\
U_{f,2}&:=U_f\setminus U_{f,1}.
\end{align}

For every $g\in U_{f,1}$, the desired conclusion follows from
\Cref{lem:average-conformal-sse} if the corresponding SSE measure is
average conformal. If an SSE measure $\mu$ has zero entropy, then
$\dim_{\mathrm{LY}}(\mu,g)=0$; hence
\Cref{lem:1.1,prop:sse-max-lyap} give $s_0(g)=0$, and
\Cref{prop zhang upperbound} forces every quantity in
\eqref{eqn:dim formula generic} to be zero.

We now prove a local generic approximation statement on $U_{f,2}$.

\begin{claim}
\label{claim:generic-local}
For every $\epsilon>0$ and every $g\in U_{f,2}$, there exist a
$C^r$-open neighborhood $U_g(\epsilon)\subseteq U_f$ of $g$ and a
$C^r$-open dense subset
$V_g(\epsilon)\subseteq U_g(\epsilon)$ such that, for every
$h\in V_g(\epsilon)$,
\begin{equation}
\label{eqn:dim formula  3 epsilon}
\sup_{\nu\in\mathcal{M}_e(\Lambda_h,h)}
\{\dim_{\mathrm{LY}}(\nu,h)\}-3\epsilon
\leq
\dim_{\mathrm H}\Lambda_h
\leq
\sup_{\nu\in\mathcal{M}_e(\Lambda_h,h)}
\{\dim_{\mathrm H}\nu\}+3\epsilon.
\end{equation}
\end{claim}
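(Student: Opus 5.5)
Fix $g\in U_{f,2}$ and $\epsilon>0$, and take an SSE measure $\mu$ of $(\Lambda_g,g)$. Since $g\in U_{f,2}$, we have $h_\mu(g)>0$ and $\lambda_1(\mu,g)>\lambda_2(\mu,g)>0$, so \Cref{prop: cao approxi} applies. It produces a horseshoe repeller $(Q_\epsilon,g^N)\subseteq\Lambda_g$ with a dominated splitting, and by \Cref{rem 2.2} its measure of maximal entropy $\nu_{\mathrm{MME}}$ satisfies
\[
\dim_{\mathrm{LY}}(\nu_{\mathrm{MME}},g^N)\geq s_0(g)-\epsilon
\]
once the horseshoe parameter is chosen small enough. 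Horseshoe repellers are structurally stable and dominated splittings persist. So there is a $C^r$-open neighborhood $U_g(\epsilon)$ of $g$ such that every $h\in U_g(\epsilon)$ has a continuation $(Q_{\epsilon,h},h^N)\subseteq\Lambda_h$, still a horseshoe with a dominated splitting. Its maximal-entropy measure has the same entropy, because entropy is a topological-conjugacy invariant, and Lyapunov exponents close to those for $g$, by continuity of the splitting, of $Dh$, and of the integrals \eqref{eq:Lyexpo1}--\eqref{eq:Lyexpo2}. Shrinking $U_g(\epsilon)$, we may also assume $|s_0(h)-s_0(g)|<\epsilon$. This uses continuity of $h\mapsto P_{\mathrm{sub}}(s,h)$, which I would obtain from the variational formula \eqref{eq:sub-pressure-lyapunov} together with the continuity of singular values of $Dh^n$ over the conjugated repeller. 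Together these give $\dim_{\mathrm{LY}}$ of the continued maximal-entropy measure at least $s_0(h)-3\epsilon$ for every $h\in U_g(\epsilon)$.

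Next define $V_g(\epsilon)$ as the set of $h\in U_g(\epsilon)$ for which $(Q_{\epsilon,h},h^N)$ is $su$-non-integrable. By the density result of \Cref{appendix:B}, applied to the horseshoe repeller $(Q_{\epsilon,g},g^N)$ and transported through iterates (perturbing $h$ perturbs $h^N$ in a way that still hits the open dense set, since the appendix perturbation is local near a point of the horseshoe), this set is $C^r$-open and dense in $U_g(\epsilon)$. For $h\in V_g(\epsilon)$, \Cref{prop: irre-horseshoe} applied to $(Q_{\epsilon,h},h^N)$ gives
\[
\dim_{\mathrm H}\Lambda_h\geq\dim_{\mathrm H}Q_{\epsilon,h}=\sup_{\nu\in\mathcal M_e(Q_{\epsilon,h},h^N)}\dim_{\mathrm H}\nu\geq s_0(h)-3\epsilon .
\]
By \Cref{lem:1.1}, $s_0(h)=\sup_\nu\dim_{\mathrm{LY}}(\nu,h)$, and this yields the left inequality of \eqref{eqn:dim formula  3 epsilon}.

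For the right inequality, choose an $h^N$-ergodic measure $\overline\nu$ on $Q_{\epsilon,h}$ with $\dim_{\mathrm H}\overline\nu\geq\dim_{\mathrm H}Q_{\epsilon,h}-\epsilon$. Averaging it as in \Cref{lem:bernoulli-dim-equality} gives an $h$-ergodic measure $\nu$ with $\dim_{\mathrm H}\nu=\dim_{\mathrm H}\overline\nu$. Then
\[
\dim_{\mathrm H}\Lambda_h\leq s_0(h)\leq \dim_{\mathrm H}Q_{\epsilon,h}+3\epsilon\leq\dim_{\mathrm H}\nu+4\epsilon,
\]
using \Cref{prop zhang upperbound} for the first step. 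The constant can be tightened to $3\epsilon$ by choosing the horseshoe parameter smaller, which only rescales $\epsilon$.

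I expect two points to be the main obstacles. The first is the uniformity of the lower bound on $\dim_{\mathrm{LY}}$ under perturbation: the exponents of measures on the continued horseshoe must move only slightly, uniformly in $h\in U_g(\epsilon)$. I would get this from the $C^1$-closeness of the conjugacy and of the splitting over the compact horseshoe. The second is verifying that \Cref{appendix:B} applies to the iterate $h^N$ restricted to a sub-horseshoe. If the appendix is stated only for $h$ itself, I would instead apply it to the horseshoe viewed as a repeller of $h^N$ and realize the required perturbation of $h^N$ as a perturbation of $h$ supported near one point of one orbit segment.
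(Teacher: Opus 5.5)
Your outline follows the paper's proof closely: the SSE measure, the Cao--Pesin--Zhao horseshoe $(Q_\epsilon,g^N)$, the continuation with the conjugated maximal-entropy measure, the bound $|s_0(h)-s_0(g)|<\epsilon$, and then \Cref{prop: irre-horseshoe} and \Cref{lem:1.1} for both inequalities. The gap is the density of $V_g(\epsilon)$, which your argument does not establish.

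Applying \Cref{appendix:B} to $(Q_\epsilon,g^N)$ gives an open dense set of maps in a $C^r$-neighborhood of $g^N$ in $C^r(M,M)$. The maps of the form $h^N$ with $h$ near $g$ do not form an open subset there. So density in the ambient space says nothing about how this open set meets the family $\{h^N\}$.

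Saying that the appendix perturbation is local near $q_0$ does not fix this. A perturbation of $h$ supported near $q_0$ changes $h^N$ at every point whose $h$-orbit enters the support within $N$ steps. You would therefore need both backward orbit segments, read in $h$-time rather than $h^N$-time, to avoid the support at every intermediate time except the intended one. In particular, $q_0$ must lie off the whole $g$-orbit of $p$. This is \Cref{lem:periodic-point-with-two-preimages} at the level of $g$, and the appendix applied to $g^N$ does not provide it. Your fallback points in this direction but is not carried out.

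The paper handles this step by applying \Cref{appendix:B} to the cyclic $g$-repeller $\widetilde Q_\epsilon=\bigcup_{j=0}^{N-1}g^j(Q_\epsilon)$ instead of to $g^N$. This repeller is transitive and non-invertible because $(Q_\epsilon,g^N)$ is a full-shift horseshoe. The appendix's perturbations are then perturbations of $g$ itself. They give a $C^r$-open dense set of $h\in U_g(\epsilon)$ for which $(\widetilde Q_\epsilon(h),h)$ is $su$-non-integrable. Non-integrability of $(Q_\epsilon(h),h^N)$ then follows by sampling two $h$-histories with distinct strong unstable directions at the times $0,-N,-2N,\ldots$. With this replacement the rest of your argument goes through.

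Two minor remarks:
\begin{itemize}
\item In the right inequality you do not need a near-maximizer $\overline\nu$. \Cref{prop: irre-horseshoe} gives $\dim_{\mathrm H}Q_\epsilon(h)=\sup_{\overline\nu\in\mathcal M_e(Q_\epsilon(h),h^N)}\dim_{\mathrm H}\overline\nu$ exactly. Averaging shows this supremum is at most $\sup_{\nu\in\mathcal M_e(\Lambda_h,h)}\dim_{\mathrm H}\nu$, which yields $3\epsilon$ directly instead of $4\epsilon$.
\item Both inequalities use only $s_0(h)\le s_0(g)+\epsilon$, that is, upper semicontinuity of $h\mapsto P_{\mathrm{sub}}(s,h)$. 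That is the direction your variational sketch readily gives. The paper cites \cite[Theorem~3.5]{cao_dimension_2019} for full continuity.
\end{itemize}
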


\begin{proof}[Proof of the claim]
Fix $g\in U_{f,2}$ and choose an SSE measure $\mu_g$ of
$(\Lambda_g,g)$. By the definition of $U_{f,2}$,
\[
h_{\mu_g}(g)>0,
\qquad
\lambda_1(\mu_g,g)>\lambda_2(\mu_g,g)>0.
\]
Choose the approximation parameter in \Cref{prop: cao approxi} so that
the error in \Cref{rem 2.2} is at most $\epsilon$ and the resulting
horseshoe has positive topological entropy. We obtain a horseshoe
$(Q_\epsilon,g^N)$ whose measure of maximal entropy $\nu_g$ satisfies
\[
\dim_{\mathrm{LY}}(\nu_g,g^N)
\geq
\dim_{\mathrm{LY}}(\mu_g,g)-\epsilon
=
s_0(g)-\epsilon,
\]
where the last equality follows from
\Cref{lem:1.1,prop:sse-max-lyap}.

Shrink a $C^r$-open neighborhood $U_g(\epsilon)$ of $g$ so that
$(Q_\epsilon,g^N)$ has a continuation
$(Q_\epsilon(h),h^N)$ for every $h\in U_g(\epsilon)$ and its dominated
splitting persists. Let $\nu_h$ be the measure of maximal entropy of
$(Q_\epsilon(h),h^N)$. The conjugacy between the horseshoes sends
$\nu_g$ to $\nu_h$, while the dominated bundles and the derivative
cocycles vary continuously in the $C^1$ topology. After shrinking
$U_g(\epsilon)$ if necessary, we therefore have
\[
\dim_{\mathrm{LY}}(\nu_h,h^N)
\geq
\dim_{\mathrm{LY}}(\nu_g,g^N)-\epsilon,
\qquad
h\in U_g(\epsilon).
\]

Let
\[
\widetilde Q_\epsilon
:=
\bigcup_{j=0}^{N-1}g^j(Q_\epsilon)
\]
be the cyclic \(g\)-repeller generated by the \(g^N\)-horseshoe
\(Q_\epsilon\). Since \((Q_\epsilon,g^N)\) is a full-shift horseshoe,
\((\widetilde Q_\epsilon,g)\) is transitive and non-invertible.
Applying \Cref{appendix:B} to this cyclic repeller, we obtain a
\(C^r\)-open dense set $V_g(\epsilon)$ in $U_g(\epsilon)$ for which
\((\widetilde Q_\epsilon(h),h)\) is \(su\)-non-integrable. Sampling two
histories with distinct strong unstable directions at the times
\(0,-N,-2N,\ldots\) shows that the \(h^N\)-component
\((Q_\epsilon(h),h^N)\) is also \(su\)-non-integrable.
Hence
\Cref{prop: irre-horseshoe,lem:1.1} give
\[
\dim_{\mathrm H}Q_\epsilon(h)
=
\dim_{\mathrm B}Q_\epsilon(h)
=
\sup_{\nu\in\mathcal{M}_e(Q_\epsilon(h),h^N)}
\{\dim_{\mathrm H}\nu\}
=
\sup_{\nu\in\mathcal{M}_e(Q_\epsilon(h),h^N)}
\{\dim_{\mathrm{LY}}(\nu,h^N)\}.
\]
Since $Q_\epsilon(h)\subseteq\Lambda_h$, it follows that
\begin{align}
\dim_{\mathrm H}\Lambda_h
&\geq
\dim_{\mathrm H}Q_\epsilon(h)
\geq
\dim_{\mathrm{LY}}(\nu_h,h^N)
\geq
s_0(g)-2\epsilon.
\end{align}

We next compare $s_0(g)$ and $s_0(h)$. By
\cite[Theorem~3.5]{cao_dimension_2019}, for every fixed $s\in[0,2]$ the
sub-additive pressure is continuous within the
class of $C^{1+\alpha_r}$ expanding maps. Applying this result to the
finitely many continued transitive components and taking their maximum,
we conclude that
\[
h\longmapsto P_{\mathrm{sub}}(s,h)
\]
is continuous on $U_g(\epsilon)$ in the $C^r$ topology. Since
$s\mapsto P_{\mathrm{sub}}(s,h)$ is continuous and strictly decreasing
with a unique zero, the zero $h\mapsto s_0(h)$ is continuous as well.
Shrinking $U_g(\epsilon)$ once more, we may assume that
\[
|s_0(h)-s_0(g)|<\epsilon,
\qquad
h\in U_g(\epsilon).
\]
Consequently, for every $h\in V_g(\epsilon)$,
\[
\dim_{\mathrm H}\Lambda_h
\geq
s_0(h)-3\epsilon
=
\sup_{\nu\in\mathcal{M}_e(\Lambda_h,h)}
\{\dim_{\mathrm{LY}}(\nu,h)\}-3\epsilon,
\]
where the equality follows from \Cref{lem:1.1}. This is the left-hand
inequality in \eqref{eqn:dim formula  3 epsilon}.

For the opposite inequality, \Cref{prop zhang upperbound} gives
\[
\dim_{\mathrm H}\Lambda_h\leq s_0(h).
\]
Moreover,
\begin{align}
s_0(h)
&\leq s_0(g)+\epsilon
\leq \dim_{\mathrm{LY}}(\nu_g,g^N)+2\epsilon %
\leq \dim_{\mathrm{LY}}(\nu_h,h^N)+3\epsilon\\
&\leq
\sup_{\nu\in\mathcal M_e(Q_\epsilon(h),h^N)}
\{\dim_{\mathrm H}\nu\}+3\epsilon.
\end{align}
For every
$\overline\nu\in\mathcal M_e(Q_\epsilon(h),h^N)$, the measure
\[
\nu:=\frac1N\sum_{j=0}^{N-1}h_*^j\overline\nu
\]
is $h$-ergodic, is supported on $\Lambda_h$, and satisfies
$\dim_{\mathrm H}\nu=\dim_{\mathrm H}\overline\nu$, as in
\Cref{lem:bernoulli-dim-equality}. Therefore,
\[
\sup_{\nu\in\mathcal M_e(Q_\epsilon(h),h^N)}
\{\dim_{\mathrm H}\nu\}
\leq
\sup_{\nu\in\mathcal M_e(\Lambda_h,h)}
\{\dim_{\mathrm H}\nu\}.
\]
Combining the last three displays gives the right-hand inequality in
\eqref{eqn:dim formula  3 epsilon}, and proves the claim.
\end{proof}

For $\epsilon>0$, let
\[
W(\epsilon)
:=
\left\{
h\in U_f:
\eqref{eqn:dim formula  3 epsilon}
\text{ holds for }(\Lambda_h,h)
\right\},
\]
and let $O(\epsilon)$ be the interior of $W(\epsilon)$ in the $C^r$
topology of $U_f$. We claim that $O(\epsilon)$ is dense in $U_f$.
Indeed, let $B\subseteq U_f$ be a nonempty $C^r$-open set. If
$B\cap U_{f,2}\neq\emptyset$, choose $g\in B\cap U_{f,2}$. The set
$B\cap U_g(\epsilon)$ is nonempty and open, so the density of
$V_g(\epsilon)$ in $U_g(\epsilon)$ gives
\[
\emptyset\neq B\cap V_g(\epsilon)\subseteq O(\epsilon).
\]
If $B\cap U_{f,2}=\emptyset$, then $B\subseteq U_{f,1}\subseteq
W(\epsilon)$, and hence $B\subseteq O(\epsilon)$. Thus
$O(\epsilon)$ is $C^r$-open and dense in $U_f$.

Set
\[
V:=\bigcap_{n=1}^{\infty}O(1/n).
\]
Then $V$ is $C^r$-residual in $U_f$. For every $g\in V$, letting
$n\to\infty$ in \eqref{eqn:dim formula  3 epsilon} and using
\Cref{lem:1.1} gives
\[
s_0(g)
=
\sup_{\nu\in\mathcal M_e(\Lambda_g,g)}
\{\dim_{\mathrm{LY}}(\nu,g)\}
\leq
\dim_{\mathrm H}\Lambda_g
\leq
\sup_{\nu\in\mathcal M_e(\Lambda_g,g)}
\{\dim_{\mathrm H}\nu\}
\leq
\dim_{\mathrm H}\Lambda_g.
\]
Hence
\[
\dim_{\mathrm H}\Lambda_g
=
\sup_{\nu\in\mathcal M_e(\Lambda_g,g)}
\{\dim_{\mathrm H}\nu\}
=
s_0(g).
\]
Finally, \Cref{prop zhang upperbound} yields
\[
s_0(g)
=
\dim_{\mathrm H}\Lambda_g
\leq
\underline{\dim}_{\mathrm B}\Lambda_g
\leq
\overline{\dim}_{\mathrm B}\Lambda_g
\leq
s_0(g).
\]
Thus the box dimension exists and equals $s_0(g)$, completing the proof.
\end{proof}

\subsection{Polynomial counterexample}\label{sec:counterexample}

We present a polynomial example showing that the generic condition or the $su$-non-integrability assumption cannot be removed. Consider the square $[-1,1]^2\subseteq\mathbb R^2$ and the polynomial map
\[
f(x,y)=(1.1x,-200y^2+100).
\]
Let $(\Lambda_f,f)$ be the repeller determined by the isolating neighborhood $[-1,1]^2$. In this example, the dynamics splits as a product of a weak expanding direction and a one-dimensional expanding Cantor dynamics. The strong unstable direction is integrable, and one checks that the Hausdorff dimension of $\Lambda_f$ attains the lower bound in \Cref{prop: general dim estimates} rather than the upper bound $s_0$. Hence this gives a counterexample to the conclusions of \Cref{thm:A} and \Cref{thm:B} without the generic or $su$-non-integrability assumption.

\subsection{Dimension lower bound for arbitrary repellers}

In this section, we give Hausdorff dimension estimates for arbitrary $C^{1+\alpha}$ repellers. Let $f$ be a $C^{1+\alpha}$ map of a Riemannian manifold $M$ of dimension $d$, and let $\Lambda_f$ be a repeller of $f$.

We first introduce the super-additive singular pressure $P_{\mathrm{sup}}(s)$. For $s\in[0,d]$, define
\[
\psi^s_n(x)
=
\sum_{i=1}^{[s]}\log\alpha_i(x,f^n)
+
(s-[s])
\log\alpha_{[s]+1}(x,f^n),
\]
where the singular values $\alpha_i(x,f^n)$ are those fixed in \Cref{sec:2.1}. The sequence $\{-\psi_n^s\}_{n\geq1}$ is super-additive, and its variational pressure is
\[
P_{\mathrm{sup}}(s)
=
\sup_{\mu\in\mathcal{M}_e(\Lambda_f,f)}
\left\{
h_\mu(f)
-
\lim_{n\to\infty}\frac1n
\int\psi^s_n\,\mathrm d\mu
\right\};
\]
see \cite{cao_dimension_2019}. Equivalently, if
$\lambda_1(\mu,f)\geq\cdots\geq\lambda_d(\mu,f)>0$
are the Lyapunov exponents of an ergodic measure $\mu$, then
\[
P_{\mathrm{sup}}(s)
=
\sup_{\mu\in\mathcal{M}_e(\Lambda_f,f)}
\left\{
h_\mu(f)
-
\sum_{i=1}^{[s]}\lambda_i(\mu,f)
-
(s-[s])
\lambda_{[s]+1}(\mu,f)
\right\}.
\]
Let $s_0'$ be the unique zero of $P_{\mathrm{sup}}(s)$.

{
\begin{prop}\label{prop: general dim estimates}
For any $C^{1+\alpha}$ repeller $(\Lambda_f,f)$,
\[
s_0'
\leq
\sup_{\nu\in\mathcal M_e(\Lambda_f,f)}
\{\dim_{\mathrm H}\nu\}
\leq
\dim_{\mathrm H}\Lambda_f
\leq
\underline{\dim}_{\mathrm B}\Lambda_f
\leq
\overline{\dim}_{\mathrm B}\Lambda_f
\leq
s_0,
\]
where $s_0$ is the unique zero of $P_{\mathrm{sub}}(s)$ and $s_0'$ is the unique zero of $P_{\mathrm{sup}}(s)$.
\begin{proof}
The second inequality follows from the definition of the Hausdorff dimension of a measure recalled in \Cref{sec:2.1}. The remaining upper bounds are given by \Cref{prop zhang upperbound}. It therefore remains to prove
\[
s_0'
\leq
\sup_{\nu\in\mathcal M_e(\Lambda_f,f)}
\{\dim_{\mathrm H}\nu\}.
\]

For an ergodic measure $\nu$, define its lower Lyapunov dimension
$\dim_{\mathrm{LLY}}(\nu,f)$ as the unique zero of
\[
G_\nu(t)
:=
h_\nu(f)
-
\sum_{i=1}^{\lfloor t\rfloor}\lambda_i(\nu,f)
-
(t-\lfloor t\rfloor)
\lambda_{\lfloor t\rfloor+1}(\nu,f).
\]
By the Ledrappier--Young entropy formula for repellers,
\[
\dim_{\mathrm{LLY}}(\nu,f)
\leq
\dim_{\mathrm H}\nu;
\]
see \cite{QianXie}. Consequently,
\[
\sup_{\nu\in\mathcal M_e(\Lambda_f,f)}
\{\dim_{\mathrm{LLY}}(\nu,f)\}
\leq
\sup_{\nu\in\mathcal M_e(\Lambda_f,f)}
\{\dim_{\mathrm H}\nu\}.
\]

It remains to identify $s_0'$ with the supremum of the lower Lyapunov dimensions. The proof is the same monotonicity argument as in \Cref{lem:1.1}: each $G_\nu$ is continuous and strictly decreasing, and
\[
P_{\mathrm{sup}}(s)
=
\sup_{\nu\in\mathcal M_e(\Lambda_f,f)}
G_\nu(s).
\]
Hence
\[
s_0'
=
\sup_{\nu\in\mathcal M_e(\Lambda_f,f)}
\{\dim_{\mathrm{LLY}}(\nu,f)\}.
\]
Combining the preceding two displays proves the result.
\end{proof}
\end{prop}
}

\begin{rem}
\Cref{prop: general dim estimates} recovers the lower estimate in \cite[Theorem~3.1]{cao_dimension_2019}, namely $s_0'\leq \dim_{\mathrm H}\Lambda_f$. In light of this proposition, \Cref{thm:dominated} and \Cref{thm:generic_full} show that, for $su$-non-integrable surface repellers and for generic surface repellers, the upper estimate given by the sub-additive topological pressure coincides with the lower estimate given by the mass distribution principle.
\end{rem}

\section{Proofs of results on graphs of Weierstrass-type functions and non-linear IFS}\label{sec: IFS and Weierstrass}
\subsection{Dimensions of graphs of Weierstrass-type functions}
\label{sec:weierstrass-graph-dimension}

In this subsection, we prove \Cref{thm:ren-shen-c}. We follow the notation of
Ren and Shen \cite{ren_dichotomy_2021}. Let \(b\geq 2\) be an integer,
\(\lambda\in(1/b,1)\), and let \(\phi\in C^r(\mathbb T)\), where
\(r\in(1,\infty]\cup\{\omega\}\) and \(\mathbb T=\mathbb R/\mathbb Z\).
Set
\[
    W(x)=W_{\lambda,b}^{\phi}(x)
    :=
    \sum_{n=0}^{\infty}\lambda^n\phi(b^n x).
\]
Then \(W\) satisfies the functional equation
\begin{equation}\label{eq:weierstrass-cohomological-equation}
    W(x)=\phi(x)+\lambda W(bx).
\end{equation}

Consider the skew-product map
\[
    F:\mathbb T\times\mathbb R\to \mathbb T\times\mathbb R,
    \qquad
    F(x,y)=\left(bx,\frac{y-\phi(x)}{\lambda}\right).
\]
By \eqref{eq:weierstrass-cohomological-equation}, the graph
\[
    \Gamma_W:=\{(x,W(x)):x\in\mathbb T\}
\]
is \(F\)-invariant. Moreover, the projection
\[
    \pi_x:\Gamma_W\to\mathbb T,
    \qquad
    \pi_x(x,W(x))=x,
\]
conjugates \(F|_{\Gamma_W}\) to the expanding map \(T(x)=bx\):
\[
    \pi_x\circ F|_{\Gamma_W}=T\circ\pi_x.
\]
Therefore
\begin{equation}\label{eq:entropy-weierstrass}
    h_{\mathrm{top}}(F|_{\Gamma_W})=h_{\mathrm{top}}(T)=\log b .
\end{equation}

\paragraph{We first verify that \((\Gamma_W,F)\) is a repeller.} Choose \(C>0\) such that
\[
    |W(x)|<C
    \qquad \text{for every }x\in\mathbb T,
\]
and set
\[
    U:=\mathbb T\times(-C,C).
\]
For any \((x,y)\in\mathbb T\times\mathbb R\), using
\eqref{eq:weierstrass-cohomological-equation}, we have
\[
    F(x,y)-F(x,W(x))
    =
    \left(0,\frac{y-W(x)}{\lambda}\right).
\]
Thus the vertical distance to the graph is multiplied by \(\lambda^{-1}>1\)
under one forward iterate. Hence if \((x,y)\notin\Gamma_W\), then
\[
    \left|\pi_y(F^n(x,y))-W(b^n x)\right|
    =
    \lambda^{-n}|y-W(x)|
    \to\infty.
\]
Since \(W\) is bounded, every point of \(U\setminus\Gamma_W\) eventually
leaves \(U\). Consequently,
\[
    \Gamma_W
    =
    \{z\in U:F^n(z)\in U\text{ for every }n\geq 0\}.
\]
The non-wandering condition follows from the conjugacy with \(T\).

It remains to verify uniform expansion. Along \(\Gamma_W\), we have
\[
    D_{(x,y)}F
    =
    \begin{pmatrix}
        b & 0\\
        -\lambda^{-1}\phi'(x) & \lambda^{-1}
    \end{pmatrix}.
\]
Since
\[
    \min\{b,\lambda^{-1}\}>1,
\]
choose \(\kappa\in(1,\min\{b,\lambda^{-1}\})\). For \(A>0\), consider the
equivalent Riemannian norm
\[
    \|(u,v)\|_A:=\sqrt{A^2u^2+v^2}.
\]
In the coordinates \((U,V)=(Au,v)\), the matrix of \(DF\) becomes
\[
    \begin{pmatrix}
        b & 0\\
        -A^{-1}\lambda^{-1}\phi'(x) & \lambda^{-1}
    \end{pmatrix}.
\]
As \(A\to\infty\), these matrices converge uniformly to the diagonal matrix
\[
    \begin{pmatrix}
        b & 0\\
        0 & \lambda^{-1}
    \end{pmatrix},
\]
whose smallest singular value is \(\min\{b,\lambda^{-1}\}>\kappa\). Hence,
for \(A\) sufficiently large, the smallest singular value of \(DF\) with
respect to \(\|\cdot\|_A\) is uniformly larger than \(\kappa\). Therefore
\[
    \|D_zF(v)\|_A\geq \kappa\|v\|_A
    \qquad
    \text{for all }z\in\Gamma_W,\ v\in T_z(\mathbb T\times\mathbb R).
\]
Thus \(F\) is uniformly expanding on \(\Gamma_W\), and consequently
\((\Gamma_W,F)\) is a surface repeller generated by the \(C^r\) map \(F\).

\paragraph{We now identify the dominated splitting with the slope functions %
in \cite{ren_dichotomy_2021}.} Let
\[
    \hat x=(\ldots,x_{-2},x_{-1},x_0)
\]
be a prehistory for the base map \(T(x)=bx\). The weak unstable direction is
the vertical direction
\[
    E^{wu}(x_0,W(x_0))=\mathbb R(0,1),
\]
with expansion rate \(\lambda^{-1}\). The strong unstable direction is the
direction expanded by \(b\). Indeed,
\[
    D_{(x,y)}F
    =
    \begin{pmatrix}
        b & 0\\
        -\lambda^{-1}\phi'(x) & \lambda^{-1}
    \end{pmatrix}.
\]
If the strong direction has slope \(s\) at \((x,W(x))\), then its image has
slope
\[
    s_+
    =
    \frac{-\lambda^{-1}\phi'(x)+\lambda^{-1}s}{b}
    =
    -\gamma\phi'(x)+\gamma s,
    \qquad
    \gamma:=\frac{1}{b\lambda}.
\]
Iterating this relation along the backward orbit gives
\[
    E^{su}(\hat x)=\mathbb R(1,Y(\hat x)),
\]
where
\begin{equation}\label{eq:ren-shen-Y}
    Y(\hat x)
    =
    -\sum_{n=1}^{\infty}\gamma^n\phi'(x_{-n}).
\end{equation}
This is precisely the slope function used by Ren and Shen. Since
\[
    b>\lambda^{-1}>1,
\]
the expansion along \(E^{su}\) dominates the expansion along \(E^{wu}\).
Therefore
\[
    T_{\pi(\hat z)}(\mathbb T\times\mathbb R)
    =
    E^{wu}(\pi(\hat z))\oplus E^{su}(\hat z)
\]
is a dominated splitting over the inverse limit of \((\Gamma_W,F)\).

We split the proof according to whether the slopes \eqref{eq:ren-shen-Y}
depend on the prehistory.

\paragraph{First suppose that the slopes depend on the prehistory.} Then
there exist \(x\in\mathbb T\) and two prehistories \(\hat x,\hat x'\) over
\(x\) such that
\[
    Y(\hat x)\neq Y(\hat x').
\]
Equivalently, for the corresponding prehistories over the same point of
\(\Gamma_W\),
\[
    E^{su}(\hat x)\neq E^{su}(\hat x').
\]
Thus the repeller \((\Gamma_W,F)\) is \(su\)-non-integrable in the sense of
\Cref{sec:2.1}. By \Cref{thm:dominated},
\[
    \dim_{\mathrm H}\Gamma_W=s_0,
\]
where \(s_0\) is the unique zero of the sub-additive pressure
\(P_{\mathrm{sub}}(s)\).

We compute this zero explicitly. For every ergodic
\(F|_{\Gamma_W}\)-invariant measure \(\mu\), the Lyapunov exponents are
constant:
\[
    \lambda_1(\mu,F)=\log b,
    \qquad
    \lambda_2(\mu,F)=-\log\lambda .
\]
Indeed, \(DF^n\) is lower triangular with diagonal entries \(b^n\) and
\(\lambda^{-n}\). Since \(b>\lambda^{-1}\), the off-diagonal term grows at
most at rate \(b^n\), and the determinant is \((b/\lambda)^n\). Hence the two
singular-value exponents are \(\log b\) and \(-\log\lambda\). Moreover, by
\eqref{eq:entropy-weierstrass},
\[
    \sup_{\mu\in\mathcal M_e(\Gamma_W,F)}h_\mu(F)=\log b.
\]
Since \(\lambda\in(1/b,1)\), the desired zero lies in \((1,2)\). For
\(s\in[1,2]\), the variational formula for the sub-additive pressure gives
\[
\begin{aligned}
    P_{\mathrm{sub}}(s)
    &=
    \sup_{\mu\in\mathcal M_e(\Gamma_W,F)}
    \left\{
        h_\mu(F)-\lambda_2(\mu,F)
        -(s-1)\lambda_1(\mu,F)
    \right\}  \\
    &=
    \log b+\log\lambda-(s-1)\log b .
\end{aligned}
\]
Therefore the unique zero is
\[
    s_0
    =
    2+\frac{\log\lambda}{\log b}
    =
    2+\log_b\lambda .
\]
Hence, in the non-integrable case, we can apply \cref{thm:dominated} to obtain
\[
    \dim_{\mathrm H}\Gamma_W=2+\log_b\lambda .
\]

\paragraph{It remains to treat the integrable case. }In the terminology of
Ren and Shen \cite{ren_dichotomy_2021}, this is precisely Condition \(H^*\): the slope function
\(Y\) is independent of the prehistory. Therefore there exists a function
\(u:\mathbb T\to\mathbb R\) such that
\[
    Y(\hat x)=u(x_0)
\]
for every prehistory \(\hat x\) with a terminal point \(x_0\).

Assume first that \(r<\infty\). Write
\[
    r-1=k+\alpha,
\]
where \(k\in\mathbb N\cup\{0\}\) and \(\alpha\in[0,1)\), with the usual
convention that \(\alpha=0\) corresponds to an integer regularity class.
Since \(\phi\in C^r\), we have \(\phi'\in C^{r-1}\). For each inverse branch
\(\tau_{\mathbf i}^n\) of \(T^n\), the \(n\)-th term in the series defining
\(Y\) has the form
\[
    \gamma^n\phi'\circ \tau_{\mathbf i}^n .
\]
For every \(0\leq \ell\leq k\),
\[
    \left\|D^\ell(\phi'\circ \tau_{\mathbf i}^n)\right\|_{C^0}
    \leq
    b^{-n\ell}\|D^\ell\phi'\|_{C^0},
\]
and, if \(\alpha>0\),
\[
    [D^k(\phi'\circ \tau_{\mathbf i}^n)]_{\alpha}
    \leq
    b^{-n(k+\alpha)}[D^k\phi']_{\alpha}.
\]
Multiplying by \(\gamma^n\) and summing over \(n\), we obtain uniform
convergence in the \(C^{r-1}\) norm, uniformly over all prehistories.
Therefore the prehistory-independent function \(u\) belongs to
\(C^{r-1}(\mathbb T)\). For \(r=\infty\), the same argument applies in every
finite \(C^k\) norm.

The invariance of the slope gives
\[
    u(bx)
    =
    -\gamma\phi'(x)+\gamma u(x),
    \qquad
    \gamma=\frac{1}{b\lambda}.
\]
Equivalently,
\begin{equation}\label{eq:u-cohomological}
    u(x)=\phi'(x)+\lambda b\cdot u(bx).
\end{equation}
Integrating \eqref{eq:u-cohomological} over \(\mathbb T\), we get (using the fact the integral of $\phi'$ vanishes)
\[
    \int_{\mathbb T}u(x)\,dx
    =
    \lambda b\int_{\mathbb T}u(bx)\,dx
    =
    \lambda b\int_{\mathbb T}u(x)\,dx .
\]
Since \(\lambda b\neq 1\), it follows that
\[
    \int_{\mathbb T}u(x)\,dx=0.
\]
Thus \(u\) admits a periodic primitive \(G_0\), namely \[ G_0'(x)=u(x). \] By \eqref{eq:u-cohomological}, the
function
\[
    G_0(x)-\lambda G_0(bx)-\phi(x)
\]
has zero derivative, and hence is constant. After adding a suitable constant
to \(G_0\), we obtain a function \(G\in C^r(\mathbb T)\) satisfying
\[
    G(x)=\phi(x)+\lambda G(bx).
\]
Comparing this equation with \eqref{eq:weierstrass-cohomological-equation},
we see that \(H:=W-G\) satisfies
\[
    H(x)=\lambda H(bx).
\]
Iterating gives \(H(x)=\lambda^n H(b^n x)\). Since \(H\) is bounded on
\(\mathbb T\) and \(0<\lambda<1\), letting \(n\to\infty\) yields
\(H\equiv0\). Therefore \(W=G\in C^r(\mathbb T)\).

For the analytic case \(r=\omega\), Condition \(H^*\) implies that
\(W\) is real analytic by \cite{ren_dichotomy_2021}. Hence the
same regularity conclusion also holds in the analytic category.

Combining the two cases, either
\[
    \dim_{\mathrm H}\operatorname{graph}(W)=2+\log_b\lambda,
\qquad\text{or}\qquad
    W\in C^r(\mathbb T).
\]
Finally, assume that $\phi$ is non-constant and fix $b\geq2$. Following the final argument in
\cite[p.~1061]{ren_dichotomy_2021}, it can be proved that
the exceptional set
\[
\mathcal E_{\phi,b}^{r}
:=
\left\{
\lambda\in(1/b,1):
W_{\lambda,b}^{\phi}\in C^r(\mathbb T)
\right\}
\]
is finite.
This proves \Cref{thm:ren-shen-c}.

\subsection{Dimensions of non-linear iterated function systems}
\label{sec:proof-ifs-non-perturbative}

In this subsection we prove \Cref{thm:ifs-non-perturbative} and \Cref{thm:ifs}. The proof is based
on three correspondences: an IFS satisfying SSC gives a horseshoe repeller;
uniform non-conformality gives a dominated splitting; and weak irreducibility of
the IFS is exactly the \(su\)-non-integrability condition for the associated
repeller.

Let
\[
    \Phi=\{\phi_i\}_{i=1}^m
\]
be a \(C^{1+\alpha}\) planar contracting IFS on a bounded open domain
\(U\subset\mathbb R^2\), and let \(\Lambda_\Phi\) be its attractor. Assume
that \(\Phi\) satisfies the strong separation condition
\[
    \phi_i(\Lambda_\Phi)\cap \phi_j(\Lambda_\Phi)=\emptyset,
    \qquad i\neq j.
\]

We first realize \(\Lambda_\Phi\) as a horseshoe repeller. After shrinking
neighborhoods if necessary, choose pairwise disjoint open sets \(U_i\) such
that
\[
    \phi_i(\Lambda_\Phi)\subset U_i
\]
and such that each inverse map \(\phi_i^{-1}\) is well-defined and expanding
on \(U_i\). Put
\[
    V:=\bigcup_{i=1}^m U_i
\]
and define
\[
    f:V\to U,
    \qquad
    f|_{U_i}:=\phi_i^{-1}.
\]
Then \(f(\Lambda_\Phi)=\Lambda_\Phi\). Moreover, one has
\[
    \Lambda_\Phi
    =
    \{x\in V:f^n(x)\in V\text{ for every }n\geq0\}.
\]
Furthermore, by SSC, the coding map
\[
    \pi:\Sigma_m^+\to\Lambda_\Phi,
    \qquad
    \pi(i_0i_1i_2\ldots)
    =
    \lim_{n\to\infty}
    \phi_{i_0}\circ\phi_{i_1}\circ\cdots\circ\phi_{i_n}(x_*)
\]
is one-to-one, where \(x_*\in U\) is arbitrary, and it satisfies
\[
    f\circ\pi=\pi\circ\sigma.
\]
Therefore \((\Lambda_\Phi,f)\) is a horseshoe repeller.

We next prove that uniform non-conformality gives a dominated splitting for
this repeller. Let \(\hat\Lambda_\Phi\) be the inverse limit space of
\((\Lambda_\Phi,f)\), and let \(\hat f\) be the natural extension. Consider
the normalized derivative cocycle over the homeomorphism
\(\hat f:\hat\Lambda_\Phi\to\hat\Lambda_\Phi\):
\[
    A(\hat x)
    :=
    \frac{Df_{\pi(\hat x)}}{\sqrt{|\det Df_{\pi(\hat x)}|}} .
\]
For \(n\geq1\),
\[
    A^{(n)}(\hat x)
    :=
    A(\hat f^{n-1}\hat x)\cdots A(\hat f\hat x)A(\hat x)
    =
    \frac{Df^n_{\pi(\hat x)}}{\sqrt{|\det Df^n_{\pi(\hat x)}|}} .
\]
The normalization is scalar, so it does not change projective directions.

The uniform non-conformality of the IFS is equivalent to uniform
hyperbolicity of this normalized \(Df\)-cocycle. Indeed, if the forward orbit
of \(x=\pi(\hat x)\) follows the word \(\mathbf i=(i_0,\ldots,i_{n-1})\), then
the inverse branch of \(f^n\) sending \(f^n(x)\) to \(x\) is
\[
    \Phi_{\mathbf i}^n
    =
    \phi_{i_0}\circ\cdots\circ\phi_{i_{n-1}},
\]
and
\[
    Df^n_x
    =
    \left(D\Phi_{\mathbf i}^n(f^n x)\right)^{-1}.
\]
After normalization by \(\sqrt{|\det|}\), taking the inverse does not change
the operator norm in dimension two. Hence the uniform non-conformality
condition
\[
    \frac{\|D\Phi_{\mathbf i}^n(y)\|}
         {\sqrt{|\det D\Phi_{\mathbf i}^n(y)|}}
    \geq C e^{\epsilon n}
\]
is equivalent to
\[
    \|A^{(n)}(\hat x)\|\geq C e^{\epsilon n}
    \qquad
    \text{for every }\hat x\in\hat\Lambda_\Phi,\ n\geq1.
\]
Thus the normalized \(Df\)-cocycle is uniformly hyperbolic in the sense of
\cite[Section 2.1]{avila_bochi_trieste_2008}. 

By \cite[Theorem 2.3]{avila_bochi_trieste_2008}, the uniformly hyperbolic
normalized cocycle has a unique continuous invariant stable direction. Since
the base \((\hat\Lambda_\Phi,\hat f)\) is a homeomorphism,
\cite[Corollary 2.5]{avila_bochi_trieste_2008} gives a continuous invariant
splitting
\[
    T_{\pi(\hat x)}\mathbb R^2
    =
    E^u_A(\hat x)\oplus E^s_A(\hat x)
\]
for the normalized \(Df\)-cocycle.

We now identify this splitting with the dominated splitting required in
\Cref{def:dominated splitting}. The stable direction \(E^s_A\) is the
direction with the smaller expansion rate for \(Df\). Since
\(A^{(n)}(\hat x)\) depends only on the forward orbit of \(x=\pi(\hat x)\),
the stable direction obtained in \cite[Theorem 2.3]{avila_bochi_trieste_2008}
is also determined only by this forward orbit. For the repeller, the forward
orbit is uniquely determined by \(x\). Hence \(E^s_A(\hat x)\) depends only on
\(\pi(\hat x)\), and we define
\[
    E^{wu}(\pi(\hat x)):=E^s_A(\hat x).
\]
The unstable direction \(E^u_A(\hat x)\), on the other hand, is determined by
the prehistory. We define
\[
    E^{su}(\hat x):=E^u_A(\hat x).
\]
Thus
\[
    T_{\pi(\hat x)}\mathbb R^2
    =
    E^{wu}(\pi(\hat x))\oplus E^{su}(\hat x).
\]
The exponential separation of the uniformly hyperbolic normalized cocycle
gives domination for \(Df\): after increasing the iterate if necessary, there
exists \(n_0\geq1\) such that
\[
    \frac{\|Df^{n_0}_{\pi(\hat x)}|_{E^{su}(\hat x)}\|}
         {\|Df^{n_0}_{\pi(\hat x)}|_{E^{wu}(\pi(\hat x))}\|}
    \geq 2
\]
for every \(\hat x\in\hat\Lambda_\Phi\). Therefore
\((\Lambda_\Phi,f)\) admits a dominated splitting in the sense of
\Cref{def:dominated splitting}.

We now compare weak irreducibility with \(su\)-non-integrability. By the definition
of weak irreducibility for uniformly non-conformal IFSs, the IFS is not weakly irreducible
exactly when the strong direction, equivalently the most expanded direction
for the associated repeller, descends to a line field on the
attractor. Under the above identification, this direction is precisely
\(E^{su}(\hat x)\). Therefore non-weak irreducibility is equivalent to the condition that
for every \(x\in\Lambda_\Phi\),
\[
    \#\left\{
    \mathbb P(E^{su}(\hat x)):\hat x\in\hat\Lambda_\Phi,\,
    \pi(\hat x)=x
    \right\}=1.
\]
This is exactly \(su\)-integrability in \Cref{sec:2.1}. Consequently,
\[
    \Phi\text{ is weakly irreducible}
    \quad\Longleftrightarrow\quad
    (\Lambda_\Phi,f)\text{ is }su\text{-non-integrable}.
\]

We can now apply the repeller results. The system \((\Lambda_\Phi,f)\) is a
\(C^{1+\alpha}\) horseshoe repeller, admits a dominated splitting, and is
\(su\)-non-integrable. Hence \Cref{thm:dominated} gives
\[
    \dim_{\mathrm H}\Lambda_\Phi=s_0(\Phi).
\]

Finally, let \(\nu\) be a fully supported stationary measure on
\(\Lambda_\Phi\) associated with a Bernoulli probability vector
\((p_1,\ldots,p_m)\), with \(p_i>0\) for every \(i\). Since the coding map is
one-to-one under SSC, \(\nu\) is the push-forward of the fully supported
Bernoulli measure on \(\Sigma_m^+\). Viewed through the conjugacy
\(\pi:\Sigma_m^+\to\Lambda_\Phi\), this is precisely a fully supported
\(f\)-Bernoulli measure on the horseshoe repeller. Applying \Cref{thm:base}
yields
\[
    \dim_{\mathrm H}\nu=\dim_{\mathrm{LY}}(\nu,\Phi).
\]
This proves \Cref{thm:ifs-non-perturbative}.

We finally explain why \Cref{thm:ifs} follows from \Cref{thm:generic_full}. By the
first part of the proof, every \(C^r\) planar IFS satisfying SSC can be
identified, through its inverse branches, with a horseshoe repeller, and this
identification is local in the \(C^r\) topology. Thus the \(C^r\)-
generic subset of repellers given by \Cref{thm:generic_full} pulls back to a
\(C^r\)-generic subset in the space of \(C^r\) IFSs satisfying SSC.
Hence \Cref{thm:generic_full} gives
\[
    \dim_{\mathrm H}\Lambda_\Phi=s_0(\Phi)
\]
for a \(C^r\)-generic \(C^r\) planar IFS satisfying SSC, which is
\Cref{thm:ifs}.

\appendix

\section{Linearization of the holonomy projections}
Throughout this section, we work in the following setting. Fix $y\in\Lambda_{n_0}$ and $\bfi,\bfj\in\Sigma^-$; write $\widetilde y:=\exp^{-1}_{x_0}(y)$ and $\Omega:=\exp^{-1}_{x_0}(\Lambda_{n_0})$. For $z\in\Lambda_{n_0}$ and $\bfk\in\Sigma^-$, put $W_\bfk(z):= W^{su}_{\mathrm{loc}}(z,\bfk)$, and write
\[
\widetilde z:=\exp^{-1}_{x_0}(z),\qquad
\widetilde W_\bfk(\widetilde z):=\exp^{-1}_{x_0}\bigl(W_\bfk(z)\bigr)
\]
for their images under the chart; every point of $\Omega$ is of the form $\widetilde z$ with $z\in\Lambda_{n_0}$. For a local transversal $T'$, we write $d_{T'}$ for the restricted Riemannian distance on $T'$.

For $z\in\Lambda_{n_0}$, let $T_z$ be a local transversal through $z$ such that $\exp^{-1}_{x_0}(T_z)$ is a segment of the line through $\widetilde z$ orthogonal to $(D_z\exp^{-1}_{x_0})(E^{su}_\bfi(z))$, and put $T:=T_y$. As explained at the beginning of \Cref{subsec:statement and preparation}, each $T_z$ is a $\theta$-uniform $su$-transversal. Moreover, each $W_\bfk(z')$, $z'\in\Lambda_{n_0}$, $\bfk\in\Sigma^-$ meets $T_z$ in exactly one point. 
We use orthonormal coordinates $w=(w_T,w_N)$ on $\R^2$ centered at $\widetilde y$, with $\exp^{-1}_{x_0}(T)$ on the horizontal axis and $(D_y\exp^{-1}_{x_0})(E^{su}_\bfi(y))$ on the vertical axis; since $T_yW_\bfi(y)=E^{su}_\bfi(y)$ for $y\in\Lambda_{n_0}$,
\[
\exp^{-1}_{x_0}(T)\subseteq\R\times\{0\},\qquad
T_{\widetilde y}\widetilde W_\bfi(\widetilde y)=\{0\}\times\R .
\]
We identify $\exp^{-1}_{x_0}(T)$ with a subset of $\R$ via $w\mapsto w_T$; for $u\in\exp^{-1}_{x_0}(T)$ and $r>0$, $B^T(u,r)$ denotes the ball of radius $r$ about $u$ in $\exp^{-1}_{x_0}(T)$. For $\bfk\in\Sigma^-$ and $\widetilde z\in\Omega$, let $\pi_\bfk(\widetilde z)$ be the unique point of $\widetilde W_\bfk(\widetilde z)\cap\exp^{-1}_{x_0}(T)$; this defines the holonomy projection $\pi_\bfk\colon\Omega\to\exp^{-1}_{x_0}(T)$ along the $\bfk$-leaves onto $T$, read in the chart. Since $y\in T$, we have $\pi_\bfk(\widetilde y)=\widetilde y=0$ for every $\bfk$.

Let
\[
\widetilde V_\bfi:=\exp_{x_0}^{-1}(V_\bfi),
\qquad
S:=\sigma_\bfi\circ\exp_{x_0}:
\widetilde V_\bfi\longrightarrow\mathbb R^2.
\]
where $\sigma_\bfi$ is the straightening map of \Cref{prop:straightening map}, constructed with the fixed transversal $T_0$; thus, $S_1$ is the $T_0$-holonomy coordinate $p$ of the $\bfi$-plaques.

As in \Cref{sec: sec5}, let
\[
L':=\sigma_{\bfi}^{-1}\bigl(\cD^{h}_{Ck}(\sigma_{\bfi}(y))\cap\sigma_{\bfi}(W_\bfi(y))\bigr)
\qquad\text{and}\qquad
L:=\cD^{\bfi,h}_{Ck}(y)\cap\calA^{su}_\bfi(y)=L'\cap\Lambda_{n_0};
\]
thus $L'\subset W_\bfi(y)$ contains $y$, and $\sigma_\bfi(L')$ is a vertical segment of length $2^{-Ck}$ containing $\sigma_\bfi(y)$. Let $v$ be a unit vector spanning the vertical direction of the $\sigma_\bfi$-coordinates, so that $\sigma_\bfi(L')\subset\sigma_\bfi(y)+\R v$, and set
\[
\gamma(\lambda):=\bigl(\exp^{-1}_{x_0}\circ\sigma^{-1}_\bfi\bigr)\bigl(\sigma_\bfi(y)+\lambda v\bigr),
\qquad \sigma_\bfi(y)+\lambda v\in\sigma_\bfi(L');
\]
thus $\gamma$ parametrizes $\exp^{-1}_{x_0}(L')\subset\widetilde W_\bfi(\widetilde y)$, $\gamma(0)=\widetilde y=0$, and $\lambda$ ranges over an interval of length $2^{-Ck}$ containing $0$. Let
\[
    c_0:=\|\gamma'(0)\|=\Bigl\|D_{\sigma_\bfi(y)}\bigl(\exp_{x_0}^{-1}\circ\sigma_\bfi^{-1}\bigr)\,v\Bigr\| ,
\]
the leafwise derivative of $\exp^{-1}_{x_0}\circ\sigma^{-1}_\bfi$ at $\sigma_\bfi(y)$; by \Cref{prop:straightening map}\ref{S:inverse reg in tang}, $c_0\in[1,1.01]$. Since $\gamma'(0)$ is tangent to $\widetilde W_\bfi(\widetilde y)$ at $\widetilde y$, it is vertical; we orient $v$ so that $\gamma'(0)=(0,c_0)$.

We use the constants
\[
1<C_0<C_1<\cdots<C_5
\]
of \Cref{rem:constants}. The following local regularity facts hold in the chart, for all $z\in\Lambda_{n_0}$ and $\bfk\in\Sigma^-$:
\begin{enumerate}[label=(R\arabic*),ref=(R\arabic*)]
\item\label{R1:leaf graph} By \ref{U:USR}, $\widetilde W_\bfk(\widetilde z)$ is the graph over its tangent line at $\widetilde z$ of a $C^{1+\alpha/2}$ function $\varphi$ with $\varphi(0)=\varphi'(0)=0$ and $|\varphi'(s_1)-\varphi'(s_2)|\leq C_1|s_1-s_2|^{\alpha/2}$; in particular $|\varphi(s)|\leq C_1|s|^{1+\alpha/2}$, and the tangent lines of $\widetilde W_\bfk(\widetilde z)$ at any two of its points $w_1,w_2$ make an angle at most $C_1\,d(w_1,w_2)^{\alpha/2}$.
\item\label{R2:holder Esu} By \Cref{prop Holder continuous} and \Cref{rem:constants}, applied to lifts with the same past $\bfk$: for $z_1,z_2\in\Lambda_{n_0}$, the tangent lines $T_{\widetilde z_1}\widetilde W_\bfk(\widetilde z_1)$ and $T_{\widetilde z_2}\widetilde W_\bfk(\widetilde z_2)$ make an angle at most $C_0\,d(\widetilde z_1,\widetilde z_2)^{\alpha}$.
\item\label{R3:transversality} By the $\theta$-uniformity of $T$, together with \ref{R1:leaf graph} and the smallness of $\Lambda_{n_0}$: every tangent line of $\widetilde W_\bfk(\widetilde z)$ makes an angle at least $\theta/2$ with the horizontal axis $\R\times\{0\}$; consequently, $\widetilde W_\bfk(\widetilde z)$ is a graph over the vertical axis. For a line $\ell$ making an angle at least $\theta/2$ with the horizontal axis, let $s(\ell)\in\R$ be its signed slope with respect to the vertical axis, that is, $\ell$ is spanned by $(s(\ell),1)$. Then $|s(\ell)|\leq\cot(\theta/2)\leq2/\theta$, and for two such lines $\ell_1,\ell_2$,
\begin{equation}\label{eq:slope-vs-angle}
|s(\ell_1)-s(\ell_2)|\leq\Bigl(\frac{\pi}{\theta}\Bigr)^2\angle(\ell_1,\ell_2).
\end{equation}
Indeed, let $\vartheta_1,\vartheta_2\in[-\pi/2+\theta/2,\,\pi/2-\theta/2]$ be the signed angles of $\ell_1,\ell_2$ from the vertical axis, so that $s(\ell_i)=\tan\vartheta_i$ and $\cos\vartheta_i\geq\sin(\theta/2)\geq\theta/\pi$. Since lines are unoriented, $\angle(\ell_1,\ell_2)=\min\{|\vartheta_1-\vartheta_2|,\pi-|\vartheta_1-\vartheta_2|\}$, so $|\sin(\vartheta_1-\vartheta_2)|=\sin\angle(\ell_1,\ell_2)\leq\angle(\ell_1,\ell_2)$. Hence
\[
|s(\ell_1)-s(\ell_2)|
=\frac{|\sin(\vartheta_1-\vartheta_2)|}{\cos\vartheta_1\cos\vartheta_2}
\leq\frac{\angle(\ell_1,\ell_2)}{\sin^2(\theta/2)}
\leq\Bigl(\frac{\pi}{\theta}\Bigr)^2\angle(\ell_1,\ell_2).
\]
\item\label{R4:leafwise derivative} By \Cref{prop:straightening map}\ref{S:inverse reg in tang}, there is a constant $C_6>1$, depending only on the dynamical system, such that the leafwise derivative of $\exp^{-1}_{x_0}\circ\sigma^{-1}_\bfi$ along $\sigma_\bfi(L')$ satisfies
\[
c_0\bigl(1-C_62^{-\alpha Ck/2}\bigr)\leq\|\gamma'(\lambda)\|\leq c_0\bigl(1+C_62^{-\alpha Ck/2}\bigr)
\qquad\text{for all }\lambda\text{ with }\sigma_\bfi(y)+\lambda v\in\sigma_\bfi(L');
\]
in particular, as $c_0\leq1.01$,
\begin{equation}\label{eq:leafwise-derivative}
\bigl|\,\|\gamma'(\lambda)\|-c_0\bigr|\leq 1.01\,C_6\,2^{-\alpha Ck/2}.
\end{equation}
\end{enumerate}

Finally, fix $C,k\in\N$ with $C\alpha>6$, and assume
\[
\tau_y:=\tan\angle_y(\bfi,\bfj)
=\tan\angle\bigl(T_{\widetilde y}\widetilde W_\bfi(\widetilde y),\,T_{\widetilde y}\widetilde W_\bfj(\widetilde y)\bigr)
\in [2^{-k-2}, 2^{-k+2}].
\]
For $\bfk\in\Sigma^-$ let $\mathfrak m_\bfk:=s\bigl(T_{\widetilde y}\widetilde W_\bfk(\widetilde y)\bigr)$ be the signed slope of the $\bfk$-plaque through $\widetilde y$; thus $\mathfrak m_\bfi=0$ and $|\mathfrak m_\bfj|=\tau_y$.

\subsection*{Plaques as graphs and the slope estimate}
For $\bfk\in\Sigma^-$ and $\widetilde q=(\widetilde q_T,\widetilde q_N)\in\Omega$, by \ref{R3:transversality} the arc of $\widetilde W_\bfk(\widetilde q)$ between $\widetilde q$ and $\pi_\bfk(\widetilde q)$ is a graph over the vertical axis,
\[
\Gamma^\bfk_{\widetilde q}=\{(\psi^\bfk_{\widetilde q}(t),t):\ t\text{ lies between }\widetilde q_N\text{ and }0\},
\qquad
\psi^\bfk_{\widetilde q}(\widetilde q_N)=\widetilde q_T,\quad \psi^\bfk_{\widetilde q}(0)=\pi_\bfk(\widetilde q),
\]
and $(\psi^\bfk_{\widetilde q})'(t)=s\bigl(T_w\Gamma^\bfk_{\widetilde q}\bigr)$ for $w=(\psi^\bfk_{\widetilde q}(t),t)$; in particular $|(\psi^\bfk_{\widetilde q})'|\leq2/\theta$. Consequently,
\begin{equation}\label{eq:projection-formula}
\pi_\bfk(\widetilde q)=\widetilde q_T+\int_{\widetilde q_N}^{0}(\psi^\bfk_{\widetilde q})'(t)\,dt .
\end{equation}
All three lemmas below rest on \eqref{eq:projection-formula} and the following estimate for the integrand.

\begin{lem}[Slope estimate]\label{lem:slope-estimate}
Let $K_*:=60\,C_1\theta^{-3}$. For every $\bfk\in\Sigma^-$, $\widetilde q\in\Omega$, and $t$ between $\widetilde q_N$ and $0$,
\[
\bigl|(\psi^\bfk_{\widetilde q})'(t)-\mathfrak m_\bfk\bigr|
\leq K_*\bigl(d(\widetilde q,\widetilde y)^{\alpha}+|\widetilde q_N|^{\alpha/2}\bigr).
\]
\end{lem}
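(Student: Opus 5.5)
The plan is to bound the slope difference by a chain of angle comparisons between three tangent lines, and then convert the angle bound into a slope bound using \eqref{eq:slope-vs-angle} from \ref{R3:transversality}. Fix $\bfk$, $\widetilde q\in\Omega$, $t$ between $\widetilde q_N$ and $0$, and set $w=(\psi^\bfk_{\widetilde q}(t),t)\in\Gamma^\bfk_{\widetilde q}\subseteq\widetilde W_\bfk(\widetilde q)$. The three lines are
\[
\ell_w:=T_w\widetilde W_\bfk(\widetilde q),\qquad
\ell_{\widetilde q}:=T_{\widetilde q}\widetilde W_\bfk(\widetilde q),\qquad
\ell_{\widetilde y}:=T_{\widetilde y}\widetilde W_\bfk(\widetilde y).
\]
By definition $(\psi^\bfk_{\widetilde q})'(t)=s(\ell_w)$ and $\mathfrak m_\bfk=s(\ell_{\widetilde y})$. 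All three lines make angle at least $\theta/2$ with the horizontal axis by \ref{R3:transversality}. Therefore
\[
|s(\ell_w)-s(\ell_{\widetilde y})|\leq(\pi/\theta)^2\bigl(\angle(\ell_w,\ell_{\widetilde q})+\angle(\ell_{\widetilde q},\ell_{\widetilde y})\bigr).
\]

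\emph{Step 1: the first angle.} Both $w$ and $\widetilde q$ lie on the same plaque $\widetilde W_\bfk(\widetilde q)$, so \ref{R1:leaf graph} gives $\angle(\ell_w,\ell_{\widetilde q})\leq C_1\,d(w,\widetilde q)^{\alpha/2}$. Both points lie on the graph $\Gamma^\bfk_{\widetilde q}$ over the vertical axis, whose slopes are bounded by $2/\theta$. Their vertical coordinates differ by $|t-\widetilde q_N|\leq|\widetilde q_N|$. Hence
\[
d(w,\widetilde q)\leq\sqrt{1+4\theta^{-2}}\,|\widetilde q_N|\leq 3\theta^{-1}|\widetilde q_N|.
\]
This gives $\angle(\ell_w,\ell_{\widetilde q})\leq C_1(3/\theta)^{\alpha/2}|\widetilde q_N|^{\alpha/2}\leq 3C_1\theta^{-1}|\widetilde q_N|^{\alpha/2}$, using $\theta<1$ and $\alpha\leq1$.

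\emph{Step 2: the second angle.} Both $\widetilde q$ and $\widetilde y$ lie in $\Omega$, so \ref{R2:holder Esu} with the common past $\bfk$ gives $\angle(\ell_{\widetilde q},\ell_{\widetilde y})\leq C_0\,d(\widetilde q,\widetilde y)^\alpha\leq C_1\,d(\widetilde q,\widetilde y)^\alpha$.

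\emph{Step 3: combine.} Multiplying by $(\pi/\theta)^2\leq 10\theta^{-2}$ gives a bound of the form $30C_1\theta^{-3}(\cdots)$, which is within $K_*=60C_1\theta^{-3}$.

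The one point that needs care is Step 1: it requires that the arc between $\widetilde q$ and $w$ really lies in the domain where \ref{R1:leaf graph} applies. Specifically, the arc must stay inside the local plaque of uniform size $r_0$, so that the Hölder bound on tangent lines is valid along it. This follows because $|\widetilde q_N|$ is at most the diameter of the small set $\Omega$, which was chosen small relative to $r_0$ when $n_0$ was fixed. Apart from checking this, the argument reduces to bookkeeping of constants.
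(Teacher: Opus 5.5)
Your proof is correct and follows essentially the same route as the paper: you compare $T_w\Gamma^\bfk_{\widetilde q}$ with $T_{\widetilde q}\widetilde W_\bfk(\widetilde q)$ via \ref{R1:leaf graph} and then with $T_{\widetilde y}\widetilde W_\bfk(\widetilde y)$ via \ref{R2:holder Esu}, bound $d(w,\widetilde q)$ by a multiple of $|\widetilde q_N|$ using the slope bound $2/\theta$ and the mean value theorem, and convert angles to slopes with \eqref{eq:slope-vs-angle}. The only differences are cosmetic constants: you use $\sqrt{1+4\theta^{-2}}\leq 3/\theta$ where the paper uses $1+2/\theta\leq 4/\theta$, and both land within $K_*$.
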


\begin{proof}
Put $w:=(\psi^\bfk_{\widetilde q}(t),t)\in\Gamma^\bfk_{\widetilde q}$. Comparing $T_w\Gamma^\bfk_{\widetilde q}$ with $T_{\widetilde q}\widetilde W_\bfk(\widetilde q)$ (by \ref{R1:leaf graph}, both being tangent lines of $\widetilde W_\bfk(\widetilde q)$) and then with $T_{\widetilde y}\widetilde W_\bfk(\widetilde y)$ (by \ref{R2:holder Esu}), we get
\[
\angle\bigl(T_w\Gamma^\bfk_{\widetilde q},\,T_{\widetilde y}\widetilde W_\bfk(\widetilde y)\bigr)
\leq C_1\,d(w,\widetilde q)^{\alpha/2}+C_0\,d(\widetilde q,\widetilde y)^{\alpha}.
\]
Since $|(\psi^\bfk_{\widetilde q})'|\leq2/\theta$, the mean value theorem gives $|\psi^\bfk_{\widetilde q}(t)-\widetilde q_T|\leq(2/\theta)|t-\widetilde q_N|$, hence, as $\theta\leq\pi/2<2$ and $|t-\widetilde q_N|\leq|\widetilde q_N|$,
\[
d(w,\widetilde q)\leq(1+2/\theta)|t-\widetilde q_N|\leq(4/\theta)|\widetilde q_N| .
\]
Both lines make an angle at least $\theta/2$ with the horizontal axis, and $(\psi^\bfk_{\widetilde q})'(t)=s(T_w\Gamma^\bfk_{\widetilde q})$, $\mathfrak m_\bfk=s(T_{\widetilde y}\widetilde W_\bfk(\widetilde y))$. Therefore, by \eqref{eq:slope-vs-angle},
\[
\bigl|(\psi^\bfk_{\widetilde q})'(t)-\mathfrak m_\bfk\bigr|
\leq\Bigl(\frac{\pi}{\theta}\Bigr)^2\Bigl(C_1\bigl(\tfrac{4}{\theta}\bigr)^{\alpha/2}|\widetilde q_N|^{\alpha/2}+C_0\,d(\widetilde q,\widetilde y)^{\alpha}\Bigr)
\leq K_*\bigl(d(\widetilde q,\widetilde y)^{\alpha}+|\widetilde q_N|^{\alpha/2}\bigr),
\]
where we used $(4/\theta)^{\alpha/2}\leq4/\theta$, $C_0\leq C_1$, $1+4/\theta\leq6/\theta$, and $6\pi^2\leq60$.
\end{proof}

\subsection*{The three lemmas}

\begin{lem}[Planar geometry input I]
\label{lem:geom-input-proj-curved-rectangle}
Given $A>0$, there exists $A'>0$, depending only on $A$, $\alpha$, $\theta$, $C_1$ and $C_5$, such that the following holds. If $\widetilde{R}\subset \Omega$ satisfies
\begin{equation}
\label{eqn:rectangle inclusion}
    S(\widetilde R)
    \subset
    [S_1(\widetilde{y})-A2^{-(C+1)k},S_1(\widetilde{y})+A 2^{-(C+1)k}]
    \times
    [S_2(\widetilde{y})-A2^{-Ck},S_2(\widetilde{y})+A2^{-Ck}],
\end{equation}
then
\[
    \pi_\bfj(\widetilde R)
    \subset
    B^T\bigl(\pi_\bfj(\widetilde y),A'2^{-(C+1)k}\bigr).
\]
\end{lem}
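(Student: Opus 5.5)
We will reduce everything to the projection formula \eqref{eq:projection-formula} and \Cref{lem:slope-estimate}. Fix $\widetilde q\in\widetilde R$. By the triangle inequality, $|\pi_\bfj(\widetilde q)-\pi_\bfj(\widetilde y)|=|\pi_\bfj(\widetilde q)|\leq|\pi_\bfj(\widetilde q)-\pi_\bfi(\widetilde q)|+|\pi_\bfi(\widetilde q)|$. We bound each term by a multiple of $2^{-(C+1)k}$.

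\emph{Step 1: the $\bfi$-projection and the location of $\widetilde q$.} The point $\pi_\bfi(\widetilde q)$ lies on the $\bfi$-plaque through $\widetilde q$, so $S_1(\pi_\bfi(\widetilde q))=S_1(\widetilde q)$. Also $\widetilde y=\pi_\bfi(\widetilde y)$ lies on $T$. Since $S$ restricted to $\exp^{-1}_{x_0}(T)$ is the holonomy from $T$ to $T_0$ read in charts, \Cref{lem holder of Esu} and \Cref{rem:constants} make this restriction bi-Lipschitz with constant bounded by a power of $C_5$. Hence \eqref{eqn:rectangle inclusion} gives $|\pi_\bfi(\widetilde q)|\leq C_5^{2}A2^{-(C+1)k}$.

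Next, $S$ is $C_5^2$-bi-Lipschitz, and $S(\widetilde q)$ lies within $\sqrt2 A2^{-Ck}$ of $S(\widetilde y)$. Therefore $d(\widetilde q,\widetilde y)\leq 2C_5^2A2^{-Ck}$, and in particular $|\widetilde q_N|\leq 2C_5^2A2^{-Ck}$.

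\emph{Step 2: comparing the two projections.} By \eqref{eq:projection-formula} applied with $\bfk=\bfi$ and $\bfk=\bfj$,
\[
\pi_\bfj(\widetilde q)-\pi_\bfi(\widetilde q)=\int_{\widetilde q_N}^{0}\bigl((\psi^\bfj_{\widetilde q})'(t)-(\psi^\bfi_{\widetilde q})'(t)\bigr)\,dt .
\]
We write the integrand as $(\mathfrak m_\bfj-\mathfrak m_\bfi)$ plus the two error terms controlled by \Cref{lem:slope-estimate}. The main term contributes at most $|\mathfrak m_\bfj|\,|\widetilde q_N|\leq 2^{-k+2}\cdot2C_5^2A2^{-Ck}$, which is $O(2^{-(C+1)k})$. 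The errors contribute at most
\[
2K_*\bigl(d(\widetilde q,\widetilde y)^{\alpha}+|\widetilde q_N|^{\alpha/2}\bigr)|\widetilde q_N|\lesssim 2^{-Ck(1+\alpha/2)} .
\]
This is at most a constant times $2^{-(C+1)k}$ because $C\alpha/2>3>1$, which follows from the standing assumption $C\alpha>6$.

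Collecting the three bounds gives an explicit constant $A'$ depending only on $A,\alpha,\theta,C_1,C_5$. The only delicate point is the first estimate of Step 1: the $T$-coordinate of $\pi_\bfi$ must be controlled by the straightened horizontal coordinate through the holonomy bi-Lipschitz constant, rather than only by the coarser $2^{-Ck}$ vertical size. Everything else is a direct integration of the slope estimate.
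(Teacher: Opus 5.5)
Your argument is correct, but it is organized differently from the paper's. The paper never passes through $\pi_\bfi$. It bounds the chart coordinate $|\widetilde z_T|$ of a point $\widetilde z\in\widetilde R$ directly. To do so it introduces an auxiliary point of $W_\bfi(y)\cap T_z$, where $T_z$ is the transversal through $z$, and uses the holonomy between $T_0$ and $T_z$ to place that point within $O(2^{-(C+1)k})$ of $\widetilde z$. It then uses the graph bound $|\varphi(s)|\leq C_1|s|^{1+\alpha/2}$ for $\widetilde W_\bfi(\widetilde y)$ to conclude $|\widetilde z_T|\lesssim 2^{-(C+1)k}+2^{-(1+\alpha/2)Ck}$. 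Finally it applies \eqref{eq:projection-formula} once, for $\bfj$ only, with $|(\psi^\bfj_{\widetilde z})'|\leq\tau_y+K_*(\cdots)$.

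You instead write $\pi_\bfj(\widetilde q)=\pi_\bfi(\widetilde q)+\bigl(\pi_\bfj(\widetilde q)-\pi_\bfi(\widetilde q)\bigr)$. The first term follows in one line from the bi-Lipschitz holonomy from $T$ itself to $T_0$. Strictly speaking it is $S_1$, not $S$, that restricts on $\exp^{-1}_{x_0}(T)$ to the holonomy coordinate, and $S_1$ is constant on $\bfi$-plaques. The second term comes from subtracting the two projection formulas: $\widetilde q_T$ cancels and only slope differences remain, which is essentially the mechanism of \Cref{rem:geom-input-commutator}.

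Your route dispenses with the auxiliary transversal $T_z$ and with the curvature of the $\bfi$-leaf. It also makes the case $\bfj=\bfi$ noted in \Cref{rem:scales-I} immediate, since that case is just your Step 1. The paper's route yields in addition that the straightened rectangle is thin in the exponential chart itself, through the bound on $|\widetilde z_T|$, though the statement does not need this. Both give $A'$ depending only on $A,\alpha,\theta,C_1,C_5$; your holonomy constant satisfies $C_2\leq C_5$ by \Cref{rem:constants}.
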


\begin{rem}\label{rem:scales-I}
The rectangle has a height of order $2^{-Ck}$, much larger than its width $2^{-(C+1)k}$, yet its $\bfj$-projection has a length of order $2^{-(C+1)k}$ only: the $\bfj$-plaques are almost vertical, with a tilt $\tau_y\leq2^{-k+2}$, so following them over the height $2^{-Ck}$ displaces the projection by at most $\tau_y\cdot O(2^{-Ck})=O(2^{-(C+1)k})$, the order of the width, while the curvature of the plaques contributes only $O(2^{-(1+\alpha/2)Ck})$. The lemma is sharp at the scale $2^{-(C+1)k}$.
 
The proof uses only the upper bound $\tau_y\leq2^{-k+2}$. In particular, the conclusion also holds for $\bfj=\bfi$, that is, for the projection $\pi_\bfi$ along the $\bfi$-leaves, with the same constant $A'$; this case is used in \eqref{equ:upperbound-NB}.
\end{rem}

\begin{proof}
Set $a:=A2^{-(C+1)k}$ and $b:=A2^{-Ck}$, so that $a\leq b$. Let $\widetilde{z}=(\widetilde{z}_T,\widetilde{z}_N)\in\widetilde R$.

\smallskip\noindent
\emph{Step 1: distance to $\widetilde y$.}
By \eqref{eqn:rectangle inclusion}, $S(\widetilde z)$ lies in the rectangle of side lengths $2a\times2b$ centered at $S(\widetilde y)$, so $d(S(\widetilde z),S(\widetilde y))\leq a+b\leq 2b$. Since $S$ is bi-Lipschitz with constant $C_5^2$,
\[
    d(\widetilde{z},\widetilde{y})
    \leq C_5^2\, d(S(\widetilde{z}),S(\widetilde{y}))
    \leq K_1b ,
    \qquad K_1:=2C_5^2 .
\]
Hence
\begin{equation}\label{eq:v-bound-from-bilip}
    |\widetilde{z}_N|\leq d(\widetilde{z},\widetilde{y})\leq K_1b .
\end{equation}

\smallskip\noindent
\emph{Step 2: the horizontal coordinate $\widetilde z_T$.}
Since $S_1$ is the $T_0$-holonomy coordinate, \eqref{eqn:rectangle inclusion} gives
\[
    d_{T_0}\bigl(W_{\bfi}(z)\cap T_0,\,W_{\bfi}(y)\cap T_0\bigr)\leq C_5a .
\]
Let $q$ be the unique point of $W_\bfi(y)\cap T_z$. Since $W_\bfi(z)\cap T_z=\{z\}$, the bi-Lipschitz property of the holonomy along the $\bfi$-leaves between the transversals $T_0$ and $T_z$ (\Cref{lem holder of Esu}) gives
\[
    d_{T_z}(z,q)\leq C_5^2 a .
\]
Hence $\widetilde q:=\exp^{-1}_{x_0}(q)=(\widetilde q_T,\widetilde q_N)\in\widetilde W_\bfi(\widetilde y)$ satisfies
\[
    d(\widetilde{z},\widetilde{q})\leq C_5\,d(z,q)\leq C_5\,d_{T_z}(z,q)\leq C_5^3a .
\]
Together with \eqref{eq:v-bound-from-bilip}, this gives
\[
    |\widetilde{q}_N|\leq |\widetilde{z}_N|+d(\widetilde{z},\widetilde{q})\leq K_1b+C_5^3a\leq K_2b,
    \qquad K_2:=K_1+C_5^3 .
\]
By \ref{R1:leaf graph} applied to $\widetilde W_\bfi(\widetilde y)$, whose tangent line at $\widetilde y$ is the vertical axis, we have $\widetilde q=(\varphi(\widetilde q_N),\widetilde q_N)$ with $|\varphi(s)|\leq C_1|s|^{1+\alpha/2}$; hence
\[
    |\widetilde{q}_T|\leq C_1|\widetilde{q}_N|^{1+\alpha/2}\leq K_3 b^{1+\alpha/2},
    \qquad K_3:=C_1K_2^{1+\alpha/2}.
\]
Therefore
\begin{equation}\label{eq:u-bound-from-H1H2}
    |\widetilde{z}_T|
    \leq |\widetilde{z}_T-\widetilde{q}_T|+|\widetilde{q}_T|
    \leq d(\widetilde{z},\widetilde{q})+|\widetilde{q}_T|
    \leq K_4\bigl(a+b^{1+\alpha/2}\bigr),
    \qquad K_4:=C_5^3+K_3 .
\end{equation}

\smallskip\noindent
\emph{Step 3: the $\bfj$-projection.}
By \Cref{lem:slope-estimate} with $\bfk=\bfj$ and $\widetilde q=\widetilde z$, together with $|\mathfrak m_\bfj|=\tau_y$ and \eqref{eq:v-bound-from-bilip}, for all $t$ between $\widetilde z_N$ and $0$,
\[
    |(\psi^\bfj_{\widetilde{z}})'(t)|
    \leq \tau_y+K_*\bigl(K_1^{\alpha}b^{\alpha}+K_1^{\alpha/2}b^{\alpha/2}\bigr)
    \leq K_5\bigl(\tau_y+b^\alpha+b^{\alpha/2}\bigr),
    \qquad K_5:=K_*K_1^{\alpha},
\]
where we used $K_1\geq1$ and $K_*\geq1$. Since $\pi_\bfj(\widetilde y)=0$, \eqref{eq:projection-formula}, \eqref{eq:u-bound-from-H1H2} and \eqref{eq:v-bound-from-bilip} give
\[
\begin{aligned}
    |\pi_\bfj(\widetilde z)-\pi_\bfj(\widetilde y)|
    &\leq |\widetilde z_T|+|\widetilde z_N|\sup_t|(\psi^\bfj_{\widetilde z})'(t)|
    \leq
    K_4\bigl(a+b^{1+\alpha/2}\bigr)
    +
    K_1b\cdot K_5\bigl(\tau_y+b^\alpha+b^{\alpha/2}\bigr)\\
    &\leq
    K_6\bigl(a+\tau_y b+b^{1+\alpha}+b^{1+\alpha/2}\bigr),
    \qquad K_6:=2 (K_4+K_1K_5).
\end{aligned}
\]

\smallskip\noindent
\emph{Step 4: conclusion.}
We have $a=A2^{-(C+1)k}$ and $\tau_y b\leq 2^{-k+2}A2^{-Ck}=4A2^{-(C+1)k}$. Moreover, since $C\alpha>6$, the exponents $C\alpha-1$ and $C\alpha/2-1$ are positive, so
\begin{align}
    b^{1+\alpha}
    &=
    A^{1+\alpha}2^{-(C+1)k}\,2^{-(C\alpha-1)k}\leq A^{1+\alpha}2^{-(C+1)k},\\
    b^{1+\alpha/2}
    &=
    A^{1+\alpha/2}2^{-(C+1)k}\,2^{-(C\alpha/2-1)k}\leq A^{1+\alpha/2}2^{-(C+1)k}.
\end{align}
Hence
\[
    |\pi_\bfj(\widetilde z)-\pi_\bfj(\widetilde y)|
    \leq A'2^{-(C+1)k},
    \qquad A':=K_6\bigl(5A+A^{1+\alpha}+A^{1+\alpha/2}\bigr),
\]
and $A'$ depends only on $A$, $\alpha$, $\theta$, $C_1$ and $C_5$. Since $\widetilde z\in\widetilde R$ was arbitrary, the proof is complete.
\end{proof}

\begin{figure}[H]
\centering
\begin{tikzpicture}[scale=1.9,>=Latex,font=\small]
  \draw[->] (-2.1,0) -- (2.3,0) node[right] {$\exp_{x_0}^{-1}(T)$};
  \draw[->] (0,-2.3) -- (0,2.85) node[above left] {$T_{\widetilde y}\widetilde W_{\bfi}(\widetilde y)$};
  \fill[blue!12]
    plot[domain=-1.8:1.8,samples=40] ({0.02*\x*\x*\x-0.30},{\x})
    -- plot[domain=1.8:-1.8,samples=40] ({0.02*\x*\x*\x+0.30},{\x}) -- cycle;
  \draw[blue!60] plot[domain=-1.8:1.8,samples=40] ({0.02*\x*\x*\x-0.30},{\x});
  \draw[blue!60] plot[domain=-1.8:1.8,samples=40] ({0.02*\x*\x*\x+0.30},{\x});
  \draw[blue!60] (-0.4166,-1.8) -- (0.1834,-1.8);
  \draw[blue!60] (-0.1834,1.8) -- (0.4166,1.8);
  \node[blue!70!black] at (-0.75,1.2) {$\widetilde R$};
  \draw[<->,gray] (-1.3,-1.8) -- (-1.3,1.8) node[pos=0.62,left,black] {$\approx 2b$};
  \draw[<->,gray] (-0.1834,2.05) -- (0.4166,2.05) node[midway,above,black] {$\approx 2a$};
  \draw[thick,blue] plot[domain=-2.25:2.6,samples=60] ({0.02*\x*\x*\x},{\x})
    node[right] {$\widetilde W_{\bfi}(\widetilde y)$};
  \draw[thick,red] plot[domain=-2.25:2.35,samples=60] ({0.30*\x+0.01*\x*\x*\x},{\x})
    node[above right] {$\widetilde W_{\bfj}(\widetilde y)$};
  \coordinate (O) at (0,0);
  \coordinate (V) at (0,1.5);
  \coordinate (J) at (0.45,1.5);
  \pic[draw=red,angle radius=0.55cm] {angle=J--O--V};
  \draw[->,red,shorten >=1pt] (0.72,0.62) node[right,font=\scriptsize,black] {$\angle_y(\bfi,\bfj)$, $\tan\angle_y(\bfi,\bfj)=\tau_y$} -- (0.07,0.31);
  \coordinate (Z) at (0.261,1.45);
  \coordinate (Q) at (0.061,1.45);
  \draw[densely dotted,thick] (Q) -- (Z) node[midway,below=1pt,xshift=7pt,font=\scriptsize] {$\le C_5^3a$};
  \fill (Q) circle (0.9pt);
  \draw[gray,thin] (Q) -- (-0.32,1.72) node[left,black,font=\scriptsize] {$\widetilde q\in\widetilde W_{\bfi}(\widetilde y)$};
  \fill (Z) circle (1.1pt) node[right] {$\widetilde z=(\widetilde z_T,\widetilde z_N)$};
  \draw[thick,red] plot[domain=0:1.45,samples=40] ({0.261+0.30*(\x-1.45)+0.01*(\x*\x*\x-3.049)},{\x});
  \draw[thick,red,dashed] plot[domain=1.45:2.3,samples=30] ({0.261+0.30*(\x-1.45)+0.01*(\x*\x*\x-3.049)},{\x});
  \node[red,font=\scriptsize] (G) at (-0.24,0.95) {$\Gamma_{\widetilde z}$};
  \draw[red,thin,->,shorten >=1pt] (G) -- (0.05,0.85);
  \coordinate (PJ) at (-0.204,0);
  \fill[red] (PJ) circle (1.1pt);
  \node[red,font=\scriptsize,below left] at (PJ) {$\pi_{\bfj}(\widetilde z)$};
  \fill (O) circle (1.1pt);
  \node[font=\scriptsize,below right,yshift=-3pt] at (0.03,-0.03) {$\widetilde y=\pi_{\bfj}(\widetilde y)$};
  \draw[line width=2.4pt,red!70!black,opacity=0.5] (-0.62,0) -- (0.62,0);
  \draw[<-,red!70!black] (0.64,-0.06) -- (1.15,-0.85)
    node[right,align=left,black] {$\pi_{\bfj}(\widetilde R)\subseteq B^{T}\bigl(\pi_{\bfj}(\widetilde y),\,A'2^{-(C+1)k}\bigr)$};
\end{tikzpicture}
 \caption{The 
$\bfj$-projection of an 
$\bfi$-rectangle (\Cref{lem:geom-input-proj-curved-rectangle}).}
\label{fig:j projection of an i rectangle}
\end{figure}

\begin{lem}[Planar geometry input II]\label{rem:geom-input-commutator}
Given $A_0>0$, there exists $A'>0$, depending only on $A_0$, $\alpha$, $\theta$ and $C_1$, such that the following holds. Let $\widetilde z\in\Omega$ with $d(\widetilde z,\widetilde y)\leq A_02^{-Ck}$, and let $B_{\widetilde z}\subset\Omega\cap B\bigl(\widetilde z,A_02^{-(C+1)k}\bigr)$. Then for every $\widetilde w\in B_{\widetilde z}$,
\begin{equation}\label{eq:commutator-pointwise}
\Bigl|\bigl(\pi_\bfj(\widetilde w)-\pi_\bfj(\widetilde z)\bigr)-\bigl(\pi_\bfi(\widetilde w)-\pi_\bfi(\widetilde z)\bigr)\Bigr|\leq A'2^{-(C+2)k}.
\end{equation}
In particular, if $\widetilde z\in\widetilde W_\bfi(\widetilde y)$, then $\pi_\bfi(\widetilde z)=0$ and
\[
    d_{\mathrm{H}}\bigl(\pi_\bfj(B_{\widetilde{z}})-\pi_\bfj(\widetilde{z}),\,\pi_\bfi(B_{\widetilde{z}})\bigr)
    \leq A'2^{-(C+2)k}.
\]
\end{lem}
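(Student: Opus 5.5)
We will use the projection formula \eqref{eq:projection-formula} together with \Cref{lem:slope-estimate}, as in the proof of \Cref{lem:geom-input-proj-curved-rectangle}. For $\bfk\in\{\bfi,\bfj\}$ and $\widetilde q\in\{\widetilde w,\widetilde z\}$,
\[
\pi_\bfk(\widetilde q)=\widetilde q_T+\int_{\widetilde q_N}^{0}(\psi^\bfk_{\widetilde q})'(t)\,dt .
\]
We subtract the $\bfi$-version from the $\bfj$-version. The horizontal coordinates $\widetilde w_T,\widetilde z_T$ cancel identically, so the left side of \eqref{eq:commutator-pointwise} becomes
\[
\int_{\widetilde w_N}^{0}\bigl((\psi^\bfj_{\widetilde w})'-(\psi^\bfi_{\widetilde w})'\bigr)\,dt-\int_{\widetilde z_N}^{0}\bigl((\psi^\bfj_{\widetilde z})'-(\psi^\bfi_{\widetilde z})'\bigr)\,dt .
\]
We then subtract the constant $\mathfrak m_\bfj-\mathfrak m_\bfi=\mathfrak m_\bfj$ from each integrand. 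The two resulting constant integrals combine to $\mathfrak m_\bfj(\widetilde z_N-\widetilde w_N)$. Since $|\mathfrak m_\bfj|=\tau_y\leq 2^{-k+2}$ and $|\widetilde z_N-\widetilde w_N|\leq A_02^{-(C+1)k}$, this term is at most $4A_02^{-(C+2)k}$. This is the linear part of the estimate, and it produces exactly the scale $2^{-(C+2)k}$.

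Write $D_{\widetilde q}(t):=(\psi^\bfj_{\widetilde q})'(t)-(\psi^\bfi_{\widetilde q})'(t)-\mathfrak m_\bfj$ for the remaining integrand. We split its integrals as
\[
\int_{\widetilde w_N}^{\widetilde z_N}D_{\widetilde w}\,dt+\int_{\widetilde z_N}^{0}\bigl(D_{\widetilde w}-D_{\widetilde z}\bigr)\,dt ,
\]
where $D_{\widetilde w}$ is extended past $\widetilde w_N$ if necessary, since the plaques extend by (R3).

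The first piece is controlled by \Cref{lem:slope-estimate}. We have $d(\widetilde w,\widetilde y)\lesssim 2^{-Ck}$ and $|\widetilde w_N|\lesssim 2^{-Ck}$, so $|D_{\widetilde w}|\lesssim 2^{-\alpha Ck/2}$. The interval has length $\lesssim 2^{-(C+1)k}$, so this piece is $\lesssim 2^{-(C+1)k-\alpha Ck/2}$. Since $C\alpha>6$, this is at most $2^{-(C+2)k}$.

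The second piece is the main obstacle. The interval now has length $\sim 2^{-Ck}$, so the slope estimate alone is not enough. We must compare the slopes of two $\bfk$-plaques through the nearby points $\widetilde w,\widetilde z$ at the same height $t$. The plan is to use Hölder continuity in the base point. The $\bfk$-plaque points $(\psi^\bfk_{\widetilde w}(t),t)$ and $(\psi^\bfk_{\widetilde z}(t),t)$ are at horizontal distance $\lesssim 2^{-(C+1)k}$, because $|\psi'|\leq 2/\theta$ over a vertical separation of order $2^{-(C+1)k}$. By continuity of the local unstable plaques in $C^{1+\alpha/2}$ with respect to the base point (from the Hadamard--Perron construction behind (U1), or (R2) combined with (R1)), their tangent slopes then differ by $\lesssim (2^{-(C+1)k})^{\alpha/2}$. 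Integrating over length $2^{-Ck}$ gives $\lesssim 2^{-Ck-(C+1)k\alpha/2}\leq 2^{-(C+2)k}$, using $C\alpha>6$.

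The delicate point is that the plaque points at height $t$ need not lie in $\Lambda_{n_0}$, so (R2) does not apply to them directly. If the $C^{1+\alpha/2}$ plaque-continuity is not available, the fallback is to use (R2) at the base points $\widetilde w,\widetilde z$ and (R1) along each plaque. That gives the cruder bound $\lesssim d(\widetilde w,\widetilde z)^\alpha+|t-\widetilde w_N|^{\alpha/2}$, which again has to be shown small after integration.

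Collecting the three contributions gives \eqref{eq:commutator-pointwise} with $A'$ depending only on $A_0,\alpha,\theta,C_1$. The Hausdorff-distance statement then follows immediately: when $\widetilde z\in\widetilde W_\bfi(\widetilde y)$ we have $\pi_\bfi(\widetilde z)=0$, and the pointwise bound \eqref{eq:commutator-pointwise} applied to each $\widetilde w\in B_{\widetilde z}$ gives
\[
d_{\mathrm{H}}\bigl(\pi_\bfj(B_{\widetilde z})-\pi_\bfj(\widetilde z),\,\pi_\bfi(B_{\widetilde z})\bigr)\leq A'2^{-(C+2)k}.
\]
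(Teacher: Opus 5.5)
Your skeleton matches the paper's. In \eqref{eq:projection-formula} the horizontal coordinates cancel, and the linear term $\mathfrak m_\bfj(\widetilde z_N-\widetilde w_N)$ is at most $\tau_yA_02^{-(C+1)k}\leq4A_02^{-(C+2)k}$. The gap is the step you call the main obstacle: as written, it is not proved.

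Your primary route rests on $C^{1+\alpha/2}$-dependence of the $\bfk$-plaques on the base point. That is not among \ref{R1:leaf graph}--\ref{R4:leafwise derivative}, and it would bring in constants beyond $A_0,\alpha,\theta,C_1$. You also claim the two plaques stay at horizontal distance $\lesssim2^{-(C+1)k}$ at every height $t$. This does not follow from $|\psi'|\leq2/\theta$, which only controls the separation near height $\widetilde w_N$. Propagating that separation over a vertical distance $\sim2^{-Ck}$ already requires a bound on $(\psi^\bfk_{\widetilde w})'-(\psi^\bfk_{\widetilde z})'$ along the way. Your fallback then stops at ``has to be shown small after integration''.

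The missing observation is that this step is not an obstacle: \Cref{lem:slope-estimate} alone handles the long interval.
\begin{itemize}
\item With $A_1:=2A_0$, both $\widetilde q\in\{\widetilde w,\widetilde z\}$ satisfy $d(\widetilde q,\widetilde y),\,|\widetilde q_N|\leq A_12^{-Ck}$.
\item Hence $|D_{\widetilde q}(t)|\leq2\epsilon_k$, where $\epsilon_k:=K_*\bigl(A_1^{\alpha}2^{-\alpha Ck}+A_1^{\alpha/2}2^{-\alpha Ck/2}\bigr)$.
\item Integrating over an interval of length at most $A_12^{-Ck}$ gives $O\bigl(2^{-(1+\alpha/2)Ck}\bigr)$. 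Since $C\alpha>6$, this is $O\bigl(2^{-(C+3)k}\bigr)$.
\end{itemize}
You only need to gain $2^{-2k}$ over the interval length $2^{-Ck}$, and the slope estimate gains $2^{-\alpha Ck/2}<2^{-3k}$. This is the same arithmetic you already used for your first piece, and it would also finish your fallback.

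This is exactly the paper's proof, which does not split the integrals at all. For $\bfk\in\{\bfi,\bfj\}$ it writes
\[
\pi_\bfk(\widetilde w)-\pi_\bfk(\widetilde z)=(\widetilde w_T-\widetilde z_T)-\mathfrak m_\bfk(\widetilde w_N-\widetilde z_N)+E_\bfk,
\]
with $|E_\bfk|\leq(|\widetilde w_N|+|\widetilde z_N|)\epsilon_k\leq2A_12^{-Ck}\epsilon_k$. It then subtracts the two identities, using $\mathfrak m_\bfi=0$. No comparison between distinct plaques is needed, and no plaque is extended past its arc to $T$.

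Your Hausdorff-distance deduction at the end is fine.
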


\begin{rem}\label{rem:scales-II}
The point of the lemma is the scale of the error: the ball has radius $A_02^{-(C+1)k}$, while the two displacements agree up to $A'2^{-(C+2)k}$, one scale smaller. The gain of the factor $2^{-k}$ comes from the precise control of the angle. 
\end{rem}

\begin{proof}
Let $\widetilde w\in B_{\widetilde z}$ and put $A_1:=2A_0$. Then $d(\widetilde w,\widetilde z)\leq A_02^{-(C+1)k}$ and $d(\widetilde w,\widetilde y)\leq d(\widetilde w,\widetilde z)+d(\widetilde z,\widetilde y)\leq A_12^{-Ck}$; hence, for $\widetilde q\in\{\widetilde z,\widetilde w\}$,
\[
    d(\widetilde q,\widetilde y)\leq A_12^{-Ck},\qquad
    |\widetilde q_N|\leq d(\widetilde q,\widetilde y)\leq A_12^{-Ck},\qquad
    |\widetilde w_N-\widetilde z_N|\leq A_02^{-(C+1)k}.
\]
By \Cref{lem:slope-estimate}, for $\bfk\in\{\bfi,\bfj\}$, $\widetilde q\in\{\widetilde z,\widetilde w\}$, and $t$ between $\widetilde q_N$ and $0$,
\[
    \bigl|(\psi^\bfk_{\widetilde q})'(t)-\mathfrak m_\bfk\bigr|\leq\epsilon_k,
    \qquad
    \epsilon_k:=K_*\bigl(A_1^{\alpha}2^{-\alpha Ck}+A_1^{\alpha/2}2^{-\alpha Ck/2}\bigr).
\]
Applying \eqref{eq:projection-formula} to $\widetilde w$ and to $\widetilde z$ and writing $(\psi^\bfk_{\widetilde q})'=\mathfrak m_\bfk+\bigl((\psi^\bfk_{\widetilde q})'-\mathfrak m_\bfk\bigr)$, we obtain, for $\bfk\in\{\bfi,\bfj\}$,
\[
    \pi_\bfk(\widetilde w)-\pi_\bfk(\widetilde z)
    =(\widetilde w_T-\widetilde z_T)-\mathfrak m_\bfk(\widetilde w_N-\widetilde z_N)+E_\bfk,
    \qquad
    |E_\bfk|\leq\bigl(|\widetilde w_N|+|\widetilde z_N|\bigr)\epsilon_k\leq 2A_12^{-Ck}\epsilon_k .
\]
Subtracting the two identities and using $\mathfrak m_\bfi=0$, $|\mathfrak m_\bfj|=\tau_y\leq2^{-k+2}$, we get
\[
\begin{aligned}
\Bigl|\bigl(\pi_\bfj(\widetilde w)-\pi_\bfj(\widetilde z)\bigr)-\bigl(\pi_\bfi(\widetilde w)-\pi_\bfi(\widetilde z)\bigr)\Bigr|
&\leq \tau_y|\widetilde w_N-\widetilde z_N|+|E_\bfj|+|E_\bfi|\\
&\leq 4A_02^{-(C+2)k}+4A_1K_*\bigl(A_1^{\alpha}2^{-(1+\alpha)Ck}+A_1^{\alpha/2}2^{-(1+\alpha/2)Ck}\bigr).
\end{aligned}
\]
Since $C\alpha>6$, we have $(1+\alpha/2)C\geq C+3$, hence $2^{-(1+\alpha)Ck}\leq2^{-(1+\alpha/2)Ck}\leq2^{-(C+2)k}$, and \eqref{eq:commutator-pointwise} follows with
\[
    A':=4A_0+4A_1K_*\bigl(A_1^{\alpha}+A_1^{\alpha/2}\bigr),
\]
which depends only on $A_0$, $\alpha$, $\theta$ and $C_1$.

If $\widetilde z\in\widetilde W_\bfi(\widetilde y)$, then $\widetilde W_\bfi(\widetilde z)=\widetilde W_\bfi(\widetilde y)$ meets $\exp^{-1}_{x_0}(T)$ at $\widetilde y$, so $\pi_\bfi(\widetilde z)=0$, and \eqref{eq:commutator-pointwise} says that the maps $\widetilde w\mapsto\pi_\bfj(\widetilde w)-\pi_\bfj(\widetilde z)$ and $\widetilde w\mapsto\pi_\bfi(\widetilde w)$ differ by at most $A'2^{-(C+2)k}$ at every point of $B_{\widetilde z}$; hence each of the two sets $\pi_\bfj(B_{\widetilde z})-\pi_\bfj(\widetilde z)$ and $\pi_\bfi(B_{\widetilde z})$ lies in the $A'2^{-(C+2)k}$-neighborhood of the other, which is the Hausdorff bound.
\end{proof}

\begin{figure}[H]
\centering
\begin{tikzpicture}[scale=1.6,>=Latex,font=\small]
  \draw[->] (-2.2,0) -- (2.2,0) node[right] {$\exp_{x_0}^{-1}(T)$};
  \draw[->] (0,-0.6) -- (0,3.15) node[above left] {$T_{\widetilde y}\widetilde W_{\bfi}(\widetilde y)$};
  \draw[thick,blue] plot[domain=-0.5:2.95,samples=60] ({0.02*\x*\x*\x},{\x})
    node[above,font=\scriptsize] {$\widetilde W_{\bfi}(\widetilde y)$};
  \coordinate (Z) at (0.16,2.0);
  \coordinate (W) at (0.55,2.05);
  \draw[dashed,gray] (Z) circle (0.55);
  \node[gray,font=\scriptsize,right] at (0.76,2.0) {$B\bigl(\widetilde z,A_02^{-(C+1)k}\bigr)$};
  \fill[blue!15,draw=blue!50] plot[smooth cycle] coordinates {(0.0,1.85) (0.35,1.7) (0.62,1.95) (0.6,2.22) (0.3,2.35) (0.0,2.2)};
  \node[blue!70!black,font=\scriptsize] at (-0.28,2.32) {$B_{\widetilde z}$};
  \draw[thick,blue] plot[domain=0:2.7,samples=50] ({0.02*\x*\x*\x+0.378},{\x})
    node[above right,font=\scriptsize] {$\widetilde W_{\bfi}(\widetilde w)$};
  \draw[thick,red] plot[domain=0:2.85,samples=50] ({0.16+0.75*(\x-2)+0.01*(\x*\x*\x-8)},{\x});
  \draw[thick,red] plot[domain=0:2.45,samples=50] ({0.55+0.75*(\x-2.05)+0.01*(\x*\x*\x-8.615)},{\x})
    node[right,font=\scriptsize] {$\widetilde W_{\bfj}(\widetilde w)$};
  \node[red,font=\scriptsize,left] at (-1.06,0.62) {$\widetilde W_{\bfj}(\widetilde z)$};
  \fill (Z) circle (1.1pt) node[below left,font=\scriptsize,xshift=1pt,yshift=1pt] {$\widetilde z$};
  \fill (W) circle (1.1pt) node[below right,font=\scriptsize,xshift=-1pt,yshift=1pt] {$\widetilde w$};
  \coordinate (PIZ) at (0,0);
  \coordinate (PIW) at (0.378,0);
  \coordinate (PJZ) at (-1.42,0);
  \coordinate (PJW) at (-1.074,0);
  \fill (PIZ) circle (1.1pt);
  \fill[blue] (PIW) circle (1.1pt);
  \fill[red] (PJZ) circle (1.1pt);
  \fill[red] (PJW) circle (1.1pt);
  \node[font=\scriptsize,below left,red] at (PJZ) {$\pi_{\bfj}(\widetilde z)$};
  \node[font=\scriptsize,below right,red] at (PJW) {$\pi_{\bfj}(\widetilde w)$};
  \node[font=\scriptsize,below left] at (0.02,0) {$\widetilde y$};
  \node[font=\scriptsize,below right,blue] at (PIW) {$\pi_{\bfi}(\widetilde w)$};
  \draw[decorate,decoration={brace,mirror,amplitude=3pt},red] (-1.42,-0.3) -- (-1.074,-0.3)
    node[midway,below=3pt,font=\scriptsize] {$\pi_{\bfj}(\widetilde w)-\pi_{\bfj}(\widetilde z)$};
  \draw[decorate,decoration={brace,mirror,amplitude=3pt},blue] (0,-0.3) -- (0.378,-0.3)
    node[midway,below=3pt,font=\scriptsize] {$\pi_{\bfi}(\widetilde w)-\pi_{\bfi}(\widetilde z)$};
  \node[align=center,font=\small,anchor=north] at (0.0,-0.85)
    {$\bigl|\bigl(\pi_{\bfj}(\widetilde w)-\pi_{\bfj}(\widetilde z)\bigr)-\bigl(\pi_{\bfi}(\widetilde w)-\pi_{\bfi}(\widetilde z)\bigr)\bigr|\le A'2^{-(C+2)k}$};
  \draw[<->,gray] (-1.85,0) -- (-1.85,2.0) node[midway,left,black,font=\scriptsize] {$\le A_02^{-Ck}$};
  \draw[dotted,gray] (-1.85,2.0) -- (Z);
\end{tikzpicture}
 \caption{The commutator estimate: agreement of the two projections at scale $2^{-(C+2)k}$ (\Cref{rem:geom-input-commutator}).}
\label{fig:the commutator}
\end{figure}

\subsection*{The parametrization $\varphi_1$ and the non-linear displacement}
Define the affine bijection $\varphi_1^{-1}\colon\sigma_\bfi(y)+\R v\to\R$ by
\[
    \varphi^{-1}_1\bigl(\sigma_{\bfi}(y)+\lambda v\bigr):=-\lambda\,c_0\,\mathfrak m_{\bfj},
\]
which is injective since $\mathfrak m_{\bfj}\neq0$ by $\tau_y>0$, and write $\varphi_1$ for its inverse. Recall $s_{\mathrm a}:=c_0\tau_y2^{k}$; since $c_0\geq1$ and $\tau_y\geq2^{-k-2}$,
\begin{equation}\label{eq:sa-lower}
s_{\mathrm a}\geq\tfrac14 .
\end{equation}

\begin{lem}[Planar geometry input III]\label{rem:geom-input-nonlinear-displacement}
Let $K_7:=11\,K_*C_5^4+8C_6$ and
\[
k_0:=\Bigl\lceil\frac{2\log_2(4K_7)}{C\alpha-6}\Bigr\rceil ,
\]
which depends only on $C$, $\alpha$, $\theta$, $C_1$, $C_5$ and $C_6$. If $k\geq k_0$, then for every $t\in\varphi_1^{-1}(\sigma_\bfi(L))$, the unique point $z_t\in L$ with $t=\varphi_1^{-1}(\sigma_\bfi(z_t))$ satisfies
\[
    \bigl|\pi_\bfj(\widetilde z_t)-t\bigr|\leq s_{\mathrm a}\,2^{-(C+3)k}.
\]
\end{lem}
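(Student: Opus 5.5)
We will start from the projection formula \eqref{eq:projection-formula} applied to $\widetilde q=\widetilde z_t=\gamma(\lambda)$, where $\lambda$ is defined by $\sigma_\bfi(z_t)=\sigma_\bfi(y)+\lambda v$. Then $t=-\lambda c_0\mathfrak m_\bfj$, and $|\lambda|\leq 2^{-Ck}$ because $\sigma_\bfi(L)$ is a segment of length $2^{-Ck}$ containing $\sigma_\bfi(y)$. Writing $\widetilde z_t=(\widetilde z_T,\widetilde z_N)$, we get
\[
\pi_\bfj(\widetilde z_t)=\widetilde z_T-\mathfrak m_\bfj\widetilde z_N+E,\qquad
E:=\int_{\widetilde z_N}^0\bigl((\psi^\bfj_{\widetilde z_t})'(s)-\mathfrak m_\bfj\bigr)\,ds .
\]
So we compare $t$ with the linear term $-\mathfrak m_\bfj\widetilde z_N$ and bound the two remainders $\widetilde z_T$ and $E$ separately.

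\emph{Step 1 (the curve $\gamma$).} Since $\gamma(0)=0$ and $\gamma'(0)=(0,c_0)$, we write $\gamma'(\lambda')=\|\gamma'(\lambda')\|\,u(\lambda')$. By \ref{R1:leaf graph}, the unit tangent $u$ differs from $(0,1)$ by at most $C_1(C_5^2|\lambda'|)^{\alpha/2}$, because $d(\gamma(\lambda'),\widetilde y)\leq C_5^2|\lambda'|$ by \Cref{rem:constants}. By \eqref{eq:leafwise-derivative}, $\bigl|\|\gamma'\|-c_0\bigr|\leq 1.01C_6 2^{-\alpha Ck/2}$. Integrating from $0$ to $\lambda$ gives
\[
|\widetilde z_N-c_0\lambda|\leq K\,2^{-(1+\alpha/2)Ck},\qquad |\widetilde z_T|\leq K\,2^{-(1+\alpha/2)Ck},
\]
with $K$ a combination of $C_1C_5^{\alpha}$ and $C_6$. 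The bound on $\widetilde z_T$ can also be read off directly from the graph estimate $|\varphi(s)|\leq C_1|s|^{1+\alpha/2}$ in \ref{R1:leaf graph}.

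\emph{Step 2 (the slope error).} By \Cref{lem:slope-estimate}, using $d(\widetilde z_t,\widetilde y)\leq C_5^2 2^{-Ck}$ and $|\widetilde z_N|\leq C_5^2 2^{-Ck}$, we get
\[
|E|\leq|\widetilde z_N|\,K_*\bigl(C_5^{2\alpha}2^{-\alpha Ck}+C_5^{\alpha}2^{-\alpha Ck/2}\bigr)\leq 2K_*C_5^4\,2^{-(1+\alpha/2)Ck}.
\]

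\emph{Step 3 (assembly).} Combining the two steps,
\[
|\pi_\bfj(\widetilde z_t)-t|\leq|\widetilde z_T|+|\mathfrak m_\bfj|\,|\widetilde z_N-c_0\lambda|+|E|\leq K_7\,2^{-(1+\alpha/2)Ck},
\]
where $|\mathfrak m_\bfj|=\tau_y\leq 4$ is used only crudely. By \eqref{eq:sa-lower}, $s_{\mathrm a}2^{-(C+3)k}\geq\frac14 2^{-(C+3)k}$, so it suffices to have
\[
K_7\,2^{-(C\alpha/2-3)k}\leq \tfrac14 ,
\]
that is, $k(C\alpha-6)/2\geq\log_2(4K_7)$. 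This is exactly the condition $k\geq k_0$.

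The main obstacle is Step 1, the control of the leafwise parametrization. We need the vertical coordinate of $\gamma(\lambda)$ to equal $c_0\lambda$ up to a relative error $o(2^{-3k})$ in order to beat the scale $2^{-(C+3)k}$. This is exactly where the near-constancy of $\|\gamma'\|$, namely \ref{R4:leafwise derivative} with its H\"older modulus $2^{-\alpha Ck/2}$, together with $C\alpha>6$ is essential. Along the way we must check that the angle deviation of $\gamma'$ contributes to the $T$-component only at second order, and to the $N$-component only through the cosine term, which is quadratic in the angle.
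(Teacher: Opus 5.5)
Your proposal is correct and follows the paper's proof essentially step for step. It uses the same decomposition $\pi_\bfj(\widetilde z_t)-t=\widetilde z_{t,T}-\mathfrak m_\bfj(\widetilde z_{t,N}-c_0\lambda)+E$, the same use of \ref{R1:leaf graph} and \eqref{eq:leafwise-derivative} to locate $\gamma(\lambda)$, \Cref{lem:slope-estimate} for $E$, and the crude bound $\tau_y\leq 4$ together with $s_{\mathrm a}\geq\tfrac14$ to reach the threshold $k_0$. The differences are cosmetic. You observe that $1-\cos\vartheta$ is quadratic in the angle, whereas the paper uses the weaker $1-\cos\vartheta\leq|\vartheta|$, which already suffices. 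The paper also checks explicitly, via continuity and \ref{R3:transversality}, that $\gamma'$ stays oriented near $(0,1)$ rather than $(0,-1)$; you use this implicitly.
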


\begin{rem}
In contrast to \Cref{lem:geom-input-proj-curved-rectangle} and \Cref{rem:geom-input-commutator}, the conclusion here has the fixed constant $s_{\mathrm a}$ rather than a free constant $A'$; this is why a threshold $k_0$ appears, and why the lower bound $\tau_y\geq2^{-k-2}$ is used, through \eqref{eq:sa-lower}.
\end{rem}

\begin{proof}
Fix $t\in\varphi_1^{-1}(\sigma_\bfi(L))$ and put $\lambda:=-t/(c_0\mathfrak m_\bfj)$, so that $\sigma_\bfi(z_t)=\sigma_\bfi(y)+\lambda v$, $\widetilde z_t=\gamma(\lambda)$, and
\begin{equation}\label{eq:t-lambda}
t=-c_0\mathfrak m_\bfj\lambda,\qquad |\lambda|\leq2^{-Ck}\leq1 .
\end{equation}
Write $\widetilde z_t=(\widetilde z_{t,T},\widetilde z_{t,N})$. Note that $\widetilde z_t\in\Omega$, since $z_t\in L\subset\Lambda_{n_0}$.

\smallskip\noindent
\emph{Step 1: position of $\widetilde z_t$.}
For $|u|\leq|\lambda|$ we have $S(\gamma(u))-S(\widetilde y)=uv$, so, $S$ being bi-Lipschitz with constant $C_5^2$,
\begin{equation}\label{eq:gamma-dist}
d(\gamma(u),\widetilde y)\leq C_5^2|u|;\qquad\text{in particular}\qquad
d(\widetilde z_t,\widetilde y)\leq C_5^2|\lambda|,\quad |\widetilde z_{t,N}|\leq C_5^2|\lambda| .
\end{equation}
Since $\widetilde z_t\in\widetilde W_\bfi(\widetilde y)$, \ref{R1:leaf graph} applied to $\widetilde W_\bfi(\widetilde y)$ (a graph over the vertical axis, its tangent line at $\widetilde y$) gives
\begin{equation}\label{eq:zT-bound}
|\widetilde z_{t,T}|\leq C_1|\widetilde z_{t,N}|^{1+\alpha/2}\leq C_1C_5^{3}|\lambda|^{1+\alpha/2},
\end{equation}
using $C_5^{2+\alpha}\leq C_5^3$. For the vertical coordinate, let $\vartheta(u)$ be the signed angle of $\gamma'(u)$ from the vertical axis. It is continuous in $u$, $\vartheta(0)=0$, and $|\vartheta(u)|<\pi/2$ by \ref{R3:transversality}; hence $\gamma'(u)=\|\gamma'(u)\|\bigl(\sin\vartheta(u),\cos\vartheta(u)\bigr)$ with $\cos\vartheta(u)>0$, and \ref{R1:leaf graph}, applied to the points $\widetilde y$ and $\gamma(u)$ of $\widetilde W_\bfi(\widetilde y)$, together with \eqref{eq:gamma-dist} gives $|\vartheta(u)|\leq C_1d(\gamma(u),\widetilde y)^{\alpha/2}\leq C_1C_5|u|^{\alpha/2}$. Combining this with \eqref{eq:leafwise-derivative}, $c_0\leq1.01$, $1-\cos\vartheta\leq|\vartheta|$ and $|\lambda|\leq2^{-Ck}$, we obtain
\begin{equation}\label{eq:zN-bound}
\begin{aligned}
\bigl|\widetilde z_{t,N}-c_0\lambda\bigr|
&=\Bigl|\int_0^{\lambda}\bigl(\|\gamma'(u)\|\cos\vartheta(u)-c_0\bigr)\,du\Bigr|\\
&\leq\int_0^{|\lambda|}\bigl(1.01\,C_6\,2^{-\alpha Ck/2}+1.01\,C_1C_5\,u^{\alpha/2}\bigr)\,du\\
&\leq1.01\,C_6\,2^{-\alpha Ck/2}|\lambda|+1.01\,C_1C_5|\lambda|^{1+\alpha/2}\\
&\leq2\,(C_6+C_1C_5)\,2^{-(1+\alpha/2)Ck},
\end{aligned}
\end{equation}
where we used $\bigl|\|\gamma'\|\cos\vartheta-c_0\bigr|\leq\bigl|\|\gamma'\|-c_0\bigr|+c_0(1-\cos\vartheta)$.

\smallskip\noindent
\emph{Step 2: the $\bfj$-projection.}
By \eqref{eq:projection-formula} and \Cref{lem:slope-estimate} with $\bfk=\bfj$ and $\widetilde q=\widetilde z_t$,
\[
    \pi_\bfj(\widetilde z_t)=\widetilde z_{t,T}-\mathfrak m_\bfj\,\widetilde z_{t,N}+E,
    \qquad
    |E|\leq|\widetilde z_{t,N}|\,K_*\bigl(d(\widetilde z_t,\widetilde y)^{\alpha}+|\widetilde z_{t,N}|^{\alpha/2}\bigr),
\]
so that, by \eqref{eq:gamma-dist}, $|\lambda|\leq1$ and $C_5^{2+2\alpha},C_5^{2+\alpha}\leq C_5^4$,
\[
    |E|\leq 2K_*C_5^4|\lambda|^{1+\alpha/2}.
\]
Subtracting \eqref{eq:t-lambda},
\[
    \pi_\bfj(\widetilde z_t)-t=\widetilde z_{t,T}-\mathfrak m_\bfj\bigl(\widetilde z_{t,N}-c_0\lambda\bigr)+E .
\]
Hence, by \eqref{eq:zT-bound}, \eqref{eq:zN-bound}, $|\lambda|\leq2^{-Ck}$, $|\mathfrak m_\bfj|=\tau_y\leq4$, and $C_1C_5^3+8C_1C_5+2K_*C_5^4\leq11K_*C_5^4$ (recall $K_*\geq C_1$),
\[
    \bigl|\pi_\bfj(\widetilde z_t)-t\bigr|\leq\bigl(11K_*C_5^4+8C_6\bigr)2^{-(1+\alpha/2)Ck}=K_7\,2^{-(1+\alpha/2)Ck}.
\]

\smallskip\noindent
\emph{Step 3: conclusion.}
By \eqref{eq:sa-lower} it suffices to have $K_72^{-(1+\alpha/2)Ck}\leq\tfrac14 2^{-(C+3)k}$, that is, $2^{(C\alpha/2-3)k}\geq4K_7$. Since $C\alpha>6$, this holds precisely when $k\geq2\log_2(4K_7)/(C\alpha-6)$, in particular for $k\geq k_0$.
\end{proof}

\begin{figure}[H]
\centering
\begin{tikzpicture}[scale=1.9,>=Latex,font=\small]
  \node[font=\small\itshape] at (-4.0,2.3) {$\sigma_\bfi$-coordinates};
  \draw[dashed,gray] (-4.7,-0.5) rectangle (-3.3,1.7);
  \node[gray,font=\scriptsize,anchor=south west] at (-4.7,1.72) {$\cD^{h}_{Ck}(\sigma_\bfi(y))$};
  \draw[blue!50] (-4.0,-0.95) -- (-4.0,2.05);
  \node[blue,font=\scriptsize,right] at (-3.98,-0.85) {$\sigma_\bfi(W_\bfi(y))$};
  \draw[very thick,blue] (-4.0,-0.5) -- (-4.0,1.7);
  \node[blue!70!black,font=\scriptsize,right] at (-3.98,-0.35) {$\sigma_\bfi(L')$};
  \foreach \yy in {-0.46,-0.4,-0.14,0.12,0.18,0.55,0.66,1.2,1.4,1.48,1.62}
    \fill[black] (-4.0,\yy) circle (0.6pt);
  \fill (-4.0,0) circle (1.1pt) node[left,font=\scriptsize] {$\sigma_\bfi(y)$};
  \draw[->,thick] (-4.0,0) -- (-4.0,0.42) node[pos=0.9,right,font=\scriptsize] {$v$};
  \coordinate (SZ) at (-4.0,1.2);
  \fill (SZ) circle (1.1pt) node[right,font=\scriptsize] {$\sigma_\bfi(z_t)=\sigma_\bfi(y)+\lambda v$};
  \draw[decorate,decoration={brace,amplitude=3pt}] (-4.15,0) -- (-4.15,1.2) node[midway,left=3pt,font=\scriptsize] {$\lambda$};
  \draw[<->,gray] (-4.85,-0.5) -- (-4.85,1.7) node[midway,left,font=\scriptsize,black] {$2^{-Ck}$};
  \node[font=\small\itshape] at (1.3,2.3) {chart $\exp^{-1}_{x_0}$};
  \draw[->] (-2.0,0) -- (2.0,0) node[right] {$\exp_{x_0}^{-1}(T)$};
  \draw[->] (0,-0.95) -- (0,2.15) node[above left] {$T_{\widetilde y}\widetilde W_{\bfi}(\widetilde y)$};
  \draw[blue!50] plot[domain=-0.9:2.0,samples=50] ({0.02*\x*\x*\x},{\x}) node[right,font=\scriptsize] {$\widetilde W_\bfi(\widetilde y)$};
  \draw[very thick,blue] plot[domain=-0.5:1.7,samples=40] ({0.02*\x*\x*\x},{\x});
  \node[blue,font=\scriptsize,anchor=west] at (0.14,1.62) {$\gamma=\exp^{-1}_{x_0}(L')$};
  \draw[red!60,thin] plot[domain=-0.7:1.9,samples=40] ({0.75*\x+0.03*\x*\x},{\x})
    node[right,font=\scriptsize] {$\widetilde W_\bfj(\widetilde y)$};
  \coordinate (Z) at (0.035,1.2);
  \draw[thick,red] plot[domain=0:1.2,samples=40] ({0.035+0.75*(\x-1.2)+0.15*(\x*\x-1.44)},{\x});
  \draw[thick,red,dashed] plot[domain=1.2:1.45,samples=10] ({0.035+0.75*(\x-1.2)+0.15*(\x*\x-1.44)},{\x});
  \fill (Z) circle (1.1pt) node[left,font=\scriptsize,xshift=-1pt] {$\widetilde z_t=\gamma(\lambda)$};
  \node[red,font=\scriptsize,left] at (-0.86,0.72) {$\widetilde W_\bfj(\widetilde z_t)$};
  \node[red,font=\scriptsize,right] at (-0.55,0.22) {$\pi_\bfj$};
  \draw[gray,densely dotted] (Z) -- (-0.865,0);
  \coordinate (PJ) at (-1.081,0);
  \coordinate (Tt) at (-0.9,0);
  \fill[red] (PJ) circle (1.1pt);
  \fill (Tt) circle (1.1pt);
  \node[red,font=\scriptsize,above left,yshift=1pt] at (PJ) {$\pi_\bfj(\widetilde z_t)$};
  \node[font=\scriptsize,below right,xshift=-1pt] at (Tt) {$t$};
  \draw[decorate,decoration={brace,mirror,amplitude=2.5pt}] (-1.081,-0.22) -- (-0.9,-0.22)
    node[midway,below=2pt,font=\scriptsize] {$\le s_{\mathrm a}2^{-(C+3)k}$};
  \draw[<->,gray] (1.55,0) -- (1.55,1.2) node[midway,right,font=\scriptsize,black] {$\widetilde z_{t,N}\approx c_0\lambda$};
  \draw[dotted,gray] (Z) -- (1.55,1.2);
  \draw[line width=2.2pt,black!60,opacity=0.5] (-1.275,0) -- (0.375,0);
  \node[font=\scriptsize] at (0.55,-0.6) {$I_k=\varphi_1^{-1}(\sigma_\bfi(L'))$};
  \draw[->,gray,thin] (0.2,-0.5) -- (0.0,-0.06);
  \draw[->,thick,black!70] (SZ) to[bend left=22] node[pos=0.5,above,font=\scriptsize,black] {$\exp^{-1}_{x_0}\circ\sigma^{-1}_\bfi$} (Z);
  \draw[->,thick,black!70] (SZ) to[bend right=42] node[pos=0.6,below,font=\scriptsize,black] {$\varphi_1^{-1}$:\ $t=-\lambda\,c_0\,\mathfrak m_\bfj$} (Tt);
\end{tikzpicture}
 \caption{The non-linear displacement (\Cref{rem:geom-input-nonlinear-displacement}).}
\label{fig:nonlinear-displacement}
\end{figure}

\section{Open dense property of non-integrability}

\begin{lem}[A Markov-coding lemma]
\label{lem:periodic-point-with-two-preimages}
Let \((\Lambda,f)\) be a transitive repeller, and assume that
\(f|_{\Lambda}\) is not one-to-one. Then there exist a periodic point
\(p\in\Lambda\), two distinct points \(q_0,q_1\in\Lambda\), and an open
neighborhood \(V\) of \(q_0\) such that
\[
    f(q_0)=f(q_1)=p,
\]
\(q_1\) is the predecessor of \(p\) on its periodic orbit, \(q_0\) is not on that
periodic orbit, and the following holds. For every \(N_0\geq1\), there exists
\(N\geq N_0\) and two admissible backward orbit segments
\[
    x^0_{-N},\ldots,x^0_{-1}=q_0,x_0=p,
    \qquad
    x^1_{-N},\ldots,x^1_{-1}=q_1,x_0=p,
\]
such that the second segment lies on the periodic orbit of \(p\), and
\[
    x^0_{-j}\notin \overline V
    \qquad\text{for every }2\leq j\leq N .
\]
\end{lem}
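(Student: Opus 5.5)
We will work entirely through the irreducible Markov coding of \cref{subsec:coding}, $\Pi\colon\Sigma_A^+\to\Lambda$, with $\Pi\circ\sigma=f\circ\Pi$, cylinder diameters $\leq D\vartheta^n$, and $A$ irreducible. The first step is to produce the configuration $(p,q_0,q_1)$. Since $f|_\Lambda$ is not injective, some point of $\Lambda$ has two distinct preimages in $\Lambda$. By openness of $f$ near $\Lambda$ (local diffeomorphism with uniform injectivity radius) and density of periodic points in a transitive repeller, we can move this configuration to a periodic point $p$. Concretely, take distinct $a,b\in\Lambda$ with $f(a)=f(b)=c$. Choose a periodic $p$ very close to $c$. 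The local inverse branches of $f$ near $a$ and near $b$, applied to $p$, give two distinct preimages $a',b'$ of $p$; these lie in $\Lambda$ because $\Lambda$ is the maximal invariant set in the isolating neighborhood. At most one of them is the predecessor of $p$ on its orbit. If neither is, replace one of them by the predecessor (which is a genuine third preimage). In all cases we set $q_1:=$ the periodic predecessor and $q_0:=$ a preimage of $p$ distinct from $q_1$; since the only point of the orbit mapping to $p$ is $q_1$, $q_0$ is automatically off the orbit. Then choose $V$ a small open ball around $q_0$, of radius less than half the distance from $q_0$ to the (finite) periodic orbit $\mathcal O(p)$.

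The second part is building, for each $N_0$, a backward $f$-orbit $x^0_{-N},\dots,x^0_{-1}=q_0$ that avoids $\overline V$ for indices $2\le j\le N$. The segment $x^1$ is just the periodic orbit read backward, so there is nothing to do there. For $x^0$ we need $x^0_{-2}\in f^{-1}(q_0)\cap\Lambda$, then further preimages. The plan is to choose $x^0_{-j}$ near the periodic orbit $\mathcal O(p)$ for most $j$, which keeps it away from $\overline V$ by the choice of radius. Transitivity plus the Markov coding provide a connecting word. Fix a coding $\xi$ of $q_0$. By irreducibility there is an admissible word $u$ from the last symbol of the periodic word $w^\infty$ of $p$ to the first symbol of $\xi$, so that $w^m u\xi$ is admissible for every $m$. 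Set $x^0_{-j}:=\Pi(\sigma^{\,N-j}(w^m u \xi))$ with $N=m|w|+|u|+1$; this gives $f$-consistent preimages ending at $q_0$, then $p$. The points with index $j\ge |u|+2$ lie in deep cylinders of $w^\infty$-shifts, hence within $D\vartheta^{(\text{depth})}$ of $\mathcal O(p)$. Choosing $m$ large makes most of them close to $\mathcal O(p)$, and taking $m$ large also forces $N\ge N_0$.

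The main obstacle is the short transitional stretch $2\le j\le |u|+1$, and the early part of the $w^m$ block whose cylinder depth is still small. These points are neither controlled to be near $\mathcal O(p)$ nor obviously away from $q_0$. To handle it, I would use the freedom in the choice of $u$ and shrink $V$. Fix one connecting word $u$ first; the finitely many points $\Pi(\sigma^i(w^m u\xi))$, $0<i\le|u|$, together with the remaining deep-cylinder points of the $w^m$ block, converge as $m\to\infty$ to the finitely many points $\Pi(\sigma^i(w^\infty))$-type limits and $\Pi(\sigma^i(u\xi))$. So it suffices that none of the limit points $\Pi(\sigma^{i}(u\xi))$ for $1\le i\le |u|$, nor points of $\mathcal O(p)$, equals $q_0$. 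Here $\Pi(\sigma^{|u|}(u\xi))=q_0$ corresponds to index $j=1$, which is allowed. If some earlier one equals $q_0$, the backward orbit from $q_0$ returns to $q_0$ before reaching the $w$-block, so $q_0$ is periodic. In that case we shorten $u$ by cutting at the last such return, and the return is to a point distinct from $q_0$ since $q_0\notin\mathcal O(p)$. Once the finitely many limit points are all distinct from $q_0$, shrink $V$ to exclude them, then take $m$ large so the finite-$m$ points are within half the margin of their limits. The expected delicate bookkeeping is exactly this uniformity in $m$, which holds because $V$ depends only on $u$ and on $\mathcal O(p)$, not on $m$.
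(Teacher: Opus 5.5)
Your construction of $(p,q_0,q_1)$ is fine, and building the first segment from the admissible words $w^m u\xi$ is a workable alternative to the paper's route. There is, however, a gap in your final uniformity step.

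\textbf{The gap.} The points of the $w^m$-block just before $u$ are $y_r:=\Pi(s_r u\xi)$, where $s_r$ is the length-$r$ suffix of $w^m$. For fixed $r\le m|w|$ these points do not depend on $m$ at all. So they do not converge, as $m\to\infty$, to points of $\mathcal O(p)$ or to the points $\Pi(\sigma^i(u\xi))$. Your sufficient condition (``none of the limit points $\Pi(\sigma^i(u\xi))$, nor points of $\mathcal O(p)$, equals $q_0$'') therefore omits the finitely many $y_r$ for which $D\vartheta^r$ is not small compared to $d(q_0,\mathcal O(p))$. Neither your choice of radius for $V$ nor ``taking $m$ large'' controls these points. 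Your cutting procedure also only searches for returns to $q_0$ inside the $u$-stretch, never inside the $w$-block.

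\textbf{The repair.} The missing observation is that $q_0$ is not periodic. If $f^n(q_0)=q_0$ for some $n\geq1$, then $p=f(q_0)$ lies on the orbit of $q_0$, so $q_0\in\mathcal O(p)$, contradicting your choice. Consequently no point $x^0_{-j}$ with $j\ge2$ on a backward orbit ending at $q_0$ can equal $q_0$, since that would give $f^{j-1}(q_0)=q_0$. In particular your ``cutting'' case is vacuous.

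Next, observe that the segments for different $m$ are all truncations of one fixed pre-history
$(\ldots,y_2,y_1,\Pi(u\xi),\ldots,\Pi(\sigma^{|u|-1}(u\xi)),q_0)$.
Hence every $x^0_{-j}$ with $j\ge2$, for every $m$, lies in the fixed set
$Y:=\{y_r\}_{r\ge1}\cup\{\Pi(\sigma^i(u\xi))\}_{0\le i<|u|}$.
Since $d(y_r,\mathcal O(p))\le D\vartheta^r$, we get $\overline Y\subseteq Y\cup\mathcal O(p)$, and so $q_0\notin\overline Y$. Take $V$ to be a ball about $q_0$ with $\overline V\cap\overline Y=\emptyset$. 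The conclusion then holds for every $m$, with $N=m|w|+|u|+1\ge N_0$ once $m$ is large.

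\textbf{Comparison with the paper.} With this repair your argument is complete. It is purely topological: it uses only the non-periodicity of $q_0$ and the convergence of a single pre-history to $\mathcal O(p)$. The paper instead enforces the avoidance of $\overline V$ symbolically. It picks a Markov vertex with two incoming edges $e_0,e_1$ and builds words in which $e_0$ occurs only at the last step. It then places $\overline V$ inside the $e_0$-branch, so that no earlier orbit point can enter $V$.
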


\begin{proof}
We use the standard Markov coding of transitive repellers; see
\cite{KatokHasselblatt1995,Przytycki1976}. Since \(f|_\Lambda\) is a local
homeomorphism and is not one-to-one, there is a point with two distinct preimages
in \(\Lambda\). After refining a Markov partition, the corresponding irreducible
Markov graph has a vertex \(v\) with two distinct incoming edges, realized by two
disjoint local inverse branches over the same Markov rectangle:
\[
    e_0:u_0\to v,
    \qquad
    e_1:u_1\to v .
\]

Choose a shortest admissible path from \(v\) to \(u_1\). This path does not contain
\(e_0\). Indeed, if it contained \(e_0\), then after traversing \(e_0\) the path would
return to \(v\), and the remaining tail would give a strictly shorter path from \(v\)
to \(u_1\). Hence this path followed by \(e_1\) gives a periodic cycle \(P\) ending at
\(v\) and not containing \(e_0\).

Similarly, choose a shortest admissible path from \(v\) to \(u_0\). The same argument
shows that this path does not contain \(e_0\). Therefore, by repeating the cycle \(P\)
sufficiently many times, then following this shortest path from \(v\) to \(u_0\), and
finally taking the edge \(e_0:u_0\to v\), we obtain admissible words of arbitrarily
large length in which \(e_0\) occurs only at the last step.

Let \(p\) be the periodic point represented by the cycle \(P\). Let \(q_1\) be its
periodic predecessor corresponding to the edge \(e_1\), and let \(q_0\) be the other
preimage of \(p\) corresponding to the edge \(e_0\). Then
\[
    f(q_0)=f(q_1)=p,
\]
\(q_1\) lies on the periodic orbit of \(p\), and \(q_0\) does not, since the cycle \(P\)
does not contain \(e_0\). Choose a sufficiently small open neighborhood \(V\) of
\(q_0\) whose closure is disjoint from the periodic orbit of \(p\) and contained in the
local inverse branch corresponding to \(e_0\) over the terminal Markov rectangle.
Thus an orbit segment can enter \(V\) only when the symbolic transition \(e_0\) is
used. The admissible words constructed above therefore project to backward orbit
segments
\[
    x^0_{-N},\ldots,x^0_{-1}=q_0,x_0=p,
    \qquad
    x^1_{-N},\ldots,x^1_{-1}=q_1,x_0=p,
\]
where the second segment lies on the periodic orbit of \(p\), and the first segment
satisfies
\[
    x^0_{-j}\notin \overline V
    \qquad\text{for every }2\leq j\leq N .
\]
Since the cycle \(P\) can be repeated arbitrarily many times, \(N\) can be taken
larger than any prescribed \(N_0\).
\end{proof}

\begin{prop}[Open dense property of non-integrability]
\label{appendix:B}
Let \(1<r\leq\infty\), and let \((\Lambda_f,f)\) be a \(C^r\) surface repeller admitting
a dominated splitting in the sense of Definition~\ref{def:dominated splitting}.
Assume that \(f|_{\Lambda_f}\) is non-invertible on every transitive component.
Choosing a \(C^1\)-neighborhood \(\mathcal U_f\) of \(f\) such that every
\(g\in\mathcal U_f\) has a continuation \((\Lambda_g,g)\), the dominated splitting
continues, and every continued transitive component remains non-invertible. 

Then, for every \(1<r\leq\infty\), the set of maps
\[
    g\in \mathcal U_f^r:=\mathcal U_f\cap C^r(M,M)
\]
for which every transitive component of \((\Lambda_g,g)\) is
\(su\)-non-integrable is \(C^1\)-open and \(C^r\)-dense in \(\mathcal U_f^r\).
Consequently, \(su\)-non-integrability is a \(C^1\)-open and \(C^r\)-dense property
among non-invertible surface repellers admitting a dominated splitting.
\end{prop}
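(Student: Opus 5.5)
Openness and density are handled separately, and both work one transitive component at a time. There are finitely many components, each continues under perturbation, and a finite intersection of open dense sets is open dense, so it suffices to treat a single component.

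\emph{Openness.} Suppose $(\Lambda_g,g)$ is $su$-non-integrable. By \Cref{lem:uniform-su-separation} we get words $u,v$ of length $N_0$ and an angle $\delta_0>0$ such that the strong unstable direction sets $\mathscr D^{su}_{N_0}(x,u)$ and $\mathscr D^{su}_{N_0}(x,v)$ stay $\delta_0$-apart at a point $x$. For $g'$ that is $C^1$-close to $g$, the dominated splitting depends continuously on the map, uniformly on the inverse limit: the cone-field and fiber-contraction construction used in \Cref{lem: Holder in inverse limit space} is $C^1$-robust. The conjugacy carries the histories coded by $u$ and $v$ to histories of the continuation. So the two direction sets over the continued point remain at least $\delta_0/2$ apart, and non-integrability persists on a $C^1$-neighborhood.

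\emph{Density.} Take $g$ that is integrable on some component. Apply \Cref{lem:periodic-point-with-two-preimages} to obtain the periodic point $p$, its two preimages $q_0$ (off the orbit) and $q_1$ (on the orbit), and the neighborhood $V$ of $q_0$. By integrability, both histories of $p$ have the same strong direction $E^{su}(p)$. The plan is to perturb $g$ only inside a small ball $\overline V$ around $q_0$, replacing $g$ by $g\circ\rho$, where $\rho$ is a $C^r$-small diffeomorphism equal to the identity outside $V$. We choose $\rho$ with $\rho(q_0)=q_0$ and $D_{q_0}\rho$ a small rotation; the $C^r$ norm of this bump perturbation can be made arbitrarily small. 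Neither $p$ nor its periodic orbit meets $\overline V$, so the periodic history through $q_1$ is unchanged, and its strong direction at $p$ is still $E^{su}(p)$.

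Now consider a history through $q_0$ whose earlier points $x^0_{-j}$, $j\geq 2$, avoid $\overline V$, as provided by the lemma. Its strong direction at $p$ equals $D_{q_0}g\, D_{q_0}\rho\, E^{su}(\hat q_0)$. Here $E^{su}(\hat q_0)$ is essentially determined by the backward segment outside $V$, and it is unchanged up to an error that is exponentially small in $N$ (via the fiber contraction), because the segment avoids $\overline V$. The rotation angle is fixed, independent of $N$. Choosing $N$ large therefore makes the error smaller than the rotation, so the new direction differs from $E^{su}(p)$. This gives two histories of $p$ with distinct strong directions for the perturbed map, which is non-integrable by \Cref{lem:pointwise-su-nonintegrability}.

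The main obstacle is the subtle part of this argument. The histories of $q_0$ are not all controlled, and points far back in a history may re-enter $V$. I would handle this by fixing a single history, namely the lemma's segment extended by the periodic cycle $P$, which avoids $\overline V$ by construction. Since $q_0$ is a fixed point of $\rho$, the continued horseshoe contains this same backward orbit, so $E^{su}$ along it is exactly the old one, up to the change at the single step through $q_0$. With this choice the exponential estimate may not even be needed. If some rotation unexpectedly fails, I would use two rotation angles and note that at most one of them can make the directions coincide.
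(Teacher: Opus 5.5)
Your proposal is correct and is essentially the paper's argument: openness comes from the $C^1$-continuity of the dominated bundles along continued histories, and density comes from \Cref{lem:periodic-point-with-two-preimages} together with a perturbation supported in $V$ that rotates the derivative at $q_0$, comparing the untouched periodic history of $p$ with the history through $q_0$; your use of the full backward history $\cdots PPP\cdot(\text{path})\cdot e_0$, which never meets $\overline V$, keeps $E^{su}$ at $q_0$ exactly unchanged, so you can dispense with the cone-contraction bound $K\theta^N<\omega/10$ that the paper needs because it extends a finite segment arbitrarily into the past. Two minor remarks: the appeal to \Cref{lem:uniform-su-separation}, which is stated only for horseshoes, is unnecessary for openness, since one pair of histories with distinct directions suffices; and if the $C^r$ topology for non-integer $r$ includes the H\"older seminorm, then $g\circ\rho$ need not be $C^r$-close to $g$ even when $\rho$ is close to the identity, so an additive smooth perturbation such as $g(u)+\chi(u)\,D_{q_0}g\,(A-\mathrm{Id})(u-q_0)$, with $A$ a small rotation, is the safer choice (this is what the paper does, with the rotation placed after $D_{q_0}g$).
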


\begin{proof}
We use the standard hyperbolic continuation and Markov coding for expanding
repellers, and the standard robustness of dominated cone fields; see
\cite{HirschPughShub1977,KatokHasselblatt1995,Przytycki1976}.

First we prove openness. Suppose that \(g\in\mathcal U_f\) is
\(su\)-non-integrable on a transitive component. Then there exist two histories
\(\hat x^0,\hat x^1\in\hat\Lambda_g\) over the same base point \(x\) such that
\[
    \pi(\hat x^0)=\pi(\hat x^1)=x,
    \qquad
    \angle\bigl(E^{su}_g(\hat x^0),E^{su}_g(\hat x^1)\bigr)>0 .
\]
For every \(h\) sufficiently \(C^1\)-close to \(g\), structural stability gives the base
continuation and hence the induced inverse-limit continuation; the two histories
above continue to two histories over the same continued base point. The dominated
bundles depend continuously on the map and on the history. Hence the above angle
remains positive for all \(h\) sufficiently \(C^1\)-close to \(g\). Since the number of
transitive components is finite, \(su\)-non-integrability on all components is
\(C^1\)-open.

We now prove density. Fix \(g\in\mathcal U_f^r\) and a \(C^r\)-neighborhood
\(\mathcal O\) of \(g\). It is enough to treat one transitive component which is still
\(su\)-integrable; the finitely many components can then be treated one after another
with disjoint supports, and the openness just proved preserves the components
already treated.

Thus assume that \(\Lambda_g\) is transitive, non-invertible, admits a dominated
splitting, and is \(su\)-integrable. Then \(E^{su}_g(\hat x)\) depends only on the base
point \(\pi(\hat x)\); we denote this line by \(E^{su}_g(x)\). Let \(\mathcal C^{su}\) be
a cone field around \(E^{su}_g\), chosen sufficiently narrow but with positive width.
By domination, there exist constants \(K>0\) and \(0<\theta<1\) such that for every
admissible backward orbit segment
\[
    x_{-N},x_{-N+1},\ldots,x_{-1},x_0,
    \qquad
    g(x_{-j})=x_{-j+1},
\]
one has
\begin{equation}\label{eq:generic-cone-contraction}
    \operatorname{diam}_{\mathbb P}
    \left(
        Dg^N_{x_{-N}}\mathcal C^{su}(x_{-N})
    \right)
    \leq K\theta^N .
\end{equation}
Moreover, the cone field is robust under sufficiently small \(C^1\)-perturbations.

Apply Lemma~\ref{lem:periodic-point-with-two-preimages} to this component. We
obtain a periodic point \(p\), two distinct preimages \(q_0,q_1\) of \(p\), and a fixed
open neighborhood \(V\) of \(q_0\), with \(q_1\) lying on the periodic orbit of \(p\) and
\(q_0\) not lying on that orbit. Since \(V\) is fixed, we may choose a small angle
\(\omega>0\) such that \(\omega\) is much smaller than the width of 
$\mathcal C^{su}$ and such that every perturbation supported in \(V\), equal to a
rotation of angle \(\omega\) at the derivative level at \(q_0\), remains in
\(\mathcal O\) and still has the continued dominated splitting. Choose \(N_0\) so large
that
\[
    K\theta^N<\frac{\omega}{10}
    \qquad\text{for every }N\geq N_0,
\]
and choose the two backward orbit segments of some length \(N\geq N_0\) given by
Lemma~\ref{lem:periodic-point-with-two-preimages}:
\[
    x^0_{-N},\ldots,x^0_{-1}=q_0,x_0=p,
    \qquad
    x^1_{-N},\ldots,x^1_{-1}=q_1,x_0=p.
\]
 
Let \(R_\omega:T_pM\to T_pM\) be a linear isometry which rotates the line
\(E^{su}_g(p)\) by projective angle \(\omega\). In local coordinates near \(q_0\) and
\(p=g(q_0)\), choose a \(C^\infty\) bump function \(\chi\) supported in \(V\) and
satisfying \(\chi(q_0)=1\). Define a \(C^r\)-small perturbation \(\tilde g\) of \(g\) by
\[
    \tilde g(u)
    =
    g(u)+\chi(u)(R_\omega-\mathrm{Id})Dg_{q_0}(u-q_0).
\]
For \(r=\infty\), this perturbation is small in any prescribed finite family of
\(C^k\)-seminorms, provided \(\omega\) is chosen sufficiently small. Moreover,
\[
    \tilde g(q_0)=p,
    \qquad
    D\tilde g_{q_0}=R_\omega\circ Dg_{q_0},
\]
and \(\tilde g=g\) outside \(V\). By the choice of \(V\), the two finite backward orbit
segments above are still admissible for \(\tilde g\), and the periodic orbit of \(p\) is
unchanged. Extending the finite segments further to the past gives two histories
\(\hat p^0,\hat p^1\in\hat\Lambda_{\tilde g}\) over \(p\) whose last \(N\) backward
entries are the two finite segments above.

Along the second segment, the derivative cocycle is unchanged. Along the first
segment, only the last derivative is changed, and it is post-composed by
\(R_\omega\). Since the component was \(su\)-integrable for \(g\), both old cone images
are contained in the \(\omega/10\)-neighborhood of \(E^{su}_g(p)\) in the projective
line, by \eqref{eq:generic-cone-contraction}. Hence the first cone image for
\(\tilde g\) is contained in the \(\omega/10\)-neighborhood of
\(R_\omega E^{su}_g(p)\), while the second cone image is contained in the
\(\omega/10\)-neighborhood of \(E^{su}_g(p)\).

By the cone-field characterization of the dominated direction, after taking the
initial cone field sufficiently narrow and the perturbation sufficiently small, we have
\[
    E^{su}_{\tilde g}(\hat p^0)
    \in
    D\tilde g^N_{x^0_{-N}}\mathcal C^{su}(x^0_{-N}),
    \qquad
    E^{su}_{\tilde g}(\hat p^1)
    \in
    D\tilde g^N_{x^1_{-N}}\mathcal C^{su}(x^1_{-N}).
\]
Consequently,
\[
    \angle\bigl(E^{su}_{\tilde g}(\hat p^0),
                E^{su}_{\tilde g}(\hat p^1)\bigr)
    \geq
    \omega-\frac{\omega}{10}-\frac{\omega}{10}
    >0 .
\]
Thus the component is \(su\)-non-integrable after an arbitrarily small
\(C^r\)-perturbation.

Repeating this construction on the finitely many transitive components which
remain \(su\)-integrable, with pairwise disjoint supports and successively smaller
perturbations, gives a map in \(\mathcal O\) for which every transitive component is
\(su\)-non-integrable. This proves \(C^r\)-density. Together with openness, the
proposition follows.
\end{proof}

\bibliography{bibfile}

\end{document}